\newtheorem{theorem}{Theorem}[subsection]
\newtheorem{corollary}[theorem]{Corollary}
\newtheorem{proposition}[theorem]{Proposition}
\newtheorem{lemma}[theorem]{Lemma}
\newtheorem{lem}[theorem]{}
\theoremstyle{definition}
\newtheorem{definition}[theorem]{Definition}
\theoremstyle{remark}
\newtheorem{remark}[theorem]{Remark}
\newtheorem{example}[theorem]{Example}
\newcommand{\blem}{\begin{lem} \rm}
\newcommand{\elem}{\end{lem}}
\newcommand\M{\mathcal{M}}
\renewcommand\M{\mathcal{M}}
\renewcommand{\L}{\mathcal{L}}
\newcommand{\T}{\mathcal{T}}
\newcommand{\J}{\mathcal{J}}
\newcommand{\U}{\mathcal{U}}
\newcommand{\F}{\mathcal{F}}
\newcommand{\N}{\mathbb{N}}
\newcommand{\R}{\mathbb{R}}
\newcommand{\C}{\mathbb{C}}
\newcommand{\cC}{\mathcal{C}}
\newcommand{\Z}{\mathbb{Z}}
\newcommand{\on}{\operatorname}
\newcommand{\Ham}{\on{Ham}}
\newcommand{\Lag}{\on{Lag}}
\newcommand{\loc}{{\on{loc}}}
\renewcommand{\ker}{ \on{ker}}
\newcommand{\im}{ \on{im}}
\newcommand\dirac{/\kern-1.2ex\partial} 
\newcommand\qu{/\kern-.7ex/} 
\newcommand\lqu{\backslash \kern-.7ex \backslash} 
\newcommand\dr{r_+ \kern-.7ex - \kern-.7ex r_-}
\def\pd{\partial}
\def\bu{{\bar u}}
\def\bdelta{{\bar\delta}}
\def\hx{{\hat\xi}}
\def\he{{\hat\eta}}
\def\tint{{\textstyle\int}}
\def\CP{{\mathbb{CP}}}
\newcommand{\ol}{\overline}
\newcommand\eps{\epsilon}
\newcommand\Om{\Omega}
\newcommand\om{\omega}
\newcommand{\f}{\frac}
\newcommand{\lan}{\langle}
\newcommand{\ran}{\rangle}
\newcommand{\hh}{{\f{1}{2}}}
\newcommand{\ti}{\tilde}
\newcommand{\tM}{\widetilde{\M}}
\newcommand\cL{\mathcal{L}}
\newcommand\ul{\underline}
\renewcommand\Im{\on{Im}}
\newcommand\bdefn{\begin{definition}}
\newcommand\edefn{\end{definition}}
\newcommand\bea{\begin{eqnarray*}}
\newcommand\eea{\end{eqnarray*}}
\newcommand\bcv{\left[ \begin{array}{r} }
\newcommand\ecv{\end{array} \right] }
\newcommand\bma{\left[ \begin{array} }
\newcommand\ema{\end{array} \right]}
\newcommand\ben{\begin{enumerate}}
\newcommand\een{\end{enumerate}}
\newcommand\beq{\begin{equation}}
\newcommand\eeq{\end{equation}}
\newcommand\bex{\begin{example}}
\newcommand\bsj{\left\{ \begin{array}{rrr} }
\newcommand\esj{\end{array} \right\}}
\newcommand\Id{\on{Id}}
\newcommand\cI{\mathcal{I}}
\newcommand\eex{\end{example}}
\newcommand\sx{*\kern-.5ex_X}
\def\mathunderaccent#1{\let\theaccent#1\mathpalette\putaccentunder}
\def\putaccentunder#1#2{\oalign{$#1#2$\crcr\hidewidth \vbox
to.2ex{\hbox{$#1\theaccent{}$}\vss}\hidewidth}}
\begin{document}

\title[Floer cohomology and composition of Lagrangian correspondences]{Floer cohomology and geometric composition of Lagrangian correspondences}

\author{Katrin Wehrheim and Chris T. Woodward}

\address{Department of Mathematics,
Massachusetts Institute of Technology,
Cambridge, MA 02139.
{\em E-mail address: katrin@math.mit.edu}}

\address{Department of Mathematics, 
Rutgers University,
Piscataway, NJ 08854.
{\em E-mail address: ctw@math.rutgers.edu}}

\begin{abstract}  
We prove an isomorphism of Floer cohomologies under geometric composition
of Lagrangian correspondences in exact and monotone settings.

\vspace{-10mm}
\end{abstract} 

\maketitle


\section{Introduction}  
Lagrangian correspondences were described by Weinstein
\cite{we:le,we:sc} as generalizations of symplectomorphisms, in an
attempt to build a symplectic category with composable morphisms
between non-symplectomorphic manifolds.  By definition a {Lagrangian
  correspondence} from $M_0$ to $M_1$ is a Lagrangian submanifold in
the product, $L_{01} \subset M_0^-\times M_1$, with respect to the
symplectic structure $(-\omega_{M_0})\times\omega_{M_1}$.  The basic
examples are graphs of symplectomorphisms.  Composition of
symplectomorphisms generalizes to {\em geometric composition} of
Lagrangian correspondences $L_{01}\subset M_0^-\times M_1$,
$L_{12}\subset M_1^-\times M_2$, defined by
\begin{equation}\label{alg comp}
L_{01} \circ L_{12} := \bigl\{ (x_0,x_2)\in M_0\times M_2 \,\big|\, \exists x_1 :
(x_0,x_1)\in L_{01}, (x_1,x_2)\in L_{12} \bigr\} .
\end{equation}
In general this will be a singular subset of $M_0^-\times M_2$ which is
isotropic at smooth points. However, if we assume transversality of the
intersection $ L_{01} \times_{M_1} L_{12} := \bigl( L_{01} \times
L_{12}\bigr) \cap \bigl(M_0^- \times \Delta_{M_1}\times M_2 \bigr)$,
then the restriction of the projection $ \pi_{02}: M_0^- \times M_1
\times M_1^- \times M_2 \to M_0^- \times M_2$ to $L_{01} \times_{M_1}
L_{12}$ is an immersion \cite{gu:rev,we:co}, and hence $L_{01}\circ L_{12} \subset
M_0^-\times M_2$ is an immersed Lagrangian correspondence.  We will study the class of {\em embedded} geometric
compositions, for which in addition $\pi_{02}$ is injective, and hence
$L_{01} \circ L_{12}$ is a smooth Lagrangian correspondence.

Lagrangian correspondences arise naturally in various contexts.  Perutz \cite{per:lag,per:lag2} proposed a construction of three and four-manifold invariants using
Floer theory for Lagrangian correspondences in symmetric products, 
which generalize the tori in Heegard Floer homology \cite{OSZ}.
Seidel proposed a generalized version of his exact triangle in Floer cohomology \cite{se:lo} for fibered versions of symplectic Dehn twists,
whose vanishing cycle is a spherically fibered Lagrangian correspondence.
Seidel and Smith \cite{ss:li} proposed a symplectic definition of
Khovanov homology, using Lagrangians constructed as geometric compositions of the fibered vanishing cycles.  Finally, moduli spaces of flat bundles on three-dimensional cobordisms define Lagrangian correspondences between the moduli spaces of
bundles on the boundary surfaces, such that composition of cobordisms corresponds to geometric composition of correspondences.
The associated Floer cohomology groups, which we construct in \cite{fielda}, 
may be viewed as symplectic versions of instanton Floer homology for three manifolds.

Naturally the question arises of how composition of correspondences 
affects Floer cohomology. In this paper we prove that Floer cohomology is isomorphic under embedded geometric composition. For a precise general statement, it is best to use the language of quilted Floer cohomology developed in \cite{we:co} which defines $HF(L_{01},L_{12},\ldots, L_{(k-1)k})$ for a cyclic sequence of Lagrangian
correspondences $L_{(\ell-1)\ell}\subset M_{\ell-1}^-\times M_\ell$ between symplectic manifolds $M_0, M_1,\ldots, M_k=M_0$. If the composition 
$L_{(\ell-1)\ell}\circ L_{\ell(\ell+1)}$ is embedded, then we obtain under suitable monotonicity assumptions a canonical isomorphism
\begin{equation}\label{eq:iso}
HF(\ldots, L_{(\ell-1)\ell}, L_{\ell(\ell+1)}, \ldots)
\cong
HF(\ldots, L_{(\ell-1)\ell}\circ L_{\ell(\ell+1)}, \ldots) .
\end{equation}
Here the quilted Floer cohomology on the left hand side counts $k$-tuples of 
pseudoholomorphic strips $(u_j:\R\times[0,1]\to M_j)_{j=0,\ldots,k-1}$, whose boundaries match up via the Lagrangian correspondences, $(u_{j-1}(s,1),u_j(s,0))\in L_{(j-1)j}$. 
On the right hand side of \eqref{eq:iso}, no strip in $M_\ell$ is taken into account, and the strips 
$M_{\ell-1}$ and $M_{\ell+1}$ match up directly via 
$(u_{\ell-1}(s,1),u_{\ell+1}(s,0))\in L_{(\ell-1)\ell}\circ L_{\ell(\ell+1)}$.
Rather than going through the general definition in detail, we will prove in detail the following representative example in the familiar notation of Floer cohomology for pairs of Lagrangians in the same symplectic manifold.

\begin{theorem} \label{main}  
Let $M_0,M_1,M_2$ be symplectic manifolds 
that are either compact or satisfy the `bounded geometry' assumptions as in \cite[Chapter~7]{se:bo}.\footnote{
More precisely, we consider symplectic manifolds that are the interior of Seidel's compact symplectic manifolds with boundary and corners. 
We can in fact deal with more general noncompact manifolds, such as cotangent bundles or symplectic manifolds with convex ends, for which bubbling can be excluded in moduli spaces up to dimension $1$, as detailed in Section~\ref{sec:mon}.
Moreover, we require that transverse Floer trajectory spaces be constructed as in Section~\ref{sec:qHF} using almost complex structures $J$ such that, with respect to $J$-compatible metrics, up to second derivatives of $J$ as well as the curvature are uniformly bounded.
}.
Let
$$ L_0 \subset M_0, \ \ L_{01} \subset M_0^- \times M_1, \ \ L_{12}
\subset M_1^- \times M_2, \ \ L_2 \subset M_2^-  $$
be compact Lagrangian submanifolds such that the geometric composition
$L_{01}\circ L_{12}$ is embedded.
Then the canonical bijection
$(L_0 \times L_{12})\cap (L_{01} \times L_2) \cong (L_0 \times L_2) \cap(L_{01} \circ L_{12})$ induces an isomorphism
\begin{equation} \label{maybeiso}
HF(L_0 \times L_{12},L_{01} \times L_2)
\overset{\sim}{\to}
HF(L_0 \times L_2, L_{01} \circ L_{12}) ,
\end{equation}
provided the following assumptions hold:
\begin{enumerate}
\item \label{b}
The pair $(L_0\times L_{12}$, $L_{01}\times L_2)$ of Lagrangian submanifolds in $M_0\times M_1^-\times M_2$
is monotone (or exact) for Floer theory, that is with some $\tau> 0$ (or $\tau =0$)  we have
$$ 2 \tint v^*\omega_N = \tau \cdot I_{\rm Maslov}(v^*T(L_0\times L_{12}), v^*T(L_{01}\times L_2)) $$
for all maps from the annulus $v:S^1\times[0,1]\to M_0\times M_1^-\times M_2$ with 
Lagrangian boundary conditions $v(S^1\times\{0\})\subset L_0\times L_{12}$ and
$v(S^1\times\{1\})\subset L_{01}\times L_2$.
The Maslov index is defined by choosing a trivialization $v^*T(M_0\times M_1^-\times M_2)\cong S^1\times[0,1]\times\C^n$, then $I_{\rm Maslov}(v^*T(L_0\times L_{12}), v^*T(L_{01}\times L_2))$
is the difference of Maslov indices of the two loops in the Lagrangian Grassmannian of $\C^n$.
\item \label{c}
The minimal positive Maslov index in \eqref{b} is $2$, that is there exists no annulus $v$ with
$I_{\rm Maslov}(v^*T(L_0\times L_{12}), v^*T(L_{01}\times L_2)) =1$.
\item \label{d}
Each of the $L_0,L_{01},L_{12},L_2$ has minimal Maslov index $\geq 3$.
(Here the minimal Maslov index of $L\subset M$ is the positive generator of $I_{\rm Maslov}(\pi_2(M,L))\subset\Z$.)
\end{enumerate}
\end{theorem}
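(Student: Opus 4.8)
The strategy is to interpolate between the two Floer cohomologies by a family of "quilted" moduli spaces, degenerating the width of the middle strip in $M_1$ to zero. Concretely, for $\delta > 0$ consider the space $\M^\delta$ of pairs of pseudoholomorphic maps $u_0 : \R\times[0,1]\to M_0$ with $u_0(s,0)\in L_0$, and a pseudoholomorphic map into $M_1\times M_2$ on a strip of width $1$, together with a pseudoholomorphic map into $M_1$ on a strip of width $\delta$, with the seam conditions given by $L_{01}$, $\Delta_{M_1}$, $L_{12}$, $L_2$ as appropriate. For $\delta = 1$ this computes $HF(L_0\times L_{12}, L_{01}\times L_2)$ after unfolding; for $\delta = 0$ the width-$\delta$ strip collapses and, using that $L_{01}\circ L_{12}$ is embedded (so that the fiber product $L_{01}\times_{M_1}L_{12}$ is diffeomorphic via $\pi_{02}$ to the smooth Lagrangian $L_{01}\circ L_{12}$), the boundary-value problem degenerates to the one computing $HF(L_0\times L_2, L_{01}\circ L_{12})$. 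The plan is to show that for $\delta$ small enough the count of isolated trajectories in $\M^\delta$ agrees with the $\delta = 0$ count, and that it is independent of $\delta\in(0,1]$; combining these gives the isomorphism \eqref{maybeiso} on the chain level, compatible with the obvious identification of generators.

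The heart of the argument is the degeneration analysis as $\delta\to 0$, carried out in three steps. First, an \emph{a priori estimate}: I would establish that solutions on the shrinking strip have energy density uniformly bounded, and in fact that the map into $M_1$ on the width-$\delta$ strip converges, after rescaling, to a constant in the $M_1$-direction — this is a removable-singularity / elliptic-estimate statement exploiting that a holomorphic strip of width $\delta$ with boundary on transverse Lagrangians and small energy must itself be small in $C^1$. Second, a \emph{gluing/surjectivity} result: given a solution of the $\delta = 0$ problem (a quilted strip with seam $L_{01}\circ L_{12}$), one constructs for each small $\delta$ an approximate solution by inserting a thin almost-constant strip and corrects it via a contraction-mapping argument; the linearized operator must be shown to be uniformly invertible as $\delta\to 0$, which requires the transversality built into the embedded-composition hypothesis. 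Third, a \emph{compactness} result going the other way: any sequence $\delta_n\to 0$ with solutions $u_n\in\M^{\delta_n}$ has a subsequence converging to a $\delta = 0$ solution, with no energy lost to the collapsing region and no bubbling — here assumptions \eqref{b}--\eqref{d} enter, exactly to rule out disk and sphere bubbles in the relevant moduli spaces up to dimension $1$, and in particular to prevent figure-eight bubbles at the shrinking seam. Together these three steps give a bijection between the zero-dimensional components of $\M^\delta$ (small $\delta$) and of the $\delta = 0$ moduli space, orientation-preserving, so that the two Floer differentials are identified.

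The remaining, more routine, ingredients: (i) invariance of $HF(\L^\delta)$ under the choice of $\delta\in(0,1]$ and of the auxiliary almost complex structures and perturbation data, which is the standard continuation-map argument once monotonicity is in place to control the $1$-dimensional cobordisms; (ii) the unfolding identity at $\delta = 1$, identifying quilted Floer cohomology of the sequence $(L_0, L_{01}, \Delta_{M_1}, L_{12}, L_2)$ with $HF(L_0\times L_{12}, L_{01}\times L_2)$ in $M_0\times M_1^-\times M_2$, which is essentially a diffeomorphism of path spaces; and (iii) checking that the monotonicity and minimal-Maslov hypotheses \eqref{b}, \eqref{c}, \eqref{d} for the original pair propagate to all the quilted configurations appearing for $0<\delta\le 1$ and to the limit $\delta = 0$, so that all moduli spaces are compact in dimensions $0$ and $1$ and carry coherent orientations.

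\medskip
\noindent\textbf{Main obstacle.} I expect the crux to be the uniform invertibility of the linearized operators as $\delta\to 0$, together with the exclusion of bubbling concentrated in the collapsing strip (the "figure-eight bubble"). The geometric content of the embeddedness hypothesis is precisely what forces the limiting linear problem to be Fredholm of the expected index and invertible in the isolated case; making the estimates uniform in $\delta$ — so that gluing and compactness match up to give a genuine bijection rather than merely equality of Euler characteristics — is the delicate technical point around which the whole proof is organized.
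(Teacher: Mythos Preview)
Your plan is correct and matches the paper's approach closely: the proof proceeds by strip-shrinking $\delta\to 0$, with an implicit function theorem (your ``gluing/surjectivity'') constructing the map $\T_\delta:\M^1_0\to\M^1_\delta$, uniform linear estimates giving its injectivity, and a compactness theorem giving surjectivity; invariance for $\delta\in(0,1]$ is quoted from the companion paper on quilted Floer cohomology. One point worth sharpening: the paper does \emph{not} exclude figure-eight bubbles by any geometric identification or removable-singularity theorem (indeed it explicitly cannot prove one), but instead proves a mean-value/energy-quantization lemma showing that any gradient blow-up near the shrinking seam forces concentration of a fixed $\hbar>0$ of energy, and then uses monotonicity \eqref{b},\eqref{c} to argue that the residual trajectory would have strictly smaller energy, hence index $\leq 0$, hence be constant --- contradicting index $1$.
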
 


Note that (a) implies monotonicity on homotopy groups for the symplectic manifolds, i.e.\ $[\omega_{M_i}] = \tau  c_1(TM_i)$ on $\pi_2(M_i)$ for $i=0,1,2$, as well as for each Lagrangian, i.e.\ 
$2[\omega_{M}] = \tau I_{\rm Maslov}$ on $\pi_2(M,L)$ for $(M,L)$ given by $(M_0,L_0)$, $(M_0^-\times M_1,L_{01})$, $(M_1^-\times M_2,L_{12})$, or $(M_2,L_2)$.
Assumptions \eqref{b} and \eqref{c} are necessary in their full strength for a subtle bubble exclusion argument, as explained below. They are met, for example, if all Lagrangians are 
orientable and exact, or if they are orientable, 
monotone, and the image of either $\pi_1(L_0\times L_{12})$ or $\pi_1(L_{01}\times L_2)$ in $\pi_1(M_0\times M_1\times M_2)$ is torsion.
In \cite{quiltfloer} we discuss some alternative conditions ensuring monotonicity.
Note that \eqref{c} also is the natural assumption that excludes self-connecting trajectories in the construction of Floer homology. Similarly, \eqref{d} is needed only to ensure that Floer homology is well defined. In \cite{fieldb} we generalize Theorem~\ref{main} to an isomorphism in the derived category of matrix factorization, allowing to drop assumption~\eqref{d}.

In this paper, the isomorphism \eqref{maybeiso} of Floer cohomology
groups is completely proven only with $\Z_2$-coefficients. 
The discussion of coherent orientations
-- in the presence of orientations and relative spin structures on the Lagrangians --  
can be found in \cite{orient}.  There should also be versions of this
result for Floer cohomology with gradings, coefficients in flat vector
bundles, and Novikov rings.  We give a detailed statement and proof
for the gradings in \cite{quiltfloer}. 

\begin{figure}[ht]
\begin{picture}(0,0)%
\includegraphics{k_layers.pstex}%
\end{picture}%
\setlength{\unitlength}{1865sp}%
\begingroup\makeatletter\ifx\SetFigFont\undefined%
\gdef\SetFigFont#1#2#3#4#5{%
  \reset@font\fontsize{#1}{#2pt}%
  \fontfamily{#3}\fontseries{#4}\fontshape{#5}%
  \selectfont}%
\fi\endgroup%
\begin{picture}(12252,4524)(136,-4213)
\put(226,-3880){\makebox(0,0)[lb]{{{$L_{01}$}}}}
\put(436,-2600){\makebox(0,0)[lb]{{{$L_2$}}}}
\put(2296,-2186){\makebox(0,0)[lb]{{{$M_2$}}}}
\put(2251,-3086){\makebox(0,0)[lb]{{{$M_1^-$}}}}
\put(2251,-3986){\makebox(0,0)[lb]{{{$M_0$}}}}
\put(6211,-350){\makebox(0,0)[lb]{{{$L_{12}$}}}}
\put(5931,-1750){\makebox(0,0)[lb]{{{$L_0$}}}}
\put(8126,-2321){\makebox(0,0)[lb]{{{$M_2$}}}}
\put(8126,-3446){\makebox(0,0)[lb]{{{$M_0$}}}}
\put(5511,-3250){\makebox(0,0)[lb]{{{$L_{01}\circ L_{12}$}}}}
\put(11900,-50){\makebox(0,0)[lb]{{{$L_{2}$}}}}
\put(11900,-1200){\makebox(0,0)[lb]{{{$L_0$}}}}
\end{picture}%
\caption{Tuples of pseudoholomorphic strips that are counted for 
$HF(L_0 \times L_{12},L_{01} \times L_2)$
and for $HF(L_0 \times L_2,L_{01} \circ L_{12})$}
\label{layer}
\end{figure}

Throughout we will use the construction of Floer cohomology based on \cite{fl:rel,oh:fl1,fhs:tr}.
The Floer differential for $(L_0 \times L_{12},L_{01} \times L_2)$ counts triples of pseudoholomorphic strips in $M_0,M_1^-,M_2$ (see Figure \ref{layer} below).  In the standard definition, one would take the width of all three strips to be equal,
but we show in \cite{quiltfloer} that 
one can in fact allow the widths of the strips to differ.  
The main difficulty then is to prove that under the stated assumptions and with the width of the middle strip sufficiently close to zero, the triples of pseudoholomorphic strips in
$M_0,M_1^-,M_2$ are in one-to-one correspondence with the pairs of
pseudoholomorphic strips in $M_0,M_2$ that are counted in the Floer
differential for $(L_0 \times L_2,L_{01} \circ L_{12})$.
As in similar situations in Floer theory, the proof is an application
of the implicit function theorem, on one hand, and compactness results
for shrinking the middle strip, on the other.  In the limit
various kinds of bubbling can occur: Sphere bubbles in $M_0$, $M_1$, $M_2$;
disk bubbles in  
$(M_0,L_0)$, $(M_2,L_2)$, 
$(M_0\times M_1,L_{01})$, $(M_1\times M_2,L_{12})$, $(M_0\times M_2,L_{01}\circ L_{12})$;
and a novel type of bubble which we call a {\em figure eight bubble}. 
The latter is a triple of pseudoholomorphic maps
$v_0: \R \times (-\infty,-1] \to M_0,  v_1: \R \times [-1,1] \to M_1, v_2: \R \times [1,\infty) \to M_2$ such that 
$ (v_0(\tau, -1), v_1(\tau,-1)) \in L_{01}$,  $(v_1(\tau,1),v_2(\tau,1)) \in L_{12}$.

To explain the name, note that under stereographic projection to the sphere, or after transformation $z\mapsto \frac 1z$ of $\C\cong\R^2$, the lines $\Im(z) = \pm 1$ appear as a figure eight as in Figure \ref{figureeight}.
These pictures are labeled in the pictorial language of \cite{quilts}: The maps $v_0,v_1,v_2$ form a ``quilt'' on the punctured $S^2$, whose ``patches'' are the domains of the three maps (labeled by the target spaces), and with ``seams" on the intersections of these domains (labeled by the ``seam condition'' $L_{01}$ or $L_{12}$ that is satisfied there).
We conjecture that the maps $(v_0,v_1,v_2)$ can be extended continuously to the closure of their domains in $S^2$ by a point $(v_0(\infty),v_1(\infty),v_2(\infty))\in
L_{01}\times M_2\cap M_0\times L_{12}$.
\begin{figure}[ht]
\includegraphics[width=4in,height=1.5in]{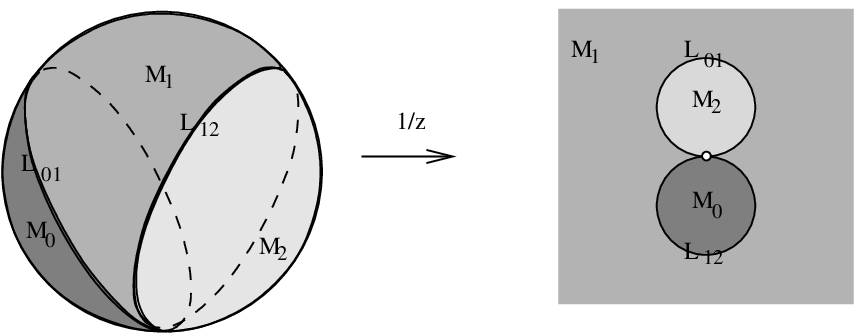}
\caption{Figure Eight bubble} \label{figureeight}
\end{figure}
However, we cannot in general prove this removal of singularities,
nor is there a readily available Fredholm theory for seams touching tangentially as in Figure~\ref{figureeight}.
Thus we are lacking the construction of a moduli space of figure eight bubbles.
Instead, as in \cite{we:en} we exclude bubbling by energy quantization without establishing a geometric description of the bubble. This method hinges on strict monotonicity with a nonnegative constant $\tau\geq 0$ as well as the $2$-grading assumption \eqref{c}. 

Theorem~\ref{main} has a wide range of applications:
First, it provides a tool for symplectic topology, which has not yet been exhaustively used.
In \cite{quiltfloer} we give examples of elementary Floer homology calculations arising from the representation of symplectic quotients as Lagrangian correspondence.
For example, a simple iteration in $n$ confirms the calculation $HF(T^n_{\rm Cl},T^n_{\rm Cl})\cong H_*(T^n)$  of Cho \cite{cho:hol} for the Clifford torus in $\CP^n$.
We also show that non-displaceability of Lagrangians in product symplectic manifolds follows directly if the Lagrangian, viewed as correspondence, has an image in one of the factors whose Floer homology is nonzero.
This explains e.g.\ the nondisplaceability of standard Lagrangian embeddings
$(S^1)^{n-k} \times S^{2k-1}\hookrightarrow (\CP^{k-1})^-\times \CP^{n}$ 
(for example the sphere $S^3 \hookrightarrow(\CP^1)^-\times\CP^2$)
by the fact that their projection to $\CP^n$ contains the nondisplaceable Clifford torus.
An application to non-triviality of symplectic mapping class groups is given in \cite{fieldb}.  Second, our isomorphism is key to proving the topological invariance of various Floer cohomology groups arising from decompositions in low-dimensional topology; for example, the symplectic version of instanton (knot) homology constructed in \cite{fielda,fieldb}, and Seidel-Smith homology and Heegard-Floer homology, for which it provides alternative constructions \cite{reza:ss}, \cite{lekili:heegard, perutz-lekili in progress}\footnote{
Excluding figure eight bubbling in negatively monotone symmetric products requires a somewhat more subtle analysis. Using a weak removable singularity theorem, it suffices to establish that potential homotopy classes of figure eight bubbles have zero energy \cite{w:banff,multiple}. This seems to be the case for all correspondences introduced by Perutz.
At the time of last revision of this paper note that the alternative approach presented in \cite{ll} assumes real valued symplectic actions, which directly implies our assumption \eqref{b} with $\tau=0$.
}.

Third, from a more conceptual point of view, Theorem~\ref{main} is
used in \cite{we:co} to give a solution to the problem in Weinstein's
construction that composition of Lagrangian correspondences is not
always defined.  Using the result here, one may construct a symplectic
$2$-category, in which all Lagrangian correspondences are composable
morphisms and Floer cohomology groups (as $2$-morphism spaces) are
well defined.  Thus one removes the quotes in Weinstein's ``category"
by promoting the construction to a $2$-category, using Floer theory.\\

{\em We thank Paul Seidel and Ivan Smith for encouragement and helpful discussions.}

\section{Floer cohomology for monotone Lagrangian correspondences}

In this section we first explain why both Floer cohomologies in Theorem~\ref{main} are well defined. Then we give a specific ``quilted" setup and choice of perturbations for both that reduce the isomorphism of Floer cohomologies to a bijection of moduli spaces that is proven in Section~\ref{shrink}.

\subsection{Monotonicity assumptions and index identities}
\label{sec:mon}

The significance of the monotonicity and Maslov index assumptions in Theorem~\ref{main} is the following energy-index relation and relative grading.

\begin{proposition} \label{prop monotone}
Suppose that the pair $(L_0,L_1)$ of Lagrangians in $M$ is monotone, transverse,
and has minimal annulus Maslov index $N\geq 2$. (That is, $N$ is the positive generator
of $\{I_{\rm Maslov}(v^*TL_0, v^*TL_1)\,|\, v:S^1\times[0,1] \to M, v(S^1\times\{j\})\subset L_j\}\subset\Z$.)

Then for any $x_\pm \in L_0\cap L_1$ there exist constants
$c(x_-,x_+)\in\R$ and $\mu(x_-,x_+)\in\Z$ such that for all
strips $u:\R\times[0,1]\to M$ with boundary values in $(L_0,L_1)$ and limits $u(\pm\infty, \cdot)=x_\pm$ we have
\begin{equation}\label{energy index}
2 E(u) =  \tau\cdot {\rm Ind}(D_u) + c(x_-,x_+) , \qquad
 {\rm Ind}(D_u) \equiv \mu(x_-,x_+) \quad\text{mod}\; N .
\end{equation}
Here $E(u)= \int u^*\omega $ is the energy and $D_u$ the linearized Cauchy-Riemann operator at $u$.
\end{proposition}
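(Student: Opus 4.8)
The plan is to reduce both identities in \eqref{energy index} to standard additivity properties of the energy, the action functional, and the Fredholm index under concatenation of strips, using the monotonicity hypothesis as the one nontrivial input. First, I would fix a reference strip $u_0$ from $x_-$ to $x_+$ with boundary in $(L_0,L_1)$; such a strip exists because $L_0 \cap L_1$ is nonempty and $M$ is connected (if $M$ is disconnected, work componentwise). Then, given an arbitrary strip $u$ with the same limits and boundary conditions, I would form the ``difference'' loop: glue $u$ to the orientation-reversed strip $\overline{u_0}$ along the ends $x_\pm$ to obtain a map $v : S^1 \times [0,1] \to M$ with $v(S^1 \times \{j\}) \subset L_j$. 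The gluing is possible precisely because the limits $x_\pm$ agree, and $v$ is continuous (smooth after a standard smoothing that does not affect $\int v^*\omega$ or the Maslov index).

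The key step is then to invoke the monotonicity of the pair $(L_0,L_1)$, which by hypothesis gives $2\int v^*\omega = \tau \cdot I_{\rm Maslov}(v^*TL_0, v^*TL_1)$ for every such annulus $v$. The next ingredient is additivity: $\int v^*\omega = E(u) - E(u_0)$ (energies subtract because of the orientation reversal, and there are no boundary terms since the boundary lies on the Lagrangians), and the gluing formula for the index, $I_{\rm Maslov}(v^*TL_0,v^*TL_1) = {\rm Ind}(D_u) - {\rm Ind}(D_{u_0})$. The latter is the standard statement that the Maslov index of the loop of Lagrangian pairs obtained by concatenating the linearized boundary data computes the difference of the Fredholm indices of the linearized Cauchy-Riemann operators with those asymptotics; here one uses that both $D_u$ and $D_{u_0}$ have the same operator at the ends (nondegenerate because $L_0 \pitchfork L_1$ at $x_\pm$) and that the index is a homotopy invariant depending only on the homotopy class of boundary data rel endpoints. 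Combining these, $2E(u) - 2E(u_0) = \tau({\rm Ind}(D_u) - {\rm Ind}(D_{u_0}))$, so setting $c(x_-,x_+) := 2E(u_0) - \tau\,{\rm Ind}(D_{u_0})$ yields the first identity, independently of the choice of $u_0$ by the same argument applied to two reference strips.

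For the congruence, I would note that varying $u$ within its homotopy class rel endpoints changes nothing, while changing the homotopy class changes ${\rm Ind}(D_u)$ by exactly the Maslov index of the annulus built from the two representatives — which by definition of $N$ lies in $N\Z$. Hence ${\rm Ind}(D_u)$ is well defined mod $N$ once $x_\pm$ are fixed; I set $\mu(x_-,x_+)$ to be this residue, realized by any single strip. (Equivalently: the set of values of ${\rm Ind}(D_u)$ over all $u$ with the given limits is a coset of $N\Z$.) The main obstacle, and the only point requiring care, is the gluing/additivity formula for the Fredholm index under concatenation along nondegenerate ends, together with making sure the smoothing of the glued annulus is symplectically and index-theoretically neutral; both are by now standard (cf.\ the linear gluing theory underlying Floer theory), but they must be stated with the correct sign conventions so that the energy difference and the index difference are governed by the \emph{same} constant $\tau$. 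Everything else is bookkeeping.
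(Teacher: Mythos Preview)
Your proposal is correct and follows essentially the same approach as the paper: glue an arbitrary strip to an orientation-reversed second strip with the same endpoints to form an annulus, then invoke additivity of energy and Fredholm index together with the monotonicity hypothesis and the definition of $N$. The only cosmetic difference is that the paper compares two arbitrary strips $u_1,u_2$ directly rather than fixing a reference $u_0$, which makes your (slightly shaky) existence argument for $u_0$ unnecessary---if no strip exists the claim is vacuous, and otherwise the quantity $2E(u)-\tau\,{\rm Ind}(D_u)$ is shown to be independent of $u$ and can be taken as $c(x_-,x_+)$.
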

\begin{proof}
Given two strips $u_1, u_2:\R\times[0,1]\to M$ we can glue them together (reversing the orientation of $u_2$) to an annulus $v:S^1\times[0,1] \to M$, then 
$\int v^*\omega= E(u_1)- E(u_2)$ and $I_{\rm Maslov}(v^*TL_0, v^*TL_1)=
 {\rm Ind}(D_{u_1})- {\rm Ind}(D_{u_2})$. 
 So the energy-index relation follows from monotonicity
 $\int v^*\omega = \tau I_{\rm Maslov}(v^*TL_0, v^*TL_1)$, 
 and the index identity follows from $I_{\rm Maslov}(v^*TL_0, v^*TL_1)\subset N\Z$.
\end{proof}

The energy-index relation ensures energy bounds for the moduli spaces of fixed index and thus compactness up to bubbling (`pointwise blow-up of the gradient') and breaking of trajectories (`nontrivial amounts of energy moving off into both ends of the strip').\footnote{
For a noncompact symplectic manifold, one needs to establish $\cC^0$-bounds on the holomorphic maps, before 'standard Gromov compactness' can be quoted. Note that the domains of maps under our consideration are such that each interior point has bounded distance from a boundary point, where the maps take values in a compact Lagrangian submanifold or in the projection of a compact Lagrangian correspondence to one factor.
Hence it suffices to establish uniform bounds on the gradient (i.e.\ exclude bubbling).
}
Together with the index identity it excludes bubbling in moduli spaces of index less than $N$ as follows: Any bubbling leads to a new (possibly broken) trajectory connecting the same points but with less energy.\footnote{
Such energy loss can be established by proving convergence of rescaled maps to disks or spheres.
Alternatively, this can be shown by a mean value inequality as in \cite[4.3]{ms:jh}, \cite{we:en}, or  Lemma~\ref{bubb}, which only requires uniform bounds on the curvature and up to second derivatives of the almost complex structure $J$ w.r.t.\ a $J$-compatible metric on $M$.
Such bounds will also be required for the proof of energy loss during strip shrinking in Lemma~\ref{bubb}, hence they are a standing assumption for noncompact manifolds.

In addition, both approaches require the removable singularity theorem (\cite[Thm 4.1.2]{ms:jh}) to hold on $M$.
}
By monotonicity, less energy means strictly less index. By the index identity mod $N$ that means negative index. 
By transversality (previously established for moduli spaces of negative index) that means an empty set: The new trajectory doesn't exist, so the bubbling didn't happen. We spelled out this argument because we will use it again to exclude figure eight bubbling -- by only proving energy loss, not actually giving a geometric description of the bubble.

Working with $N=2$ there is just one point in the construction of Floer cohomology where this argument fails: The $1$-dimensional moduli spaces of self-connecting Floer trajectories have index $2$, so bubbling could lead to an index $0$ solution (which are always constant due to the $\R$-action). Assumption \eqref{d} serves to exclude this scenario by index additivity arguments: 
Any pseudoholomorphic disk bubble with boundary on $L$ will reduce the index by at least $N_L$, the minimal Maslov index on $\pi_2(M,L)$. So $N_L\geq 3$ ensures that the remaining solution would have negative index (and the same holds for sphere bubbles whose Chern number would be at least $\frac 12 N_L$).
Note that this argument, unlike the previous bubbling exclusion by energy loss, requires an identification of the bubbles as spheres and disks.\footnote{
For noncompact symplectic manifolds, this requires a compactification as in \cite{se:bo} or the use of the maximum principle on convex ends. Alternatively, one could restrict to $N\geq 3$ 
(e.g.\ exact Lagrangians in a cotangent bundle have $N=\infty$) or use any other valid argument to prove $\partial^2=0$.
}
In our case it also requires that we work with a split almost complex structure (preserving the factors of $M_0\times M_1^-\times M_2$), otherwise pseudoholomorphic disks in the product manifold don't necessarily have the minimal index of a disk in one of the factors.
We will show in Section~\ref{sec:qHF} that we can achieve transversality with a split almost complex structure, and hence our assumptions indeed ensure that the Floer cohomology $HF(L_0 \times L_{12},L_{01} \times L_2)$ is well defined.
The next Lemma shows that the Floer cohomology $HF(L_0 \times L_2, L_{01} \circ L_{12})$ for the composed Lagrangian correspondence is also well defined. 

\begin{lemma} \label{neednoviterbo}
In the setting of Theorem~\ref{main}, the assumptions \eqref{b} and \eqref{c} imply the analogous assumptions for the pair $(L_0 \times L_2, L_{01} \circ L_{12})$ of Lagrangians in $M_0\times M_2^-$.
Assumption \eqref{d} implies that $\partial^2=0$ on $CF(L_0 \times L_2, L_{01} \circ L_{12})$, and hence the Floer cohomology is well defined.
\end{lemma}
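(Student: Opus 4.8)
The strategy is to translate the three monotonicity/Maslov conditions for the "long" tuple $(L_0\times L_{12}, L_{01}\times L_2)$ in $M_0\times M_1^-\times M_2$ into the corresponding conditions for the "short" pair $(L_0\times L_2, L_{01}\circ L_{12})$ in $M_0\times M_2^-$, using the fact that the geometric composition is embedded. The geometric heart of the argument is the following correspondence of maps: given a map $v:S^1\times[0,1]\to M_0\times M_2^-$ with boundary on $(L_0\times L_2, L_{01}\circ L_{12})$, one uses the fact that $\pi_{02}:L_{01}\times_{M_1}L_{12}\to L_{01}\circ L_{12}$ is a diffeomorphism to lift the boundary component lying on $L_{01}\circ L_{12}$ to a pair of paths landing in $L_{01}$ and $L_{12}$ with matching $M_1$-component, thereby producing a "quilted annulus" whose middle patch maps to $M_1$. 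I would first set up this lift carefully at the level of smooth maps: the $M_1$-component is obtained by composing the $L_{01}\circ L_{12}$-valued boundary with $\pi_{02}^{-1}$ followed by projection to the $M_1$-factor, and then one fills in a (homotopically inessential, hence symplectic-area-zero and Maslov-zero) bounding map on the middle strip. The two constructions are mutually inverse up to homotopy — shrinking the middle patch to width zero recovers the original annulus — so they set up a bijection between the relevant homotopy classes of annuli on the two sides.

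With this dictionary in hand, the index and area identities follow by additivity. Concretely, for the lifted quilted annulus the total symplectic area is $\int v_0^*\omega_{M_0} + \int v_1^*(-\omega_{M_1}) + \int v_2^*\omega_{M_2}$, which by the seam-lift construction equals $2\tint v^*\omega_N$ for $N = M_0\times M_2^-$, since the $M_1$-filling contributes zero. Similarly the Maslov index of the pair of loops in the Lagrangian Grassmannian of $M_0\times M_1^-\times M_2$ equals the Maslov index of the corresponding loops for $(L_0\times L_2, L_{01}\circ L_{12})$: on the seam the pair $(T_xL_{01}, T_xL_{12})$ with the diagonal $\Delta_{TM_1}$ inserted is, after the linear-algebra identity for transverse compositions (this is exactly the linearized version of the embeddedness hypothesis, cf.\ \cite{gu:rev,we:co}), Grassmannian-homotopic to $T(L_{01}\circ L_{12})$ plus a diagonal factor in $M_1$ which contributes trivially. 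Hence $I_{\rm Maslov}$ matches on the two sides, so monotonicity \eqref{b} with the same constant $\tau$ transfers, and the minimal positive Maslov index is unchanged, giving \eqref{c}.

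For the last assertion, that \eqref{d} implies $\partial^2=0$ on $CF(L_0\times L_2, L_{01}\circ L_{12})$, I would argue exactly as in the discussion preceding the lemma: with $N=2$ the only place $\partial^2=0$ can fail is via a disk or sphere bubble splitting off a $1$-dimensional (index $2$) moduli space of self-connecting trajectories and leaving an index $0$ piece. A disk bubble on $L_0\times L_2$ drops the index by at least $\min(N_{L_0}, N_{L_2}) \geq 3$; and a disk bubble on $L_{01}\circ L_{12}$ drops it by at least the minimal Maslov index of $L_{01}\circ L_{12}$ on $\pi_2(M_0\times M_2^-, L_{01}\circ L_{12})$, which I claim is $\geq 3$. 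To see the claim, I would use that $\pi_{02}$ identifies $L_{01}\circ L_{12}$ with $L_{01}\times_{M_1}L_{12}$ and that a disk with boundary on the latter, capped appropriately, splits its Maslov index into contributions bounded below by $N_{L_{01}}$ and $N_{L_{12}}$ minus a correction coming from the $M_1$-diagonal which is controlled — here one again invokes a split almost complex structure argument to reduce to disks in the individual factors, or alternatively one reduces mod $N=2$ and uses that the relevant index is even, so index $\geq 3$ forces the leftover to be negative.

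\textbf{Main obstacle.} The delicate point is the Maslov-index bookkeeping: showing that the linear symplectic reduction carrying $(T L_{01}, \Delta_{TM_1}, T L_{12})$ to $T(L_{01}\circ L_{12})$ is compatible with trivializations so that Maslov indices of loops genuinely agree, rather than merely agreeing modulo a constant. This requires care with the choice of bundle trivializations along the seam and the neck region where the $M_1$-patch degenerates, and a clean statement of the linear algebra of embedded compositions at the level of Lagrangian Grassmannian paths. Establishing that the minimal Maslov index of the composed correspondence is $\geq 3$ (for the $\partial^2=0$ claim) has the same flavor and is where the split-complex-structure hypothesis does its work.
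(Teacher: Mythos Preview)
Your treatment of \eqref{b} and \eqref{c} is essentially the paper's: lift the annulus via $\pi_{02}^{-1}$, extend the $M_1$-component constantly in $t$ so that its symplectic area vanishes, and match Maslov indices through the linear algebra of composed Lagrangians. The paper resolves the trivialization issue you flag by an explicit crossing-form calculation, showing $I(\gamma_{01})+I(\gamma_{12})=I(\gamma_{02})$ directly; your outline is correct but you have correctly identified that this step needs care.

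The genuine gap is in your argument for \eqref{d}. You try to show $N_{L_{01}\circ L_{12}}\geq 3$, but this does not follow from the hypotheses. Given a disk $w:(D,\partial D)\to(M_0\times M_2^-, L_{01}\circ L_{12})$, the boundary lifts to a loop in $L_{01}\times_{M_1}L_{12}$ and hence gives a loop $\gamma:S^1\to M_1$; however $\gamma$ need not bound in $M_1$, so you cannot ``cap appropriately'' and split $w$ into disks with boundary on $L_{01}$ and $L_{12}$. The additivity you invoke simply does not apply, and your fallback ``reduce mod $2$'' suggestion confuses the annulus grading of \eqref{c} with the disk-class Maslov minimality required here.

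The paper sidesteps this entirely and does \emph{not} attempt to bound $N_{L_{01}\circ L_{12}}$. Instead it notes that absent \eqref{d} one has $\partial^2=w\cdot\mathrm{Id}$ on each chain complex, and then invokes the bijection of index-$1$ moduli spaces (Theorem~\ref{traj bij}, which requires only \eqref{b} and \eqref{c}) to conclude that the two obstruction constants coincide; see Remark~\ref{rmk:derived main}. Since \eqref{d} forces $w=0$ for the uncomposed pair, it follows that $w=0$ for the composed pair as well. So the well-definedness of the right-hand Floer cohomology is established \emph{a posteriori} via the main comparison theorem, not by a direct index bound on $L_{01}\circ L_{12}$.
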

\begin{proof}
Consider any annulus $(u_0\times u_2) : S^1\times[0,1]\to M_0\times M_2^-$ with Lagrangian boundary conditions $(u_0\times u_2)(S^1\times\{0\})\subset L_0\times L_{2}$ and $(u_0\times u_2)(S^1\times\{1\})\subset L_{01}\circ L_{12}$. 
By the embedded composition there exists a unique lift $u_1:S^1\to M_1^-$ such that
$(u_0|_{t=1}\times u_1)(S^1)\subset L_{01}$ and
$(u_1\times u_2|_{t=1})(S^1)\subset L_{12}$.
Now we can reverse the parametrization in $\overline{u}_2(s,t):=u_2(s,1-t)$ and extend $u_1$ constant along $[0,1]$ 
to define an annulus $(u_0\times u_1\times \overline{u}_2):S^1\times[0,1]\to M_0\times M_1^-\times M_2$ as in \eqref{b}.
Here $u_1^*\omega_1=0$, hence 
$\int (u_0\times u_1\times \overline{u}_2)^* (\omega_0 \times (-\omega_1) \times \omega_2) = \int (u_0\times u_2)^* (\omega_0 \times (- \omega_2 ))$.
To identify the Maslov indices, pick the same trivializations $u_j^*TM_j\cong S^1\times[0,1]\times V_j$ for $j=0,2$ in both cases, then equality follows from the identity
\begin{equation}\label{maslovid}
 I(\gamma_{01}) + I(\gamma_{12}) =  I(\gamma_{01}\times\gamma_{12}) = I(\gamma_{02})
\end{equation}
for loops of Lagrangians $\gamma_{01}:S^1\to\Lag(V_0^-\times V_1)$,
$\gamma_{12}:S^1\to\Lag(V_1^-\times V_2)$,
and 
$\gamma_{02}:S^1\to\Lag(V_0^-\times V_2)$ given by
$\gamma_{02}(s)=(\gamma_{01}(s)\times\gamma_{12}(s))\cap (V_0\times\Delta_{V_1}\times V_2)$.
The first equality is simply additivity of the Maslov index. To see the second equality we fix Lagrangians $\Lambda_j\subset V_j$ for $j=0,2$, then the Maslov indices can be expressed as the intersection number with $\Lambda_0\times\Delta_{V_1}\times\Lambda_2$ resp.\ $\Lambda_0\times\Lambda_2$.
With this choice the intersections are identified,
$$
K(s):= \bigl(\gamma_{01}(s)\times\gamma_{12}(s)\bigr) \cap \bigl(\Lambda_0\times\Delta_{V_1}\times\Lambda_2\bigr)
\cong
\gamma_{02}(s) \cap 
\bigl(\Lambda_0\times\Lambda_2 \bigr) .
$$
Now we need to compare the crossing forms $\Gamma_{0112}(s),\Gamma_{02}(s) : K(s) \to \R$ at regular crossings $s\in S^1$.
Fix a Lagrangian complement $\gamma_{02}(s)^c\subset V_0\times V_2^-$, 
then $\gamma_{02}(s)^c\times\Delta_{V_1}$, after appropriate transposition of factors, is a Lagrangian complement for $\gamma_{01}(s)\times\gamma_{12}(s)$, due to the assumption of transversality $(L_{01}\times L_{12}) \pitchfork (M_0\times\Delta_{M_1}\times M_2)$.
So for $v_{0112}=(v_0,v_1,v_1,v_2)\in K(s)$ one finds
$(w_0,w_2)(t)\in \gamma_{02}(s)^c$ and $w_1\in V_1$  such that 
$v+(w_0,w_1,w_1,w_2)(t)\in (\gamma_{01}\times\gamma_{12})(s+t)$.
For the corresponding vector $v_{02}=(v_0,v_2)\in K(s)$ this automatically gives
$v_{02}+(w_0,w_2)(t)\in \gamma_{02}(s+t)$.
With this we identify the crossing forms 
\begin{align*}
\Gamma_{0112}(s)v_{0112} 
&= \tfrac d{dt}\bigr|_{t=0} (\omega_0\oplus -\omega_1\oplus \omega_1 \oplus -\omega_2) \bigl(v_{0112},(w_0,w_1,w_1,w_2)(t)\bigr) \\
&= \tfrac d{dt}\bigr|_{t=0} \bigl( - \omega_0(v_0, w_0) + \omega_1(v_1, w_1)
- \omega_1(v_1, w_1 ) + \omega_2(v_2, w_2) \bigr) \\
&= \tfrac d{dt}\bigr|_{t=0} (\omega_0\oplus -\omega_2)
\bigl(v_{02}, (w_0,w_2)(t) \bigr)
= \Gamma_{02}(s)v_{02} .
\end{align*}
This proves equality of the Maslov indices in \eqref{maslovid} and this finishes the proof of \eqref{b} and \eqref{c}.

In the absence of assumption \eqref{d} we have $\partial^2 = w {\rm Id}$ a multiple of the identity in both Floer theories, see \cite{oh:fl1} and \cite{fieldb}. 
A derived version of Theorem~\ref{main} implies that the value of $w$ is the same for both theories, see Remark~\ref{rmk:derived main}.
Assuming \eqref{b} for the pair $(L_0 \times L_{12}, L_{01} \times L_{2})$ we obtain $w=0$ and thus also $\partial^2=0$ on $CF(L_0 \times L_2, L_{01} \circ L_{12})$.
\end{proof}

The index calculation in \eqref{maslovid} analogously holds for strips. This identifies the index on the two complexes in Theorem \ref{main}. 
Recall here from \cite{fl:rel} that the index of the linearized Cauchy-Riemann operator $D_u$ 
at a map $u:\R\times[0,1]\to M$ with Lagrangian boundary conditions $u(\R\times\{i\})\subset L_i$ for $i=0,1$ and limits
$u(s,\cdot) \underset{s\to\pm\infty}{\longrightarrow} L_0\pitchfork L_1$ at transverse intersection points is given by the Maslov-Viterbo index,
$$
{\rm Ind}(D_u) = I_{MV}(u) := I(\gamma_0,\gamma_1) , \qquad \gamma_i(s)= T_{u(s,i)} L_i .
$$
Here the Maslov index of the pair of paths is defined by choosing a trivialization $u^*TM \cong \R\times[0,1] \times V$ (independent of $t\in[0,1]$ for $s\to\pm\infty$ ) so that $\gamma_i$ becomes a path of Lagrangian subspaces in the symplectic vector space $V$.

\begin{lemma}   \label{index}
Let $L_0\subset M_0$, $L_{01}\subset M_0^-\times M_1$, 
$L_{12}\subset M_1^-\times M_2$, and $L_2\subset M_2^-$ be Lagrangians 
such that the composition $L_{01} \circ L_{12}=:L_{02}$ is embedded.
Suppose that the intersection $L_0\times L_{12}\cap L_{01}\times L_2$
(and hence also $L_0\times L_2 \cap L_{01}\circ L_{12}$) is transverse
and consider a map $(u_0,u_2):\R\times[0,1]\to M_0\times M_2$ taking
boundary values in $(L_0\times L_2,L_{01} \circ L_{12})$, and limiting
to intersection points as $s\to\pm\infty$.  Let
$(u_0,u_1,\overline{u}_2):\R\times[0,1]\to M_0 \times M_1 \times M_2$
be the corresponding map which takes boundary values in $(L_0\times
L_{12},L_{01} \times L_{2})$ and satisfies $\partial_t u_1 = 0$.
(Here $\overline{u}_2$ reverses the $[0,1]$-parametrization of $u_2$.)
Then the indices of the linearized operators and the energies are equal, $$
{\rm Ind}(D_{(u_0,u_2)})=
{\rm Ind}(D_{(u_0,u_1,\overline{u}_2)}),
\qquad
E((u_0,u_2)) = E((u_0,u_1,\overline{u}_2)).
$$
\end{lemma}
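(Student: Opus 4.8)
The plan is to deduce both equalities from the pointwise linear algebra already established in the proof of Lemma~\ref{neednoviterbo}; the statement is essentially the ``strip version'' of the Maslov identity \eqref{maslovid} plus a trivial observation about energy.

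\emph{Energy.} Since $\partial_t u_1 = 0$, the map $u_1$ factors through the projection $\R\times[0,1]\to\R$, so $u_1^*\omega_{M_1}=0$. Using that $\overline{u}_2(s,t)=u_2(s,1-t)$ is the composition of $u_2$ with an orientation-reversing diffeomorphism of $\R\times[0,1]$, one gets
$$
E\bigl((u_0,u_1,\overline{u}_2)\bigr) = \tint u_0^*\omega_{M_0} - \tint u_1^*\omega_{M_1} + \tint \overline{u}_2^*\omega_{M_2} = \tint u_0^*\omega_{M_0} - \tint u_2^*\omega_{M_2} = E\bigl((u_0,u_2)\bigr),
$$
the last quantity being the energy with respect to the symplectic form $\omega_{M_0}\oplus(-\omega_{M_2})$.

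\emph{Index.} By the formula recalled from \cite{fl:rel}, ${\rm Ind}(D_u)=I_{MV}(u)$ for both strips (the limits at $s=\pm\infty$ lie in $(L_0\times L_{12})\cap(L_{01}\times L_2)$, resp.\ its image under the canonical bijection, which is transverse by hypothesis, so both operators are Fredholm). It therefore suffices to compare the two Maslov--Viterbo indices. I would fix trivializations $u_0^*TM_0\cong\R\times[0,1]\times V_0$ and $u_2^*TM_2\cong\R\times[0,1]\times V_2$, constant in $t$ near $s=\pm\infty$; these induce a trivialization of $\overline{u}_2^*TM_2$, and together with a $t$-independent trivialization $u_1^*TM_1\cong\R\times V_1$ (compatible with $\partial_t u_1=0$) they trivialize the pullback tangent bundles of both strips. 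With respect to these, the pair of boundary Lagrangian paths of the lifted strip is $\bigl(s\mapsto\gamma_0(s)\times\gamma_{12}(s),\ s\mapsto\gamma_{01}(s)\times\gamma_2(s)\bigr)$ and that of the composed strip is $\bigl(s\mapsto\gamma_0(s)\times\gamma_2(s),\ s\mapsto\gamma_{02}(s)\bigr)$, where $\gamma_0(s)=T_{u_0(s,0)}L_0$, $\gamma_2(s)=T_{u_2(s,0)}L_2$, $\gamma_{01}(s)=T_{(u_0(s,1),u_1(s))}L_{01}$, $\gamma_{12}(s)=T_{(u_1(s),u_2(s,1))}L_{12}$, and $\gamma_{02}(s)=(\gamma_{01}(s)\times\gamma_{12}(s))\cap(V_0\times\Delta_{V_1}\times V_2)=T_{(u_0(s,1),u_2(s,1))}(L_{01}\circ L_{12})$ (the last identification being the embedded composition assumption). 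Then one runs the verbatim computation from the proof of Lemma~\ref{neednoviterbo}: fix reference Lagrangians $\Lambda_0\subset V_0$, $\Lambda_2\subset V_2$, express the indices as signed counts of regular crossings with $\Lambda_0\times\Delta_{V_1}\times\Lambda_2$, resp.\ $\Lambda_0\times\Lambda_2$, use the transversality hypothesis $(L_{01}\times L_{12})\pitchfork(M_0\times\Delta_{M_1}\times M_2)$ to biject the crossings, and check that the crossing forms $\Gamma_{0112}$ and $\Gamma_{02}$ agree at matched crossings exactly as displayed there. Summing over $s\in\R$ gives $I_{MV}((u_0,u_1,\overline{u}_2))=I_{MV}((u_0,u_2))$.

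The step I expect to need the most care is the behaviour at the ends $s\to\pm\infty$: in passing from loops of Lagrangians (as in Lemma~\ref{neednoviterbo}) to paths with transverse limits, one must fix the convention making the Maslov index of the pair of boundary paths an integer (catenation with a linear path at each transverse limit) and verify that $\Lambda_0,\Lambda_2$ can be chosen transverse to \emph{all} the limiting configurations, so that the index is a crossing count with no endpoint correction and the crossing-form identity accounts for the full index. Since the intersections $(L_0\times L_{12})\cap(L_{01}\times L_2)$ and its composed counterpart are assumed transverse, this is routine; everything else is the pointwise linear algebra of Lemma~\ref{neednoviterbo}.
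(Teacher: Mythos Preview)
Your proposal is correct and follows essentially the same approach as the paper: the paper's proof simply says the Maslov index identity ``follows as in Lemma~\ref{neednoviterbo}'' (or alternatively from Viterbo) and notes $\int u_1^*\omega_1=0$, $\int u_2^*\omega_2=\int\overline{u}_2^*(-\omega_2)$ for the energy. You have spelled out what ``as in Lemma~\ref{neednoviterbo}'' means in the strip setting, including the one point that actually requires care---handling the endpoints when passing from loops to paths---which the paper leaves implicit.
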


\begin{proof}
The identity of Maslov indices follows as in Lemma~\ref{neednoviterbo}.
Alternatively, it could be deduced from a more general result of Viterbo \cite[Proposition 3]{vi:in}.
For the energies just note that
$\int u_2^*\omega_2 = \int \overline{u}_2^*(-\omega_2)$
and $ \int u_1^*\omega_1 = 0$.
\end{proof}

\subsection{Quilted setup for Floer cohomology}
\label{sec:qHF}

As in Theorem~\ref{main} let $M_0,M_1,M_2$ be symplectic manifolds and let
$$ L_0 \subset M_0, \ \ L_{01} \subset M_0^- \times M_1, \ \ L_{12}
\subset M_1^- \times M_2, \ \ L_2 \subset M_2^-  $$
be Lagrangian submanifolds such that the geometric composition
$L_{02}:=L_{01}\circ L_{12}$ is embedded.
The aim of this section is to introduce the ``quilted" setup and give compatible choices of perturbation data for the two Floer cohomologies
$HF(L_0 \times L_{12},L_{01} \times L_2)$ and $HF(L_0 \times L_2, L_{02})$.

First, we need to fix Hamiltonians\footnote{
If some of the symplectic manifolds are noncompact, then we work throughout with Hamiltonian functions that are supported in fixed compact neighbourhoods of the Lagrangians.
}
such that the perturbed intersection points are finite and nondegenerate.  In fact, the following Proposition shows that we can pick a Hamiltonian of split type which achieves simultaneous transversality for the intersection points in both Floer theories.
Given a pair of time-dependent Hamiltonian functions
$(H_0,H_2) \in C^\infty([0,1] \times M_0)\times C^\infty([0,1] \times M_2)$
consider the Hamiltonians 
$H_{02}(t, x_0,x_2)=H_0(t,x_0) - H_2(1-t,x_2)$ on $M_0\times M_2$ and 
$H_{012}(t, x_0,x_1,x_2)=H_0(t,x_0) + H_2(t,x_2)$ on $M_0\times M_1\times M_2$ 
and denote their time $1$ flows by $\phi^{H_{02}}$ and $\phi^{H_{012}}$.
Then the perturbed intersection points $\phi^{H_{02}}(L_0\times L_2)\cap L_{02}$ can be identified with 
$$L_{0} \times_{\phi^{H_0}} L_{02} \times_{\phi^{H_2}} L_2
= \bigl\{ (m_0, m_2) \in M_0 \times M_2
\,\big|\, m_{0} \in L_0, (\phi^{H_0}(m_{0}),m_{2}) \in L_{02} , 
\phi^{H_2}(m_{2}) \in L_2 \bigr\} 
$$
and analogously
$$
\phi^{H_{012}}(L_0\times L_{12})\cap (L_{01}\times L_2) \cong 
L_{0} \times_{\phi^{H_0}} L_{01} \times_{\phi^{H_1}}  L_{12} \times_{\phi^{H_2}} L_2,
$$ 
where $\phi^{H_{j}}$ is the time $1$ flow of the Hamiltonian $H_{j}$ and we use the trivial function $H_1\equiv 0$ on $M_1$.
Note that the Hamiltonians are constructed such that the perturbed intersection points for the two Floer theories are still canonically identified. Indeed, by assumption every point in $L_{02}=L_{01}\circ L_{12}$ has a unique lift to $L_{01}\times_{\Id_{M_1}}L_{12}$.

\begin{proposition}  \label{pert split}
There is a dense open subset 
$\Ham(L_0,L_{02},L_2)\subset C^\infty([0,1] \times M_0)\times C^\infty([0,1] \times M_2)$ such that for every $(H_0,H_2)\in\Ham(L_0,L_{02},L_2)$ and $H_1\equiv 0$ 
the defining equations for
both sets $L_{0} \times_{\phi^{H_0}} L_{02} \times_{\phi^{H_2}} L_2$
and $L_{0} \times_{\phi^{H_0}} L_{01} \times_{\phi^{H_1}}  L_{12} \times_{\phi_{1}^{H_2}} L_2$ are transversal.
\end{proposition}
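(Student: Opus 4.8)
The plan is to prove simultaneous transversality by a single Sard--Smale argument, exploiting the fact that the two intersection conditions are cut out from the \emph{same} auxiliary data. First I would set up the universal moduli space: let $\mathcal{H} = C^\infty([0,1]\times M_0)\times C^\infty([0,1]\times M_2)$ (or a large separable Banach space of such perturbations, e.g.\ a Floer $C^\varepsilon$-space), and consider the evaluation-type map
\[
\mathcal{F}\colon L_0\times L_2\times \mathcal{H}\longrightarrow (M_0\times M_1\times M_2)^{\times 2}/\text{(diagonals)}
\]
more concretely, define $\Psi(m_0,m_2,H_0,H_2) = \bigl(\phi^{H_0}(m_0),m_2\bigr)\in M_0\times M_2$ and ask that $\Psi$ be transverse to $L_{02}$ while also $\phi^{H_0}(m_0)\in M_0$, $\phi^{H_2}(m_2)\in M_2$ so that the lift condition places $\bigl(m_0,\phi^{H_0}(m_0),m_2\bigr)$ suitably against $L_{01}$ and $L_{12}$. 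The key observation, already flagged in the paragraph preceding the Proposition, is that via the embedded composition every point of $L_{02}$ has a \emph{unique} lift to $L_{01}\times_{\Id_{M_1}}L_{12}$; hence the map $\pi_{02}\colon L_{01}\times_{M_1}L_{12}\to L_{02}$ is a diffeomorphism, and transversality of the perturbed tuple against $L_{01}\times L_2$ in $M_0\times M_1\times M_2$ is \emph{equivalent} to transversality of the perturbed pair against $L_{02}$ in $M_0\times M_2$ — this equivalence is the content of the index/crossing-form computation in Lemma~\ref{neednoviterbo} read at the level of tangent spaces, so only one transversality needs to be arranged.

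Next I would check that the universal map is submersive. Fix a point $(m_0,m_2)$ in the relevant fiber product. The derivative of $H_0\mapsto \phi^{H_0}(m_0)$ in the direction of a variation $\widehat H_0$ of the Hamiltonian is the standard formula giving, at time $1$, the value $Y_{\widehat H_0}$ of the Hamiltonian vector field of $\int_0^1 (\widehat H_0)_t\,dt$ pushed forward along the flow; since cutoff functions supported near $\phi^{H_0}(m_0)$ can make this an arbitrary tangent vector in $T_{\phi^{H_0}(m_0)}M_0$, and likewise $\widehat H_2$ sweeps out all of $T_{\phi^{H_2}(m_2)}M_2$, the full derivative of $\Psi$ together with the $T(L_0\times L_2)$ directions surjects onto the normal space of $L_{02}$ inside $M_0\times M_2$. (When the $M_i$ are noncompact one uses Hamiltonians supported in fixed compact neighborhoods of the Lagrangians, as in the footnote; the evaluation points of interest lie in such neighborhoods by construction, so the cutoff argument still applies.) Therefore the universal space $\mathcal{M} = \Psi^{-1}(L_{02})$ is a Banach manifold (a genuine manifold, of the expected finite-dimensional-over-$\mathcal{H}$ type), and the projection $\mathcal{M}\to\mathcal{H}$ is Fredholm of index $0$.

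Then Sard--Smale gives a residual — hence dense — set of $(H_0,H_2)$ for which $(m_0,m_2)\mapsto \Psi(m_0,m_2,H_0,H_2)$ is transverse to $L_{02}$, i.e.\ $L_0\times_{\phi^{H_0}}L_{02}\times_{\phi^{H_2}}L_2$ is cut out transversally; by the equivalence above the same parameters make $L_0\times_{\phi^{H_0}}L_{01}\times_{\phi^{H_1}}L_{12}\times_{\phi^{H_2}}L_2$ transversal as well, with $H_1\equiv 0$. Openness is automatic once one notes these are finite sets (compactness of the Lagrangians plus transversality) and intersection transversality is an open condition, so one may intersect with the open set where the count is locally constant; this yields the dense \emph{open} subset $\Ham(L_0,L_{02},L_2)$. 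Passing from a dense set in the $C^\varepsilon$-space to a dense set in $C^\infty$ is the usual Taubes trick, cf.\ \cite[Chapter~3]{ms:jh}. The main obstacle is the submersivity verification in the noncompact case — one must be careful that the Hamiltonian variations, constrained to be supported in the fixed compact neighborhoods of the Lagrangians, still generate all normal directions at the perturbed intersection points; but since those points lie in $\phi^{H_0}(L_0)\cap\cdots$, which stays within those neighborhoods, a standard cutoff argument handles it, and everything else is routine transversality bookkeeping.
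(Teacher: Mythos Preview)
Your proposal is correct and follows essentially the same route as the paper: a Sard--Smale argument on the universal moduli space over $(H_0,H_2)$, using surjectivity of $h_j\mapsto D\phi^{H_j}(h_j,0)$, followed by the observation that transversality for the $L_{02}$-intersection automatically yields transversality for the lifted $L_{01}\times L_{12}$-intersection via the embedded-composition hypothesis. One small correction: the equivalence of the two transversality conditions is not really ``the content of Lemma~\ref{neednoviterbo}'' (which concerns Maslov indices of annuli) but follows directly from the assumed transversality $(L_{01}\times L_{12})\pitchfork(M_0\times\Delta_{M_1}\times M_2)$ --- the paper invokes exactly this, and the relevant linear algebra is spelled out later in Lemma~\ref{lltrans}.
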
  

\begin{proof}  By assumption $L_0,L_{02},L_2$  are embedded submanifolds
and so locally they are the zero sets of submersions 
$\psi_{0}: M_0 \to \R^{n_0}$,
$\psi_{02}: M_0\times M_2 \to \R^{n_0+n_2}$,
$\psi_{2}: M_2 \to \R^{n_2}$.  
Then the defining equations for 
$L_{0} \times_{\phi^{H_0}} L_{02} \times_{\phi^{H_2}} L_2$ are
\begin{equation} \label{define} 
\psi_{0} ( m_{0} ) = 0 ,\quad
\psi_{02} \bigl( \phi^{H_0}(m_{0}),m_{2} \bigr) = 0 , \quad
\psi_{2} \bigl( \phi^{H_2}(m_{2}) \bigr) = 0 .
\end{equation}
Consider the universal moduli $\U$ space of data
$(H_0,H_2,m_0,m_2)$ satisfying \eqref{define}, where now
each $H_j$ has class $C^\ell$ for some $\ell\geq 2$.  
The linearized equations for $\U$ are
\begin{equation} \label{surj} 
D \psi_{0} ( v_{0} ) = 0 , \quad
D \psi_{02} ( D \phi^{H_0} (h_{0},v_{0}),v_{2}) = 0 ,\quad
D \psi_{2} \bigl( D \phi^{H_2}(h_2,v_{2}) \bigr) = 0 .
\end{equation}
for $v_j \in T_{m_j} M_j$ and $h_j \in C^\ell([0,1] \times M_j)$.
The product of the operators on the left-hand sides of \eqref{surj} are surjective
since each of the maps $ C^\ell([0,1] \times M_{j}) \to T_{\phi^{H_j}(m_{j})}
M_{j}$, $h_{j} \mapsto D \phi^{H_j} (h_{j},0) $
is surjective.  So by the implicit
function theorem $\U$ is a smooth Banach manifold, and we consider its
projection to $ C^\ell([0,1] \times M_0)\times  C^\ell([0,1] \times M_2)$.  By the
Sard-Smale theorem, the set of regular values is dense.
On the other hand, the set of regular values is clearly open.  Hence
the set of smooth functions that are regular values is open and dense.
This is exactly the set of functions $H=(H_0,H_2)$ such that the perturbed intersection 
$L_{0} \times_{\phi^{H_0}} L_{02} \times_{\phi^{H_2}} L_2$ is transversal. 

Moreover, the perturbed intersection
$L_{0} \times_{\phi^{H_0}} L_{01} \times_{\phi^{H_1}}  L_{12} \times_{\phi_{1}^{H_2}} L_2$ is also transversal, since by assumption $L_{01}\times L_{12}$ is transverse
to the diagonal $M_0\times \Delta_{M_1}\times M_2$.
\end{proof} 

In the following, instead of working with perturbed intersection points, we will apply the Hamiltonian diffeomorphisms to the Lagrangians to achieve transversality.
Replacing $L_0$ with $L_0'=\phi^{H_0}(L_0)$ and $L_2$ with $L_2'=(\phi^{H_2})^{-1}(L_2)$ the generators of the two Floer chain groups are the transverse intersections 
\begin{align*}
(L_0'\times L_2') \cap L_{01} 
&\cong  L'_0\times_{\Id_{M_0}}L_{02}\times_{\Id_{M_2}}L'_2 , \\
(L_0'\times L_{12}) \cap (L_{01}\times L_2')
& \cong 
L'_0\times_{\Id_{M_0}}L_{01}\times_{\Id_{M_1}}L_{12}\times_{\Id_{M_2}}L'_2 .
\end{align*}
The forgetful map $(m_0,m_1,m_2)\mapsto (m_0,m_2)$ is a bijection from $\cI$ to 
$(L_0'\times L_2') \cap L_{01}$ since by assumption $L_{01}\times_{\Id_{M_1}}L_{12}\to L_{02}$ is bijective. 
So, after a
Hamiltonian perturbation, we have a natural isomorphism of the Floer chain groups
\begin{equation} \label{iso chain}
CF(L_0\times L_{12},L_{01}\times L_2)  \overset{\sim}{\to} CF(L_0\times L_2,L_{02}) 
\end{equation}
and it remains to identify the Floer differentials.
For that purpose we now drop the Hamiltonian from the notation: By abuse of notation we can assume to start out with unperturbed transverse intersections and a natural bijection
$$
\cI := (L_0\times L_2) \pitchfork L_{01} \cong  (L_0\times L_{12}) \pitchfork (L_{01}\times L_2) .
$$
To investigate the Floer trajectories note that we consider $(L_0\times L_2,L_{02})$ as a pair of Lagrangians in $M_0\times M_2^-$
and $(L_0\times L_{12},L_{01}\times L_2)$ as a pair of Lagrangians in $M_0\times M_1^-\times M_2$.
For any symplectic manifold $M$ let $\J(M)$ be the space of almost complex structures on $M$ that are compatible with the symplectic structure $\omega_M$.\footnote{
If $M$ is noncompact, then we assume as in \cite{se:bo} that the almost complex structure extends to the compact symplectic manifold with boundary and corners, whose interior is $M$. More generally, it would suffice to work with any noncompact $M$ and $J$ for which the bubble exclusion arguments hold, as detailed in Section~\ref{sec:mon} 
and Lemma~\ref{bubb}.
In particular, this requires uniform bounds on the curvature and up to second derivatives of the almost complex structures $J$ with respect to $J$-compatible metrics.
}
We pick time-dependent almost complex structures $J_0\in\cC^\infty([0,1],\J(M_0))$ and $J_2 \in \cC^\infty([0,1],\J(M_2))$,
then $J_0(t,m_0)\times (- J_2(1-t,m_2))$ defines a compatible almost complex structure on $M_0\times M_2^-$.
Now any pseudoholomorphic strip $w_{02}:\R\times[0,1]\to M_0\times M_2^-$ with boundary values on $(L_0\times L_2,L_{02})$ corresponds by ``unfolding" $w_{02}(s,t)=(u_0(s,t),u_2(s,1-t))$  to a pair of strips $\bigl( u_i:\R\times[0,1]\to M_i\bigr)_{i=0,2}$ satisfying
\begin{align} \label{u0 u2}
&\partial_s u_0 + J_0(t,u_0) \partial_t u_0=0,\qquad
\partial_s u_2 + J_2(t, u_2) \partial_t u_2=0, \\
&
u_0(s,0)\in L_0, 
\qquad (u_0(s,1),u_2(s,0))\in L_{02},
\qquad u_2(s,1) \in L_2 .  \nonumber
\end{align}
Similarly, pick an almost complex structure $J_1\in\J(M_1)$, then 
$J_0\times (-J_1)\times J_2$ defines a compatible almost complex structure on $M_0\times M_1^-\times M_2$ and any pseudoholomorphic strip with boundary values on $(L_0\times L_{12},L_{01}\times L_2)$ corresponds by ``unfolding" to a triple of strips 
$\bigl(v_i:\R\times[0,1]\to M_i\bigr)_{i=0,1,2}$ satisfying
\begin{align} \label{v0 v1 v2}
&\partial_s v_0 + J_0(t,v_0)\partial_t v_0=0 ,\quad
\partial_s v_1 + J_1(v_1)\partial_t v_1=0 ,\quad
\partial_s v_2 + J_2(t,v_2)\partial_t v_2=0 , \\
&
v_0(s,0)\in L_0, 
\quad (v_0(s,1),v_1(s,0))\in L_{01},
\quad (v_1(s,1),v_2(s,0))\in L_{12},
\quad v_2(s,1)\in L_2 . \nonumber
\end{align}
In both cases, the trajectories have finite energy $\sum_i \int |\partial_s u_i|^2$ resp.\ 
$\sum_i \int |\partial_s v_i|^2$ iff they converge uniformly to intersection points 
\begin{equation}\label{conv}
\lim_{s\to\pm\infty} (u_0,u_2)(s,\cdot) = (x_0^\pm,x_2^\pm)\in\cI
\quad \text{resp.}\quad 
\lim_{s\to\pm\infty} (v_0,v_1,v_2)(s,\cdot) = (x_0^\pm,x_1^\pm, x_2^\pm)\in\cI .
\end{equation}
For any $x^-,x^+\in\cI$ let us denote by 
$$
\tM^1_0(x^-,x^+) = \bigl\{ (u_0,u_2) \,\big|\, \eqref{u0 u2}, \eqref{conv},
\;{\rm Ind}(D_{(u_0,u_2)})=1 \bigr\}
$$ 
the one dimensional (i.e.\ index $1$) component of the moduli space of Floer trajectories for $(L_0\times L_2,L_{02})$.
One can achieve transversality of these moduli spaces (of any index $\leq 1$) by choosing $t$-dependent almost complex structures $J_0$ and $J_2$ that are constant near $t=0$ and $t=1$.\footnote{  
Indeed, note that the unique continuation theorem \cite[Thm.4.3]{fhs:tr} applies to the
interior of each nonconstant strip $u_i:\R\times(0,1)\to M_i$.  
It implies that the set of regular points, $(s_0,t_0)\in\R\times(0,1)$ with
$\pd_s u_i(s_0,t_0)\neq 0$ and $u_i^{-1}(u_j(\R\cup\{\pm\infty\}) , t_0
) =\{(s_0,t_0)\}$, is open and dense. These points can be used to
prove surjectivity of the linearized operator for a universal moduli
space of solutions with respect to split almost complex structures $(J_0,J_2)$. 
(The constant solutions are automatically transverse due to the previously
ensured transversality of the intersection points.) Note that it suffices to work with almost complex structures that are $t$-independent outside of $[\frac 13,\frac 23]$.
The existence of a 
comeagre set of regular $(J_0,J_2)$ then follows
from the usual Sard-Smale argument as in \cite{ms:jh}.
}
Note that we cannot expect a bijection with the moduli spaces of Floer trajectories for
$(L_0\times L_{12},L_{01}\times L_2)$ as in \eqref{v0 v1 v2}.
However, by the independence theorem in \cite{quiltfloer}, the cohomology defined from the above Floer differential is isomorphic to the cohomology defined by the ``quilted Floer differential" arising from the moduli spaces
$$
\tM^1_\delta(x^-,x^+) = \bigl\{ (v_0,v_1,v_2) \,\big|\, \eqref{v0 v1 v2}_\delta, \eqref{conv}, \;{\rm Ind}(D_{(v_0, v_1,\overline{v}_2)})=1 \bigr\} 
$$ 
for any choice of $\delta>0$.
Here we consider strips $v_0,v_2$ of width $1$ as before but middle strips $v_1:\R\times[0,\delta]\to M_1$ of width $\delta>0$, and $\eqref{v0 v1 v2}_\delta$ denotes the same boundary value problem as above except for the seam condition $(v_1(s,\delta),v_2(s,0))\in L_{12}$. Moreover, we use almost complex structures $J_{0,\delta},J_{2,\delta}$
that converge to $J_0,J_2$ in the $\cC^\infty$-topology as $\delta\to 0$.
The specific choice follows from the constructions in the proof
\footnote{
Due to more technical folding, $J_{0,\delta},J_{2,\delta}$ are given by rescaling $J_0$ to $[0,1-\delta/2]$ and $J_2$ to $[\delta/2,1]$, and extending them constantly by $J_0(1)$ and $J_2(0)$ respectively.
The convergence holds since each $J_i$ is smooth and constant near $t=0,1$.
}
and will also ensure that the moduli spaces $\tM^1_\delta(x^-,x^+)$ are cut out transversely for $\delta>0$ sufficiently small.

\begin{figure}[ht]
\begin{picture}(0,0)%
\includegraphics{k_psshrink.pstex}%
\end{picture}%
\setlength{\unitlength}{2200sp}%
\begingroup\makeatletter\ifx\SetFigFont\undefined%
\gdef\SetFigFont#1#2#3#4#5{%
  \reset@font\fontsize{#1}{#2pt}%
  \fontfamily{#3}\fontseries{#4}\fontshape{#5}%
  \selectfont}%
\fi\endgroup%
\begin{picture}(9549,3135)(589,-2461)
\put(2476,504){\makebox(0,0)[lb]{{$\mathbf{L_2}$}}}
\put(2476,-700){\makebox(0,0)[lb]{{$\mathbf{L_{12}}$}}}
\put(2476,-1111){\makebox(0,0)[lb]{{$\mathbf{L_{01}}$}}}
\put(2476,-2300){\makebox(0,0)[lb]{{$\mathbf{L_0}$}}}
\put(8751,-2130){\makebox(0,0)[lb]{{$\mathbf{L_0}$}}}
\put(8751,-961){\makebox(0,0)[lb]{{$\mathbf{L_{02}}$}}}
\put(8751,300){\makebox(0,0)[lb]{{$\mathbf{L_2}$}}}
\put(7626,-511){\makebox(0,0)[lb]{{$\mathbf{M_2}$}}}
\put(7626,-1486){\makebox(0,0)[lb]{{$\mathbf{M_0}$}}}
\put(1276,-1600){\makebox(0,0)[lb]{{$\mathbf{M_0}$}}}
\put(1276,-900){\makebox(0,0)[lb]{{$\mathbf{M_1}$}}}
\put(1276,-200){\makebox(0,0)[lb]{{$\mathbf{M_2}$}}}
\put(4750,-850){\makebox(0,0)[lb]{{$\delta$}}}
\end{picture}%
\caption{Shrinking the middle strip} \label{shrinkfig}
\end{figure}

In order to prove Theorem~\ref{main} it now suffices to show that the isomorphism \eqref{iso chain} of chain groups descends to cohomology for an appropriate choice of $\delta>0$. We will prove this by establishing a bijection between the Floer trajectories for $(L_0,L_{02},L_2)$ on strips of width $(1,1)$ and those for $(L_0,L_{01},L_{12},L_2)$ on strips of
width $(1,\delta,1)$ for sufficiently small width $\delta>0$ of the middle strip.  These Floer trajectories are pseudoholomorphic quilts associated to the pictures in Figure \ref{shrinkfig}.
More precisely, we will consider the (zero dimensional, compact) moduli spaces of Floer trajectories modulo $\R$-translation and prove the following.

\begin{theorem}  \label{traj bij}
Under the assumptions \eqref{b}, \eqref{c} of Theorem~\ref{main} and for all sufficiently small $\delta>0$, the moduli spaces $\tM^1_{\delta}(x^-,x^+)$ are regular and
there is a bijection
$$
\T_{\delta}:  \, 
\M^1_0(x^-,x^+) := \tM^1_0(x^-,x^+)/\R
\longrightarrow  
\tM^1_{\delta}(x^-,x^+)/\R 
=: \M^1_{\delta}(x^-,x^+) .
$$
\end{theorem}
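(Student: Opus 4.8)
The plan is to build the map $\T_\delta$ by a quantitative gluing argument (inserting a thin $M_1$-patch) and then to show it is a bijection, for $\delta$ small, via the strip-shrinking compactness of Lemma~\ref{bubb}; the two halves fit together in the usual Floer-theoretic pattern --- the implicit function theorem on one side, exclusion of bubbling on the other. For the construction: given $[(u_0,u_2)]\in\M^1_0(x^-,x^+)$, use the embeddedness of $L_{01}\circ L_{12}$ to lift it to the unique triple $(u_0,u_1,\overline u_2)$ with $\partial_t u_1\equiv 0$ as in Lemma~\ref{index}; after reparametrising the middle patch to width $\delta$ and using the rescaled data $J_{0,\delta},J_{2,\delta}$ described above, this triple is an approximate solution of $\eqref{v0 v1 v2}_\delta$ whose error tends to $0$ as $\delta\to 0$. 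A Newton--Picard iteration as in \cite{ms:jh}, valid once the linearised operator at the approximate solution is uniformly right invertible, then produces a unique nearby genuine solution; by Lemma~\ref{index} it has index $1$, so it defines $\T_\delta([(u_0,u_2)])\in\tM^1_\delta(x^-,x^+)/\R$, and uniqueness in the implicit function theorem makes $\T_\delta$ injective and exhibits its image as a set of regular points of $\tM^1_\delta$.

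The first serious point is the uniform (in $\delta$) right invertibility of the linearised operator $D_\delta$ at the approximate solution. As $\delta\to 0$ the thin $M_1$-patch collapses, and precisely because the composition is \emph{embedded} the two seam conditions $L_{01}$ and $L_{12}$ fuse into the single smooth Lagrangian $L_{02}$, so that $D_\delta$ converges --- in a sense made precise by weighted Sobolev norms adapted to the strip width --- to the operator $D_{(u_0,u_2)}$ for the composed problem, which is surjective with one-dimensional kernel by the assumed regularity of $\M^1_0$. The work is to upgrade this to a uniform bound on a right inverse, which requires elliptic estimates on the thin strip that do not degenerate as its width shrinks, using the compactness or bounded-geometry hypotheses on $M_0,M_1,M_2$. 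I expect this to be one of the two main technical obstacles.

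For surjectivity of $\T_\delta$, suppose some sequence $\delta_\nu\to 0$ carries elements $[(v_0^\nu,v_1^\nu,v_2^\nu)]\in\tM^1_{\delta_\nu}(x^-,x^+)/\R$ outside the image of $\T_{\delta_\nu}$. Their energies are uniformly bounded by the energy--index relation of Proposition~\ref{prop monotone} applied to the pair $(L_0\times L_{12},L_{01}\times L_2)$, since the index is fixed at $1$. By the strip-shrinking compactness of Lemma~\ref{bubb}, after translating and passing to a subsequence, either the sequence converges in $\cC^\infty_{\loc}$ to a finite-energy width-$(1,1)$ trajectory for $(L_0\times L_2,L_{02})$, or a definite quantum $\hbar>0$ of energy concentrates and a bubble forms. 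In the convergent case, index additivity together with the transversality (already established) of the lower-index moduli spaces --- in which no nonconstant trajectory of index $\le 0$ exists --- forces the limit to have index exactly $1$ with no breaking, hence to lie in $\M^1_0(x^-,x^+)$; local uniqueness in the gluing construction then identifies $(v_0^\nu,v_1^\nu,v_2^\nu)$, for large $\nu$, with $\T_{\delta_\nu}$ of this limit, a contradiction. Thus, modulo excluding bubbles, for small $\delta$ every element of $\tM^1_\delta(x^-,x^+)/\R$ lies in the image of $\T_\delta$, which simultaneously yields regularity of $\tM^1_\delta$.

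It remains to rule out bubbling, which is the other main obstacle. Sphere bubbles in $M_0,M_1,M_2$ and disk bubbles on $L_0,L_2,L_{01},L_{12}$ are excluded exactly as in Section~\ref{sec:mon}: such a bubble strictly lowers the energy, hence by monotonicity lowers the index, hence --- modulo the minimal Maslov number, using \eqref{c} and, in the borderline $N=2$ case, \eqref{d} as there --- leaves a trajectory of negative index, contradicting transversality. The genuinely new phenomenon is the \emph{figure-eight bubble}, for which there is neither a Fredholm theory nor a removable-singularity statement. Following \cite{we:en}, the strategy is to avoid any geometric description and argue purely by energy: Lemma~\ref{bubb} supplies a uniform lower bound $\hbar>0$ on the energy of such a bubble, while the monotonicity hypothesis \eqref{b}, being formulated for annuli, applies to a capping of the bubble and forces its energy to equal $\tfrac\tau2$ times a strictly positive Maslov contribution which, by \eqref{c}, is at least $2$; feeding this into the index bookkeeping above makes the surviving trajectory have index $\le -1$, again contradicting transversality (when $\tau=0$ one argues instead that all index-$1$ trajectories carry a common energy, which a positive-energy bubble would violate). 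This excludes bubbling and completes the proof that $\T_\delta$ is a bijection for all sufficiently small $\delta>0$.
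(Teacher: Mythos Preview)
Your overall architecture matches the paper's: construction and injectivity via an implicit function theorem (Theorem~\ref{solving}, Corollary~\ref{injective}), surjectivity and regularity via compactness (Theorem~\ref{compact}), with Lemma~\ref{bubb} for energy quantization. You have correctly located the two hard steps.

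The figure-eight exclusion, however, is misargued. You propose that monotonicity~\eqref{b} ``applies to a capping of the bubble and forces its energy to equal $\tfrac\tau2$ times a strictly positive Maslov contribution.'' This is precisely what the paper says cannot be done: there is no removal of singularities for the tangential seam intersection of the figure eight, so it cannot be capped; and even if it could, the resulting object carries seams in $L_{01},L_{12}$, not boundary in the pair $(L_0\times L_{12},L_{01}\times L_2)$ to which~\eqref{b} refers. The correct argument, in the proof of Theorem~\ref{compact}, never touches the bubble's geometry. Lemma~\ref{bubb} certifies only that a definite energy $\hbar>0$ is lost; one then applies the energy--index relation~\eqref{delta mon} to the \emph{surviving} (possibly broken) trajectory $\ul{\tilde u}$, which still connects $x^-$ to $x^+$. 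Strict energy loss gives $\sum\mathrm{Ind}(D_{\tilde u_j}) < 1$, regularity of $\tM_0$ forces every piece constant, hence $x^-=x^+$, and then the original $v^\nu$ is a self-connecting index-$1$ annulus, contradicting~\eqref{c}. All index accounting happens on honest Floer trajectories where Proposition~\ref{prop monotone} is available; the bubble itself is opaque. (For the same reason your invocation of~\eqref{d} for disk bubbles is both unnecessary --- the theorem does not assume~\eqref{d} --- and avoidable: the surviving-trajectory argument handles all bubble types uniformly.)

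A secondary point: ``local uniqueness in the gluing construction'' requires the $v^\nu$ to land in the $\Gamma_{1,\delta^\nu}$-ball where Theorem~\ref{solving} asserts uniqueness, which is much stronger than $\cC^\infty_{\loc}$-convergence. Upgrading the convergence needs uniform exponential decay at the ends (Lemma~\ref{expdecay}) together with the nonlinear estimates of Lemmas~\ref{Ddelta} and~\ref{D02}, and constitutes the bulk of the work in Theorem~\ref{compact}.
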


\begin{remark}  \label{rmk:derived main}
In the situation of Theorem~\ref{main} except for assumption \eqref{d}, the constructions in this section provide naturally isomorphic chain groups 
$CF(L_0\times L_{1},L_{02})$ and $CF(L_0\times L_{12},L_{01}\times L_2)$  and well defined differentials $\partial_0$ resp.\ $\partial_\delta$ on them, defined from the moduli spaces $\M_0^1(x^-,x^+)$ and $\M_\delta^1(x^-,x^+)$.
As discussed in Section~\ref{sec:mon}, due to obstructions from disks of minimal Maslov index $2$, both differentials square to a multiple of the identity, see \cite{oh:fl1} and \cite{fieldb}. 
So we have $\partial_0^2 = w_0 {\rm Id}$ and $\partial_\delta^2 = w_\delta {\rm Id}$ for any $\delta>0$ (as long as the moduli spaces $\M_\delta^1(x^-,x^+)$ are regular).
Now Theorem~\ref{traj bij} implies that for sufficiently small $\delta>0$ and any $x\in\cI$ (viewed as generator in both chain groups) we have
$w_0 \lan x \ran = \partial_0^2 \lan x \ran = \partial_\delta^2 \lan x \ran = w_\delta \lan x \ran$, and hence $w_0=w_\delta$. (If $\cI$ is empty then both theories are trivial.)

If $w_\delta=0$ (e.g.\ by assumption \eqref{d}) or $w_0=0$ for some other reason, then this proves that both Floer cohomologies are well defined and (again by Theorem~\ref{traj bij}) are isomorphic.

For any value of $w_0=w_\delta$ this proves that there exists a canonical isomorphism
\begin{equation*} 
\bigl(CF(L_0 \times L_{12},L_{01} \times L_2), \partial_0 \bigr)
\overset{\sim}{\longrightarrow}
\bigl( CF(L_0 \times L_2, L_{01} \circ L_{12}) , \partial_\delta \bigr) 
\end{equation*}
\end{remark}
in the derived category of factorizations of $w_0{\rm Id}$.

\section{Bijection of moduli spaces under strip shrinking}
\label{shrink} 

In this section we prove Theorem \ref{traj bij}.
We start by describing the strategy of proof and introducing the relevant
notations.  First we use the assumption that $L_{01} \circ L_{12}$ is
embedded by $\pi_{02}$.  Consider a solution
$u=(u_0,u_2)\in\tM^1_0(x^-,x^+)$, that is a pair $u_0:\R\times[0,1]\to
M_0$, $u_2:\R\times[0,1]\to M_2$ of index $1$, with limits
$\lim_{s\to\pm\infty} (u_0,u_2)(s,\cdot) = x^\pm$, and satisfying
\begin{align*}
&\ol{\partial}_{J_0}u_0=0,\qquad
\ol{\partial}_{J_2}u_2=0, \\
&
u_0|_{t=0}\in L_0, 
\qquad (u_0|_{t=1},u_2|_{t=0})\in L_{02},
\qquad u_2|_{t=1}\in L_2 .
\end{align*}
We can identify $(u_0,u_2)$ with the map $u_{02}:\R\times[0,1]\to
M_0\times M_2$ given by $u_{02}(s,t)=(u_0(s,1-t),u_2(s,t))$, which
satisfies $\lim_{s\to\pm\infty} u_{02}(s,\cdot) = x^\pm$ and
\begin{align*}
\ol{\partial}_{J_{02}}u_{02}=0,\qquad
u_{02}|_{t=0}\in L_{02}, 
\qquad u_{02}|_{t=1}\in L_0\times L_2 .
\end{align*}
Here we denoted $J_{02}(s,t):=(-J_0(s,1-t),J_2(s,t))$.  We will also
denote $\bar J_{02} := J_{02}|_{t=0}$ and $\bar
u_{02}:=u_{02}|_{t=0}:\R\to L_{02}$.  
Finally, we will denote by
$(L_{01}\times L_{12})^T\subset M_0\times M_2\times M_1\times M_1$
the obvious transposition of factors.
Since $\pi_{02}:L_{01}\times_{M_1}L_{12}\to L_{02} \subset M_0 \times M_2$
is transversal and embedded, there is a unique smooth map
$\ell_1:L_{02}\to M_1$ such that
\begin{equation}\label{ell1}
(x_{02},\ell_1(x_{02}),\ell_1(x_{02})) \in ( L_{01}\times L_{12} )^T 
\qquad\forall x_{02}\in L_{02}.
\end{equation}
This provides the lift $\bar u_1:=\ell_1\circ\bar u_{02}: \R \to M_1$.
We also denote by
$\bar u:=(\bar u_{02},\bar u_1,\bar u_1)$ the extension $\R \times
[0,\delta] \to M_0\times M_2\times M_1\times M_1$ that is constant
along $[0,\delta]$.  Given $\delta$ these choices are unique, so we
can identify $u$ with the pair $(u_{02},\bar u)$.  In the same spirit
we find unique points $x^\pm_1\in M_1$ such that $(x^\pm,x^\pm_1)\in
(L_0\times L_{12})\cap ( L_{01}\times L_2 ) \subset M_0\times
M_1\times M_2$.  In this notation we have the limit
$\lim_{s\to\pm\infty} \bar u_1(s) = x_1^\pm$.  
Given $u\in\tM^1_0(x^-,x^+)$ as above and $\delta>0$ we wish to find a
corresponding $(v_0,v_1,v_2)\in\tM^1_{\delta}(x^-,x^+)$, that is a
triple $v_0:\R\times[0,1]\to M_0$, $v_1:\R\times[0,\delta]\to M_1$,
$v_2:\R\times[0,1]\to M_2$ with limits $\lim_{s\to\pm\infty}
(v_0,v_2)(s,\cdot) = x^\pm$, $\lim_{s\to\pm\infty} v_1(s,\cdot) =
x^\pm_1$, and satisfying
\begin{align*}
&\ol{\partial}_{J_{0,\delta}}v_0=0,\qquad
\ol{\partial}_{J_1}v_1=0,\qquad
\ol{\partial}_{J_{2,\delta}}v_2=0, \\
&
v_0(s,0)\in L_0, 
\quad (v_0(s,1),v_1(s,0))\in L_{01},
\quad (v_1(s,\delta),v_2(s,0))\in L_{12},
\quad v_2(s,1)\in L_2 . \nonumber
\end{align*}
Here $J_{0,\delta},J_{2,\delta}$ are given by linearly rescaling $J_0$ to $[0,1-\delta/2]$ and $J_2$ to $[\delta/2,1]$, and extending them constantly by $J_0(1)$ and $J_2(0)$ respectively.
This choice of almost complex structures is more natural in the following reformulation of the $\delta$-moduli spaces.

Let $\bdelta:=\delta/(2-\delta)$ (or
equivalently $\delta=2\bdelta/(1+\bdelta)$).  Instead of the triple
strip we consider a quadruple of maps $v=(v_{02},v_{02}',v_1,v_1')$
with $v_{02}\in\cC^\infty(\R\times[0,1],M_0\times M_2)$,
$v_{02}'\in\cC^\infty(\R\times[0,\bdelta],M_0\times M_2)$,
$v_1,v_1'\in\cC^\infty(\R\times[0,\bdelta],M_1)$ that have limits
$\lim_{s\to\pm\infty} v_{02}(s,\cdot) = \lim_{s\to\pm\infty}
v_{02}'(s,\cdot) = x^\pm$, $\lim_{s\to\pm\infty} v_1(s,\cdot) =
\lim_{s\to\pm\infty} v_1(s,\cdot)=x^\pm_1$, and satisfy
\begin{align} \label{vvvv}
&\ol{\partial}_{J_{02}}v_{02}=0,\qquad
\ol{\partial}_{-\bar J_{02}}v'_{02}=0,\qquad
\ol{\partial}_{-J_1}v_1'=0, \qquad
\ol{\partial}_{J_1}v_1=0, \nonumber\\
&
(v_{02}',v_{02})|_{t=0}\in {M_0\times M_2}, \qquad 
(v_1',v_1)|_{t=0}\in \Delta_{M_1}, \\
&
(v_{02}',v_1',v_1)|_{t=\bdelta}\in (L_{01}\times L_{12})^T, \qquad 
v_{02}|_{t=1}\in L_0\times L_2 . \nonumber
\end{align}
For notational convenience we will also group these quadruples of maps
as $v=(v_{02},\hat v)$ with $\hat v=(v_{02}',v_1,v_1')$.  Then we can
abbreviate $J=(J_{02},\hat J)$ with $\hat J:=(-\bar J_{02},-J_1,J_1)$,
and reformulate (\ref{vvvv}) as
\begin{align*}
& \ol{\partial}_{J}v := 
\bigl( \ol{\partial}_{J_{02}}v_{02} \,,\,
\ol{\partial}_{\hat J}\hat v \bigr) =0 , \\
&
(v_{02},\hat v)|_{t=0}\in \Delta_{M_0\times M_2}\times \Delta_{M_1}, 
\qquad
\hat v_{t=\bdelta}\in (L_{01}\times L_{12})^T, \qquad 
v_{02}|_{t=1}\in L_0\times L_2 . \nonumber
\end{align*}
We denote the moduli space of such solutions $v=(v_{02},\hat v)$ by
$\widehat\M^1_\bdelta(x^-,x^+)$.  It is in one-to-one correspondence
to $\tM^1_{\delta}(x^-,x^+)$ as follows: Given
$v=(v_{02},v_{02}',v_1',v_1)\in\widehat\M^1_\bdelta(x^-,x^+)$ we
obtain $\bar v=(v_0,v_1,v_2)\in\tM^1_{\delta}(x^-,x^+)$ from
\begin{align*} 
\bigl( v_0(s,1-t), v_2(s,t)\bigr)
&=\left\{\begin{array}{ll}
v_{02}'((1+\bdelta)s,\bdelta - (1+\bdelta)t)  
&\text{for}\;\; 0\le t \le \frac 12 \delta , \\
v_{02}((1+\bdelta)s,(1+\bdelta)t -\bdelta) &\text{for}\;\; \frac 12 \delta \le t \le 1 ,
\end{array}\right. \\
v_1(s,t)
&=\left\{\begin{array}{ll}
v_1'((1+\bdelta)s,\bdelta - (1+\bdelta)t) &\text{for}\;\; 0\le t \le \frac 12 \delta , \\
v_1((1+\bdelta)s,(1+\bdelta)t - \bdelta) 
&\text{for}\;\;\frac 12 \delta \le t \le \delta .
\end{array}\right. 
\end{align*}
The two different formulations for double and triple strips each are
indicated in Figure~\ref{strips}.
\begin{figure}[ht]
\begin{picture}(0,0)%
\includegraphics{strips.pstex}%
\end{picture}%
\setlength{\unitlength}{1973sp}%
\begingroup\makeatletter\ifx\SetFigFont\undefined%
\gdef\SetFigFont#1#2#3#4#5{%
  \reset@font\fontsize{#1}{#2pt}%
  \fontfamily{#3}\fontseries{#4}\fontshape{#5}%
  \selectfont}%
\fi\endgroup%
\begin{picture}(12309,6229)(451,-7103)
\put(10726,-5636){\makebox(0,0)[lb]{$v'_{02}$}}
\put(11776,-5636){\makebox(0,0)[lb]{$v_{02}$}}
\put(11551,-4136){\makebox(0,0)[lb]{{{$\mathbf{L_0}$}}}}
\put(11551,-7136){\makebox(0,0)[lb]{{{$\mathbf{L_2}$}}}}
\put(4126,-5636){\makebox(0,0)[lb]{{{$u_{02}$}}}}
\put(2851,-5636){\makebox(0,0)[lb]{{{$\bar u_{02}$}}}}
\put(1801,-5936){\makebox(0,0)[lb]{{{$\bar u_1$}}}}
\put(4276,-4136){\makebox(0,0)[lb]{{{$\mathbf{L_0}$}}}}
\put(4276,-7136){\makebox(0,0)[lb]{{{$\mathbf{L_2}$}}}}
\put(1801,-5336){\makebox(0,0)[lb]{{{$\bar u_1$}}}}
\put(7276,-3836){\makebox(0,0)[lb]{{{$\widehat{\mathcal M}_\delta$ :}}}}
\put(9151,-5111){\makebox(0,0)[lb]{{{$v'_1$}}}}
\put(9151,-5711){\makebox(0,0)[lb]{{{$v_1$}}}}
\put(9701,-6236){\makebox(0,0)[lb]{{{$\mathbf{L_{12}}$}}}}
\put(9701,-5086){\makebox(0,0)[lb]{{{$\mathbf{L_{01}}$}}}}
\put(8426,-4436){\makebox(0,0)[lb]{{{$\mathbf{\Delta_0}$}}}}
\put(8426,-5636){\makebox(0,0)[lb]{{{$\mathbf{\Delta_1}$}}}}
\put(8426,-6836){\makebox(0,0)[lb]{{{$\mathbf{\Delta_2}$}}}}
\put(1151,-4436){\makebox(0,0)[lb]{{{$\mathbf{\Delta_0}$}}}}
\put(1076,-5036){\makebox(0,0)[lb]{{{$\mathbf{L_{01}}$}}}}
\put(1151,-5636){\makebox(0,0)[lb]{{{$\mathbf{\Delta_1}$}}}}
\put(1076,-6236){\makebox(0,0)[lb]{{{$\mathbf{L_{12}}$}}}}
\put(1151,-6836){\makebox(0,0)[lb]{{{$\mathbf{\Delta_2}$}}}}
\put(4051,-2261){\makebox(0,0)[lb]{{{$u_2$}}}}
\put(8251,-2261){\makebox(0,0)[lb]{{{$v_0$}}}}
\put(11326,-2261){\makebox(0,0)[lb]{{{$v_2$}}}}
\put(451,-2811){\makebox(0,0)[lb]{{{$\mathbf{L_0}$}}}}
\put(7126,-2811){\makebox(0,0)[lb]{{{$\mathbf{L_0}$}}}}
\put(2926,-2811){\makebox(0,0)[lb]{{{$\mathbf{L_{02}}$}}}}
\put(5326,-2811){\makebox(0,0)[lb]{{{$\mathbf{L_2}$}}}}
\put(10226,-2811){\makebox(0,0)[lb]{{{$\mathbf{L_{12}}$}}}}
\put(12526,-2811){\makebox(0,0)[lb]{{{$\mathbf{L_2}$}}}}
\put(9526,-2811){\makebox(0,0)[lb]{{{$\mathbf{L_{01}}$}}}}
\put(1576,-2261){\makebox(0,0)[lb]{{{$u_0$}}}}
\put(7276,-1436){\makebox(0,0)[lb]{{{$\tilde{\mathcal M}_\delta$ :}}}}
\put(9826,-2261){\makebox(0,0)[lb]{{{$v_1$}}}}
\put(676,-1436){\makebox(0,0)[lb]{{{$\tilde{\mathcal M}_0$ :}}}}
\end{picture}%
\caption{Double and triple strips} \label{strips}
\end{figure}
The bijection $\T_{\delta}$ to the moduli space $\M^1_0(x^-,x^+)$
can then be established via a bijection
\begin{equation}\label{map}
\T_\bdelta: \M^1_0(x^-,x^+) \to  \M^1_{\bdelta}(x^-,x^+) 
:=\widehat\M^1_\bdelta(x^-,x^+)/\R .
\end{equation}
This map will be constructed by the implicit function theorem~\ref{solving}.
We prove injectivity in corollary~\ref{injective}, and the
surjectivity will follow from the compactness theorem~\ref{compact}.

\subsection{Implicit function theorem} \label{ift}

The purpose of this section is to construct the map
$
\T_\delta: \M^1_0(x^-,x^+) \to  \M^1_{\delta}(x^-,x^+) 
$
of Theorem~\ref{traj bij}.
We will do this by constructing the map \eqref{map},
with $\bdelta$ replaced by $\delta$,
from the following implicit function theorem.

\begin{theorem}\label{solving}
There exist constants $C_0$, $\eps>0$, and $\delta_0>0$ such that 
the following holds for every $\delta\in(0,\delta_0]$.
For every $u\in\tM^1_0(x^-,x^+)$ there exists a unique
$v_u\in\widehat\M^1_\delta(x^-,x^+)$ such that
$v_u=e_u(\xi)$ with $\xi\in \Gamma_{1,\delta}(\eps)\cap K_0$.
The solution moreover satisfies
\begin{equation} \label{sqrtd}
\|\xi\|_{H^2_{1,\delta}}\leq C_0 \delta^{\frac 14} .
\end{equation}
\end{theorem}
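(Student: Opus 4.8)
The plan is to run a quantitative Newton--Picard iteration, obtaining $v_u$ as a small deformation of an explicit approximate solution built from $u$. \emph{Step 1: the approximate solution.} Given $u=(u_{02},\bar u)\in\tM^1_0(x^-,x^+)$, set $v^{\mathrm{app}}_\delta:=e_u(0)=(v_{02},\hat v)$, where $v_{02}:=u_{02}$ on the width--$1$ strip, and $v'_{02},v_1,v'_1$ are the $t$--independent extensions over $\R\times[0,\bdelta]$ of $\bar u_{02}=u_{02}|_{t=0}$ and $\bar u_1=\ell_1\circ\bar u_{02}$. Using the defining property \eqref{ell1} of $\ell_1$, one checks that $v^{\mathrm{app}}_\delta$ satisfies \emph{all} the matching, seam and boundary conditions of \eqref{vvvv} exactly; the only defect is in the Cauchy--Riemann equations on the thin strips, where $\ol\partial_{\hat J}\hat v$ reduces to $\partial_s\bar u$ (there is no $t$--derivative). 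Since this defect is a bounded smooth section supported on a domain of measure $O(\delta)$ and $\partial_s\bar u\in L^2$, it follows that $\|\ol\partial_J v^{\mathrm{app}}_\delta\|\le c_1\,\delta^{1/2}$ in the relevant weighted norm.

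\emph{Step 2: uniform invertibility of the linearization.} Let $F_\delta(\xi):=\ol\partial_J e_u(\xi)$ and let $D_\delta$ be its linearization at $\xi=0$ (i.e.\ at $v^{\mathrm{app}}_\delta$), acting on the weighted spaces $H^2_{1,\delta}$ with exponential weights at $s=\pm\infty$ and the linearized seam/matching conditions. The crucial estimate is that $D_\delta$ is surjective and admits a right inverse $Q_\delta$ with $D_\delta Q_\delta=\Id$ and $\|Q_\delta\|\le c_2\,\delta^{-1/4}$, uniformly for small $\delta>0$; we then take $K_0:=\im Q_\delta$, a complement of the (uniformly one--dimensional) kernel $\ker D_\delta$. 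Here the transversality of $\tM^1_0(x^-,x^+)$ established in Section~\ref{sec:qHF} enters: the $\delta=0$ operator $D_u$ is surjective with one--dimensional kernel, and one must show this persists through the strip--shrinking. The argument decomposes a section on each thin strip into its $t$--average and its $t$--mean--zero part; the mean--zero part is controlled by a Poincar\'e inequality with constant $O(\bdelta)$, while the $t$--averages satisfy, in the limit $\bdelta\to0$, precisely the linearized equation of the \emph{composed} problem --- the collapse of the transmission condition through $L_{01}\times_{M_1}L_{12}\to L_{02}$ being governed by the transversality of $L_{01}\times L_{12}$ to $M_0\times\Delta_{M_1}\times M_2$. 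A perturbation argument transfers surjectivity to $\delta>0$; the loss $\delta^{-1/4}$ comes from upgrading $H^1$-- to $H^2$--control across a strip of width $\bdelta$ (rescaling $t\mapsto t/\bdelta$ turns $\ol\partial$ into $\partial_s+\bdelta^{-1}J\partial_\tau$, the source of the negative power of $\delta$), and this is what motivates both the rescaled structures $J_{0,\delta},J_{2,\delta}$ and the $\delta$--adapted weighted norms $H^2_{1,\delta}$.

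\emph{Step 3: quadratic estimate and fixed point.} Using the uniform bounds on the curvature and on up to second derivatives of the almost complex structures --- part of the bounded--geometry hypotheses of Theorem~\ref{main} --- together with Sobolev embeddings (applied in rescaled form on the thin strips, so the constants stay uniform in $\delta$), establish the quadratic estimate $\|dF_\delta(\xi)-D_\delta\|\le c_3\,\|\xi\|_{H^2_{1,\delta}}$ for $\|\xi\|_{H^2_{1,\delta}}\le\eps$. The standard quantitative implicit function theorem (e.g.\ \cite[Prop.~A.3.4]{ms:jh}) then applies: since $\|F_\delta(0)\|\le c_1\delta^{1/2}$, $\|Q_\delta\|\le c_2\delta^{-1/4}$, and $c_3$ is uniform, for all sufficiently small $\delta$ there is a unique $\xi\in\im Q_\delta\subset\Gamma_{1,\delta}(\eps)\cap K_0$ with $F_\delta(\xi)=0$, satisfying $\|\xi\|_{H^2_{1,\delta}}\le 2c_1c_2\,\delta^{1/4}=:C_0\,\delta^{1/4}$, which is \eqref{sqrtd}. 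Setting $v_u:=e_u(\xi)$ gives the asserted element of $\widehat\M^1_\delta(x^-,x^+)$; uniqueness in the ball is the contraction property, and regularity of the $\tM^1_\delta$ follows because surjectivity of $D_\delta$ persists to the linearization at the genuine solution $v_u$ for $\delta$ small.

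\emph{Main obstacle.} I expect the hard part to be the uniform right--inverse estimate of Step 2: controlling the linearized Cauchy--Riemann operator together with its seam conditions as the middle strip collapses, correctly identifying the limiting operator with that of the composed correspondence via the transversality of the geometric composition, and tracking the precise power of $\delta$ lost in the boundary elliptic estimates on the thin strip. The remaining steps are a standard, if lengthy, adaptation of the Floer--theoretic gluing/implicit function argument, the only novelty being the bookkeeping needed to keep every constant uniform in $\delta$.
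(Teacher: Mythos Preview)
Your overall strategy---approximate solution, linearization, quadratic estimate, Newton--Picard iteration---matches the paper's, and you have correctly identified the uniform right-inverse estimate as the crux. But the specific $\delta$-accounting you propose does not close the implicit function theorem. With error $\|F_\delta(0)\|\le c_1\delta^{1/2}$, right inverse $\|Q_\delta\|\le c_2\delta^{-1/4}$, and uniform quadratic constant $c_3$, the hypothesis of \cite[Prop.~A.3.4]{ms:jh} amounts (after unwinding) to requiring $c_3\,\|Q_\delta\|^2\,\|F_\delta(0)\|$ to be small; with your numbers this is $c_1c_2^2c_3\,\delta^{0}$, a constant independent of $\delta$. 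Nothing forces this below the threshold, so the iteration need not converge as $\delta\to 0$.

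The paper obtains a different balance by a specific choice of norms: it augments both domain and target by $L^4$-pieces, setting $\|\xi\|_{\Gamma_{1,\delta}} := \|\xi\|_{H^2_{1,\delta}} + \|\nabla\xi\|_{L^4_{1,\delta}}$ and $\|\eta\|_{\Om_{1,\delta}} := \|\eta\|_{H^1_{1,\delta}} + \|\eta\|_{L^4_{1,\delta}}$. This compensates for the failure of uniform Sobolev embeddings on the thin strip and yields a \emph{uniform} right-inverse bound $\|Q_\delta\|\le C$ independent of $\delta$ (equation~\eqref{right inverse}, via Lemmas~\ref{Ddelta} and~\ref{D02}). The price is that the error, measured in $\Om_{1,\delta}$, is only $\|\F_u(0)\|_{\Om_{1,\delta}}\le C_1\delta^{1/4}$: the $L^4$-contribution of $\partial_s\bar u$ over a strip of area $\delta$ scales like $\delta^{1/4}$, not $\delta^{1/2}$. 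Now $\|Q\|^2\|\F_u(0)\|=O(\delta^{1/4})\to 0$ and the IFT closes. Your heuristic of decomposing sections on the thin strip into $t$-averages and mean-zero parts is in the right spirit, but the paper's actual mechanism is a uniform Sobolev/trace lemma (Lemma~\ref{Sobolev}) exploiting the seam conditions and the transversality of $L_{01}\times L_{12}$ to the diagonal, together with Lemma~\ref{D02}, which transfers the lower-order boundary terms to the limiting operator $D_{u_{02}}\oplus\pi_{02}^\perp$.

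Two smaller points. Your $K_0:=\im Q_\delta$ would be $\delta$-dependent; the paper fixes $K_0$ once and for all as the $L^2$-orthogonal complement (in the $u_{02}$-component) of $\ker(D_{u_{02}}\oplus\pi_{02}^\perp)$, a $\delta$-independent slice compatible with the uniqueness statement. And no exponential weights at $s=\pm\infty$ are used; ordinary Sobolev norms suffice since the intersection points are transverse.
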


Here $e_u(\xi):=(v_{02},v_{02}',v_1',v_1)$ 
is given in terms of $u=(u_{02},\bar u)$ and
$\xi=(\xi_{02},\hat\xi)$ with 
$\xi_{02}\in\Gamma(u_{02}^*T (M_0\times M_2))$
and 
$\hat\xi=(\xi_{02}',\xi_1',\xi_1)\in\Gamma(\bar u^*T(M_0\times M_2\times M_1\times M_1))$.
The precise definitions of the exponential map $e_u$, 
the $\eps$-ball $\Gamma_{1,\delta}(\eps)$, the $H^2_{1,\delta}$-norm,
and the local slice $K_0$ of the $\R$-shift symmetry
will be given in the process of the proof.

To prove the theorem we fix a solution $u\in\tM^1_0(x^-,x^+)$, 
and in the following will allow all constants to depend on $u$
up to translation in $\R$.
(Since $\M^1_0(x^-,x^+)$ is finite we can then 
easily find uniform constants $C_0$ and $\delta_0>0$.)
We will then roughly solve $\ol{\partial}_{J}e_u(\xi)=0$ 
for sections $\xi=(\xi_{02},\hat\xi)$,
$\hat\xi=(\xi_{02}',\xi_1',\xi_1)$ satisfying the boundary conditions
\begin{align}\label{LL}
(\xi_{02}',\xi_{02})|_{t=0}\in 
T_{(\bar u_{02},\bar u_{02})}\Delta_{M_0\times M_2} , &\qquad 
(\xi_1',\xi_1)|_{t=0}\in T_{(\bar u_1,\bar u_1)}\Delta_1 , \\
\hat\xi|_{t=\delta}=(\xi_{02}',\xi_1',\xi_1)|_{t=\delta}
\in T_{\bar u}(L_{01}\times L_{12})^T , & \qquad 
\xi_{02}|_{t=1}\in T_{u_{02}}(L_0\times L_2). \nonumber
\end{align}
The exponential map $e_u(\xi)$ will then be constructed such that the 
nonlinear Lagrangian boundary conditions are satisfied automatically.
The index of the new solution $v_u$ will coincide with that of the
given solution $u$ due to Lemma~\ref{index}.
Here we identified $v_u$ with a solution $\bar v_u\in\tM^1_{\tilde\delta}(x^-,x^+)$,
$\tilde\delta=2\delta/(1+\delta)$.
Then the homotopy between $v_u=e_u(\xi)$ and $(u_{02},\bar u)$
induces a homotopy $\bar v_u\cong (u_0,\bar u_1,u_2)$.

To set up the implicit function theorem we introduce the space 
of $H^k$-sections over $(u_{02},\bar u)$
for $k\in\N_0$, 
$$ 
H^k_{1,\delta}:=\left\{(\eta_{02},\eta_{02}',\eta_1',\eta_1) \left|
\begin{aligned}
\eta_{02}\in H^k(\R\times[0,1],u_{02}^* T(M_0\times M_2)), \\
\eta_{02}'\in H^k(\R\times[0,\delta],\bar u_{02}^* T(M_0\times M_2)), \\
\eta_1',\eta_1\in H^k(\R\times[0,\delta],\bar u_1^* TM_1) \qquad\quad 
\end{aligned}\right\}\right..
$$
We also write these sections as $\eta=(\eta_{02},\hat\eta)\in H^k_{1,\delta}$,
where the subscripts indicate the width of the domains of $\eta_{02}$ and
$\hat\eta=(\eta_{02}',\eta_1',\eta_1)\in
H^k(\R\times[0,\delta],\bar u^* T(M_0\times M_2\times M_1\times M_1))$. 
The corresponding $H^k$-norm on this space is
\begin{align*}
&\bigl\|(\eta_{02},\eta_{02}',\eta_1',\eta_1)\bigr\|_{H^k_{1,\delta}}^2
:=
\|\eta_{02}\bigr\|_{H^k(\R\times[0,1])}^2
+ \|\hat\eta\bigr\|_{H^k(\R\times[0,\delta])}^2  \\
&\qquad =
\|\eta_{02}\bigr\|_{H^k(\R\times[0,1])}^2
+ \|\eta_{02}'\bigr\|_{H^k(\R\times[0,\delta])}^2 
+ \|\eta_1'\bigr\|_{H^k(\R\times[0,\delta])}^2 
+ \|\eta_1\bigr\|_{H^k(\R\times[0,\delta])}^2 .
\end{align*}
We denote the space of $H^2$-sections satisfying the boundary conditions by
$$
\Gamma_{1,\delta}
:=\bigl\{\xi\in H^2_{1,\delta} \,\big|\, (\ref{LL}) \bigr\} 
$$
and equip this space with the norm
$$
\bigl\|\xi\bigr\|_{\Gamma_{1,\delta}}
:=
\|\xi\bigr\|_{H^2_{1,\delta}}
+ \|\nabla\xi\bigr\|_{L^4_{1,\delta}} ,
$$
where we added the $L^4$-norm $\|\nabla(\xi_{02},\hat\xi)\|_{L^4_{1,\delta}} :=
\bigl( \|\nabla\xi_{02}\|_{L^4(\R\times[0,1])}^4
+ \|\nabla\hat\xi\|_{L^4(\R\times[0,\delta])}^4 \bigr)^{1/4}
$ 
on the multi-strip.
We denote the $\eps$-ball in $\Gamma_{1,\delta}$ by
$$
\Gamma_{1,\delta}(\eps)
:=\bigl\{\xi\in H^2_{1,\delta} \,\big|\, 
\|\xi\|_{\Gamma_{1,\delta}}<\eps , (\ref{LL}) \bigr\} .
$$
We equip the target space
$\Om_{1,\delta} := H^1_{1,\delta}$
with the norm
$$
\|\eta\|_{\Om_{1,\delta}}
:= \|\eta\|_{H^1_{1,\delta}} + \|\eta\|_{L^4_{1,\delta}} .
$$
The reason for adding the $L^4$-norms in domain and target is that we
do not have uniform Sobolev embeddings on the strips of varying width.
Instead, we build the necessary Sobolev multiplication properties into
the norms.
The definitions of all these norms also involves a choice of metric on each manifold $M_0,M_1,M_2$. Different choices yield equivalent norms.\footnote{
This remains true if some $M_i$ are noncompact, since the images of $u_{02}$, $\bar u$ are contained in compact sets.
}

Next, we make some preparations for defining an exponential map 
that is compatible with the boundary conditions (\ref{LL}). 

\begin{lemma} (Existence of compatible quadratic corrections) \label{QQ}
There exists $\eps_0>0$ and smooth families of maps (defined on the
$\eps_0$-balls)
$$
Q_s: T_{\bar u(s)}\bigl( M_0\times M_2\times M_1\times M_1 \bigr)
\supset B_{\eps_0}
\to T_{\bar u(s)}\bigl( M_0\times M_2\times M_1\times M_1 \bigr),
\qquad\forall s\in\R,
$$
$$
Q^{02}_{s,t} : T_{u_{02}(s,t)}(M_0\times M_2)\supset B_{\eps_0}^{02} 
\to T_{u_{02}(s,t)}(M_0\times M_2)
\qquad\forall (s,t)\in\R\times[0,1],
$$
that are a diffeomorphism onto their image and have the following 
properties:
\begin{description}
\item[(Quadratic)] $Q_s(0)=0$, $d Q_s(0)\equiv 0$, $Q^{02}_{s,t}(0)=0$, and
$d Q^{02}_{s,t}(0)\equiv 0$ for all $(s,t)\in\R\times[0,1]$.
In particular, there is a constant $C_Q$ such that for all
$\hat\xi\in B_{\eps_0}$ and $\xi_{02}\in B_{\eps_0}^{02}$
\begin{equation}\label{Quest}
|Q_s(\hat\xi)|\leq C_Q |\hat\xi|^2 ,\qquad
|Q_{s,t}^{02}(\xi_{02})|\leq C_Q |\xi_{02}|^2 .
\end{equation}
\item[(Linearizing $\mathbf{L_{01} \times L_{12}}$)] 
$\exp_{\bar u(s)}\circ \, ( 1+Q_s )$ maps 
$T_{\bar u(s)}(L_{01}\times L_{12})^T\cap B_{\eps_0}$
to $(L_{01}\times L_{12})^T$.
\item[(Linearizing $\mathbf{M_0 \times M_2 \times \Delta_1}$)]
$\exp_{\bar u(s)}\circ \, (1+Q_s )$ maps 
$T_{\bar u(s)}(M_0\times M_2\times \Delta_1)\cap B_{\eps_0}$
to $M_0\times M_2\times \Delta_1$.
\item[(Linearizing $\mathbf{L_{02}}$)]
$\exp_{u_{02}(s,1)}\circ \, ( 1+Q^{02}_{s,1} )$ maps 
$T_{u_{02}(s,1)}L_{02}\cap B_{\eps_0}^{02}$ to $L_{02}$.
\item[(Compatible)]
Restricting $Q_s$ to $T_{\bar u}(M_0\times M_2\times\Delta_1)$
and composing it with the projection
${\rm Pr}_{02}:T_{(\bar u_{02},\bar u_1,\bar u_1)}(M_0\times M_2\times M_1\times M_1)
\to T_{\bar u_{02}}(M_0\times M_2)$
yields a map that is independent of the $T_{(\bar u_1,\bar u_1)}\Delta_1$-component.
The resulting family
$$
Q^{02}_s: T_{\bar u_{02}(s)}(M_0\times M_2)\supset B_{\eps_0}^{02} 
\to T_{\bar u_{02}(s)}(M_0\times M_2)
$$
coincides with $Q^{02}_{s,0}$.
\end{description}
\end{lemma}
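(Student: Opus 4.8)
The strategy is to obtain every correction from a local diffeomorphism straightening the relevant submanifolds. The elementary ingredient is: if a point $p$ lies in an embedded submanifold $N$ of a Riemannian manifold $X$, and $\Phi\colon B\to X$ is a diffeomorphism from a ball $B\subset T_pX$ onto a neighbourhood of $p$ with $\Phi(0)=p$, $d\Phi(0)=\Id$ and $\Phi(T_pN\cap B)\subset N$, then $Q:=\exp_p^{-1}\circ\Phi-\Id$ satisfies $Q(0)=0$, $dQ(0)=0$, the map $1+Q=\exp_p^{-1}\circ\Phi$ is a diffeomorphism onto its image, and $\exp_p\circ(1+Q)=\Phi$ linearizes $N$; since the zeroth and first order Taylor coefficients of $\exp_p^{-1}\circ\Phi$ at $0$ vanish, \eqref{Quest} holds with $C_Q$ bounded by second derivatives of $\exp_p$ and $\Phi$. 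All base points $\bar u(s)$ and $u_{02}(s,t)$ range over the compact image sets of $\bar u$, $u_{02}$, which extend continuously over $s\to\pm\infty$, so every family constructed below is smooth in its parameters and $\eps_0$, $C_Q$ can be chosen uniform.

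The substance is $Q_s$, which must simultaneously straighten $N_1:=(L_{01}\times L_{12})^T$ and $N_2:=M_0\times M_2\times\Delta_{M_1}$ through their common point $\bar u(s)=(\bar u_{02}(s),\bar u_1(s),\bar u_1(s))$ — the former enters the boundary value problem at $t=\delta$, the latter at $t=0$, but $Q_s$ is $t$-independent, so one map must do both. The transversality hypothesis $(L_{01}\times L_{12})\pitchfork(M_0\times\Delta_{M_1}\times M_2)$ says precisely that $N_1\pitchfork N_2$, and embeddedness of $L_{01}\circ L_{12}$ says $\pi_{02}$ identifies $N_{12}:=N_1\cap N_2$ with $L_{02}$, with inverse the graph map $x\mapsto(x,\ell_1(x),\ell_1(x))$ of \eqref{ell1}. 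For a transverse pair the tubular neighbourhood theorem provides a simultaneous model of $X=M_0\times M_2\times M_1\times M_1$ near $N_{12}$ as the total space of $\nu(N_1/X)|_{N_{12}}\oplus\nu(N_2/X)|_{N_{12}}\to N_{12}$ in which $N_1$ and $N_2$ are the two sub-bundles; composing with $\exp_{\bar u(s)}^{-1}$ and normalizing so that the differential at $\bar u(s)$ is the identity yields a candidate $\Phi_s$ with $\exp_{\bar u(s)}\circ(1+Q_s)$ straightening both $N_1$ and $N_2$. The single-submanifold ingredient then produces $Q^{02}_{s,t}$ from a tubular neighbourhood of $L_{02}$ in $M_0\times M_2$ at the relevant boundary.

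The main obstacle is the ``Compatible'' clause: the $M_0\times M_2$-component of $Q_s|_{T_{\bar u(s)}N_2}$ must be independent of the $T\Delta_{M_1}$-direction, so that it descends to a single family $Q^{02}_{s,0}$ on $T_{\bar u_{02}(s)}(M_0\times M_2)$ (this decoupling is also what makes the constant extension $v'_{02}$ glue to $v_{02}$ along $\Delta_{M_0\times M_2}$ in the exponential map). To arrange it I will build the simultaneous model compatibly with the splitting $X=(M_0\times M_2)\times(M_1\times M_1)$: use a product metric, take for $\Delta_{M_1}\subset M_1\times M_1$ the standard tubular neighbourhood already straightened by the product exponential, and obtain the straightening of $N_1$ from a tubular neighbourhood of $N_{12}$ built first inside $N_2=M_0\times M_2\times M_1$ using the graph of $\ell_1$, and only then extended in the $N_2$-normal direction. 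With this ordering, $\Phi_s|_{N_2}$ has the form $(\xi_{02},\eta)\mapsto(\psi_s(\xi_{02}),\sigma_s(\xi_{02},\eta))$ in coordinates $T(M_0\times M_2)\oplus T\Delta_{M_1}$, so $Q^{02}_{s,0}:=\exp_{\bar u_{02}(s)}^{-1}\circ\psi_s-\Id$ is well defined; and it linearizes $L_{02}=\pi_{02}(N_{12})$ because $\Phi_s(T_{\bar u(s)}N_{12})\subset N_{12}$ and $\pi_{02}|_{N_{12}}$ is a diffeomorphism onto $L_{02}$. One then extends $Q^{02}_{s,t}$ to $t>0$ smoothly, for instance by parallel transport along $t\mapsto u_{02}(s,t)$, which changes nothing at $t=0$. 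That $1+Q_s$ and $1+Q^{02}_{s,t}$ are diffeomorphisms on a fixed ball $B_{\eps_0}$, that $\eps_0$ and $C_Q$ are uniform, and that the families are smooth, then follows routinely from compactness of the image sets.
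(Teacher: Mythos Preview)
Your approach differs from the paper's. The paper works entirely in the fixed tangent space $X=T_{\bar u(s)}(M_0\times M_2\times M_1\times M_1)$ after pulling back by $\exp_{\bar u(s)}$, and builds $Q_s$ by two explicit graph steps: first write the pulled-back intersection $\hat\cL_{02}:=\exp^{-1}(N_{12})$ as a graph $\psi=\psi_{02}^\perp\times\psi_{11}$ over its tangent space; then, after this straightening, write the pulled-back $\cL_{0211}:=\exp^{-1}(N_1)$ as a graph $\phi=\phi_{02}^\perp\times\phi_{11}$ over $T_0\cL_{0211}$, with $\phi|_{T_0\hat\cL_{02}}\equiv 0$. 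The resulting formula $Q_s(x)=\bigl(\psi_{02}^\perp(x_{02})+\phi_{02}^\perp(x_C+x_{02}),\,\psi_{11}(x_{02})+\phi_{11}(x_C+x_{02})\bigr)$ in the splitting $X=C\oplus T_0\hat\cL_{02}\oplus\bigl((T_0\cL_{02})^\perp\times\{0\}\bigr)\oplus\bigl(\{0\}\times T\Delta_1\bigr)$ makes it transparent that $Q_s$ depends only on $x_C$ and $x_{02}$, never on the $T\Delta_1$-component $x_{11}$; the preservation of $\Delta$ and the Compatible clause drop out immediately from the formula. Your route via a simultaneous tubular neighbourhood is in principle viable, but two points are underdeveloped.

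First, the triangular form $(\xi_{02},\eta)\mapsto(\psi_s(\xi_{02}),\sigma_s(\xi_{02},\eta))$ is asserted rather than derived. A generic tubular neighbourhood of $N_{12}$ inside $N_2\cong(M_0\times M_2)\times M_1$ does not respect the product; you must choose the chart so that its $(M_0\times M_2)$-component depends only on $\xi_{02}$, and the phrase ``using the graph of $\ell_1$'' does not pin this down. It can be arranged, but spelling it out is precisely a graph-over-$T L_{02}$ construction of the kind the paper makes explicit. Likewise the subsequent ``extension in the $N_2$-normal direction'' must be chosen so that $N_1$ becomes linear, which is not automatic and which you do not say how to do.

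Second, and more concretely, your construction of $Q^{02}_{s,t}$ is incomplete. Parallel transport of $Q^{02}_{s,0}$ along $t\mapsto u_{02}(s,t)$ gives a smooth family but has no reason to linearize anything at $t=1$. The clause labelled ``Linearizing $L_{02}$'' (a typo in the statement---the submanifold through $u_{02}(s,1)$ is $L_0\times L_2$) imposes a separate condition there, needed so that $e_{u_{02}}(\xi_{02})|_{t=1}\in L_0\times L_2$. The paper meets it by constructing $Q^{02}_{s,1}$ independently (single-submanifold case) and then interpolating linearly, $Q^{02}_{s,t}=(1-t)Q^{02}_{s,0}+tQ^{02}_{s,1}$. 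Your first paragraph alludes to a tubular-neighbourhood construction ``at the relevant boundary'', but this is never reconciled with the parallel-transport extension in the last paragraph.
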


\begin{proof}
We fix $s\in\R$ and restrict the exponential map $\exp_{\bar u(s)}$ to
a geodesic ball around $0$. The subsequent constructions will depend
smoothly on $s\in\R$, which we drop from now on.  By assumption the
submanifold
$\cL_{0211}
:=\exp_{\bar u}^{-1}(L_{01}\times L_{12})^T$ in the
vector space $X:=T_{\bar u}(M_0\times M_2\times M_1\times M_1)$ is
transverse to the subspace $\Delta:=T_{\bar u}(M_0\times M_2\times
\Delta_1)$.  Their intersection $\hat\cL_{02}:=\cL_{0211}\cap\Delta$ is
diffeomorphic to the submanifold $\cL_{02}:=\exp_{\bar
u_{02}}^{-1}(L_{02})\subset T_{\bar u_{02}}(M_0\times M_2)$ by a map
$(m_0,m_2)\mapsto (m_0,m_2,m_1,m_1)$ with uniquely determined
$m_1=m_1(m_0,m_2)$.  So we have a direct sum decomposition
$$
\Delta = T_{\bar u_{02}}(M_0\times M_2) \times T_{(\bar u_1,\bar u_1)}\Delta_1
=
T_0\hat\cL_{02} \oplus \bigl( (T_0\cL_{02})^\perp \times\{0\} \bigr)
\oplus \bigl( \{0\}\times T_{(\bar u_1,\bar u_1)}\Delta_1 \bigr) .
$$
As a submanifold we can now write $\hat\cL_{02}\subset\Delta$ as the graph of a map
$\psi$ over a sufficiently small $\eps$-ball,
$$
\psi=\psi_{02}^\perp\times\psi_{11} : T_0\hat\cL_{02}\supset B_\eps \to
\bigl(T_0\cL_{02}\bigr)^\perp \times T_{(\bar u_1,\bar u_1)}\Delta_1 
$$ 
with $\psi(0)=0$ and $d\psi(0)\equiv 0$.  We moreover pick a
complement $C$ of $T_0\hat\cL_{02}\subset T_0\cL_{0211}$,
$$ T_0\cL_{0211} = C \oplus T_0\hat\cL_{02},
$$
then the transversality $X=T_0\cL_{0211} + \Delta$ implies the splitting 
\begin{equation}\label{split X}
X = 
C \oplus T_0\hat\cL_{02} 
\oplus \bigl(T_0\cL_{02}\bigr)^\perp \times\{0\}
\oplus \{0\}\times T_{(\bar u_1,\bar u_1)}\Delta_1 .
\end{equation}
We write $X \ni x=x_C+x_{02}+(x_{02}^\perp,0)+(0,x_{11})$ in this splitting
and define a map $\Psi: X\supset B_\eps \to X$ by
\begin{align*}
\Psi(x) &:= x + (\psi_{02}^\perp(x_{02}),0) + (0,\psi_{11}(x_{02})) \\
& =
x_C + x_{02} + (x_{02}^\perp + \psi_{02}^\perp(x_{02}),0) 
+ (0,x_{11}+\psi_{11}(x_{02})) .
\end{align*}
\begin{figure}[ht]
\setlength{\unitlength}{0.00043333in}
\begingroup\makeatletter\ifx\SetFigFont\undefined%
\gdef\SetFigFont#1#2#3#4#5{%
  \reset@font\fontsize{#1}{#2pt}%
  \fontfamily{#3}\fontseries{#4}\fontshape{#5}%
  \selectfont}%
\fi\endgroup%
{\renewcommand{\dashlinestretch}{30}
\begin{picture}(7388,5439)(0,-10)
\path(704.214,2881.071)(582.000,2862.000)(698.811,2821.315)
\path(582,2862)(6587,2319)
\path(6464.786,2299.929)(6587.000,2319.000)(6470.189,2359.685)
\path(3143.483,1259.299)(3162.000,1137.000)(3203.214,1253.625)
\path(3162,1137)(3447,4137)
\path(3465.517,4014.701)(3447.000,4137.000)(3405.786,4020.375)
\path(1872,4677)(387,4812)(12,642)
	(6612,12)(6972,4212)(5367,4362)
\dashline{60.000}(1857,4677)(5352,4362)
\path(4570,2795)(4548,2538)
\path(2202,5412)(2204,5412)(2207,5410)
	(2214,5408)(2224,5405)(2239,5400)
	(2260,5394)(2285,5386)(2316,5377)
	(2353,5366)(2395,5353)(2441,5340)
	(2493,5324)(2548,5308)(2606,5291)
	(2667,5274)(2730,5256)(2794,5238)
	(2858,5220)(2923,5202)(2986,5185)
	(3049,5168)(3111,5152)(3171,5137)
	(3230,5123)(3287,5109)(3342,5097)
	(3395,5085)(3446,5074)(3495,5065)
	(3543,5056)(3590,5048)(3635,5040)
	(3678,5034)(3721,5029)(3763,5024)
	(3804,5020)(3844,5017)(3884,5015)
	(3923,5013)(3963,5012)(4002,5012)
	(4041,5012)(4081,5013)(4120,5015)
	(4160,5017)(4200,5020)(4241,5024)
	(4283,5029)(4326,5034)(4369,5040)
	(4414,5048)(4461,5056)(4509,5065)
	(4558,5074)(4609,5085)(4662,5097)
	(4717,5109)(4774,5123)(4833,5137)
	(4893,5152)(4955,5168)(5018,5185)
	(5081,5202)(5146,5220)(5210,5238)
	(5274,5256)(5337,5274)(5398,5291)
	(5456,5308)(5511,5324)(5563,5340)
	(5609,5353)(5651,5366)(5688,5377)
	(5719,5386)(5744,5394)(5765,5400)
	(5780,5405)(5790,5408)(5797,5410)
	(5800,5412)(5802,5412)
\path(5802,5412)(5801,5410)(5799,5407)
	(5796,5400)(5791,5389)(5784,5374)
	(5774,5353)(5762,5328)(5747,5298)
	(5731,5263)(5713,5224)(5692,5181)
	(5671,5136)(5649,5088)(5626,5039)
	(5603,4989)(5580,4939)(5558,4889)
	(5536,4840)(5515,4792)(5494,4746)
	(5475,4701)(5457,4658)(5440,4617)
	(5424,4577)(5408,4539)(5394,4502)
	(5381,4467)(5369,4433)(5357,4399)
	(5346,4367)(5336,4335)(5327,4304)
	(5318,4273)(5310,4243)(5302,4212)
	(5294,4180)(5287,4147)(5280,4114)
	(5273,4081)(5267,4048)(5262,4013)
	(5256,3977)(5252,3941)(5247,3902)
	(5243,3863)(5239,3821)(5235,3778)
	(5231,3733)(5228,3686)(5225,3637)
	(5222,3587)(5219,3535)(5217,3482)
	(5214,3429)(5212,3376)(5210,3325)
	(5209,3274)(5207,3227)(5206,3183)
	(5205,3144)(5204,3109)(5203,3080)
	(5203,3057)(5203,3039)(5202,3026)
	(5202,3018)(5202,3014)(5202,3012)
\path(2202,5412)(2201,5410)(2199,5407)
	(2196,5400)(2191,5389)(2184,5374)
	(2174,5353)(2162,5328)(2147,5298)
	(2131,5263)(2113,5224)(2092,5181)
	(2071,5136)(2049,5088)(2026,5039)
	(2003,4989)(1980,4939)(1958,4889)
	(1936,4840)(1915,4792)(1894,4746)
	(1875,4701)(1857,4658)(1840,4617)
	(1824,4577)(1808,4539)(1794,4502)
	(1781,4467)(1769,4433)(1757,4399)
	(1746,4367)(1736,4335)(1727,4304)
	(1718,4273)(1710,4243)(1702,4212)
	(1694,4180)(1687,4147)(1680,4114)
	(1673,4081)(1667,4048)(1662,4013)
	(1656,3977)(1652,3941)(1647,3902)
	(1643,3863)(1639,3821)(1635,3778)
	(1631,3733)(1628,3686)(1625,3637)
	(1622,3587)(1619,3535)(1617,3482)
	(1614,3429)(1612,3376)(1610,3325)
	(1609,3274)(1607,3227)(1606,3183)
	(1605,3144)(1604,3109)(1603,3080)
	(1603,3057)(1603,3039)(1602,3026)
	(1602,3018)(1602,3014)(1602,3012)
\path(1602,3012)(1604,3012)(1607,3010)
	(1614,3008)(1624,3005)(1639,3000)
	(1660,2994)(1685,2986)(1716,2977)
	(1753,2966)(1795,2953)(1841,2940)
	(1893,2924)(1948,2908)(2006,2891)
	(2067,2874)(2130,2856)(2194,2838)
	(2258,2820)(2323,2802)(2386,2785)
	(2449,2768)(2511,2752)(2571,2737)
	(2630,2723)(2687,2709)(2742,2697)
	(2795,2685)(2846,2674)(2895,2665)
	(2943,2656)(2990,2648)(3035,2640)
	(3078,2634)(3121,2629)(3163,2624)
	(3204,2620)(3244,2617)(3284,2615)
	(3323,2613)(3363,2612)(3402,2612)
	(3441,2612)(3481,2613)(3520,2615)
	(3560,2617)(3600,2620)(3641,2624)
	(3683,2629)(3726,2634)(3769,2640)
	(3814,2648)(3861,2656)(3909,2665)
	(3958,2674)(4009,2685)(4062,2697)
	(4117,2709)(4174,2723)(4233,2737)
	(4293,2752)(4355,2768)(4418,2785)
	(4481,2802)(4546,2820)(4610,2838)
	(4674,2856)(4737,2874)(4798,2891)
	(4856,2908)(4911,2924)(4963,2940)
	(5009,2953)(5051,2966)(5088,2977)
	(5119,2986)(5144,2994)(5165,3000)
	(5180,3005)(5190,3008)(5197,3010)
	(5200,3012)(5202,3012)
\dashline{60.000}(5202,3012)(5202,3010)(5202,3006)
	(5202,2998)(5203,2985)(5203,2967)
	(5203,2944)(5204,2915)(5205,2880)
	(5206,2841)(5207,2797)(5209,2750)
	(5210,2699)(5212,2648)(5214,2595)
	(5217,2542)(5219,2489)(5222,2437)
	(5225,2387)(5228,2338)(5231,2291)
	(5235,2246)(5239,2203)(5243,2161)
	(5247,2122)(5252,2083)(5256,2047)
	(5262,2011)(5267,1976)(5273,1943)
	(5280,1910)(5287,1877)(5294,1844)
	(5302,1812)(5310,1781)(5318,1751)
	(5327,1720)(5336,1689)(5346,1657)
	(5357,1625)(5369,1591)(5381,1557)
	(5394,1522)(5408,1485)(5424,1447)
	(5440,1407)(5457,1366)(5475,1323)
	(5494,1278)(5515,1232)(5536,1184)
	(5558,1135)(5580,1085)(5603,1035)
	(5626,985)(5649,936)(5671,888)
	(5692,843)(5713,800)(5731,761)
	(5747,726)(5762,696)(5774,671)
	(5784,650)(5791,635)(5796,624)
	(5799,617)(5801,614)(5802,612)
\dashline{60.000}(1602,3012)(1602,3010)(1602,3006)
	(1602,2998)(1603,2985)(1603,2967)
	(1603,2944)(1604,2915)(1605,2880)
	(1606,2841)(1607,2797)(1609,2750)
	(1610,2699)(1612,2648)(1614,2595)
	(1617,2542)(1619,2489)(1622,2437)
	(1625,2387)(1628,2338)(1631,2291)
	(1635,2246)(1639,2203)(1643,2161)
	(1647,2122)(1652,2083)(1656,2047)
	(1662,2011)(1667,1976)(1673,1943)
	(1680,1910)(1687,1877)(1694,1844)
	(1702,1812)(1710,1781)(1718,1751)
	(1727,1720)(1736,1689)(1746,1657)
	(1757,1625)(1769,1591)(1781,1557)
	(1794,1522)(1808,1485)(1824,1447)
	(1840,1407)(1857,1366)(1875,1323)
	(1894,1278)(1915,1232)(1936,1184)
	(1958,1135)(1980,1085)(2003,1035)
	(2026,985)(2049,936)(2071,888)
	(2092,843)(2113,800)(2131,761)
	(2147,726)(2162,696)(2174,671)
	(2184,650)(2191,635)(2196,624)
	(2199,617)(2201,614)(2202,612)
\dashline{60.000}(2202,612)(2204,612)(2207,610)
	(2214,608)(2224,605)(2239,600)
	(2260,594)(2285,586)(2316,577)
	(2353,566)(2395,553)(2441,540)
	(2493,524)(2548,508)(2606,491)
	(2667,474)(2730,456)(2794,438)
	(2858,420)(2923,402)(2986,385)
	(3049,368)(3111,352)(3171,337)
	(3230,323)(3287,309)(3342,297)
	(3395,285)(3446,274)(3495,265)
	(3543,256)(3590,248)(3635,240)
	(3678,234)(3721,229)(3763,224)
	(3804,220)(3844,217)(3884,215)
	(3923,213)(3963,212)(4002,212)
	(4041,212)(4081,213)(4120,215)
	(4160,217)(4200,220)(4241,224)
	(4283,229)(4326,234)(4369,240)
	(4414,248)(4461,256)(4509,265)
	(4558,274)(4609,285)(4662,297)
	(4717,309)(4774,323)(4833,337)
	(4893,352)(4955,368)(5018,385)
	(5081,402)(5146,420)(5210,438)
	(5274,456)(5337,474)(5398,491)
	(5456,508)(5511,524)(5563,540)
	(5609,553)(5651,566)(5688,577)
	(5719,586)(5744,594)(5765,600)
	(5780,605)(5790,608)(5797,610)
	(5800,612)(5802,612)
\put(4300,2887){\makebox(0,0)[lb]{$\hat\cL_{02}$}}
\put(4300,4682){\makebox(0,0)[lb]{
$\cL_{0211}$
}}
\put(6500,3787){\makebox(0,0)[lb]{$\Delta$}}
\put(5650,2450){\makebox(0,0)[lb]{$T_0 \cL_{02}$}}
\put(4607,2520){\makebox(0,0)[lb]{$\psi$}}
\end{picture}
}
\caption{Construction of quadratic corrections: Writing $\hat\cL_{02}$ as graph} 
\end{figure}
This map linearizes the intersection, 
$\Psi(T_0\hat\cL_{02})=\hat\cL_{02}$,
and we have $\Psi(0)=0$ and $d\Psi(0)={\rm Id}$.
In order to linearize the entire Lagrangian $\cL_{0211}$ we 
remark that $T_0\bigl(\Psi^{-1}(\cL_{0211})\bigr) = d\Psi(0)^{-1}T_0\cL_{0211} =T_0\cL_{0211}$.
So we can write $\Psi^{-1}(\cL_{0211})$ as graph of a map 
$$
\phi=\phi_{02}^\perp\times\phi_{11} :
 T_0\cL_{0211}\supset B_\eps \to
\bigl(T_{\bar u_{02}}\cL_{02}\bigr)^\perp \times T_{(\bar u_1,\bar u_1)}\Delta_1 
$$
with $\phi(0)=0$, $d\phi(0)\equiv 0$, 
and by the previous construction $\phi|_{T_0\hat\cL_{02}}\equiv 0$.

\begin{figure}
\setlength{\unitlength}{0.00043333in}
\begingroup\makeatletter\ifx\SetFigFont\undefined%
\gdef\SetFigFont#1#2#3#4#5{%
  \reset@font\fontsize{#1}{#2pt}%
  \fontfamily{#3}\fontseries{#4}\fontshape{#5}%
  \selectfont}%
\fi\endgroup%
{\renewcommand{\dashlinestretch}{30}
\begin{picture}(7388,5439)(0,-10)
\path(704.214,2881.071)(582.000,2862.000)(698.811,2821.315)
\path(582,2862)(6587,2319)
\path(6464.786,2299.929)(6587.000,2319.000)(6470.189,2359.685)
\path(1872,4677)(387,4812)(12,642)
	(6612,12)(6972,4212)(5367,4362)
\dashline{60.000}(1857,4677)(5352,4362)
\path(1602,4812)(5202,4812)(5202,912)
	(1602,912)(1602,4812)
\path(1614,2773)(1614,4833)(5224,4823)(5204,2443)
\dashline{60.000}(5194,2463)(5194,903)(1634,923)(1604,2783)
\path(2202,5412)(2204,5412)(2207,5412)
	(2214,5412)(2224,5412)(2239,5412)
	(2260,5412)(2285,5412)(2316,5412)
	(2353,5412)(2395,5412)(2441,5412)
	(2493,5412)(2548,5412)(2606,5412)
	(2667,5412)(2730,5412)(2794,5412)
	(2858,5412)(2923,5412)(2986,5412)
	(3049,5412)(3111,5412)(3171,5412)
	(3230,5412)(3287,5412)(3342,5412)
	(3395,5412)(3446,5412)(3495,5412)
	(3543,5412)(3590,5412)(3635,5412)
	(3678,5412)(3721,5412)(3763,5412)
	(3804,5412)(3844,5412)(3884,5412)
	(3923,5412)(3963,5412)(4002,5412)
	(4041,5412)(4081,5412)(4120,5412)
	(4160,5412)(4200,5412)(4241,5412)
	(4283,5412)(4326,5412)(4369,5412)
	(4414,5412)(4461,5412)(4509,5412)
	(4558,5412)(4609,5412)(4662,5412)
	(4717,5412)(4774,5412)(4833,5412)
	(4893,5412)(4955,5412)(5018,5412)
	(5081,5412)(5146,5412)(5210,5412)
	(5274,5412)(5337,5412)(5398,5412)
	(5456,5412)(5511,5412)(5563,5412)
	(5609,5412)(5651,5412)(5688,5412)
	(5719,5412)(5744,5412)(5765,5412)
	(5780,5412)(5790,5412)(5797,5412)
	(5800,5412)(5802,5412)
\path(5802,5412)(5801,5410)(5799,5407)
	(5796,5400)(5791,5389)(5784,5373)
	(5774,5352)(5762,5326)(5747,5295)
	(5731,5260)(5713,5220)(5692,5176)
	(5671,5129)(5649,5079)(5626,5028)
	(5603,4976)(5580,4924)(5558,4872)
	(5536,4820)(5515,4770)(5494,4721)
	(5475,4673)(5457,4627)(5440,4582)
	(5424,4538)(5408,4496)(5394,4455)
	(5381,4415)(5369,4376)(5357,4338)
	(5346,4300)(5336,4263)(5327,4225)
	(5318,4188)(5310,4150)(5302,4112)
	(5295,4077)(5289,4042)(5283,4007)
	(5277,3971)(5272,3934)(5267,3895)
	(5262,3856)(5257,3815)(5253,3773)
	(5249,3729)(5245,3684)(5242,3636)
	(5238,3586)(5235,3534)(5232,3480)
	(5229,3424)(5227,3366)(5224,3305)
	(5222,3243)(5219,3179)(5217,3114)
	(5215,3049)(5213,2983)(5212,2918)
	(5210,2854)(5209,2792)(5207,2733)
	(5206,2677)(5205,2627)(5204,2581)
	(5204,2540)(5203,2506)(5203,2477)
	(5203,2454)(5202,2437)(5202,2425)
	(5202,2418)(5202,2414)(5202,2412)
\path(2202,5412)(2201,5410)(2199,5407)
	(2196,5400)(2191,5389)(2184,5373)
	(2174,5353)(2162,5327)(2147,5297)
	(2131,5261)(2113,5222)(2092,5179)
	(2071,5132)(2049,5084)(2026,5034)
	(2003,4983)(1980,4931)(1958,4880)
	(1936,4830)(1915,4781)(1894,4733)
	(1875,4687)(1857,4642)(1840,4599)
	(1824,4558)(1808,4518)(1794,4479)
	(1781,4441)(1769,4404)(1757,4369)
	(1746,4334)(1736,4299)(1727,4265)
	(1718,4230)(1710,4196)(1702,4162)
	(1695,4129)(1688,4096)(1682,4063)
	(1676,4029)(1670,3994)(1665,3959)
	(1660,3922)(1656,3885)(1651,3846)
	(1647,3805)(1643,3763)(1640,3719)
	(1636,3673)(1633,3625)(1630,3575)
	(1627,3523)(1624,3469)(1622,3414)
	(1619,3357)(1617,3299)(1615,3241)
	(1613,3183)(1611,3125)(1610,3069)
	(1608,3015)(1607,2964)(1606,2916)
	(1605,2873)(1604,2835)(1603,2802)
	(1603,2775)(1603,2753)(1602,2737)
	(1602,2725)(1602,2718)(1602,2714)(1602,2712)
\dashline{60.000}(5202,2412)(5202,2410)(5202,2405)
	(5203,2396)(5203,2382)(5204,2364)
	(5205,2340)(5207,2312)(5208,2280)
	(5211,2245)(5213,2209)(5216,2171)
	(5219,2133)(5223,2096)(5227,2060)
	(5231,2024)(5236,1990)(5241,1957)
	(5246,1926)(5252,1895)(5259,1864)
	(5266,1834)(5274,1804)(5282,1774)
	(5292,1743)(5302,1712)(5310,1687)
	(5319,1662)(5329,1637)(5339,1610)
	(5350,1582)(5361,1553)(5374,1523)
	(5387,1491)(5402,1458)(5417,1422)
	(5434,1385)(5452,1345)(5471,1304)
	(5491,1260)(5512,1214)(5535,1167)
	(5558,1118)(5582,1068)(5606,1017)
	(5630,966)(5654,916)(5677,868)
	(5700,822)(5720,779)(5739,741)
	(5755,707)(5769,679)(5780,656)
	(5789,639)(5795,626)(5799,618)
	(5801,614)(5802,612)
\dashline{60.000}(1602,3012)(1602,3010)(1602,3006)
	(1602,2998)(1603,2985)(1603,2967)
	(1603,2944)(1604,2915)(1605,2880)
	(1606,2841)(1607,2797)(1609,2750)
	(1610,2699)(1612,2648)(1614,2595)
	(1617,2542)(1619,2489)(1622,2437)
	(1625,2387)(1628,2338)(1631,2291)
	(1635,2246)(1639,2203)(1643,2161)
	(1647,2122)(1652,2083)(1656,2047)
	(1662,2011)(1667,1976)(1673,1943)
	(1680,1910)(1687,1877)(1694,1844)
	(1702,1812)(1710,1781)(1718,1751)
	(1727,1720)(1736,1689)(1746,1657)
	(1757,1625)(1769,1591)(1781,1557)
	(1794,1522)(1808,1485)(1824,1447)
	(1840,1407)(1857,1366)(1875,1323)
	(1894,1278)(1915,1232)(1936,1184)
	(1958,1135)(1980,1085)(2003,1035)
	(2026,985)(2049,936)(2071,888)
	(2092,843)(2113,800)(2131,761)
	(2147,726)(2162,696)(2174,671)
	(2184,650)(2191,635)(2196,624)
	(2199,617)(2201,614)(2202,612)
\dashline{60.000}(2202,612)(2204,612)(2207,612)
	(2214,612)(2224,612)(2239,612)
	(2260,612)(2285,612)(2316,612)
	(2353,612)(2395,612)(2441,612)
	(2493,612)(2548,612)(2606,612)
	(2667,612)(2730,612)(2794,612)
	(2858,612)(2923,612)(2986,612)
	(3049,612)(3111,612)(3171,612)
	(3230,612)(3287,612)(3342,612)
	(3395,612)(3446,612)(3495,612)
	(3543,612)(3590,612)(3635,612)
	(3678,612)(3721,612)(3763,612)
	(3804,612)(3844,612)(3884,612)
	(3923,612)(3963,612)(4002,612)
	(4041,612)(4081,612)(4120,612)
	(4160,612)(4200,612)(4241,612)
	(4283,612)(4326,612)(4369,612)
	(4414,612)(4461,612)(4509,612)
	(4558,612)(4609,612)(4662,612)
	(4717,612)(4774,612)(4833,612)
	(4893,612)(4955,612)(5018,612)
	(5081,612)(5146,612)(5210,612)
	(5274,612)(5337,612)(5398,612)
	(5456,612)(5511,612)(5563,612)
	(5609,612)(5651,612)(5688,612)
	(5719,612)(5744,612)(5765,612)
	(5780,612)(5790,612)(5797,612)
	(5800,612)(5802,612)
\put(6500,3787){\makebox(0,0)[lb]{$\Delta$}}
\put(2000,2722){\makebox(0,0)[lb]{$\scriptstyle{T_0 \Psi^{-1}(\hat\cL_{02})=\Psi^{-1}(\hat\cL_{02})}$}}
\put(4100,5000){\makebox(0,0)[lb]{$\Psi^{-1}\cL_{0112}$}}
\put(2909,4300){\makebox(0,0)[lb]{$T_0 \Psi^{-1}\cL_{0112}$}}
\end{picture}
}
\caption{Linearizing $\L_{0112}$ compatibly with $\hat\L_{02}$ } 
\end{figure}
%
%

Finally we define the entire linearization $\Phi: X\supset B_\eps \to X$ by
\begin{align*}
\Phi(x)
&:= \Psi\bigl( x + (\phi_{02}^\perp(x_C+x_{02}),0) + (0,\phi_{11}(x_C+x_{02})) \bigr) 
\end{align*}
for $x=x_C+x_{02}+(x_{02}^\perp,0)+(0,x_{11})$ in the splitting (\ref{split X}).
Now $Q_s:=\Phi - {\rm Id}$ is quadratic and linearized $(L_{01}\times L_{12})^T$ by construction. 
Explicitly, we have
\begin{equation}\label{Q}
Q_s(x) =
\bigl( \psi_{02}^\perp(x_{02}) + \phi_{02}^\perp(x_C+x_{02}) ,
 \psi_{11}(x_{02}) + \phi_{11}(x_C+x_{02}) \bigr) .
\end{equation}
The construction moreover ensures that $Q_s$ linearizes $M_0\times M_2\times\Delta_1$, 
that is $\Phi(\Delta)\subset\Delta$, 
since $x\in\Delta=\{x_C=0\}$ is mapped to
$\Phi(x)= x + \bigl(\psi_{02}^\perp(x_{02}),\psi_{11}(x_{02})\bigr) \in\Delta$.

To construct $Q_s^{02}$ compatible with $Q_s$ note that for 
$x=(m_0,m_2,m_1,m_1)\in T_{\bar u}(M_0\times M_2\times\Delta_1)\subset X$
we have a splitting
$$
x=(m_0,m_2,0,0) + (0,0,m_1,m_1)
=
x_C+x_{02}+(x_{02}^\perp,0)+(0,x_{11}+(m_1,m_1)),
$$
where $x_C, x_{02}, x_{02}^\perp, x_{11}$ only depend on $(m_0,m_2)$.
With this we can see in (\ref{Q}) that indeed 
$Q_s(m_0,m_2,m_1,m_1)$ is independent of $m_1$.
We then simply define
$Q^{02}_{s,0}(m_0,m_2):={\rm Pr}_{02}Q_s(m_0,m_2,0,0)$.
Moreover, a graph construction as above provides a map
$Q^{02}_{s,1} : T_{u_{02}(s,1)}(M_0\times M_2)\supset B_\eps^{02} 
\to T_{u_{02}(s,1)}(M_0\times M_2)$
that is quadratic and linearizes $L_{02}$.
Now the two families $Q^{02}_{s,0}$ and $Q^{02}_{s,1}$ can easily
be interpolated by the smooth family 
$Q^{02}_{s,t}:=(1-t)Q^{02}_{s,0} + t Q^{02}_{s,1}$
of quadratic maps.
\end{proof}

With these quadratic corrections we can now define the exponential map
$e_u$ by $e_u(\xi):=( e_{u_{02}}(\xi_{02}) , e_{\bar u}(\hat\xi) )$
for $\xi=(\xi_{02},\hat\xi)\in\Gamma_{1,\delta}(\eps)$, where
\begin{align} \label{expu}
e_{u_{02}}(\xi_{02}):=\exp_{u_{02}}\circ \, (1+Q^{02}) (\xi_{02}), \qquad
e_{\bar u}(\hat\xi):=\exp_{\bar u}\circ \, (1+Q)(\hat\xi).
\end{align}
Note that we have the usual properties of an exponential map,
$$
e_u(0)=(u_{02},\bar u),\qquad
d e_u(0) = {\rm Id}.
$$
To define $e_u$ on $\Gamma_{1,\delta}(\eps)$ 
the $\eps>0$ should be chosen such that
$\|\xi_{02}\|_{\cC^0}$ and $\|\hat\xi\|_{\cC^0}$ are sufficiently small
for the quadratic corrections in Lemma~\ref{QQ} to be defined.\footnote{
If some $M_i$ are noncompact, in particular the interior of a compact manifold with boundary and corners as in \cite{se:bo}, then the choice of $\eps>0$ also ensures that the exponential map at $u_{02}$ resp.\ $\bar u$ is well defined. This is always possible with a uniform $\eps>0$ since the images of $u_{02}$ and $\bar u$ are contained in compact subsets.
}
Lemma~\ref{Sobolev} below ensures that we can pick
a uniform $\eps>0$ for all $\delta>0$.
Now solutions $v_u\in\widehat\M^1_\delta(x^-,x^+)$ 
in a neighborhood of $u$ correspond to zeroes of the map 
$\F_u : \Gamma_{1,\delta}(\eps) \to \Om_{1,\delta}$
given by
$$
\F_u(\xi):= \bigl(\,
\Phi_{u_{02}}(\xi_{02})^{-1} ( \ol{\partial}_{J_{02}} e_{u_{02}}(\xi_{02}) )  \,,\,
\Phi_{\bar u}(\hat\xi)^{-1} ( \ol{\partial}_{\hat J} e_{\bar u}(\hat\xi) ) \,\bigr).
$$
Here $\Phi_{u}(\xi)$ denotes the parallel transport
$T_{u}M\to T_{e_{u}(\xi)}M$ along the path $\tau\mapsto e_u(\tau\xi)$.
For $\Phi_{u_{02}}$ this parallel transport on $T(M_0\times M_2)$ can
simply use the Levi-Civita connection.
\label{connection}
In the definition of $\Phi_{\bar u}$ we however use
a Hermitian connection $\tilde\nabla$ on the tangent bundle
$T(M_0\times M_2\times M_1\times M_1)$ that leaves $\hat J$ invariant.
This can be done by the same construction as in 
\cite[Proposition 3.1.1]{ms:jh}, which brings the linearized 
operator into simple form.

Next, we introduce projections related to the various Lagrangians:
$$ 
\pi_{0211}^\perp 
\in{\rm Aut}\big( \cC^\infty(\R,\bar u^* T(M_0\times M_2\times M_1\times M_1)) \bigr) ,
$$
$$ 
\pi_{02} \;,\; \pi_{02}^\perp
\;\in{\rm Aut}\big( \cC^\infty(\R,\bar u_{02}^* T(M_0\times M_2)) \bigr)
$$
are linear operators, given by pointwise orthogonal projection onto the subspaces
$(T(L_{01}\times L_{12})^T)^\perp \subset T(M_0\times M_2\times M_1\times M_1)$
resp.\ 
$T L_{02}, (T L_{02})^\perp \subset T(M_0\times M_2)$.
The following lemma contains the estimates resulting from the transversality assumption.

\begin{lemma} (Quantitative transversality)  \label{lltrans}
There exists a constant $C$ such that the following holds.
\begin{enumerate}
\item
For every $\hat x = (x_0,x_2,x_1',x_1)\in M_0\times M_2\times M_1\times M_1$ 
\begin{align*}
d ( (x_0,x_2), L_{02} ) \leq C \bigl( d( \hat x , (L_{01}\times L_{12})^T ) + d(x_1', x_1 ) \bigr).
\end{align*}
\item
For every $s\in\R$ and
$\hx=(\xi_{02}',\xi_1',\xi_1)\in T_{\bu(s)}(M_0\times M_2\times M_1\times M_1)$ 
\begin{align*}
| \hx | 
&\leq C \bigl( | \pi_{02}\xi_{02}' | + \bigl| \xi_1'-\xi_1 \bigr|
+ \bigl| \pi_{0211}^\perp \hx \bigr| \bigr) , \\
| \pi_{02}^\perp\xi_{02}' |
&\leq 
C \bigl( | \pi_{0211}^\perp \hat\xi | + | \xi_1'-\xi_1 | \bigr).
\end{align*}
\item
For every 
$\hx\in\cC^\infty(\R,\bar u^* T(M_0\times M_2\times M_1\times M_1))$
\begin{align*} 
\Vert \hat\xi \Vert_{H^1(\R)}
&\leq 
C \bigl(\Vert \pi_{02} \xi_{02}' \Vert_{H^1(\R)}
+ \Vert \xi_1'-\xi_1 \Vert_{H^1(\R)} 
+ \Vert \pi_{0211}^\perp \hat\xi \Vert_{H^1(\R)} \bigr) ,
\end{align*} 
and the same holds with $H^1$ replaced by $\cC^1$ or $L^p$ for any $p\geq 1$. 
Moreover, 
\begin{align*} 
\Vert \pi_{02}^\perp \xi_{02}'  \Vert_{L^2(\R)}
&\leq 
C\bigl( \Vert \pi_{0211}^\perp \hat\xi  \Vert_{L^2(\R)}
+ \Vert \xi_1'-\xi_1 \Vert_{L^2(\R)} \bigr) , \\
\Vert \pi_{02}^\perp \xi_{02}' \Vert_{H^1(\R)}
&\leq 
C\bigl( \Vert \pi_{0211}^\perp \hat\xi \Vert_{H^1(\R)}
+ \Vert \xi_1'-\xi_1 \Vert_{H^1(\R)} 
+ \bigl\| |\pd_s\bu|\cdot|\hat\xi| \bigl\|_{L^2(\R)} \bigr) .
\end{align*}
\end{enumerate}
\end{lemma}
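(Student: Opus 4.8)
\emph{Strategy and notation.}
Write $p_s:=\bar u(s)=(\bar u_{02}(s),\bar u_1(s),\bar u_1(s))$ and abbreviate $X_s:=T_{p_s}(M_0\times M_2\times M_1\times M_1)$, $\Lambda_s:=T_{p_s}(L_{01}\times L_{12})^T$, $\Delta_s:=T_{p_s}(M_0\times M_2\times\Delta_{M_1})$, $\Lambda_{02,s}:=T_{\bar u_{02}(s)}L_{02}$.
The embedded composition hypothesis says exactly that $\Lambda_s$ is transverse to $\Delta_s$ and that $\mathrm{Pr}_{02}|_{\Lambda_s\cap\Delta_s}\colon\Lambda_s\cap\Delta_s\to\Lambda_{02,s}$ is an isomorphism.
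All of this data varies smoothly with $p_s$, the point $p_s$ ranges over the image of $\bar u$, whose closure in $(L_{01}\times L_{12})^T\cap\Delta$ is compact (since $\bar u_{02}=u_{02}|_{t=0}$ converges to intersection points and $\bar u_1=\ell_1\circ\bar u_{02}$), and $|\partial_s\bar u|$ is uniformly bounded because $u_{02}$ is a finite-energy Floer trajectory with bounded geometry; every constant below comes from this compactness.
The plan for \eqref{lltrans} part (b) is to note that, up to universal constants, the three terms on the right are the components of the bundle map
$$
A_s\colon X_s\longrightarrow \Lambda_{02,s}\oplus T_{\bar u_1(s)}M_1\oplus\Lambda_s^\perp,
\qquad
A_s\hat\xi=\bigl(\pi_{02}\,\mathrm{Pr}_{02}\hat\xi\,,\,\tfrac12(\xi_1'-\xi_1)\,,\,\pi_{0211}^\perp\hat\xi\bigr),
$$
using $d(\hat\xi,\Delta_s)=\tfrac1{\sqrt2}|\xi_1'-\xi_1|$ and $d(\hat\xi,\Lambda_s)=|\pi_{0211}^\perp\hat\xi|$.
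Both inequalities in (b) follow once $A_s$ is a linear isomorphism with $\sup_s\|A_s^{-1}\|<\infty$: injectivity is immediate, since $A_s\hat\xi=0$ puts $\hat\xi\in\Lambda_s\cap\Delta_s$, whence $\mathrm{Pr}_{02}\hat\xi\in\Lambda_{02,s}$ and $0=\pi_{02}\mathrm{Pr}_{02}\hat\xi=\mathrm{Pr}_{02}\hat\xi$ forces $\hat\xi=0$ by injectivity of $\mathrm{Pr}_{02}|_{\Lambda_s\cap\Delta_s}$; a dimension count ($\dim\Lambda_{02,s}+\dim M_1+\dim\Lambda_s^\perp=\dim X_s$) upgrades this to bijectivity, and the uniform bound on $\|A_s^{-1}\|$ follows by continuity of $s\mapsto A_s$ over the compactification $[-\infty,+\infty]$.

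For the sharper second inequality of (b) I would fix a uniformly bounded splitting $X_s=(\Lambda_s\cap\Delta_s)\oplus\Lambda_s'\oplus\Delta_s'$ with $\Lambda_s'\subset\Lambda_s$, $\Delta_s'\subset\Delta_s$, so that $d(\hat\xi,\Delta_s)$ is comparable to $|\hat\xi_{\Lambda'}|$ and $d(\hat\xi,\Lambda_s)$ to $|\hat\xi_{\Delta'}|$; since $\mathrm{Pr}_{02}\hat\xi_0\in\Lambda_{02,s}$ for the $(\Lambda_s\cap\Delta_s)$-component $\hat\xi_0$, we get $\pi_{02}^\perp\xi_{02}'=\pi_{02}^\perp\mathrm{Pr}_{02}(\hat\xi_{\Lambda'}+\hat\xi_{\Delta'})$, of size $\le C(|\hat\xi_{\Lambda'}|+|\hat\xi_{\Delta'}|)\le C(d(\hat\xi,\Delta_s)+d(\hat\xi,\Lambda_s))$.

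For part (c): the $L^p$ and $\cC^0$ statements are just the pointwise bounds integrated, and for the $H^1$/$\cC^1$ form of the first estimate I would differentiate $\hat\xi=A_s^{-1}(A_s\hat\xi)$, getting $\nabla_s\hat\xi=(\nabla_sA_s^{-1})(A_s\hat\xi)+A_s^{-1}\nabla_s(A_s\hat\xi)$ with $\|\nabla_sA_s^{-1}\|\le C|\partial_s\bar u|$; since $|\partial_s\bar u|$ is bounded, the first term is pointwise $\le C(|\pi_{02}\xi_{02}'|+|\xi_1'-\xi_1|+|\pi_{0211}^\perp\hat\xi|)$, hence absorbed into the right-hand side $H^1$-norms, so no correction term appears. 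In the second estimate, by contrast, $\nabla_s(\pi_{02}^\perp\xi_{02}')$ contains $(\nabla_s\pi_{02}^\perp)\xi_{02}'$, of size $\le C|\partial_s\bar u|\,|\hat\xi|$, which genuinely cannot be controlled by $\|\pi_{0211}^\perp\hat\xi\|_{H^1}+\|\xi_1'-\xi_1\|_{H^1}$ (these vanish on $\Lambda_s\cap\Delta_s$ while $\hat\xi$ need not), while the remaining piece $\pi_{02}^\perp\mathrm{Pr}_{02}\nabla_s\hat\xi$ is bounded by applying the pointwise second inequality of (b) to $\nabla_s\hat\xi$ and re-commuting the $s$-dependent projections (each commutator costing another $C|\partial_s\bar u|\,|\hat\xi|$); this is exactly why the summand $\||\partial_s\bar u|\cdot|\hat\xi|\|_{L^2}$ is present there and nowhere else.

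Finally, part (a) is the nonlinear shadow of (b): near the compact submanifold $(L_{01}\times L_{12})^T\cap(M_0\times M_2\times\Delta_{M_1})$, choose local submersions cutting out $(L_{01}\times L_{12})^T$; transversality makes them together with the $M_1\times M_1$-diagonal defect a submersion along the intersection, which yields $d\bigl(\hat x,(L_{01}\times L_{12})^T\cap(M_0\times M_2\times\Delta_{M_1})\bigr)\le C\bigl(d(\hat x,(L_{01}\times L_{12})^T)+d(x_1',x_1)\bigr)$, and post-composing with the $1$-Lipschitz map $\mathrm{Pr}_{02}$, which sends this intersection onto $L_{02}$, gives (a) near the intersection; outside a fixed neighborhood of it the right-hand side is bounded below while the left-hand side is bounded above (in the compact case, or after restricting $\hat x$ to a neighborhood of the compact image of $\bar u$ in the bounded-geometry case), so a large $C$ works. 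I expect no conceptual obstruction: the real content is the uniform-in-$s$ linear algebra of (b), where the embedded-composition hypothesis enters precisely as the isomorphism $\mathrm{Pr}_{02}\colon\Lambda_s\cap\Delta_s\to\Lambda_{02,s}$, together with the bookkeeping in (c) identifying which derivative-of-projection terms survive.
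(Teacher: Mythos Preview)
Your proposal is correct and follows essentially the same route as the paper. The paper phrases the key linear algebra in (b) as a direct-sum decomposition $T_{\bar u}(M_0\times M_2\times M_1\times M_1)=(TL_{02}\times\{0\})\oplus(\{0\}\times(T\Delta_1)^\perp)\oplus T(L_{01}\times L_{12})^\perp$ rather than as the invertibility of your bundle map $A_s$, but these are equivalent statements; part (c) is then handled in both arguments by commuting the $s$-dependent projections with $\nabla_s$ and absorbing the resulting $|\partial_s\bar u|\cdot|\hat\xi|$ commutator terms into the right-hand side wherever the already-established $L^p$ bound on $\hat\xi$ permits.
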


\begin{proof}
We fix metrics on each $M_j$ and use the induced split metrics on both $M_0\times M_2$ and $M_0\times M_2\times M_1\times M_1$.
Towards (a) note that we evidently have $d ( (x_0,x_2), L_{02} ) \leq d( (x_0,x_2,x_1',x_1) , \hat L_{02} )$ for $\hat L_{02}:= (L_{01}\times L_{12})^T \cap M_0\times M_2 \times \Delta_1$.
Moreover, we can estimate with some constant $C$
$$
d ( \hat x, \hat L_{02} ) \leq C \bigl( d( \hat x , (L_{01}\times L_{12})^T ) + d(\hat x , M_0\times M_2 \times \Delta_1) \bigr) ,
$$
since the Lagrangian $(L_{01}\times L_{12})^T$ intersects $M_0\times M_2\times\Delta_1$ transversally. (In more detail this follows from the linear theory below; if $\hat x$ does not lie in an exponential neighbourhood of $\hat L_{02}$, then both sides of the inequality are bounded away from zero, hence the quotient attains a positive minimum on the complement of the exponential neighbourhood.\footnote{
This remains true when some of the $M_i$ are noncompact. Indeed, we only need to consider the case of $d ( (x_0^\nu,x_2^\nu), L_{02} )\to \infty$ and simply note that $L_{02}$ as well as the projection ${\rm pr}_{02}(\hat L_{02})\subset M_0\times M_2$ are compact subsets. Hence we can bound $d ( (x_0^\nu,x_2^\nu), L_{02} ) \leq d ( (x_0^\nu,x_2^\nu), {\rm pr}_{02}(\hat L_{02} ) ) + D \leq d( \hat x^\nu, (L_{01}\times L_{12})^T ) + D$ with a finite constant $D= d(L_{02}, {\rm pr}_{02}(\hat L_{02} ))$ and obtain
$\frac{d( \hat x^\nu, (L_{01}\times L_{12})^T )}{d ( (x_0^\nu,x_2^\nu), L_{02} )} 
\geq 1 - \frac{D}{d ( (x_0^\nu,x_2^\nu), L_{02} )}  \to 1$.
}) 
This proves (a) since $d(\hat x , M_0\times M_2 \times \Delta_1)$ is bounded by $d(x_1',x_1)$.
To approach (b) note moreover that $\hat L_{02}$ injects to $L_{02}\subset M_0\times M_2$.
So at every point of $\hat L_{02}$ we have a decomposition
$T(M_0\times M_2\times M_1\times M_1) =  T\hat L_{02} \oplus ( T\hat L_{02} )^\perp$,
where we can change the first factor to $TL_{02}\times\{0\}$.
On the other hand, the transverse intersection implies 
\begin{equation} \label{transint}
( T\hat L_{02} )^\perp = \bigl(\{0\} \times (T\Delta_1)^\perp \bigr)
\oplus T(L_{01} \times L_{12})^\perp ,\end{equation}
so we obtain a splitting
\begin{equation} \label{split0211}
T(M_0\times M_2\times M_1\times M_1) 
= \bigl( TL_{02}\times\{0\}\bigr) \oplus \bigl(\{0\} \times (T\Delta_1)^\perp \bigr)
\oplus T(L_{01} \times L_{12})^\perp .
\end{equation}
This means that the product of the three orthogonal projections onto
the factors defines an isomorphism.  The norm of this isomorphism is
bounded at each $\bar u(s)\in\hat L_{02}$, so for every
$\hx=(\xi_{02}',\xi_1',\xi_1)\in T_{\bu(s)}(M_0\times M_2\times
M_1\times M_1)$ we have
$$ 
| \hx | \leq C \bigl( | \pi_{02}\xi_{02}' | + \bigl| \xi_1'-\xi_1 \bigr|
+ \bigl| \pi_{0211}^\perp \hx \bigr| \bigr)
$$
with a uniform constant $C$ as claimed in (b).
(Here the projection onto $(T\Delta_1)^\perp$ is given by
$(\xi_{02}',\xi_1',\xi_1)\mapsto \frac 12 (\xi_1'-\xi_1, \xi_1-\xi_1')$.)
Moreover, the splitting \eqref{split0211} commutes with
$$
T(M_0\times M_2) = TL_{02} \oplus ( TL_{02} )^\perp
$$
via the canonical projection on the left hand side, and on the right hand side the identity on 
$TL_{02}$ combined with a bounded map 
$\bigl(\{0\} \times (T\Delta_1)^\perp \bigr)
\oplus T(L_{01} \times L_{12})^\perp \to TL_{02} \oplus ( TL_{02} )^\perp$. This implies that
$$ 
| \pi_{02}^\perp\xi_{02}' |
\leq C\bigl( | \pi_{0211}^\perp \hat\xi | + | \xi_1'-\xi_1 | \bigr)
$$
with another uniform constant $C$. This proves (b).
For $\hx\in\cC^\infty(\R,\bar u^* T(M_0\times M_2\times M_1\times M_1))$
we can then apply the pointwise estimates to $\hx(s)$ 
and integrate over $s\in\R$ to obtain for any $p\geq 1$ including $p=\infty$
\begin{align} \label{previous} 
\Vert \hat\xi \Vert_{L^p(\R)}
&\leq 
C \bigl(\Vert \pi_{02} \xi_{02}' \Vert_{L^p(\R)}
+ \Vert \xi_1'-\xi_1 \Vert_{L^p(\R)} 
+ \Vert \pi_{0211}^\perp \hat\xi \Vert_{L^p(\R)} \bigr),  \\
\Vert \pi_{02}^\perp \xi_{02}' \Vert_{L^p(\R)}
&\leq 
C\bigl( \Vert \pi_{0211}^\perp \hat\xi \Vert_{L^p(\R)}
+ \Vert \xi_1'-\xi_1 \Vert_{L^p(\R)} \bigr) . \nonumber
\end{align}
In order to prove the $H^1$- and $\cC^1$-estimates we also 
apply the pointwise estimates to $\nabla_s\hx(s)$,
\begin{align*} 
| \nabla_s\hx | 
&\leq C \bigl( | \pi_{02}(\nabla_s\xi_{02}') | 
+ \bigl| \nabla_s\xi_1'-\nabla_s\xi_1 \bigr|
+ \bigl| \pi_{0211}^\perp(\nabla_s\hx) \bigr| \bigr), \\
| \pi_{02}^\perp(\nabla_s\xi_{02}') |
&\leq 
C\bigl( | \pi_{0211}^\perp (\nabla_s\hat\xi) | + | \nabla_s\xi_1'-\nabla_s\xi_1 | \bigr).
\end{align*} 
Here we will need the inequalities
\begin{align*} 
| \pi_{02}( \nabla_s\xi_{02}' ) | 
&\leq C\bigl( | \nabla_s (\pi_{02}(\xi_{02}') ) | + | \hat\xi | \bigr), \\
| \pi_{0211}^\perp( \nabla_s\hat\xi ) | 
&\leq C\bigl( | \nabla_s (\pi_{0211}^\perp(\hat\xi) ) | 
+ |\pd_s\bu|\cdot| \hat\xi | \bigr), \\
| \nabla_s (\pi_{02}^\perp(\xi_{02}') ) | 
&\leq C \bigl( | \pi_{02}^\perp (\nabla_s\xi_{02}' ) |  
+ |\pd_s\bu|\cdot|\hat\xi | \bigr) .
\end{align*} 
The first inequality (and similarly the others) 
can be seen by writing $\xi_{02}'$ in a local orthonormal frame
given by $(\gamma_i(s))_{i=1,\dots,k}\in \bu_{02}(s)^*T L_{02}$
and $(\eta_i(s))_{i=1,\dots,K}\in \bu_{02}(s)^*(T L_{02})^\perp$.
Writing $\hx = \sum \lambda^i\gamma_i + \sum \mu^i \eta_i$ we have
\begin{align*}
\bigl| \pi_{02}( \nabla_s\xi_{02}' ) - \nabla_s ( \pi_{02}(\xi_{02}') ) \bigr| 
& = \Bigl| \sum \lambda^i \bigl( \pi_{02}(\nabla_s\gamma_i) - \nabla_s \gamma_i \bigr)
 + \sum \mu^i \pi_{02}\nabla_s( \eta_i) \Bigr|  \\
&\leq C |\pd_s\bu_{02}|\cdot|\xi_{02}' | .
\end{align*}
Note here that $\nabla_s \gamma_i=\nabla_{\pd_s \bu_{02}} \gamma_i$ and 
$\nabla_s\eta_i=\nabla_{\pd_s \bu_{02}}\eta_i$ are uniformly bounded.
Putting things together we obtain the first estimate in (c) with an extra
$\|\hx\|_{L^2(\R)}$ or $\|\hx\|_{\cC^0(\R)}$ on the right hand side, for 
which we can use (\ref{previous}).
For the last estimate in (c) we obtain
\begin{align*} 
\| \nabla_s ( \pi_{02}^\perp \xi_{02}' ) \|_{L^2(\R)} 
&\leq C \bigl( \| \nabla_s ( \pi_{0211}^\perp \hat\xi ) \|_{L^2(\R)}
+ \| \nabla_s\xi_1'-\nabla_s\xi_1 \|_{L^2(\R)} 
+  \bigl\| |\pd_s\bu|\cdot|\hat\xi| \bigl\|_{L^2(\R)} \bigr) .
\end{align*} 
This finishes the proof of (c).
\end{proof}

The following lemma contains a Sobolev estimate with a constant
independent of the width $\delta$ of the middle strip; here the
transversality assumption is used in a crucial way.

\begin{lemma}\label{Sobolev} (Uniform Sobolev Estimate)
There is a constant $C_S$ such that for all $\delta\in(0,1]$ and
$\xi=(\xi_{02},\hat\xi)\in H^2_{1,\delta}$
\begin{align*}
& \| \xi_{02} \|_{\cC^0([0,1],H^1(\R))}
+ \| \hat\xi \|_{\cC^0([0,\delta],H^1(\R))} \\
& \leq C_S\bigl( \|\xi\|_{H^2_{1,\delta}} 
+\| (\xi_{02}- \xi'_{02})|_{t=0} \|_{H^1(\R)}
+ \Vert (\xi_1-\xi_1')|_{t=0} \Vert_{H^1(\R)} 
+ \Vert \pi_{0211}^\perp\hx|_{t=\delta} \Vert_{H^1(\R)}
\bigr).
\end{align*}
In particular, for all $p>2$ including $p=\infty$
and for $\xi\in \Gamma_{1,\delta}$
satisfying the boundary conditions (\ref{LL}),
$$
\| \xi_{02} \|_{L^p(\R\times[0,1])}
+ \| \hat\xi \|_{L^p(\R\times[0,\delta])}
\leq C_S \|\xi\|_{H^2_{1,\delta}} .
$$
\end{lemma}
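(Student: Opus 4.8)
The plan is to treat $\xi_{02}$, which lives on the fixed-width strip $\R\times[0,1]$, and $\hat\xi=(\xi_{02}',\xi_1',\xi_1)$, which lives on the thin strip $\R\times[0,\delta]$, separately. For $\xi_{02}$ nothing is needed beyond the standard trace/Sobolev embedding $H^2(\R\times[0,1])\hookrightarrow\cC^0([0,1],H^1(\R))$ on a strip of fixed width $1$, whose constant is independent of $\delta$: this gives $\|\xi_{02}\|_{\cC^0([0,1],H^1(\R))}\leq C\|\xi_{02}\|_{H^2(\R\times[0,1])}\leq C\|\xi\|_{H^2_{1,\delta}}$. (Concretely one integrates $\tfrac{d}{dt}\|\xi_{02}(\cdot,t)\|_{H^1(\R)}^2=2\langle\partial_t\xi_{02},\xi_{02}\rangle_{H^1(\R)}$ from an averaged base point in $[0,1]$; averaging over the \emph{unit} interval costs nothing.) The real difficulty is $\hat\xi$: the naive embedding $H^2(\R\times[0,\delta])\hookrightarrow\cC^0([0,\delta],H^1(\R))$ has a constant of order $\delta^{-1/2}$, since the same averaging argument over $[0,\delta]$ produces a term $\tfrac1\delta\|\hat\xi\|_{L^2}^2$. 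My plan is to avoid averaging over $[0,\delta]$ altogether and instead integrate $\partial_t$ from a boundary slice $\{t=0\}$ or $\{t=\delta\}$ whose $H^1(\R)$-norm I can control independently of $\delta$; then the fundamental theorem of calculus costs only a harmless factor $\sqrt\delta\leq 1$.

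First I would apply the quantitative transversality estimate Lemma~\ref{lltrans}(c) to $\hat\xi(\cdot,t)$ for each fixed $t\in[0,\delta]$ --- legitimate because $\bar u$ is constant in $t$, so $\hat\xi(\cdot,t)$ is a genuine section over $\bar u$ --- obtaining
$$
\|\hat\xi(\cdot,t)\|_{H^1(\R)}\leq C\bigl(\|\pi_{02}\xi_{02}'(\cdot,t)\|_{H^1(\R)}+\|\xi_1'(\cdot,t)-\xi_1(\cdot,t)\|_{H^1(\R)}+\|\pi_{0211}^\perp\hat\xi(\cdot,t)\|_{H^1(\R)}\bigr).
$$
Taking the supremum over $t$ reduces the task to bounding each of $q\in\{\pi_{02}\xi_{02}',\ \xi_1'-\xi_1,\ \pi_{0211}^\perp\hat\xi\}$ in $\cC^0([0,\delta],H^1(\R))$. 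For each such $q$ I write $q(s,t)=q(s,t_*)+\int_{t_*}^t\partial_\tau q(s,\tau)\,d\tau$, with $t_*=0$ for the first two and $t_*=\delta$ for the last, and apply the Minkowski integral inequality together with Cauchy--Schwarz in $\tau$ to get
$$
\|q\|_{\cC^0([0,\delta],H^1(\R))}\leq\|q(\cdot,t_*)\|_{H^1(\R)}+\sqrt\delta\,\|\partial_t q\|_{L^2([0,\delta],H^1(\R))}.
$$

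Since $\pi_{02}$ and $\pi_{0211}^\perp$ are $t$-independent, $\partial_t q$ is a first-order expression in $\hat\xi$ with bounded coefficients --- here I use that $\partial_s\bar u$, hence $\partial_s\pi_{02}$ and $\partial_s\pi_{0211}^\perp$, are bounded --- so $\|\partial_t q\|_{L^2([0,\delta],H^1(\R))}\leq C\|\hat\xi\|_{H^2(\R\times[0,\delta])}\leq C\|\xi\|_{H^2_{1,\delta}}$, and $\sqrt\delta\leq1$ absorbs the width. The boundary terms are exactly where the conclusion's right-hand side comes from: $\|(\xi_1'-\xi_1)|_{t=0}\|_{H^1(\R)}$ and $\|\pi_{0211}^\perp\hat\xi|_{t=\delta}\|_{H^1(\R)}$ appear literally, while
$$
\|\pi_{02}\xi_{02}'|_{t=0}\|_{H^1(\R)}\leq C\|\xi_{02}'|_{t=0}\|_{H^1(\R)}\leq C\bigl(\|\xi_{02}|_{t=0}\|_{H^1(\R)}+\|(\xi_{02}-\xi_{02}')|_{t=0}\|_{H^1(\R)}\bigr)
$$
and $\|\xi_{02}|_{t=0}\|_{H^1(\R)}\leq C\|\xi_{02}\|_{H^2(\R\times[0,1])}\leq C\|\xi\|_{H^2_{1,\delta}}$ by the trace theorem on the unit-width strip. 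Summing the three contributions with the bound for $\xi_{02}$ gives the first displayed estimate.

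For the ``in particular'' statement I would note that the boundary conditions (\ref{LL}) force $\xi_{02}'|_{t=0}=\xi_{02}|_{t=0}$, $\xi_1'|_{t=0}=\xi_1|_{t=0}$ and $\pi_{0211}^\perp\hat\xi|_{t=\delta}=0$, so the three extra terms vanish and $\|\xi_{02}\|_{\cC^0([0,1],H^1(\R))}+\|\hat\xi\|_{\cC^0([0,\delta],H^1(\R))}\leq C_S\|\xi\|_{H^2_{1,\delta}}$. Composing with the one-dimensional embedding $H^1(\R)\hookrightarrow L^\infty(\R)$ (uniform constant) handles $p=\infty$, and interpolating with the trivial bounds $\|\xi_{02}\|_{L^2(\R\times[0,1])},\|\hat\xi\|_{L^2(\R\times[0,\delta])}\leq\|\xi\|_{H^2_{1,\delta}}$ via $\|f\|_{L^p}\leq\|f\|_{L^2}^{2/p}\|f\|_{L^\infty}^{1-2/p}$ handles $2<p<\infty$. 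The step I expect to be the main obstacle is precisely controlling $\hat\xi$ on the thin strip: the naive Sobolev embedding blows up like $\delta^{-1/2}$, and the only reason the argument closes is the interplay of Lemma~\ref{lltrans}(c) --- which replaces $\hat\xi$ by components with controllable traces at $t=0$ or $t=\delta$ --- with fundamental-theorem-of-calculus integration \emph{from} those controlled slices, which costs only $\sqrt\delta$.
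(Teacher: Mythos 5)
Your proof is correct and, at heart, uses the same three ingredients as the paper: the trace/Sobolev embedding on the fixed-width strip for $\xi_{02}$, the quantitative transversality estimate of Lemma~\ref{lltrans}(c) to replace $\hat\xi$ by components with controllable traces, and integration over $[0,\delta]$ from a boundary slice, which costs only $\sqrt\delta\le 1$. What differs is the packaging. The paper argues by contradiction with a normalized sequence $\xi^\nu$: it applies Lemma~\ref{lltrans}(c) once, at the single slice $t=\delta^\nu$, and then moves the $\pi_{02}\xi_{02}'$-term from $t=\delta^\nu$ back to $t=0$ via the integrated difference bound \eqref{txi1}. You instead give a direct argument, applying Lemma~\ref{lltrans}(c) at \emph{every} $t\in[0,\delta]$ and then integrating each of the three projected quantities from the relevant boundary slice ($t=0$ for $\pi_{02}\xi_{02}'$ and $\xi_1'-\xi_1$, $t=\delta$ for $\pi_{0211}^\perp\hat\xi$); the needed $\cC^0([0,\delta],H^1(\R))$ control of each piece then comes from the fundamental theorem of calculus plus Minkowski and Cauchy--Schwarz, with $\partial_t q$ bounded by $\|\hat\xi\|_{H^2}$ because the projections are $t$-independent and $\partial_s\bar u$ is bounded. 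This buys a genuinely explicit constant $C_S$ (the paper's compactness argument only proves existence), while the paper's version is shorter to write down once the contradiction sequence is set up. You also make the mechanism more transparent: it is precisely the transversality splitting that removes the $\delta^{-1/2}$ loss in the naive Sobolev embedding, which is what you emphasize and what the paper leaves implicit. The derivation of the ``in particular'' statement from the boundary conditions \eqref{LL} is correct, including the interpolation argument for $2<p<\infty$.
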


\begin{proof}
The $\cC^0$- and $L^p$-estimates will follow from the 
continuous embeddings $H^1(\R)\hookrightarrow\cC^0(\R)$
and $H^1(\R)\hookrightarrow L^p(\R)$ for $p\geq 2$.
So it suffices to suppose by contradiction that there are 
sequences $\delta^\nu>0$ and $\xi^\nu\in H^2_{1,\delta^\nu}$ with 
$\| \xi_{02}^\nu \|_{\cC^0([0,1],H^1(\R))}
+ \| \hat\xi^\nu \|_{\cC^0([0,\delta^\nu],H^1(\R))}=1$
but 
$\|\xi^\nu\|_{H^2_{1,\delta^\nu}} 
+\| (\xi^\nu_{02}- \xi'^\nu_{02})|_{t=0} \|_{H^1(\R)}
+ \Vert (\xi_1^\nu-\xi_1'^\nu)|_{t=0} \Vert_{H^1(\R)} 
+ \Vert \pi_{0211}^\perp\hx^\nu|_{t=\delta^\nu} \Vert_{H^1(\R)}
\to 0$.
By the standard Sobolev embedding 
$$
H^2([0,1]\times\R) \subset H^1([0,1],X)\hookrightarrow\cC^0([0,1],X) 
\qquad\text{with}\; X=H^1(\R)
$$
this implies $\| \xi^\nu_{02} \|_{\cC^0([0,1],H^1(\R))} \to 0$, and so 
\begin{equation} \label{before}
 \| \xi'^\nu_{02}|_{t=0} \|_{H^1(\R)}
\leq \| \xi^\nu_{02}|_{t=0} \|_{H^1(\R)} 
+ \| (\xi^\nu_{02}- \xi'^\nu_{02})|_{t=0} \|_{H^1(\R)} \to 0 .
\end{equation}
We can moreover integrate for all $t_0\in[0,\delta^\nu]$ to obtain
\begin{equation}\label{txi1}
\| \hat\xi^\nu|_{t=t_0} - \hat\xi^\nu|_{t=\delta^\nu} \|_{H^1(\R)}^2
\leq \delta^\nu \int_0^{\delta^\nu} \|\nabla_t\hat\xi^\nu\|_{H^1(\R)}^2
\leq \delta^\nu \|\hat\xi^\nu\|_{H^2(\R\times[0,\delta^\nu])}^2
\to 0 .
\end{equation}
Using Lemma~\ref{lltrans} we then obtain
\begin{align*}
& \|\hat\xi^\nu|_{t=\delta^\nu}\|_{H^1(\R)} \\
&\leq C \bigl(\Vert \pi_{02}\xi_{02}'^\nu|_{t=\delta^\nu} \Vert_{H^1(\R)}
+ \Vert (\xi_1^\nu-\xi_1'^\nu)|_{t=\delta^\nu} \Vert_{H^1(\R)} 
+ \Vert \pi_{0211}^\perp\hx^\nu|_{t=\delta^\nu} \Vert_{H^1(\R)} \bigr) \\
&\leq C \bigl(
\Vert \pi_{02}(\xi_{02}'^\nu|_{t=\delta^\nu} - \xi_{02}'^\nu|_{t=0} ) \Vert_{H^1(\R)}
+ \Vert \pi_{02} (\xi'^\nu_{02})|_{t=0} \Vert_{H^1(\R)} \\
&\qquad
+ \Vert (\xi_1^\nu-\xi_1'^\nu)|_{t=0} \Vert_{H^1(\R)} 
+ 2 \Vert \hat\xi^\nu|_{t=\delta^\nu} - \hat\xi^\nu|_{t=0} \Vert_{H^1(\R)} 
+ \Vert \pi_{0211}^\perp\hx^\nu|_{t=\delta^\nu} \Vert_{H^1(\R)}  
\bigr) \\
&\to 0
\end{align*}
with uniform constants $C,C'$ by \eqref{LL}, \eqref{before}, \eqref{txi1}, and
a bound on the operator norm of $\pi_{02}$.
Now combining $\|\hat\xi^\nu|_{t=\delta^\nu}\|_{H^1(\R)}\to 0$ with (\ref{txi1})
proves 
$\|\hat\xi^\nu\|_{\cC^0([0,\delta^\nu],H^1(\R))}\to 0$
in contradiction to the assumption and the previously established fact that
 $\| \xi^\nu_{02} \|_{\cC^0([0,1],H^1(\R))} \to 0$.
\end{proof}

The solution $u$ of the $0$-equation corresponds to $\xi=0$,
which is an almost zero of $\F_u$. This and a quadratic estimate
for $d\F_u$ near $0$ is the content of the next lemma.
For later purposes we also compare $d\F_u(\xi)$
with the linearized operator $D_{e_u(\xi)}$ 
of $\ol{\pd}_J=(\ol{\pd}_{J_{02}},\ol{\pd}_{\hat J})$ 
at $e_u(\xi)$.
To state the comparison we will need the pointwise linear operator
$$
E_u(\xi)\eta := \tfrac{d}{d\tau} e_u(\xi+\tau\eta) |_{\tau=0} .
$$
It satisfies $E_u(0)={\rm Id}$, and since $e_u$ maps $\Gamma_{1,\delta}$
to the space of maps satisfying the boundary conditions in (\ref{vvvv}),
the linearization $E_u(\xi)$ maps $\Gamma_{1,\delta}$ to 
the space of sections 
$\zeta\in\Gamma(v_{02}^*TM_{02})\times \Gamma(\hat v^*TM_{0211})$
over $v=(v_{02},\hat v):=e_u(\xi)$, 
that satisfy the linearized boundary conditions 
$$
(\zeta_{02},\zeta)|_{t=0}\in 
T_v(\Delta_{M_0\times M_2}\times \Delta_{M_1}), 
\quad
\hat\zeta|_{t=\delta}\in T_{\hat v}(L_{01}\times L_{12}), \quad 
\zeta_{02}|_{t=1}\in T_{v_{02}}(L_0\times L_2) .
$$
The linearized operator $D_v$ acts on this space of sections and is given by
$$
D_v \zeta = \tilde\nabla_\tau \ol{\partial}_J e_v(\tau\zeta) |_{\tau=0} ,
$$
with the connection $\tilde\nabla$ introduced on 
page \pageref{connection}.
In this notation we have $D_{e_u(0)}=d\F_u(0)$.

\begin{lemma}\label{quadratic} (Uniform quadratic and error estimates)
There are uniform constants $\eps>0$ and $C_1, C_2,C_3$ such that for
all $\delta\in(0,1]$ and $\xi\in\Gamma_{1,\delta}(\eps)$,
$\eta\in\Gamma_{1,\delta}$
\begin{align*}
\|\F_u(0)\|_{\Om_{1,\delta}} &\leq C_1 \delta^{\frac 14}, \\
\| d\F_u(\xi)\eta - d\F_u(0)\eta \|_{\Om_{1,\delta}}
&\leq C_2 \| \xi \|_{\Gamma_{1,\delta}} \| \eta \|_{\Gamma_{1,\delta}} ,\\
\| d\F_u(\xi)\eta - \Phi_u(\xi)^{-1}D_{e_u(\xi)} E_u(\xi)\eta 
\|_{\Om_{1,\delta}}
&\leq C_3 \| \xi \|_{\Gamma_{1,\delta}} \| \eta \|_{\Gamma_{1,\delta}} .
\end{align*}
\end{lemma}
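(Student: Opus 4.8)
\emph{Overview.} The plan is to establish the three bounds in turn; the one substantial point common to all of them is that the constants must be independent of the width $\delta$ of the middle strip, and the devices making this possible are the $L^4$-norms of $\xi$ and $\nabla\xi$ built into $\Gamma_{1,\delta}$ and $\Om_{1,\delta}$ together with the uniform Sobolev estimate of Lemma~\ref{Sobolev}. \emph{First estimate.} I would simply evaluate $\F_u(0)$. Since $e_{u_{02}}(0)=u_{02}$ and $\Phi_{u_{02}}(0)=\Id$ while $u_{02}$ is $J_{02}$-holomorphic, the $M_0\times M_2$-component of $\F_u(0)$ vanishes identically; since $e_{\bar u}(0)=\bar u$, $\Phi_{\bar u}(0)=\Id$, and $\bar u=(\bar u_{02},\bar u_1,\bar u_1)$ is independent of $t$, the remaining component equals $\ol\partial_{\hat J}\bar u=(\partial_s\bar u_{02},\partial_s\bar u_1,\partial_s\bar u_1)$, a $t$-independent section over the thin strip $\R\times[0,\delta]$. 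Because $u$ has finite energy and limits $x^\pm$, the curve $\bar u$, hence $\partial_s\bar u$, decays exponentially, so $\|\partial_s\bar u\|_{H^1(\R)}$ and $\|\partial_s\bar u\|_{L^4(\R)}$ are finite; a $t$-independent section $f$ satisfies $\|f\|_{H^1(\R\times[0,\delta])}=\delta^{1/2}\|f\|_{H^1(\R)}$ and $\|f\|_{L^4(\R\times[0,\delta])}=\delta^{1/4}\|f\|_{L^4(\R)}$, whence $\|\F_u(0)\|_{\Om_{1,\delta}}\le\delta^{1/2}\|\partial_s\bar u\|_{H^1(\R)}+\delta^{1/4}\|\partial_s\bar u\|_{L^4(\R)}\le C_1\delta^{1/4}$ for $\delta\le 1$.

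\emph{Quadratic estimate.} Here $\F_u$ is a composition of the fibrewise-smooth exponential map $e_u$ (built from $\exp$ and the quadratic corrections $Q,Q^{02}$ of Lemma~\ref{QQ}), the first-order nonlinear operator $\ol\partial_J$, and the parallel transports $\Phi_u(\xi)^{-1}$, so $d\F_u(\xi)\eta$ is a first-order operator in $\eta$ whose coefficients depend smoothly on $(\xi,\nabla\xi)$ and on $(u,\nabla u)$. Writing $d\F_u(\xi)\eta-d\F_u(0)\eta=\int_0^1\tfrac{d}{dr}\bigl(d\F_u(r\xi)\eta\bigr)\,dr$ and expanding in a local frame, each term is pointwise bounded by the product of one factor from $\{|\xi|,|\nabla\xi|\}$, at most one factor from $\{|\eta|,|\nabla\eta|\}$, and factors built from $u$, $\nabla u$ and the coefficient functions, all of which are bounded since the images of $u_{02},\bar u$ lie in compact sets and the maps $Q,Q^{02}$ of Lemma~\ref{QQ} and their derivatives are uniformly bounded on the $\eps_0$-balls. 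Integrating over the multi-strip and using H\"older, the $L^2$-part of the $\Om_{1,\delta}$-norm is controlled by sums of $\|\nabla\xi\|_{L^4}\|\eta\|_{L^4}$, $\|\xi\|_{L^4}\|\nabla\eta\|_{L^4}$, $\|\xi\|_{L^4}\|\eta\|_{L^4}$, and similar; the $L^4$-part by sums of $\|\nabla\xi\|_{L^4}\|\eta\|_{L^\infty}$, $\|\xi\|_{L^\infty}\|\nabla\eta\|_{L^4}$, $\|\xi\|_{L^\infty}\|\eta\|_{L^4}$. Now $\|\nabla\xi\|_{L^4},\|\nabla\eta\|_{L^4}$ are literally part of the $\Gamma_{1,\delta}$-norm, while $\|\eta\|_{L^4},\|\eta\|_{L^\infty},\|\xi\|_{L^4},\|\xi\|_{L^\infty}\le C_S\|\cdot\|_{H^2_{1,\delta}}\le C_S\|\cdot\|_{\Gamma_{1,\delta}}$ by Lemma~\ref{Sobolev} (which includes $p=\infty$), so $C_2$ can be chosen independent of $\delta$. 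The $H^1$-target part, requiring one more $s$-derivative, is estimated in the same fashion, the worst additional factor $|\partial_s u|$ entering only in $L^2$ or $L^4$.

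\emph{Comparison estimate.} Write $\F_u(\xi)=\Phi_u(\xi)^{-1}\,\ol\partial_J e_u(\xi)$ and differentiate in $\eta$. Because $\tilde\nabla$ was chosen to leave the almost complex structure invariant, the torsion identity shows that the covariant derivative $\tilde\nabla_\tau\bigl(\ol\partial_J e_u(\xi+\tau\eta)\bigr)\big|_{\tau=0}$ depends only on $e_u(\xi)$ and on the variation $E_u(\xi)\eta$, and hence equals $D_{e_u(\xi)}\bigl(E_u(\xi)\eta\bigr)$; consequently the only discrepancy between $d\F_u(\xi)\eta$ and $\Phi_u(\xi)^{-1}D_{e_u(\xi)}\bigl(E_u(\xi)\eta\bigr)$ comes from commuting the $\tau$- and $\sigma$-derivatives of the parallel transport $\Phi_u(\xi+\tau\eta)^{-1}$, which, as in the standard computation \cite[Ch.~3]{ms:jh}, produces a zeroth-order term in $\eta$: an integral over $\sigma\in[0,1]$ of the curvature of $\tilde\nabla$ evaluated on $\partial_\sigma e_u(\sigma\xi)=E_u(\sigma\xi)\xi$ and $\sigma E_u(\sigma\xi)\eta$ and applied to a parallel transport of $\ol\partial_J e_u(\xi)$. (At $\xi=0$ the factor $\partial_\sigma e_u(0)$ vanishes, recovering $D_{e_u(0)}=d\F_u(0)$.) By Lemma~\ref{QQ} the factors $E_u(\sigma\xi)\xi$ and $E_u(\sigma\xi)\eta$ are pointwise $\le C|\xi|$ and $C|\eta|$ on the $\eps$-ball, so this discrepancy is pointwise $\le C|\xi||\eta|\,|\ol\partial_J e_u(\xi)|$; integrating and applying H\"older together with Lemma~\ref{Sobolev} exactly as above bounds its $\Om_{1,\delta}$-norm by $C\,C_S^2\,\|\xi\|_{\Gamma_{1,\delta}}\|\eta\|_{\Gamma_{1,\delta}}\,\|\F_u(\xi)\|_{\Om_{1,\delta}}$, using $\|\ol\partial_J e_u(\xi)\|=\|\Phi_u(\xi)\F_u(\xi)\|\le C\|\F_u(\xi)\|$. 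Since $\|\F_u(\xi)\|_{\Om_{1,\delta}}$ is bounded uniformly in $\delta\le 1$ on the $\eps$-ball — by the first estimate, the quadratic estimate, and a uniform bound on $\|d\F_u(0)\|$ — the discrepancy is $\le C_3\|\xi\|_{\Gamma_{1,\delta}}\|\eta\|_{\Gamma_{1,\delta}}$ with $C_3$ uniform; the $H^1$-target part again costs one further $s$-derivative, absorbed by one more H\"older estimate of the same type.

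\emph{Main difficulty.} The step requiring the most care is exactly the $\delta$-uniformity: on $\R\times[0,\delta]$ the embeddings $H^1\hookrightarrow L^p$ and $H^2\hookrightarrow\cC^0$ degenerate as $\delta\to 0$, so the familiar Floer-theoretic term counting cannot be quoted verbatim and must be reorganized around the two $\delta$-insensitive inputs above — the $L^4$-norms of $\xi,\nabla\xi$, which convert Sobolev multiplication into H\"older $L^4\times L^4\to L^2$ and $L^\infty\times L^4\to L^4$, and the uniform Sobolev estimate of Lemma~\ref{Sobolev}, whose own proof rests on the quantitative transversality of $(L_{01}\times L_{12})^T$ with $M_0\times M_2\times\Delta_1$ from Lemma~\ref{lltrans}. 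Granting these, all three estimates reduce to a bounded (if lengthy) term-by-term bookkeeping, and the uniform bound on $\|d\F_u(0)\|$ invoked above is the one auxiliary fact to be recorded along the way.
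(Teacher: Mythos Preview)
Your approach is essentially the paper's: evaluate $\F_u(0)=(0,\partial_s\bar u)$ directly, derive the identity $\Phi_u(\xi)d\F_u(\xi)\eta-D_{e_u(\xi)}E_u(\xi)\eta=-\Psi_u(\xi,\eta,\F_u(\xi))$, and for the quadratic estimate obtain a pointwise bound and integrate using H\"older together with the uniform Sobolev Lemma~\ref{Sobolev}. The organization differs only in that the paper treats the $\Psi$-term as the first summand in its explicit expansion of $\Phi_{\bar u}(\hat\xi)\bigl(d\F_{\bar u}(\hat\xi)\hat\eta-d\F_{\bar u}(0)\hat\eta\bigr)$ (following \cite[Prop.~3.5.3]{ms:jh}), so the third estimate falls out of the second rather than being bounded separately via a uniform bound on $\|\F_u(\xi)\|_{\Om_{1,\delta}}$; your route is fine too.

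There is one genuine imprecision. In the $H^1$-part of the quadratic estimate you write that the extra derivative produces ``the worst additional factor $|\partial_s u|$''. This is not what happens: the derivative lands on $\xi$ and $\eta$, and the paper's pointwise bound reads
\[
\bigl|\nabla\bigl(d\F_{\bar u}(\hat\xi)\hat\eta-d\F_{\bar u}(0)\hat\eta\bigr)\bigr|
\le c_4\bigl(|\hat\xi|+|\nabla\hat\xi|\bigr)\bigl(|\hat\eta|+|\nabla\hat\eta|\bigr)
+ c_4\bigl(|\nabla^2\hat\xi|\,|\hat\eta|+|\nabla\hat\xi|^2|\hat\eta|+|\nabla\hat\xi|\,|\nabla\hat\eta|+|\hat\xi|\,|\nabla^2\hat\eta|\bigr).
\]
These second-order terms are precisely why $\Gamma_{1,\delta}$ carries both the full $H^2$-norm and $\|\nabla\xi\|_{L^4}$: one pairs $\|\nabla^2\hat\xi\|_{L^2}\|\hat\eta\|_{\cC^0}$, $\|\hat\xi\|_{\cC^0}\|\nabla^2\hat\eta\|_{L^2}$, $\|\nabla\hat\xi\|_{L^4}\|\nabla\hat\eta\|_{L^4}$, and for the cubic term $\|\nabla\hat\xi\|_{L^2}\|\nabla\hat\xi\|_{L^4}\|\hat\eta\|_{L^4}$ with $\|\nabla\hat\xi\|_{L^2}\le\eps$. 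The same correction applies to the $H^1$-part of your $\Psi$-estimate: differentiating $\Psi_u(\xi,\eta,\F_u(\xi))$ produces $|\nabla\xi|\,|\eta|\,|\F_u(\xi)|$, $|\xi|\,|\nabla\eta|\,|\F_u(\xi)|$, and $|\xi|\,|\eta|\,|\nabla\F_u(\xi)|$, all of which are handled by the same H\"older pairings, but none of which is ``an additional factor $|\partial_s u|$''. Once you record these terms explicitly, your argument goes through with $\delta$-uniform constants exactly as in the paper.
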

\begin{proof}
To estimate $\F_u(0)$ we recall that $u_{02}$ is pseudoholomorphic
and $\bar u$ is constant in $t$, so
\begin{align*}
\|\F_u(0)\|_{\Om_{1,\delta}}
&=\|(0,\partial_s \bar u)\|_{H^1_{1,\delta}} + \|(0,\partial_s \bar u)\|_{L^4_{1,\delta}} \\
&\leq \delta^{\frac 12} \bigl( \|\partial_s u_{02}|_{t=0}\|_{H^1(\R)} 
+ 2 \|\partial_s \bar u_1\|_{H^1(\R)} \bigr)
+ \delta^{\frac 14} \bigl( \|\partial_s u_{02}|_{t=0}\|_{L^4(\R)} 
+ 2 \|\partial_s \bar u_1\|_{L^4(\R)} \bigr) \\
&\leq C_1 \delta^{\frac 14} .
\end{align*}
Here $\partial_s u_{02}\to 0$ converges exponentially as
$s\to\pm\infty$, and so does 
$\partial_s \bar u_1=d\ell_1(\partial_s\bar u_{02})$, where 
$\ell_1$ from \eqref{ell1} has bounded differential.
This shows that the above constant $C_1$ is indeed finite.
For the third estimate we differentiate as in \cite[p.68]{ms:jh}
the identity
$\Phi_u(\xi+\tau\eta)\F_u(\xi+\tau\eta) = \ol{\pd}_J(e_u(\xi+\tau\eta))$
to obtain
\begin{equation}\label{dFD}
\Phi_u(\xi)d\F_u(\xi)\eta - D_{e_u(\xi)} E_u(\xi)\eta
= - \Psi_u(\xi,\eta,\F_u(\xi)) ,
\end{equation}
where the estimate for the right hand side
$$
\Psi_u(\xi,\eta,\zeta)
:=\tilde\nabla_\tau(\Phi_u(\xi+\tau\eta)\zeta)|_{\tau=0} 
$$
is part of the estimates below.
The first component of $\F_u$ is independent of $\delta$, so the
quadratic estimates for it simply follow from the continuous 
differentiability of $\F_u$. 
For the second component we follow the argument in \cite[Prop.3.5.3.]{ms:jh} to
obtain a uniform constant for all $\delta\in(0,1]$.
We need to consider
$$ \F_{\bar u}(\hat\xi):= \Phi_{\bar
u}(\hat\xi)^{-1}(\ol{\partial}_{\hat J} e_{\bar u}(\hat\xi) ),
$$
where $e_{\bar u}(\hat\xi)=\exp_{\bar u}(\hat\xi+Q(\hat\xi))$ is the
exponential map with quadratic correction defined in \eqref{expu}.
Note that our parallel transport $\Phi_{\bu}(\hx)$ is defined with
respect to the path $\tau\mapsto e_\bu(\tau\hx)$ and the Hermitian
connection $\tilde\nabla$ on $T(M_0\times M_2\times M_1\times M_1)$
that leaves $\hat J$ invariant.  Since $e_\bu(0)=\bu$ and $d
e_\bu(0)={\rm Id}$, the same path can be used in the definition of
$\nabla_\hx$ instead of the geodesic.  Now let $\xi,\eta
\in\Gamma_{1,\delta}$ with $\|\xi\|_{H^2_{1,\delta}}\leq\eps$.  Then
by Lemma~\ref{Sobolev}
$$
\|\hx\|_{\cC^0}\leq C_S\|\xi\|_{H^2_{1,\delta}}\leq C_S\eps =: c_0,
\qquad
\|\he\|_{\cC^0}\leq C_S\|\eta\|_{H^2_{1,\delta}}
$$
with a uniform constant $C_S$ thus a uniform constant $c_0$ that only
depends on $\eps$. In the following, all constants will be uniform
in the sense that they only depend on $c_0$ and hence $\eps$.
Next, we consider
$$
E_\bu(\hx)\he := \tfrac{d}{d\tau} e_\bu(\hx+\tau\he) |_{\tau=0} , 
\qquad
\Psi_\bu(\hx,\he,\zeta)
:=\tilde\nabla_\tau(\Phi_\bu(\hx+\tau\he)\zeta)|_{\tau=0} .
$$
Note that $E_\bu(0)={\rm Id}$ and that
$\Psi(0,\he,\zeta)=0$ since the covariant derivative exactly uses 
the parallel transport $\Phi_\bu(\tau\he)$.
Moreover, these maps are linear in $\he$ and $\zeta$, and they
depend smoothly on $\hx$.
So given $\eps$ and thus $|\hx|\leq c_0$ we have linear bounds
$$
|E_\bu(\hx)|\leq c_1 ,\qquad
|\nabla(E_\bu(\hx))|
\leq c_1 \bigl( |\nabla\hx| + |d\bu||\hx| \bigr),\qquad
|\Psi_\bu(\hx,\he,\zeta)|\leq c_1|\hx||\he||\zeta| 
$$
with a uniform constant $c_1$.
With these preparations we calculate from (\ref{dFD}),
using the notation of \cite[Prop.3.5.3.]{ms:jh},
\begin{align*}
&\Phi_\bu(\hx) \bigl( d\F_\bu(\hx)\he - d\F_\bu (0) \he \bigr)\\
&=
- \Psi_\bu ( \hx,\he,\F_\bu(\hx) )
+ \bigl( \nabla (E_\bu(\hx)) \he \bigr)^{0,1}
+ \bigl( \bigl( E_\bu(\hx) - \Phi_\bu(\hx) \bigr) \nabla\he \bigr)^{0,1} \\
&\quad
-\tfrac 12 \hat J(e_\bu(\hx)) 
\bigl( \bigl( (\nabla_{(E_\bu(\hx)\he
- \Phi_\bu(\hx)\he )}\hat J )(e_\bu(\hx)) \bigr)
\Phi_\bu(\hx) d\bar u \bigr)^{0,1} \\
&\quad
-\tfrac 12 \hat J(e_\bu(\hx)) 
\bigl( 
\bigl( (\nabla_{\Phi_\bu(\hx)\he} \hat J)(e_\bu(\hx)) 
- \Phi_\bu(\hx) (\nabla_\he \hat J)(\bu) \Phi_\bu(\hx)^{-1}  \bigr)
\Phi_\bu(\hx) d\bar u \bigr)^{0,1} \\
&\quad
-\tfrac 12 \hat J(e_\bu(\hx)) 
\bigl( ( \nabla_{E_\bu(\hx)\he}\hat J )(e_\bu(\hx)) 
(d (e_\bu(\hx)) - \Phi_\bu(\hx) d\bar u ) \bigr)^{0,1} .
\end{align*}
We then use the uniform bounds\footnote{
For noncompact $M_i$ we here need $\hat J, \nabla\hat J$ bounded only in a compact neighbourhood of the image of $\bar u$.
}
on $\|\hat J\|_\infty$, $\|\nabla\hat J\|_\infty$,  $\|\Phi_{\bar u}(\hat\xi)^{-1}\|$, $|d\bar u|$,
$|\hx|$, and the estimates
\begin{align*}
&|\F_\bu(\hx)|\leq C|d(e_\bu(\hx))| 
\leq c_2 \bigl( |\nabla\hx| + |d\bu| \bigr) , \qquad
|d(e_\bu(\hx)) - \Phi_\bu(\hx) d\bar u | 
\leq c_2 \bigl( |\nabla\hx| + |d\bu||\hx| \bigr), \\
& 
\bigl| E_\bu(\hx) - \Phi_\bu(\hx) \bigr|\leq c_2 |\hx|, \qquad
\bigl| (\nabla_{\Phi_\bu(\hx)\he} \hat J)(e_\bu(\hx)) 
- \Phi_\bu(\hx) (\nabla_\he \hat J)(\bu) \Phi_\bu(\hx)^{-1}  \bigr|
\leq c_2 |\hx||\he| 
\end{align*}
with a uniform constant $c_2$ to obtain
with a further uniform constant $c_3$
\begin{align*}
\bigl| d\F_\bu(\hx)\he - d\F_\bu (0) \he \bigr| 
%
%
&\leq
c_3 \bigl(|\hx| |\he| + |\he| |\nabla\hx| +| \hx | |\nabla\he| \bigr).
\end{align*}
So far these pointwise estimates were standard calculations.
Now we have to check that they actually lead to uniform bounds  
in the $\delta$-dependent norms.
The zeroth order part of the $\Om_{1,\delta}$-norm
over $\R\times[0,\delta]$ can be estimated with the
help of Lemma~\ref{Sobolev} by
\begin{align*}
\bigl\| d\F_\bu(\hx)\he - d\F_\bu (0) \he \bigr\|_{L^2} 
&\leq
c_3 \bigl(\|\hx\|_{L^4} \|\he\|_{L^4} + \|\he\|_{\cC^0} \|\nabla\hx\|_{L^2} 
+ \| \hx \|_{\cC^0} \|\nabla\he\|_{L^2} \bigr) \\
&\leq
c_3 ( C_S^2 + 2 C_S) \|\xi\|_{H^2_{1,\delta}} \|\eta\|_{H^2_{1,\delta}}, \\
\bigl\| d\F_\bu(\hx)\he - d\F_\bu (0) \he \bigr\|_{L^4} 
&\leq
c_3 \bigl(\|\hx\|_{L^8} \|\he\|_{L^8} + \|\he\|_{\cC^0} \|\nabla\hx\|_{L^4} 
+ \| \hx \|_{\cC^0} \|\nabla\he\|_{L^4} \bigr) \\
&\leq
c_3 ( C_S^2 + 2 C_S) 
\bigl( \|\xi\|_{H^2_{1,\delta}} + \|\xi\|_{L^4_{1,\delta}} \bigr)
\bigl( \|\eta\|_{H^2_{1,\delta}} + \|\nabla\eta\|_{L^4_{1,\delta}} \bigr).
\end{align*}
For the first order part of the $\Om_{1,\delta}$-norm 
one differentiates the above identity and uses further bounds on
 $\|\nabla^2\hat J\|_\infty$ and $|\nabla d\bar u|$ 
to find a pointwise bound
\begin{align*}
\bigl| \nabla \bigl( d\F_\bu(\hx)\he - d\F_\bu (0) \he \bigr) \bigr| 
&\leq
c_4 \bigl(|\hx| + |\nabla\hx| \bigr)\bigl(|\he| + |\nabla\he| \bigr) \\
&\quad
+c_4 \bigl( |\nabla^2\hx| |\he| +  |\nabla\hx|^2 |\he|
+ |\nabla\hx| |\nabla\he| + |\hx| |\nabla^2\he| \bigr) .
\end{align*}
Then we again use Lemma~\ref{Sobolev}
and $\|\nabla\hx\|_{L^2}\leq\eps$ to obtain with a final 
uniform constant $c_5$
\begin{align*}
&\bigl\|\nabla\bigl( d\F_\bu(\hx)\he - d\F_\bu (0) \he \bigr) \bigr\|_{L^2} \\
&\leq
c_4 \bigl(\|\hx\|_{L^4} + \|\nabla\hx\|_{L^4} \bigr)
\bigl(\|\he\|_{L^4} + \|\nabla\he\|_{L^4} \bigr) \\
&\quad
+ c_4 \bigl(
 \|\nabla^2\hx\|_{L^2} \|\he\|_{\cC^0} 
+ \|\nabla\hx\|_{L^2} \|\nabla\hx\|_{L^4} \|\he\|_{L^4}
+ \|\nabla\hx\|_{L^4} \|\nabla\he\|_{L^4} 
+ \|\hx\|_{\cC^0} \|\nabla^2\he\|_{L^2} \bigr) \\
&\leq
c_5 
\bigl( \|\xi\|_{H^2_{1,\delta}} + \|\nabla\xi\|_{L^4_{1,\delta}} \bigr)
\bigl( \|\eta\|_{H^2_{1,\delta}} + \|\nabla\eta\|_{L^4_{1,\delta}} \bigr).
\end{align*}
\vspace{-8mm}

\end{proof}

Theorem~\ref{solving} now follows from the implicit function theorem
\cite[A.3.4]{ms:jh} if we can establish
surjectivity and a uniform bound on the right inverse 
for the linearized operator 
\begin{align} \label{lin op}
D^\delta : \Gamma_{1,\delta} \to \Om_{1,\delta},
\qquad
D^\delta \xi &:= d\F_u(0)\xi = \bigl( D_{u_{02}}\xi_{02} \,,\, D_{\bar u}\hat\xi \bigr) ; \\
D_{u_{02}}\xi_{02}
&=\nabla_s\xi_{02}+J(u_{02})\nabla_t\xi_{02} 
+ \nabla_{\xi_{02}}J_{02}(u_{02})\pd_t u_{02} , \nonumber \\
D_{\bar u}\hat\xi 
&= \nabla_s \hat\xi + \hat J(\bar u)\nabla_t \hat\xi 
+ \tfrac 12 \nabla_{\hx}\hat J(\bu)\hat J(\bu) \pd_s \bu . \nonumber
\end{align}
Here $D_{u_{02}}$ and $D_{\bar u}$ are the linearized operators of
$\ol{\partial}_{J_{02}}$ at $u_{02}$ (which is pseudoholomorphic) and of
$\ol{\partial}_{\hat J}$ at $\bu$ (which satisfies $\pd_t\bu=0$)
respectively.  
(See \cite[Prop.3.1.1.]{ms:jh} for an explicit calculation of the linearized 
operators, and note that we identify $\Om^{0,1}(\R\times[0,1],u^*TM)$ with sections
of $u^*TM$ by $\gamma ds+ J\gamma dt\mapsto\gamma$.)
We can identify the cokernel of $D^\delta$ with $(\im
D^\delta)^\perp \subset (H^1_{1,\delta})^*$.  By elliptic regularity
any element in this cokernel can be represented by the $L^2$-inner
product $\lan \eta,\im D^\delta\ran=0$ with a smooth section $\eta$.
Partial integration then shows that $\eta\in\Gamma_{1,\delta}$
satisfies the boundary conditions (\ref{LL}) and lies in the kernel of
the formal adjoint operator, $(D^\delta)^*\eta=0$.  Note that
$(D^\delta)^*$ is given by $\bigl(-\nabla_s + J_{02}(u_{02})\nabla_t ,
-\nabla_s + \hat J(\bar u)\nabla_t \bigr)$ plus lower order terms.  So
$(D^\delta)^*$ has the same analytic properties as $D^\delta$, and we
will prove the surjectivity of $D^\delta$ by establishing injectivity
for $(D^\delta)^*$.

By our assumptions on the index and regularity of 
$(u_0,u_2)\in\tM^1_0(x^-,x^+)$ we know that the 
operator $D_{u_{02}}\oplus\pi_{02}^\perp$ on the space of 
sections in $H^2(u_{02}^*T(M_0\times M_2))$
with boundary conditions at $t=1$ in $T(L_0\times L_2)$
(where $\pi_{02}^\perp$ is the projection at $t=0$)
is surjective and has a one dimensional kernel 
$\ker(D_{u_{02}}\oplus\pi_{02}^\perp)$.
This is not a subspace of $\Gamma_{1,\delta}$, 
but we will fix a complement for every $\delta>0$
in the following sense,
$$
K_0 := \bigl\{ \xi=(\xi_{02},\hx)\in 
\Gamma_{1,\delta} \,\big|\,
\lan \xi_{02}, \ker(D_{u_{02}}\oplus\pi_{02}^\perp) \ran_{L^2}\equiv 0 
\bigr\} .
$$
Here we used the $L^2$-inner product on 
$H^2(\R\times[0,1], u_{02}^*T(M_0\times M_2))$.

Combining the uniform linear estimates Lemma~\ref{Ddelta} and 
Lemma~\ref{D02} we can choose
$\delta_0:=\frac 1{16} c_1^2 c_2^2>0$ 
such that for all $\delta\in(0,\delta_0)$
and $\xi\in\Gamma_{1,\delta}$ 
\begin{align*}
(1 +c_2^{-1}) \| (D^\delta)^* \xi \|_{\Om_{1,\delta}} 
&\geq
\tfrac 12 \| (D^\delta)^* \xi \|_{H^1_{1,\delta}} 
+ \tfrac 12 \| (D^\delta)^* \xi \|_{L^4_{1,\delta}} 
+ c_2^{-1} \Vert D_{u_{02}}^* \xi_{02} \Vert_{H^1(\R\times[0,1])} \\
&\geq 
\tfrac 12 c_1 \| \xi \|_{\Gamma_{1,\delta}}
- c_2^{-1}\sqrt{\delta} \Vert \nabla_t\hx \Vert_{H^1(\R\times[0,\delta])} 
\;\geq\;
\tfrac 14 c_1 \| \xi \|_{\Gamma_{1,\delta}} ,
\end{align*}
and similarly for all 
$\xi\in\Gamma_{1,\delta}\cap K_0$
\begin{align}\label{right inverse}
\| D^\delta \xi \|_{\Om^1_{1,\delta}} 
\geq \frac{c_1 c_2}{4(c_2+1)}  \| \xi \|_{\Gamma_{1,\delta}} .
\end{align}
The first estimate shows that $(D^\delta)^*$ is injective and hence
$D^\delta$ is surjective. The second estimate shows that its right 
inverse is uniformly bounded. 
It remains to check that $D^\delta$ stays surjective when
restricted to $K_0$.
This follows from the fact that both $D_{u_{02}}$
with boundary conditions in $(L_{02},L_0\times L_2)$
and $D^\delta=(D_{u_{02}},D_{\bar u})$ 
with boundary conditions (\ref{LL}) are surjective
and have the same index $1$ by Lemma~\ref{index} and the identification
$\tM^1_{\bar\delta}(x^-,x^+)\cong\widehat\M^1_{\bar\delta}(x^-,x^+)$.
So $D^\delta$ has a $1$-dimensional kernel, which is
transversal to $K_0$ by the last estimate, and hence
$D^\delta|_{K_0}$ must be surjective.
This finishes the proof of theorem~\ref{solving}.
Here $\eps>0$ is fixed such that the exponential map $e_u$ 
is defined on $\Gamma_{1,\delta}(\eps)$ and such that Lemma~\ref{quadratic}
holds.

\begin{corollary} \label{injective}
There exists $\delta_0>0$ such that the map 
$\T_\delta : \M^1_0(x^-,x^+) \to \M^1_\delta(x^-,x^+)$
given by $\T_\delta([u]):=[v_u]$ is well defined and injective
for all $\delta\in(0,\delta_0]$.
\end{corollary}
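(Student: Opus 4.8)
The plan is to deduce the statement from Theorem~\ref{solving}, the bound \eqref{sqrtd}, and the uniform Sobolev estimate of Lemma~\ref{Sobolev}. By Theorem~\ref{solving} each $u\in\tM^1_0(x^-,x^+)$ determines a \emph{unique} $v_u=e_u(\xi)$ with $\xi\in\Gamma_{1,\delta}(\eps)\cap K_0$, of index $1$ by Lemma~\ref{index}; passing through the identifications between the width-$\delta$, $\bdelta$, and triple-strip pictures, $\T_\delta([u]):=[v_u]$ is defined at least on representatives. To see that it descends to $\R$-orbits I would check that the whole construction of $v_u$ is $\R$-equivariant in $u$: the exponential map $e_u$ and the quadratic corrections of Lemma~\ref{QQ} are built pointwise from $u$, the map $\F_u$ intertwines the shift actions, and the local slice $K_0$ is cut out by $\ker(D_{u_{02}}\oplus\pi_{02}^\perp)=\R\,\partial_s u_{02}$, which translates together with $u$. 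Hence $v_{\sigma_a u}=\sigma_a v_u$ for the $\R$-shift $\sigma_a$ by any $a$, and so $[u]\mapsto[v_u]$ is independent of the representative.

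For injectivity the main auxiliary input is a uniform $\cC^0$-separation of distinct Floer orbits. Since $\M^1_0(x^-,x^+)$ is a finite set (a compact $0$-manifold) and every element converges exponentially to $x^\pm$ at the ends, I would establish that
\[
\rho_0:=\min\Bigl\{\,\inf_{r\in\R}\ \sup_{(s,t)}\, d\bigl(u_1(s,t),u_2(s-r,t)\bigr)\ \Big|\ [u_1]\neq[u_2]\ \text{in }\M^1_0(x^-,x^+)\Bigr\}>0 .
\]
For a fixed pair with $[u_1]\neq[u_2]$, a minimizing sequence of shifts $r_\nu$ cannot run off to $\pm\infty$, since then $u_2(\,\cdot-r_\nu,\cdot)$ would converge pointwise to one of the constant maps $x^\pm$, forcing $u_1$ to equal that constant and contradicting that $u_1$ is a nonconstant index-$1$ trajectory; and a bounded minimizing sequence subconverges to some $r_*$ (using uniform continuity of $u_2$, which holds by exponential decay at the ends), so the infimum is attained and positive because $u_1\neq\sigma_{r_*}u_2$. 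Taking the minimum over the finitely many orbit pairs preserves positivity.

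The injectivity argument is then short. Suppose $\T_\delta([u_1])=\T_\delta([u_2])$, i.e.\ $[v_{u_1}]=[v_{u_2}]$. By the equivariance above, after replacing $u_1$ by a translate (which leaves $[u_1]$ unchanged) I may assume $v_{u_1}=v_{u_2}=:v$. Writing $v_{u_i}=e_{u_i}(\xi^{(i)})$, the first components satisfy $v_{02}=e_{u_{i,02}}(\xi^{(i)}_{02})$ with $\|\xi^{(i)}\|_{H^2_{1,\delta}}\le C_0\delta^{\frac14}$ by \eqref{sqrtd}; Lemma~\ref{Sobolev} then gives $\|\xi^{(i)}_{02}\|_{\cC^0}\le C_S C_0\delta^{\frac14}$, and since each $e_{u_{i,02}(z)}$ is a $\cC^1$-small perturbation of the identity near $0$, the triangle inequality yields (after unfolding $u_{02}\leftrightarrow(u_0,u_2)$)
\[
\sup_{(s,t)}\, d\bigl(u_1(s,t),u_2(s,t)\bigr)\ \le\ C\,\delta^{\frac14}
\]
with a constant $C$ independent of $\delta$. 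Shrinking $\delta_0$ so that $C\delta_0^{\frac14}<\rho_0$, this bound (with $r=0$ in the definition of $\rho_0$) forces $[u_1]=[u_2]$; hence $\T_\delta$ is injective for every $\delta\in(0,\delta_0]$.

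I expect the real work to lie not in any single estimate --- the uniform Sobolev embedding and the $\delta^{\frac14}$-decay are already established --- but in (i) verifying the strict positivity of the separation constant $\rho_0$, i.e.\ that distinct Floer orbits stay a definite $\cC^0$-distance apart uniformly in the shift parameter, which rests on finiteness of $\M^1_0(x^-,x^+)$ together with exponential convergence at the ends, and (ii) carefully tracking $\R$-equivariance through the reparametrizations relating $v_u$ to the width-$\delta$ and width-$\bdelta$ strip pictures.
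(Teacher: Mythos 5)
Your proposal is correct and takes essentially the same route as the paper: well-definedness via $\R$-equivariance of the implicit-function-theorem construction (the paper phrases this by checking directly that the translated section $\xi(\cdot+\sigma)$ satisfies the uniqueness hypotheses of Theorem~\ref{solving}), and injectivity by combining the finite $\cC^0$-separation $\Delta_0>0$ of distinct orbits in $\M^1_0(x^-,x^+)$ with the $O(\delta^{1/4})$ closeness of $v_u$ to $u$ coming from \eqref{sqrtd}, \eqref{Quest}, and Lemma~\ref{Sobolev}. The only cosmetic difference is that you spell out why the separation constant is positive (exponential decay prevents shifts from escaping to $\pm\infty$), whereas the paper asserts it from discreteness of the moduli space.
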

\begin{proof}
We choose $\delta_0\leq\eps^4 C_0^{-4}$ such that Theorem~\ref{solving} applies.
Then let $v_u=e_u(\xi)$ be the solution constructed from $u\in \tM^1_0(x^-,x^+)$
and consider a shifted $0$-solution $\tilde u=u(\cdot+\sigma)\in[u]$. 
Then $\tilde\xi:=\xi(\cdot+\sigma)$ satisfies 
$\|\tilde\xi\|=\|\xi\|\leq C_0\delta^{\frac 14}\leq\eps$, 
$\F_u(\tilde\xi)=0$, and the orthogonality condition
to $\ker(D_{\tilde u_{02}}\oplus\pi_{02}^\perp)$.
Hence
$v_{\tilde u}=e_{u(\cdot+\sigma)}(\xi(\cdot+\sigma))
=v_u(\cdot+\sigma)\in[v_u]$, so $\T_\delta([u])=[v_u]$ is well defined.

The injectivity of $\T_\delta$ follows from the fact that
$\M^1_0(x^-,x^+)$ consists of isolated points, so the
$\cC^0$-distance $d_{\cC^0}([u],[u'])>\Delta_0$
is bounded below by some $\Delta_0>0$ for all $[u]\neq[u']$.
On the other hand,
$d_{\cC^0}([\bu],\T_\delta([u])\leq C_0 C_S (1+C_Q) \delta^{\frac 14}$
by (\ref{sqrtd}), (\ref{Quest}), and Lemma~\ref{Sobolev}. 
So if we had $\T_\delta([u])=\T_\delta([u'])$ then
$d_{\cC^0}([u],[u'])\leq d_{\cC^0}([\bu],[\bu'])
\leq 2C_0 C_S (1+C_Q) \delta^{\frac 14}$ .
This implies $[u]=[u']$ whenever $\delta\leq\delta_0$, where
we choose $\delta_0\leq (2C_0 C_S (1+C_Q))^{-4}\Delta_0^4$.
\end{proof}

\subsection{Uniform estimates}

In this section we establish the uniform linear and nonlinear estimates
that are used in Sections~\ref{ift} and \ref{cpt}.
We will work in the setup of section~\ref{ift} and 
fix a solution $u\in\tM^1_0(x^-,x^+)$.
For convenience we denote the target spaces by $M_{02}:=M_0\times M_2$
and $M_{0211}:=M_0\times M_2\times M_1\times M_1$ and the symplectic structures 
by $\om_{02}=(-\om_0)\oplus\om_2$ and 
$\om_{0211}=\om_0\oplus(-\om_2)\oplus(-\om_1)\oplus\om_1$ respectively.
The nonlinear equation for $v=(v_{02},\hat v)$, 
$v_{02}:\R\times[0,1]\to M_{02}$,
$\hat v:\R\times[0,\delta]\to M_{0211}$ is
$$
\ol{\partial}_J v := \pd_s v + J(v)\pd_t v :=
\bigl( \pd_s v_{02} + J_{02}(v_{02})\pd_t v_{02} \,,\, 
 \pd_s \hat v + \hat J(\hat v)\pd_t \hat v \bigr) .
$$
We will need uniform estimates for the nonlinear operator
$\xi\mapsto\ol{\partial}_J e_u(\xi)$ on $\xi\in \Gamma_{1,\delta}(\eps)$
and the linearized operator $D^\delta$.
For that purpose we use the Levi-Civita connection on $M=M_{02}$ and
$M=M_{0211}$ respectively to identify
$T_u M\times T_u M \cong T_\xi T_u M$ for every $\xi\in T_u M$.
With this we decompose $Te(u,\xi):T_\xi T_u M \to T_{e_u\xi}M$ as
$$
Te(u,\xi)(X,\eta)= \pd_1 e(u,\xi) X  +  de_u(\xi) \eta \qquad
\forall \xi,X,\eta\in T_u M .
$$
We denote the pullback almost complex structure on $H^2_{1,\delta}$ 
under $de_u(\xi)$ by
\begin{eqnarray*}
J(\xi) &:=& ( J_{02}(\xi_{02}),\hat J(\hat\xi) ) \\
&:=& \bigl( (de_{u_{02}}(\xi_{02}))^{-1} J_{02}(e_{u_{02}}(\xi_{02})) 
de_{u_{02}}(\xi_{02}) ,
(de_{\bar u}(\hat\xi))^{-1} \hat J(e_{\bar u}(\hat\xi)) de_{\bar u}(\hat\xi) \bigr)
\end{eqnarray*}
for $\xi=(\xi_{02},\hat\xi)\in \Gamma_{1,\delta}(\eps)$.
With this we can express
\begin{equation}\label{dbarxi}
\ol{\partial}_J ( e_u(\xi) ) = 
de_u(\xi) \bigl( \nabla_s \xi + J(\xi) \nabla_t \xi \bigr)
+ \pd_1 e(u,\xi)\pd_s u + J(u) \pd_1 e(u,\xi)\pd_t u
\end{equation}
in terms of the nonlinear operator on $H^2_{1,\delta}$, 
$$
\nabla_s \xi + J(\xi) \nabla_t \xi :=
\bigl(\nabla_s \xi_{02} + J_{02}(\xi_{02})\nabla_t \xi_{02} \,,\, 
 \nabla_s \hat \xi + \hat J(\hat\xi)\nabla_t \hat\xi \bigr) .
$$
Note that $J(0)=(J_{02},\hat J)$ is the usual almost complex structure
at $(u_{02},\bar u)$,
so we can express the linearized operator \eqref{lin op} as
\begin{align*}
D^\delta \xi &= 
\nabla_s \xi + J(0) \nabla_t \xi  
+ \bigl( \nabla_{\xi_{02}}J_{02}(u_{02})\partial_t u_{02} \,,\, 
\tfrac 12 \nabla_{\hx}\hat J(\bu)\hat J(\bu)\pd_s\bu \bigr) .
\end{align*}

The following lemma provides uniform elliptic estimates.

\begin{lemma}\label{Ddelta}  \hspace{2mm}\\
\vspace{-5mm}
\begin{enumerate}
\item
There is a constant $C_1$ such that for all $\delta\in(0,1]$ and 
${\xi\in \Gamma_{1,\delta}}$
\begin{align*}
 \biggl| \int_{\{1\}\times\R} \om_{02}(\xi_{02},\nabla_s\xi_{02} ) \biggr|
+ \biggl| \int_{\{\delta\}\times\R} 
\om_{0211}(\hat\xi, \nabla_s\hat\xi ) \biggr| 
&\leq C_1 \bigl( \| \xi_{02}|_{t=1} \|_{H^0(\R)} 
+ \| \hx|_{t=\delta} \|_{H^0(\R)}  \bigr)^2 , \\
\biggl| \int_{\{1\}\times\R} \om_{02}(\nabla_s\xi_{02},\nabla_s^2\xi_{02} ) 
\biggr| 
+ \biggl| \int_{\{\delta\}\times\R} 
\om_{0211}(\nabla_s\hat\xi, \nabla_s^2\hat\xi ) \biggr|
&\leq C_1 \bigl( \| \xi_{02}|_{t=1} \|_{H^1(\R)}  
+ \| \hx|_{t=\delta} \|_{H^1(\R)} \bigr)^2 .
\end{align*}
\item
There is a constant $\eps>0$ and for every $c_0>0$ there is
a constant $C_1$ such that for all $\delta\in(0,1]$ and 
$\xi,\zeta\in H^2_{1,\delta}$ with $\|\zeta\|_\infty\leq \eps$,
$\|\nabla\zeta\|_\infty\leq c_0$ 
\begin{align*}
\| \xi \|_{H^1_{1,\delta}} 
&\leq C_1 \biggl( 
\| \nabla_s \xi + J(\zeta)\nabla_t\xi \|_{H^0_{1,\delta}}
+ \| \xi \|_{H^0_{1,\delta}}  \\
&\qquad\quad
+ \biggl| \int_{\{\delta\}\times\R} 
\om_{0211}(\hat\xi, \nabla_s\hat\xi ) \biggr|^{1/2}
+ \biggl| \int_{\{1\}\times\R} \om_{02}(\xi_{02},\nabla_s\xi_{02} ) 
\biggr|^{1/2} \biggr), \\
\| \xi \|_{H^2_{1,\delta}} 
&\leq C_1 \Bigl( 
\| \nabla_s \xi + J(\zeta)\nabla_t\xi \|_{H^1_{1,\delta}}
+ \| \xi \|_{H^0_{1,\delta}}  \\
&\qquad\quad
+ \biggl| \int_{\{\delta\}\times\R} 
\om_{0211}(\hat\xi, \nabla_s\hat\xi ) \biggr|^{1/2}
+ \biggl| \int_{\{\delta\}\times\R} 
\om_{0211}(\nabla_s\hat\xi, \nabla_s^2\hat\xi ) \biggr|^{1/2} \\
&\qquad\quad
+ \biggl| \int_{\{1\}\times\R} \om_{02}(\xi_{02},\nabla_s\xi_{02} ) 
\biggr|^{1/2}
+ \biggl| \int_{\{1\}\times\R} \om_{02}(\nabla_s\xi_{02},\nabla_s^2\xi_{02} ) 
\biggr|^{1/2}  \biggl) , \\
\| \nabla \xi \|_{L^4_{1,\delta}}
&\leq C_1 \Bigl( \| \xi \|_{H^2_{1,\delta}} 
+ \| \nabla_s \xi + J(\zeta)\nabla_t\xi \|_{L^4_{1,\delta}}
+ \| \hx|_{t=\delta} \|_{H^1(\R)} \Bigr) .
\end{align*}
\item
There is a constant $c_1 > 0$ such that 
for all $\delta\in(0,1]$ and $\xi\in\Gamma_{1,\delta}$
\begin{align*}
c_1\| \xi \|_{H^2_{1,\delta}}
&\leq
\| D^\delta \xi \|_{H^1_{1,\delta}}
+ \| \xi \|_{H^0_{1,\delta}}
+ \| \hx|_{t=\delta} \|_{H^1(\R)} 
+ \| \xi_{02}|_{t=1} \|_{H^1(\R)} , \\
c_1\| \nabla\xi \|_{L^4_{1,\delta}}
&\leq
\| D^\delta \xi \|_{H^1_{1,\delta}}
+ \| D^\delta \xi \|_{L^4_{1,\delta}}
+ \| \xi \|_{H^0_{1,\delta}} 
+ \| \hx|_{t=\delta} \|_{H^1(\R)}  
+ \| \xi_{02}|_{t=1} \|_{H^1(\R)}, 
\end{align*}
and the same holds with $D^\delta$ replaced by $(D^\delta)^*$.
\end{enumerate}
\end{lemma}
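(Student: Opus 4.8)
The plan is to prove the three estimates in order, the substance lying in (b), and to reduce everything to an integration‑by‑parts (Bochner/Green) identity together with the uniform Sobolev estimate of Lemma~\ref{Sobolev}. Part (a) is a pointwise computation on the two seams. At $t=1$ the section $\xi_{02}$ takes values in $T(L_0\times L_2)$ and at $t=\delta$ the section $\hat\xi$ takes values in $T(L_{01}\times L_{12})^T$; both are Lagrangian for the relevant symplectic forms, so $\om$ vanishes on pairs of vectors tangent to them. Decomposing $\nabla_s\hat\xi$ along the curve $s\mapsto\bar u(s,\delta)\in(L_{01}\times L_{12})^T$ into its tangential part and the second fundamental form of that embedding, only the latter contributes to $\om_{0211}(\hat\xi,\nabla_s\hat\xi)$, which gives the pointwise bound $|\om_{0211}(\hat\xi,\nabla_s\hat\xi)|\le C\,|\partial_s(\bar u|_{t=\delta})|\cdot|\hat\xi|^2$, and likewise at $t=1$. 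The reference maps $u_{02}|_{t=1}$ and $\bar u|_{t=\delta}=(\bar u_{02},\bar u_1,\bar u_1)$ have $s$‑derivatives decaying exponentially toward the intersection points, and $\ell_1$ from \eqref{ell1} has bounded differential, so these derivatives are uniformly bounded; integrating over $\R$ yields the first estimate, and one more $s$‑differentiation, using uniform bounds on second $s$‑derivatives and on the ambient curvature, yields the second.

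For part (b) I would first reduce to $D_0:=\nabla_s+J(0)\nabla_t$: since $\|\zeta\|_\infty\le\eps$ one has $|J(\zeta)-J(0)|\le C\eps$, and (using $\|\nabla\zeta\|_\infty\le c_0$) the difference $(J(\zeta)-J(0))\nabla_t\xi$ together with its derivative is bounded by $C(\eps\|\xi\|_{H^2_{1,\delta}}+c_0\|\xi\|_{H^1_{1,\delta}})$, absorbed on the left once $\eps$ is small. For the $H^1$‑estimate I would use $|D_0\xi|^2=|\nabla\xi|^2+2\langle\nabla_s\xi,J(0)\nabla_t\xi\rangle$ and integrate: the cross term is, up to sign, $2\int\om(\nabla_s\xi,\nabla_t\xi)$, which after integrating by parts in $t$ and in $s$ --- commuting $\nabla_s$ and $\nabla_t$ at the cost of a term $R(\partial_su,\partial_tu)$ --- becomes the boundary integrals of $\om(\xi,\nabla_s\xi)$ over $\{t=1\}$ and $\{t=\delta\}$, plus the $t=0$ contributions, plus curvature. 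The crucial simplifications are that $\partial_t\bar u\equiv0$, so the curvature term \emph{vanishes} on the thin strip (on the width‑$1$ strip it is $\le C\|\partial_su_{02}\|_\infty\|\partial_tu_{02}\|_\infty\|\xi_{02}\|_{L^2}^2$, uniform), and that the two $t=0$ contributions cancel by the matching conditions $(\xi_{02}',\xi_{02})|_{t=0}\in T\Delta_{M_{02}}$, $(\xi_1',\xi_1)|_{t=0}\in T\Delta_{M_1}$ (up to a term $\le\delta^{1/2}\|\nabla_t\hat\xi\|_{H^0_{1,\delta}}$, via $\hat\xi|_{t=0}=\hat\xi|_{t=\delta}-\int_0^\delta\nabla_t\hat\xi$, which is absorbed). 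What survives is exactly the two boundary integrals on the right‑hand side of (b). For the $H^2$‑ and $L^4$‑estimates the structural point is that $\bar u$ is \emph{independent of $t$}, so $D_{\bar u}$ has $t$‑independent coefficients on the thin strip: writing $\nabla_t\hat\xi=-\hat J\bigl(D_{\bar u}\hat\xi-\nabla_s\hat\xi-(\text{l.o.t.})\bigr)$ and differentiating, the commutator $\nabla_t\nabla_s-\nabla_s\nabla_t$ again drops (no $\partial_t\bar u$), so $\|\nabla^2\hat\xi\|_{L^2}$ reduces to $\|\nabla_sD_{\bar u}\hat\xi\|_{L^2}+\|\nabla_s^2\hat\xi\|_{L^2}$ plus lower‑order terms, and $\|\nabla_s^2\hat\xi\|_{L^2}$ is handled by one more $s$‑differentiation together with the $H^1$‑estimate applied to $\nabla_s\hat\xi$, the boundary terms now being the $\nabla_s^2$‑integrals controlled by the second part of (a). Every nonlinear or product remainder ($|\nabla\xi|^2$, $|\partial_s\bar u|\,|\xi|^2$, etc.) is estimated through the \emph{uniform} Sobolev estimate of Lemma~\ref{Sobolev}; this is precisely why the $L^4$‑norms were built into $H^k_{1,\delta}$ and $\Om_{1,\delta}$. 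The $L^4$‑gradient estimate is obtained the same way, differentiating once and invoking Lemma~\ref{Sobolev} at the $L^4$ level.

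Part (c) then follows by writing $D^\delta=D_0+A$ with $A$ the zeroth‑order term $\bigl(\nabla_{\xi_{02}}J_{02}(u_{02})\partial_tu_{02},\tfrac12\nabla_{\hat\xi}\hat J(\bar u)\hat J(\bar u)\partial_s\bar u\bigr)$, bounded pointwise by $C|\xi|$ since $\partial_tu_{02}$ is bounded and $\partial_s\bar u$ decays exponentially; hence $\|A\xi\|_{H^1_{1,\delta}}\le C\|\xi\|_{H^1_{1,\delta}}$ and, by Lemma~\ref{Sobolev}, $\|A\xi\|_{L^4_{1,\delta}}\le C\|\xi\|_{H^2_{1,\delta}}$. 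So $\|D_0\xi\|_{H^1_{1,\delta}}\le\|D^\delta\xi\|_{H^1_{1,\delta}}+C\|\xi\|_{H^1_{1,\delta}}$; substituting into the $H^1$‑estimate of (b) and using part (a) to replace the boundary integrals first shows $\|\xi\|_{H^1_{1,\delta}}\le C(\|D^\delta\xi\|_{H^1_{1,\delta}}+\|\xi\|_{H^0_{1,\delta}}+\|\xi_{02}|_{t=1}\|_{H^1(\R)}+\|\hat\xi|_{t=\delta}\|_{H^1(\R)})$, and feeding this back into the $H^2$‑ and $L^4$‑gradient estimates of (b) (again converting the boundary integrals via (a)) gives the two displayed inequalities for $D^\delta$. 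Finally $(D^\delta)^*=\bigl(-\nabla_s+J_{02}(u_{02})\nabla_t,-\nabla_s+\hat J(\bar u)\nabla_t\bigr)$ plus a bounded zeroth‑order term; it preserves the boundary conditions \eqref{LL} (partial integration, as noted just before the lemma) and is an operator of the same type --- effectively $s\mapsto-s$ --- so the identical argument applies verbatim.

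The main obstacle is the $\delta$‑uniformity of $C_1$ in the $H^2$‑ and $L^4$‑parts of (b): on a strip of width $\delta\to0$ the naive elliptic and Sobolev estimates blow up. Two features rescue the argument --- that $\bar u$ is $t$‑independent on the thin strip, which removes the commutator curvature terms that would otherwise carry factors of $\delta^{-1}$, and that all Sobolev multiplications are routed through Lemma~\ref{Sobolev}, whose $\delta$‑independent constant rests on the transversality $(L_{01}\times L_{12})\pitchfork(M_0\times\Delta_{M_1}\times M_2)$ through Lemma~\ref{lltrans}. The real care is bookkeeping: tracking which of the $H^0$‑, $H^1$‑, $L^4$‑, or boundary pieces each term lands in, so that nothing on the right is estimated by a quantity scaling worse in $\delta$ than the left‑hand side.
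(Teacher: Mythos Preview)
Your outline for parts (a), (c), and the $H^1$/$H^2$ estimates in (b) is essentially the paper's argument. Two technical differences are harmless: you reduce $J(\zeta)$ to $J(0)$ and absorb the error, whereas the paper works directly with $J(\zeta)$ by passing to the pullback metric $g_\zeta$ that makes $J(\zeta)$ skew-adjoint; and your $t=0$ remark is overly cautious---the boundary term there vanishes \emph{exactly}, because the diagonal conditions $\xi_{02}|_{t=0}=\xi'_{02}|_{t=0}$, $\xi_1|_{t=0}=\xi_1'|_{t=0}$ together with $\omega_{0211}=(-\omega_{02})\oplus(-\omega_1)\oplus\omega_1$ force the width-$1$ and width-$\delta$ contributions at $t=0$ to cancel on the nose.

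The genuine gap is the $L^4$-gradient estimate in (b). Lemma~\ref{Sobolev} gives $\delta$-uniform $L^p$ bounds on $\xi$ in terms of $\|\xi\|_{H^2_{1,\delta}}$, not on $\nabla\xi$; applying it to $\nabla_s\xi$ would require $H^3$ regularity. The naive embedding $H^2(\R\times[0,\delta])\hookrightarrow W^{1,4}$ has a constant blowing up as $\delta\to 0$, so ``invoking Lemma~\ref{Sobolev} at the $L^4$ level'' does not close. What the paper does instead is a rescaling: on a unit strip one has the elementary inequality
\[
\|\hat\eta\|_{L^4(\R\times[0,1])}\le C_0\bigl(\|\hat\eta|_{t=1}\|_{L^2(\R)}+\|\nabla\hat\eta\|_{L^2(\R\times[0,1])}\bigr),
\]
and applying this to $\hat\eta(s,t):=\nabla_s\hat\xi(\delta s,\delta t)$ exploits the scale-invariance to give
\[
\|\nabla_s\hat\xi\|_{L^4(\R\times[0,\delta])}\le C_0\bigl(\|\nabla_s\hat\xi|_{t=\delta}\|_{L^2(\R)}+\|\nabla\nabla_s\hat\xi\|_{L^2(\R\times[0,\delta])}\bigr)
\]
with the \emph{same} $C_0$. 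The right-hand side is now bounded by $\|\hat\xi|_{t=\delta}\|_{H^1(\R)}+\|\xi\|_{H^2_{1,\delta}}$, which is exactly why the trace $\|\hat\xi|_{t=\delta}\|_{H^1(\R)}$ appears in the $L^4$ inequality of (b). The $\nabla_t\hat\xi$ piece is then recovered from $\nabla_t\hat\xi=\hat J(\zeta)\nabla_s\hat\xi-\hat J(\zeta)(\nabla_s\hat\xi+\hat J(\zeta)\nabla_t\hat\xi)$, and the $\xi_{02}$ piece from the ordinary $H^1\hookrightarrow L^4$ embedding on the fixed-width strip. Without this rescaling step your argument does not produce a $\delta$-uniform constant.
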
  
\begin{proof}
We prove (a) in general for
$\int_\R\omega(\xi,\nabla_s\xi)$ and
$\int_\R\omega(\nabla_s\xi,\nabla_s^2\xi)$
with a Lagrangian section $\xi:\R\to u^*TL$ over a path $u:\R\to L$. 
These expressions vanish if $L$ is totally geodesic.
To estimate them in general we pick a smooth family of orthonormal frames
$(\gamma_i(s))_{i=1,\dots,k}\in u(s)^*TL$, then
\begin{align*}
&\xi = \sum \lambda^i\gamma_i  ,\quad
\nabla_s\xi = \sum \Bigl( \pd_s\lambda^i\gamma_i 
+ \lambda^i\nabla_s\gamma_i \Bigr) , \quad
\nabla_s^2\xi = \sum \Bigl( \pd_s^2\lambda^i\gamma_i 
+ 2 \pd_s\lambda^i\nabla_s\gamma_i 
+ \lambda^i\nabla_s^2\gamma_i \Bigr)
\end{align*}
with $\lambda:\R\to\R^k$. By the orthonormality we have
$|\lambda(s)|=|\xi(s)|$,
and using $(\gamma,J\gamma)$ as a trivialization for the 
definition of Sobolev norms on $u^*TM$ we obtain
$\|\lambda\|_{H^s(\R)}=\|\xi\|_{H^s(\R)}$.
We now use the identities $\omega(\gamma_i,\gamma_j)=0$ to
obtain
\begin{align*}
\biggl| \int_\R \omega(\xi,\nabla_s\xi) \biggr|
&\leq  \biggl| \int_\R C |\xi(s)| |\lambda(s)| ds \biggr| 
=  C \|\xi\|_{L^2(\R)}^2 , \\
\biggl| \int_\R \omega(\nabla_s\xi,\nabla_s^2\xi) \biggr|
&\leq \biggl| \int_\R C \bigl( |\nabla_s\xi| |\lambda|
+ |\nabla_s\xi| |\pd_s\lambda| +  |\pd_s\lambda|^2 + |\lambda|^2 \bigl) 
 \biggr|
\leq 
4 C \|\xi\|_{H^1(\R)}^2 ,
\end{align*}
where the constant $C$ only depends on $\gamma$ (that is on $u:\R\to L$)
up to third derivatives.
Here we used partial integration
\begin{align*}
\int_\R \sum_{i,j} \lambda^i \pd_s^2\lambda^j 
\omega(\nabla_s\gamma_i,\gamma_j) 
&= 
- \int_\R \sum_{i,j} \Bigl( \pd_s \lambda^i \pd_s\lambda^j 
\omega(\nabla_s\gamma_i,\gamma_j) 
+ \lambda^i \pd_s\lambda^j \pd_s \omega(\nabla_s\gamma_i,\gamma_j) \Bigr) .
\end{align*}
To prove (c) we can replace $D^\delta$ by $\nabla_s\xi + J(0)\nabla_t\xi$
since the difference of the operators is bounded in the different components
and norms by
\begin{align} \label{lower order}
&\bigl\| \nabla_{\xi_{02}}J_{02}(u_{02})\pd_t u_{02} 
\bigr\|_{H^0(\R\times[0,1])} 
+ \bigl\| \nabla_{\hx}\hat J(\bu) J(\bu) \pd_s \bu 
\bigr\|_{H^0(\R\times[0,\delta])} 
\leq C \bigl\| \xi \bigr\|_{H^0_{1,\delta}} , \nonumber\\
&\bigl\| \nabla_{\xi_{02}}J_{02}(u_{02})\pd_t u_{02} 
\bigr\|_{L^4(\R\times[0,1])} 
\leq C \bigl\| \nabla_{\xi_{02}}J_{02}(u_{02})\pd_t u_{02} 
\bigr\|_{H^1(\R\times[0,1])} 
\leq C \bigl\| \xi \bigr\|_{H^1_{1,\delta}} , \nonumber\\
&\bigl\| \nabla_{\hx}\hat J(\bu) J(\bu) \pd_s \bu 
\bigr\|_{H^1(\R\times[0,\delta])} 
\leq C \bigl\| \xi \bigr\|_{H^1_{1,\delta}}  , \\
&\bigl\| \nabla_{\hx}\hat J(\bu) J(\bu) \pd_s \bu 
\bigr\|_{L^4(\R\times[0,\delta])} 
\leq  C \| \nabla\hat J\|_\infty \| \pd_s\bu \|_\infty 
\|\hx\|_{L^4(\R\times[0,\delta])} 
\leq C \bigl\| \xi \bigr\|_{H^2_{1,\delta}} , \nonumber
\end{align}
where $C$ denotes any uniform constant.  The extra terms on the right
hand side will fit into the proof and will be recalled for the
relevant estimates.  The proof for $(D^\delta)^*$ is completely
analogous. 
To prove (b) and (c) we may moreover fix convenient metrics on $M_{02}$ and $M_{0211}$. In order to obtain the boundary terms involving the symplectic forms, we pick the induced metrics $\lan \cdot, \cdot \ran = \omega_{02}(\cdot, J_{02} \cdot )$ resp.\ $\lan \cdot, \cdot \ran = \omega_{0211}(\cdot, \hat J \cdot )$, noting that this introduces a smooth $t$-dependence in the metric on $M_{02}$.
We will now use the notation $\nabla_s\xi + J(\sigma\zeta)\nabla_t\xi$ to make partial integration calculations for the nonlinear ($\sigma=1$) and linear ($\sigma=0$) operator at the same time.  In the nonlinear case the almost complex structure
$J(\zeta)$ is not skew-adjoint since the metric is defined by $J(0)$. 
In order to obtain this property we work with the $L^2_{1,\delta}(\sigma\zeta)$-metric, which uses the pullback metric $g_{\sigma\zeta}=\lan \cdot,\cdot\ran_{\sigma\zeta}$
under $de_{u_{02}}(\sigma\zeta_{02})$ on $M_{02}$ and $de_{\bar
u}(\sigma\zeta)$ on $M_{0211}$ respectively.  In the linear case
$\sigma=0$ nothing has happened; in the nonlinear case we can pick
$\eps>0$ and hence $\|\zeta\|_\infty$ sufficiently small such that
$de_u(\zeta)$ is $\cC^0$-close to the identity, and hence the induced
$L^2_{1,\delta}(\zeta)$-norm is uniformly equivalent to the standard
$L^2_{1,\delta}$-norm.  With this in mind we start by calculating for
any $\zeta,\eta\in H^2_{1,\delta}$ with $\|\zeta\|_\infty\leq\eps$
(unless otherwise specified integrals are over two infinite strips of
width $\delta$ and $1$)
\begin{align*} 
&\| \nabla_s \eta + J(\sigma\zeta) \nabla_t\eta \|_{L^2_{1,\delta}(\sigma\zeta)}^2 \\
&=  
 \int \Bigl( | \nabla_s \eta |_{\sigma\zeta}^2 + | \nabla_t \eta |_{\sigma\zeta}^2
+ \lan \nabla_s\eta ,J(\sigma\zeta)\nabla_t\eta \ran_{\sigma\zeta}
- \lan \nabla_t\eta , J(\sigma\zeta)\nabla_s\eta \ran_{\sigma\zeta} \Bigr)
\end{align*} 
\begin{align*} 
&=  \| \nabla \eta \|_{L^2_{1,\delta}(\sigma\zeta)}^2
- \int \Bigl( 
\nabla_s g_{\sigma\zeta} \bigl( \eta , J(\sigma\zeta)\nabla_t\eta \bigr)
- \nabla_t g_{\sigma\zeta} \bigl( \eta , J(\sigma\zeta)\nabla_s\eta \bigr) \Bigr)\\
&\quad
- \int \Bigl(  \lan \eta , \bigl(
  \nabla_s\bigl( J(\sigma\zeta)\nabla_t\eta \bigr)
- \nabla_t\bigl( J(\sigma\zeta)\nabla_s\eta \bigr) \bigr) \ran_{\sigma\zeta} \Bigr)\\
&\quad
- \lim_{S\to\infty} 
\int_{\{s=-S\}} \lan \eta ,J(\sigma\zeta)\nabla_t\eta \ran_{\sigma\zeta}
+ \lim_{S\to\infty} 
\int_{\{s=S\}} \lan \eta ,J(\sigma\zeta)\nabla_t\eta \ran_{\sigma\zeta} \\
&\quad
+ \int_{\{0\}\times\R} \lan \eta , J(\sigma\zeta)\nabla_s\eta \ran_{\sigma\zeta} 
- \int_{\{1\}\times\R} \lan \eta_{02} , 
J_{02}(\sigma\zeta_{02})\nabla_s\eta_{02} \ran_{\sigma\zeta_{02}} 
- \int_{\{\delta\}\times\R} \lan \hat\eta , 
\hat J(\sigma\hat\zeta)\nabla_s\hat\eta \ran_{\sigma\hat\zeta} \\
&\geq  \| \nabla \eta \|_{L^2_{1,\delta}(\sigma\zeta)}^2
- \int C \bigl((1+\sigma c_0) |\eta| |\nabla \eta| + |\eta|^2 \bigr)
- \Om_{02}(\eta_{02}|_{t=1})- \Om_{0211}(\hat\eta|_{t=\delta}), 
\end{align*}
where we abbreviated
\begin{align*} 
\Om_{02}(\eta_{02}|_{t=1}):=
 \biggl| \int_{\{1\}\times\R} \om_{02}(\eta_{02},\nabla_s\eta_{02} ) \biggr|,
\qquad
\Om_{0211}(\hat\eta|_{t=\delta}):=
\biggl| \int_{\{\delta\}\times\R} 
\om_{0211}(\hat\eta, \nabla_s\hat\eta ) \biggr| .
\end{align*}
These boundary terms occur on the right hand side of (c) and they will be 
estimated by (a) to prove (b).
The boundary term at $t=0$ vanishes by the diagonal boundary conditions,
and the boundary terms at $S\to\pm\infty$ vanish since 
$\eta|_{\{s\in[S,S+1]\}}\to 0$ in the $H^2_{1,\delta}$-norm.
The error term can be estimated by
$$
\int C \bigl( (1+\sigma c_0) |\eta| |\nabla \eta| + |\eta|^2 \bigr)
\leq C \|\eta\|_{L^2_{1,\delta}(\sigma\zeta)}^2 
+ \frac 1 2 \|\nabla\eta\|_{L^2_{1,\delta}(\sigma\zeta)}^2
+ \frac 1 2 C^2 (1+\sigma c_0)^2 \|\eta\|_{L^2_{1,\delta}(\sigma\zeta)}^2  ,
$$
where the highest order term $\|\nabla\eta\|$ can be absorbed on the right
hand side.
From now on $C$ will denote any uniform constant (which is allowed to depend
on $c_0$ in the nonlinear case $\sigma=1$).
In summary, the estimates for $\eta=\xi$ and
$\eta=\nabla_s\xi$ are
\begin{align*}
\tfrac 1C \| \nabla \xi \|_{L^2_{1,\delta}}^2
&\leq
\bigl\| \nabla_s\xi + J(\sigma\xi)\nabla_t\xi 
\bigr\|_{L^2_{1,\delta}}^2
+ \|\xi\|_{L^2_{1,\delta}}^2 
+ \Om_{02}(\xi_{02}|_{t=1}) + \Om_{0211}(\hat\xi|_{t=\delta}) , \\
\tfrac 1C \| \nabla\nabla_s \xi \|_{L^2_{1,\delta}}^2
&\leq
\bigl\| \nabla_s \bigl( \nabla_s\xi + J(\sigma\xi)\nabla_t\xi \bigr)
\bigr\|_{L^2_{1,\delta}}^2
+ \|\nabla\xi\|_{L^2_{1,\delta}}^2  \\
&\qquad
+ \Om_{02}(\nabla_s\xi_{02}|_{t=1}) 
+ \Om_{0211}(\nabla_s\hat\xi|_{t=\delta}) .
\end{align*}
This already proves the first estimate in (b).
We can moreover use the identity 
$\nabla_t\xi=J(\sigma\zeta)\nabla_s\xi 
- J(\sigma\zeta)(\nabla_s\xi+J(\sigma\zeta)\nabla_t\xi)$
to obtain
\begin{align*}
\| \nabla\nabla_t \xi \|_{L^2_{1,\delta}}
&\leq
\| \nabla\nabla_s \xi \|_{L^2_{1,\delta}}
+ \| \nabla( \nabla_s\xi+J(\sigma\zeta)\nabla_t\xi )  \|_{L^2_{1,\delta}}
+ C \| \nabla\xi \|_{L^2_{1,\delta}} 
+ \sigma C c_0 \|\nabla\xi \|_{L^2_{1,\delta}} .
\end{align*}
In the linear case (c) these estimates combined with (a) 
and (\ref{lower order}) to prove the first estimate:  
\begin{align*}
c_1\| \xi \|_{H^2_{1,\delta}}
&\leq
\bigl\| D^\delta \xi \bigr\|_{H^1_{1,\delta}}
+ \| \xi_{02}|_{t=1} \|_{H^1(\R)} + \| \hx|_{t=\delta} \|_{H^1(\R)} 
+\|\xi\|_{L^2_{1,\delta}} 
\end{align*}
with a uniform constant $c_1>0$.
In the nonlinear case (b) we obtain similarly
\begin{align*}
C_1^{-1}\| \xi \|_{H^2_{1,\delta}}
&\leq
\bigl\| \nabla_s\xi + J(\zeta)\nabla_t\xi \bigr\|_{H^1_{1,\delta}}
+\|\xi\|_{L^2_{1,\delta}} 
+ \Om_{02}(\xi_{02}|_{t=1}) + \Om_{0211}(\hat\xi|_{t=\delta}) \\
&\qquad
+ \Om_{02}(\nabla_s\xi_{02}|_{t=1}) 
+ \Om_{0211}(\nabla_s\hat\xi|_{t=\delta})
\end{align*}
with a constant $C_1$ that depends on $\|\nabla\xi\|_\infty\leq c_0$.

The $L^4$-estimate for the linear and nonlinear operators
will arise by rescaling from the following basic estimate.
Here $\hat u:\R\times[0,1]\to M_{0211}$ will be given by
$\hat u(s,t)=\bar u(\delta s)$ for any $\delta\in(0,1]$.
Then for every $\he\in H^1(\R\times[0,1],\hat u^*TM_{0211})$
$$
\| \he \|_{L^4(\R\times[0,1])}
\leq C_0 \bigl( \|\he|_{t=1} \|_{L^2(\R)}
+ \|\nabla\he\|_{L^2(\R\times[0,1])} \bigr) .   
$$
This simply follows from the Sobolev embedding
$H^1(\R\times[0,1]) \hookrightarrow L^4(\R\times[0,1])$ and
$$
\|\he\|_{L^2(\R\times[0,1])}^2
\leq
\int_0^1 \biggl\| \he(\cdot,1) 
- \int_t^1 \nabla_t\he(\cdot,\tau) d\tau \biggr\|_{L^2(\R)}^2 d t
\leq
2\| \he|_{t=1} \|_{L^2(\R)}^2
+ 2 \|\nabla_t\he\|_{L^2(\R\times[0,1])}^2 .
$$
When applying this to $\he(s,t):=\nabla_s\hx(\delta s,\delta t)$
we encounter the following terms:
\begin{align*}
\| \he \|_{L^4(\R\times[0,1])}^2
&= \biggl( \int_{\R\times[0,1]}
|\nabla_s\hx(\delta s,\delta t)|^4 ds dt \biggr)^{1/2}  
= \delta^{-1}\| \nabla_s\hx \|_{L^4(\R\times[0,\delta])}^2 , \\
\| \he|_{t=1} \|_{L^2(\R)}^2
&= \int_{\R} |\nabla_s\hx(\delta s,\delta)|^2 ds 
= \delta^{-1} \| \nabla_s\hx|_{t=\delta} \|_{L^2(\R)}^2 , \\
\|\nabla\he\|_{L^2(\R\times[0,1])}^2 
&= \int_{\R\times[0,1]} \delta^2
| \nabla\nabla_s\hx (\delta s,\delta t) |^2 ds dt 
= \|\nabla\nabla_s\hx\|_{L^2(\R\times[0,\delta])}^2.
\end{align*}
Putting this together we find that
\begin{align*}
\| \nabla_s\hx \|_{L^4(\R\times[0,\delta])}
&\leq C_0 \bigl( \|\nabla_s\hx|_{t=\delta} \|_{L^2(\R)}
+ \|\nabla\nabla_s\hx\|_{H^2(\R\times[0,\delta])} \bigr) 
\leq C_0 \bigl( \|\hx|_{t=\delta} \|_{H^1(\R)}
+ \|\xi\|_{H^2_{1,\delta}} \bigr),
\end{align*}
where the estimate for $\| \xi \|_{H^2_{1,\delta}}$ 
is already established.
The $L^4$-estimate for $\nabla \xi_{02}$ follows
from the Sobolev embedding 
$H^1(\R\times[0,1])\hookrightarrow L^4(\R\times[0,1])$,
and for the last component we have
\begin{align*}
 \| \nabla_t\hx \|_{L^4(\R\times[0,\delta])} 
\leq 
\| \nabla_s\hx + \hat J(\sigma\hat\zeta)\nabla_t\hx 
\|_{L^4(\R\times[0,\delta])}
+ \| \nabla_s\hx \|_{L^4(\R\times[0,\delta])}.
\end{align*}
This finishes the proof of the second estimate, where we allow 
$\| \nabla_s\xi + J(\sigma\zeta)\nabla_t\xi \|_{L^4_{1,\delta}}$
on the right hand side, and the constant in the nonlinear case
depends on $\|\nabla\zeta\|_\infty\leq c_0$.
In the linear case the difference to $\| D^\delta \xi \|_{L^4_{1,\delta}}$ 
in (\ref{lower order}) is bounded by the previous estimate.
\end{proof}

The lemma below gives control of the lower-order terms appearing in
Lemma \ref{Ddelta} and in particular will be used to prove
surjectivity of the linearized operator.

\begin{lemma} \label{D02}  
\begin{enumerate}
\item
There is a constant $\eps>0$ and for every $c_0>0$ there is a constant $C_2$ 
such that for all $\delta\in(0,1]$ and $\xi,\zeta\in H^2_{1,\delta}$
with $\|\zeta\|_\infty\leq\eps$, $\|\nabla\zeta\|_\infty\leq c_0$ we have
\begin{align*}  
&\|\hat\xi|_{t=\delta}\|_{H^1(\R)} + \|\xi_{02}|_{t=1}\|_{H^1(\R)} \\
&\leq
C_2 \bigl( 
\Vert \nabla_s\xi_{02} + J_{02}(\zeta_{02})\nabla_t\xi_{02} \Vert_{H^1(\R\times[0,1])} 
+ \sqrt{\delta} \Vert \nabla_t\hx \Vert_{H^1(\R\times[0,\delta])}  
+\|\pi_{0211}^\perp\hat\xi|_{t=\delta}\|_{H^1(\R)} \\
&\qquad\quad
+ \|\xi_{02}\|_{L^2(\R\times[0,1])} 
+ \|(\xi_1'-\xi_1)|_{t=0}\|_{H^1(\R)}
+ \|(\xi_{02}'-\xi_{02})|_{t=0}\|_{H^1(\R)} \bigr).
\end{align*}
\item
There is a constant 
$c_2 > 0$ such that for all $\delta\in(0,1]$ and
$\xi\in\Gamma_{1,\delta}$ 
\begin{align*}  
c_2 \bigl( \|\hat\xi|_{t=\delta}\|_{H^1(\R)} + \|\xi_{02}|_{t=1}\|_{H^1(\R)} 
+ \Vert \xi \Vert_{H^0_{1,\delta}} \bigr)
&\leq
\Vert D_{u_{02}}^* \xi_{02} \Vert_{H^1(\R\times[0,1])} 
+ \sqrt{\delta} \Vert \nabla_t\hx \Vert_{H^1(\R\times[0,\delta])} ,
\end{align*}
and for all $\xi\in\Gamma_{1,\delta}\cap K_0$ 
\begin{align*} 
c_2 \bigl( \|\hat\xi|_{t=\delta}\|_{H^1(\R)} + \|\xi_{02}|_{t=1}\|_{H^1(\R)} 
+ \Vert \xi \Vert_{H^0_{1,\delta}} \bigr) 
&\leq
\Vert D_{u_{02}} \xi_{02} \Vert_{H^1(\R\times[0,1])} 
+ \sqrt{\delta} \Vert \nabla_t\hx \Vert_{H^1(\R\times[0,\delta])} .
\end{align*}
\end{enumerate}
\end{lemma}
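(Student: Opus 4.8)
The plan is to reduce both parts to three available ingredients: the elementary ``thin strip'' Cauchy--Schwarz inequalities on $\R\times[0,\delta]$, the quantitative transversality of Lemma~\ref{lltrans}, and the Fredholm properties of the fixed-width operator $D_{u_{02}}$ coming from the regularity of $u\in\tM^1_0(x^-,x^+)$. The organizing principle is that on the collapsing middle strip the map $\hx$ is essentially slaved to its boundary value at $t=\delta$: for any $t_0\in[0,\delta]$ one has $\|\hx|_{t=t_0}-\hx|_{t=\delta}\|_{H^1(\R)}^2\le\delta\int_0^\delta\|\nabla_t\hx\|_{H^1(\R)}^2\le\delta\|\nabla_t\hx\|_{H^1(\R\times[0,\delta])}^2$, and similarly $\|\hx\|_{H^0(\R\times[0,\delta])}\le\sqrt{2\delta}\,\|\hx|_{t=\delta}\|_{H^0(\R)}+\sqrt2\,\delta\,\|\nabla_t\hx\|_{H^0(\R\times[0,\delta])}$. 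Hence the full $\hx$-contribution to the left-hand sides, including the thin-strip part of $\|\xi\|_{H^0_{1,\delta}}$ in (b), is controlled once $\|\hx|_{t=\delta}\|_{H^1(\R)}$ is.

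Next I would apply Lemma~\ref{lltrans}(c) at $t=\delta$. In case (b) the seam condition in (\ref{LL}) forces $\pi_{0211}^\perp\hx|_{t=\delta}=0$ and the diagonal conditions force $\xi_1'|_{t=0}=\xi_1|_{t=0}$ and $\xi_{02}'|_{t=0}=\xi_{02}|_{t=0}$, so combining Lemma~\ref{lltrans}(c) with the thin-strip inequality gives $\|\hx|_{t=\delta}\|_{H^1(\R)}\le C\bigl(\|\xi_{02}|_{t=0}\|_{H^1(\R)}+\sqrt\delta\|\nabla_t\hx\|_{H^1(\R\times[0,\delta])}\bigr)$ and, more importantly, the quantitative-transversality bound $\|\pi_{02}^\perp(\xi_{02}|_{t=0})\|_{H^1(\R)}\le C\sqrt\delta\,\|\nabla_t\hx\|_{H^1(\R\times[0,\delta])}$ (up to a lower-order $\|\hx|_{t=\delta}\|_{H^0(\R)}$ that is absorbed), since the normal-to-$L_{02}$ part of $\xi_{02}'$ at the seam is governed by the vanishing of $\pi_{0211}^\perp$ and of $\xi_1'-\xi_1$ at $t=0$. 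In case (a) the same steps simply carry along the unfixed quantities $\|\pi_{0211}^\perp\hx|_{t=\delta}\|_{H^1(\R)}$, $\|(\xi_1'-\xi_1)|_{t=0}\|_{H^1(\R)}$ and $\|(\xi_{02}'-\xi_{02})|_{t=0}\|_{H^1(\R)}$, which is precisely the list on the right-hand side of (a).

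It then remains to bound $\|\xi_{02}|_{t=1}\|_{H^1(\R)}$, $\|\xi_{02}|_{t=0}\|_{H^1(\R)}$ and $\|\xi_{02}\|_{L^2(\R\times[0,1])}$ on the fixed strip $\R\times[0,1]$. There $\xi_{02}$ carries the fixed Lagrangian condition at $t=1$ and, by the previous step, an ``almost $L_{02}$'' condition at $t=0$ whose defect $\pi_{02}^\perp(\xi_{02}|_{t=0})$ has size $O(\sqrt\delta\|\nabla_t\hx\|_{H^1})$ (plus, in (a), the explicit jump terms). Subtracting a correction supported near $t=0$ of that size turns $\xi_{02}$ into a section in the domain of $D_{u_{02}}$ with boundary conditions in $(L_{02},L_0\times L_2)$. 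Since $u$ is regular of index $1$ --- equivalently $D_{u_{02}}\oplus\pi_{02}^\perp$ is surjective with one-dimensional kernel, spanned by $\partial_s u_{02}$ --- the formal adjoint $D_{u_{02}}^*$ with the corresponding boundary conditions is injective, hence coercive, $\|\eta\|_{H^2(\R\times[0,1])}\le C\|D_{u_{02}}^*\eta\|_{H^1(\R\times[0,1])}$; the trace theorem then bounds $\|\xi_{02}|_{t=1}\|_{H^1(\R)}+\|\xi_{02}|_{t=0}\|_{H^1(\R)}+\|\xi_{02}\|_{L^2(\R\times[0,1])}$ by $C\|\xi_{02}\|_{H^2(\R\times[0,1])}$ and hence by $\|D_{u_{02}}^*\xi_{02}\|_{H^1(\R\times[0,1])}$ plus the correction size, which is the first inequality of (b). The second inequality of (b), with $D_{u_{02}}$ in place of its adjoint, is the analogous coercive estimate once the one-dimensional cokernel obstruction of $D_{u_{02}}$ is killed by restricting to $\xi\in\Gamma_{1,\delta}\cap K_0$, i.e.\ by imposing $L^2$-orthogonality of $\xi_{02}$ to $\ker(D_{u_{02}}\oplus\pi_{02}^\perp)$. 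For (a) one replaces $D_{u_{02}}^*$ by the nonlinear operator $\nabla_s\xi_{02}+J_{02}(\zeta_{02})\nabla_t\xi_{02}$; its difference to $\nabla_s\xi_{02}+J_{02}\nabla_t\xi_{02}$ is the zeroth-order term $\bigl(J_{02}(\zeta_{02})-J_{02}\bigr)\nabla_t\xi_{02}$, pointwise $\le C\|\zeta\|_\infty|\nabla_t\xi_{02}|$, and together with the $H^1$-version using $\|\nabla\zeta\|_\infty\le c_0$ this is absorbed once $\eps$ is small, leaving exactly the lower-order and jump contributions on the right-hand side.

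The main obstacle is the uniformity of all constants as $\delta\to 0$: the operator $D^\delta$ itself degenerates as the middle patch collapses, so one cannot quote its Fredholm estimates directly. The whole point of the splitting above is that every non-uniform feature lives on $\hx$ over the thin strip and is extinguished by the $\sqrt\delta$-weighted thin-strip bound together with the transversality of $L_{01}\times L_{12}$ to $M_0\times\Delta_{M_1}\times M_2$ (entering through Lemma~\ref{lltrans}), while the surviving operator $D_{u_{02}}$ lives on the fixed strip $\R\times[0,1]$ and has $\delta$-independent estimates. The technically delicate points will be: (i) making the ``correction into the domain of $D_{u_{02}}$'' precise at the level of $H^1$-traces --- one controls $\pi_{02}^\perp(\xi_{02}|_{t=0})$ only in $H^1(\R)$, so the correction must be chosen (e.g.\ after a mild $s$-mollification) so that its $H^2(\R\times[0,1])$-norm is still dominated by the available quantities, with the mollification error absorbed; and (ii) carrying the $L^2(\R\times[0,1])$-term $\|\xi_{02}\|_{L^2}$ through every step, since it is this term (appearing as part of $\|\xi\|_{H^0_{1,\delta}}$ on the left of (b)) that in the proof of Theorem~\ref{solving} lets the $\|\xi\|_{H^0_{1,\delta}}$ on the right of Lemma~\ref{Ddelta}(c) be absorbed.
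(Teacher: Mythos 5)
Your proposal correctly identifies the three ingredients (thin-strip Cauchy--Schwarz transfers, Lemma~\ref{lltrans}, Fredholm coercivity of the fixed-width operator $D_{u_{02}}$), but the paper proves Lemma~\ref{D02} by contradiction and compactness, not by a direct chain of estimates, and your direct version has a genuine gap sitting exactly where the paper concentrates its effort. The parenthetical ``(up to a lower-order $\|\hx|_{t=\delta}\|_{H^0(\R)}$ that is absorbed)'' hides the difficulty: the $H^1$-estimate for $\pi_{02}^\perp\xi_{02}'$ in Lemma~\ref{lltrans}(c) carries the error term $\bigl\||\partial_s\bu|\cdot|\hx|\bigr\|_{L^2(\R)}$, and there is no smallness in front of the resulting $\|\hx|_{t=\delta}\|_{L^2(\R)}$ --- the constant from Lemma~\ref{lltrans} is $O(1)$ and $\|\partial_s\bu\|_\infty$ is a fixed positive number depending on $u$. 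Since $\|\hx|_{t=\delta}\|_{L^2}$ is dominated by $\|\hx|_{t=\delta}\|_{H^1}$, one of the quantities on the \emph{left} of the estimate you are proving, you cannot subtract it from both sides. This is what the paper itself calls ``the hardest step''. It is resolved only indirectly: the paper normalizes the left side, additionally assumes $\|\xi_{02}^\nu\|_{H^1(\R\times[0,1])}\le 1$ (a stronger statement it then also disproves), extracts a weak $H^1$-limit of $\xi_{02}^\nu$, uses the \emph{compact} trace embedding $H^1(\Omega)\hookrightarrow L^2(\partial\Omega)$ on compact domains $\Omega\subset\R\times[0,1]$ to get strong $L^2$-convergence of the boundary trace on $[-T,T]$, identifies the limit as $0$ (from $\|\xi_{02}^\nu\|_{L^2}\to 0$ in case (a), and from injectivity of $D_{u_{02}}^*\oplus\pi_{02}^\perp$, respectively its restriction in the $K_0$ case, in case (b)), and finally combines the strong convergence on the compact piece with the decay $\sup_{|s|>T}|\partial_s\bu|\to 0$ to conclude that $\bigl\||\partial_s\bu|\,|\hx^\nu|_{t=\delta^\nu}|\bigr\|_{L^2(\R)}\to 0$. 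No direct absorption replaces this.

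Two smaller remarks. First, the trace-lifting issue you flag is resolved in the paper not by mollification but by working at the $H^{3/2}(\R\times[0,1])$-level, which matches the $H^{1/2}(\R)\to H^1(\R\times[0,1])$ extension operator and the Fredholm boundary-regularity estimate (\ref{basest}); the trace theorem from $H^{3/2}$ still yields the needed $H^1(\R)$ control of $\xi_{02}|_{t=0,1}$. There is also a separate cutoff-and-limit-operator step (comparison with $D_{x^\pm}=\partial_s-A$ near $s=\pm\infty$) needed to localize the lower-order $\|\xi_{02}\|_{L^2(\R\times[0,1])}$-term to a compact set before the compact embedding can be invoked. Second, in your favor: the bound $\|\hx|_{t=\delta}\|_{H^1}\le C(\|\xi_{02}|_{t=0}\|_{H^1}+\sqrt\delta\|\nabla_t\hx\|_{H^1(\R\times[0,\delta])})$ is fine as stated, since the first inequality of Lemma~\ref{lltrans}(c) (for $\|\hx\|_{H^1}$ rather than $\|\pi_{02}^\perp\xi_{02}'\|_{H^1}$) carries no $|\partial_s\bu|$ error and the projections commute with $H^1$ up to bounded operators; the gap is confined to the quantitative-transversality control of $\pi_{02}^\perp\xi_{02}|_{t=0}$, which is precisely what feeds into the Fredholm estimate.
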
  
\begin{proof}
The constant $\eps>0$ in case (a) is chosen such that $e_{u_{02}}(\zeta_{02})$
and thus $J_{02}(\zeta_{02})$ is defined.
To prove (a) (and similar for (b)) we assume by contradiction 
that we have sequences $\delta^\nu>0$ and 
$\xi^\nu,\zeta^\nu\in H^2_{1,\delta^\nu}$ such that 
$\|\hat\xi^\nu|_{t=\delta^\nu}\|_{H^1(\R)} + \|\xi_{02}^\nu|_{t=1}\|_{H^1(\R)}=1$
(in case (b) add $\|\xi^\nu\|_{H^0_{1,\delta}}$ here),
but the right hand sides converges to zero.
For technical reasons we assume in addition 
$\|\xi^\nu_{02}\|_{H^1(\R\times[0,1])}\leq 1$,
which we will also disprove (i.e.\ we actually prove a stronger estimate with
this term on the left hand side).
First we integrate for all $t\in[0,\delta^\nu]$
\begin{equation}\label{txii}
\| \hx^\nu|_{t=t_0} - \hx^\nu|_{t=\delta^\nu} \|_{H^1(\R)}
\leq \int_0^{\delta^\nu} \|\nabla_t\hx^\nu\|_{H^1(\R)}
\leq \sqrt{\delta^\nu} \|\nabla_t\hx^\nu\|_{H^1(\R\times[0,\delta^\nu])}
\to 0 .
\end{equation}
Next, Lemma~\ref{lltrans} implies
\begin{align}  \label{L2H1}
\|\pi_{02}^\perp \xi^\nu_{02}|_{t=0} \|_{L^2(\R)} 
&\leq 
\|\pi_{02}^\perp \xi_{02}'^\nu |_{t=\delta^\nu} \|_{L^2(\R)}
+ \|\xi_{02}'^\nu|_{t=0} - \xi_{02}'^\nu|_{t=\delta^\nu}\|_{L^2(\R)}
+ \|(\xi_{02}'^\nu-\xi_{02}^\nu)|_{t=0}\|_{L^2(\R)} \nonumber\\
&\leq 
C \bigl( \|\pi_{0211}^\perp \hat\xi^\nu|_{t=\delta^\nu}\|_{L^2(\R)} 
+ \|\hat\xi^\nu|_{t=0} - \hat\xi^\nu|_{t=\delta^\nu}\|_{L^2(\R)} \nonumber\\
&\qquad
+ \|(\xi_1'^\nu-\xi_1^\nu)|_{t=0}\|_{L^2(\R)} 
+ \|(\xi_{02}'^\nu-\xi_{02}^\nu)|_{t=0}\|_{L^2(\R)} \bigr) 
 \to 0 , \nonumber\\
\|\pi_{02}^\perp \xi^\nu_{02}|_{t=0} \|_{H^1(\R)} 
&\leq 
\|\pi_{02}^\perp \xi_{02}'^\nu |_{t=\delta^\nu} \|_{H^1(\R)}
+ \|\xi_{02}'^\nu|_{t=0} - \xi_{02}'^\nu|_{t=\delta^\nu}\|_{H^1(\R)}
+ \|(\xi_{02}'^\nu-\xi_{02}^\nu)|_{t=0}\|_{H^1(\R)}  \\
&\leq 
C \bigl( \|\pi_{0211}^\perp \hat\xi^\nu |_{t=\delta^\nu} \|_{H^1(\R)}
+ \|\hat\xi^\nu|_{t=0} - \hat\xi^\nu|_{t=\delta^\nu}\|_{H^1(\R)} 
+ \|(\xi_1'^\nu-\xi_1^\nu)|_{t=0}\|_{H^1}
\nonumber\\
&\qquad\qquad\qquad
+ \|(\xi_{02}'^\nu-\xi_{02}^\nu)|_{t=0}\|_{H^1(\R)}
+ \bigl\| |\pd_s\bu|\cdot|\hat\xi^\nu|_{t=\delta^\nu}| \bigr\|_{L^2(\R)} 
  \bigr) . \nonumber 
\end{align}
In the two cases of (b) we use the boundary conditions 
for $\xi^\nu\in\Gamma_{1,\delta}$ here. 
In all three cases the hardest step is now to prove that
$\bigl\| |\pd_s\bu|\cdot|\hat\xi^\nu|_{t=\delta^\nu}| \bigr\|_{L^2(\R)}\to 0$.
Here we exploit the assumption that $\|\xi_{02}^\nu\|_{H^1(\R\times[0,1])}$ is
bounded. This implies a bound on $\|\xi_{02}^\nu|_{t=0}\|_{L^2(\R)}$.
Now we find a convergent subsequence
$\xi_{02}^\nu\to \xi_{02}^\infty\in H^1(\R\times[0,1],u_{02}^*TM_{02})$
in the weak $H^1$-topology, and at the same time
$\xi_{02}^\nu|_{t=0}\to \xi_{02}^\infty|_{t=0}$ in the $L^2$-norm on every
compact set. (The Sobolev embedding 
$H^1(\Om)\hookrightarrow L^2(\pd\Om))$ 
is compact for compact domains $\Om\subset\R\times[0,1]$ with smooth
boundary $\pd\Om$, see e.g.\ \cite[Theorem~6.3]{ad:so}.)
In case (a) the limit has to be $\xi^\infty_{02}=0$ since 
$\| \xi^\infty_{02} \|_{L^2(\R\times[0,1])} \leq 
\liminf_{\nu\to\infty} \| \xi^\nu_{02} \|_{L^2(\R\times[0,1])} = 0 $.
This also holds in case (b) since the limit satisfies
with $D=D_{u_{02}}$ or $D=D^*_{u_{02}}$
\begin{align*}
\| D \xi^\infty_{02} \|_{L^2(\R\times[0,1])} &\leq 
\liminf_{\nu\to\infty} \| D \xi^\nu_{02} \|_{L^2(\R\times[0,1])} = 0, \\
\qquad
\| \pi_{02}^\perp\xi^\infty_{02}|_{t=0} \|_{L^2(\R)} &\leq 
\liminf_{\nu\to\infty} \| \pi_{02}^\perp\xi^\nu_{02}|_{t=0} \|_{L^2(\R)} = 0 .
\end{align*}
Since $u_{02}$ is assumed regular, $D_{u_{02}}^*\oplus\pi_{02}^\perp$
is injective, and in the second part of case (b) we have in addition
$\xi^\infty_{02}\in \ker(D_{u_{02}}\oplus\pi_{02}^\perp)^\perp$.  So
in all three cases we obtain
$$
\|\xi_{02}^\nu|_{t=0}\|_{L^2(\R)} \leq C \qquad\text{and}\qquad
\|\xi_{02}^\nu|_{t=0}\|_{L^2([-T,T])}\to 0 \qquad \text{for all}\; T>0 .
$$
The same holds for $\hat\xi^\nu|_{t=\delta^\nu}$ since we can apply
Lemma~\ref{lltrans} on the interval $(-T,T)$ for any $T\in(0,\infty]$
to obtain
\begin{align*}
\|\hat\xi^\nu|_{t=\delta^\nu}\|_{L^2}
&\leq  C\bigl(
\Vert \pi_{02}\xi_{02}'^\nu|_{t=\delta^\nu} \Vert_{L^2}
+ \|\pi_{0211}^\perp\hat\xi^\nu|_{t=\delta^\nu}\|_{L^2}
+ \|(\xi_1'^\nu-\xi_1^\nu)|_{t=\delta^\nu}\|_{L^2} \bigl)\\
&\leq
C' \bigl( \| \xi_{02}^\nu|_{t=0} \|_{L^2}
+ \|(\xi_{02}'^\nu-\xi_{02}^\nu)|_{t=0}\|_{L^2}
+ \|\hat\xi^\nu|_{t=0}- \hat\xi^\nu|_{t=\delta^\nu}\|_{L^2} \\
&\qquad\qquad\quad
+ \|\pi_{0211}^\perp \hat\xi^\nu|_{t=\delta^\nu}\|_{L^2} 
+ \|(\xi_1'^\nu-\xi_1^\nu)|_{t=0}\|_{L^2} \bigl) .
\end{align*}
This together with the fact that $\sup_{|s|\geq T}|\pd_s\bu(s)|\to 0$ 
as $T\to\infty$ implies that
$\bigl\| |\pd_s\bu|\cdot|\hat\xi^\nu|_{t=\delta^\nu}| \bigr\|_{L^2(\R)}\to 0$
and hence $\|\pi_{02}^\perp \xi^\nu_{02}|_{t=0} \|_{H^1(\R)} \to 0$ by (\ref{L2H1}).
From this we will move on to prove that 
\begin{equation}\label{xito0}
\Vert \xi^\nu_{02} \Vert_{H^{3/2}(\R\times[0,1])} \to 0 .
\end{equation}
For that purpose we denote by $D$ any of the three operators 
$\nabla_s + J_{02}(\zeta_{02}) \nabla_t$ in case (a)
and $D_{u_{02}}^*$ or $D_{u_{02}}$ in case (b).
Then we use the fact that in all three cases the operator
$D\oplus\pi_{02}^\perp$ is Fredholm on the space of sections 
$\eta$ that satisfy the boundary conditions
$\eta|_{t=1}\in T_{u_{02}}(L_0\times L_2)$, 
see e.g. \cite[Theorem 20.1.2]{ho:an3} for compact domains.
The corresponding estimates add up to
\begin{align} \label{basest}
\Vert \xi_{02}^\nu \Vert_{H^{3/2}(\R\times[0,1])} 
&\leq C \bigl(
\Vert D \xi_{02}^\nu \Vert_{H^1(\R\times[0,1])} 
+ \|\pi_{02}^\perp \xi_{02}^\nu|_{t=0} \|_{H^1(\R)}
+ \| \xi_{02}^\nu \|_{H^0(\R\times[0,1])} \bigr).
\end{align}
In the nonlinear case (a) the constant in this estimate depends
continuously on $J_{02}(\zeta_{02})$ in the $\cC^1$-topology, see
e.g.\ \cite[Appendix~B]{ms:jh}.  In this case the above estimate
already implies the claim (\ref{xito0}) since we assumed
$\|\xi_{02}^\nu\|_{L^2} \to 0$.  In the linear cases we need to use
the injectivity of the operators to remove the last term from the
right hand side of \eqref{basest}.  Since $H^{3/2}(\R)\hookrightarrow
H^0((-T,T))$ is compact only for $T<\infty$, we first have to achieve
a lower order term on a compact domain:

Consider the operator $D_{x^\pm}=\pd_s - A$, where
$A:=-J(x^\pm)\pd_t$ (or $A:=J(x^\pm)\pd_t$ in the case $D=D_{u_{02}}^*$)
is self-adjoint and invertible on its constant domain
$H^1([0,1],T_{x^\pm}M_{02})$ with boundary conditions
$\eta|_{t=0}\in T_{x^\pm}L_{02}$, 
$\eta|_{t=1}\in T_{x^\pm}(L_0\times L_2)$.
Then abstract theory (e.g.\ \cite[Lemma~3.9, Proposition~3.14]{rs:spec}) 
implies the Fredholm property and bijectivity,
$$
\| \eta \|_{H^1(\R\times[0,1])}
\leq C \Vert D_{x^\pm} \eta \Vert_{H^0(\R\times[0,1])} .
$$
In order to apply this estimate to $\xi_{02}^\nu$ we first
find an extension $\zeta\in H^1(\R\times[0,1])$ of
$\zeta|_{t=0}=\pi_{02}^\perp\xi^\nu_{02}|_{t=0}$ such that
$\|\zeta\|_{H^1}\leq C\|\pi_{02}^\perp\xi^\nu_{02}|_{t=0}\|_{H^{1/2}}$.
We moreover fix a cutoff function 
$h\in\cC_0^\infty(\R,[0,1])$ with $h|_{\{|s|\leq T-1\}}\equiv 0$
and $h|_{\{|s|\geq T\}}\equiv 1$,
where we fix $T>1$ sufficiently large such that
$u_{02}|_{\on{supp}(h)}=e_{x^\pm}(\vartheta_{02})$
for some smooth map $\vartheta_{02}:\{\pm s\geq (T-1)\} \to T_{x^\pm}M_{02}$.
Then we can apply the estimate to 
$\eta:=\Phi_{x^\pm}(\vartheta_{02})^{-1}\bigl( h(\xi^\nu_{02}-\zeta)\bigr)$, 
where $\Phi_{x^\pm}(\vartheta_{02})$ denotes parallel transport 
along the path 
$[0,1]\ni\tau\mapsto e_{x^\pm}(\tau\vartheta_{02})$.
We obtain, denoting all uniform constants by $C$,
\begin{align*}
&\Vert h \xi^\nu_{02} \Vert_{H^1(\R\times[0,1])}  \\
&\leq C \Vert \eta \Vert_{H^1(\R\times[0,1])} 
+ \Vert h \zeta \Vert_{H^1(\R\times[0,1])} \\
&\leq C \bigl( 
\Vert \bigl( D_{x^\pm} - D\circ\Phi_{x^\pm}(\vartheta_{02}) \bigr) 
\eta \Vert_{H^0(\R\times[0,1])} 
+\Vert D (h\xi^\nu_{02}) \Vert_{H^0(\R\times[0,1])} 
+ \Vert h \zeta \Vert_{H^1(\R\times[0,1])} \bigr) \\
&\leq C \bigl( 
\Vert \bigl( D_{x^\pm} - D\circ\Phi_{x^\pm}(\vartheta_{02}) 
\bigr)\bigr|_{\{|s|>T-1\}}\Vert \cdot
\| h( \xi^\nu_{02} - \zeta) \Vert_{H^1(\R\times[0,1])} 
+\Vert D\xi^\nu_{02} \Vert_{H^0(\R\times[0,1])} \\
&\qquad
+\Vert \xi^\nu_{02} \Vert_{H^0([-T,T]\times[0,1])} 
+ \|\pi_{02}^\perp\xi^\nu_{02}|_{t=0}\|_{H^{1/2}(\R)} \bigr) .
\end{align*}
Here the difference of the operators goes to zero for $T\to\infty$
since $u_{02}|_{\{|s|\geq T-1\}}\to x^\pm$ with all derivatives.  Thus for sufficiently large $T>0$ we can absorb
the first term into the left hand side and 
$\|h\zeta\|_{H^1}\leq C\|\pi_{02}^\perp\xi^\nu_{02}|_{t=0}\|_{H^{1/2}}$.  
After
all this we can finally replace the last term in (\ref{basest}) by
$\Vert \xi^\nu_{02} \Vert_{H^0([-T,T]\times[0,1])}$.

Now in the first case of (b) we can deduce (\ref{xito0})
from the fact that $D_{u_{02}}\oplus \pi^\perp_{02}$ is surjective 
by assumption and hence $D_{u_{02}}^*\oplus \pi^\perp_{02}$ is injective.
So the compact embedding
$H^{3/2}(\R\times[0,1])\hookrightarrow H^0([-T,T]\times[0,1])$
allows the removal of the lower order term.
Similarly, in the second case of (b) we can employ the injectivity 
of the operator on 
$\ker(D_{u_{02}}\oplus\pi_{02}^\perp)^\perp\ni \xi^\nu_{02}$
to deduce (\ref{xito0}). 

Next, (\ref{xito0}) and the Sobolev trace theorem provide
$\Vert \xi^\nu_{02}|_{t=0} \Vert_{H^1(\R)} +
\Vert \xi^\nu_{02}|_{t=1} \Vert_{H^{1}(\R)}\to 0$,
and again using Lemma~\ref{lltrans} we can deduce that
\begin{align*}
& \Vert \hx^\nu|_{t=\delta^\nu} \Vert_{H^{1}(\R)} \\
&\leq C\bigl(
\Vert \pi_{02} \xi_{02}'^\nu|_{t=\delta^\nu} \Vert_{H^{1}(\R)}
+ \|\pi_{0211}^\perp \hat\xi^\nu|_{t=\delta^\nu} \|_{H^1(\R)}
+ \|(\xi_1'^\nu-\xi_1^\nu)|_{t=\delta^\nu}\|_{H^1(\R)} \bigl) \\
& \leq C\bigl(
 \Vert \xi_{02}^\nu|_{t=0} \Vert_{H^{1}(\R)}
+ \|(\xi_{02}'^\nu-\xi_{02}^\nu)|_{t=0}\|_{H^1(\R)}
+ \|\pi_{0211}^\perp \hat\xi^\nu|_{t=\delta^\nu}\|_{H^1(\R)} \\
&\qquad\qquad\qquad\qquad
+ \|\hat\xi^\nu|_{t=0} - \hat\xi^\nu|_{t=\delta^\nu}\|_{H^1(\R)}
+ \|(\xi_1'^\nu-\xi_1^\nu)|_{t=0}\|_{H^1(\R)} \bigl) 
\;\to 0 .
\end{align*}
Finally, combining this with (\ref{txii}) in case (b) implies
$$\Vert \hx^\nu \Vert_{L^2(\R\times[0,\delta^\nu])}\to 0$$
in contradiction to the assumption.
\end{proof}

Finally, we establish uniform exponential decay for the
solutions of 
Floer's equation (\ref{vvvv}) on the triple strip. 
For that purpose we introduce 
the following notation for integration over finite strips,
\begin{align*}
\int_{[0,1]\sqcup[0,\delta]} \left|\pd_s v(s,t) \right|^2 dt
:= \int_0^1 \left|\pd_s v_{02}(s,t) \right|^2 dt 
+ \int_0^\delta \left|\pd_s \hat v(s,t) \right|^2 dt ,
\end{align*}
and similarly for the $\cC^0$-norm
\begin{align*}
\left\| \pd_s v \right\|_{\cC^0_{1,\delta}([s_0,s_1])}
&:= \| \pd_s v_{02} \|_{L^\infty([s_0,s_1]\times[0,1])}
+ \| \pd_s \hat v \|_{L^\infty([s_0,s_1]\times[0,\delta])} , \\
d_{\cC^0_{1,\delta}([s_0,s_1])}(v , x^\pm )
&:= \sup_{(s,t)\in [s_0,s_1]\times[0,1]} d_{M_{02}}(v_{02}(s,t), x^\pm ), \\
&\quad +
\sup_{(s,t)\in [s_0,s_1]\times[0,\delta]} 
d_{M_{0211}}(\hat v(s,t), (x^\pm,x_1^\pm,x_1^\pm) ) .
\end{align*}

\begin{lemma}\label{expdecay}  
There are constants $\hbar,\Delta>0$ and $C$ such
that the following holds for every $\delta\in(0,1]$.  If
$v\in\widehat\M_\delta(x^-,x^+)$ is a smooth solution of~(\ref{vvvv})
satisfying
\begin{equation}\label{eq:longenergy}
\int_0^\infty \int_{[0,1]\sqcup[0,\delta]} \left|\pd_s v(s,t) \right|^2 dt ds 
\;<\; \hbar ,
\end{equation}
then for every $S\geq 3$
$$
d_{\cC^0_{1,\delta}([S,\infty))}(v , x^+ )^2
+ \left\|\pd_s v \right\|_{\cC^0_{1,\delta}([S,\infty))}^2
\le C e^{-\Delta S} 
\int_0^{2} \int_{[0,1]\sqcup[0,\delta]} 
\left| \pd_s v (s,t) \right|^2 dt ds ,
$$
and the analogous statement holds on $(-\infty,0]$ 
for the convergence to $x^-$.
\end{lemma}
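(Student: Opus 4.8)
\emph{Strategy.} I will follow the classical three–step proof of exponential decay for Floer trajectories — $\eps$--regularity, a second order differential inequality for the slicewise energy, and an elliptic bootstrap to $\cC^0$ — but carry out each step with constants that do not degenerate as the middle width $\delta\to 0$. The only place where this uniformity is nontrivial is the coercivity of the asymptotic operator on the multi–slice, which is supplied by the transversality hypotheses exactly as in Lemmas~\ref{lltrans}, \ref{Sobolev} and \ref{D02}. Throughout I use the $J_{02}$-- and $\hat J$--compatible metrics $\om_{02}(\cdot,J_{02}\cdot)$, $\om_{0211}(\cdot,\hat J\cdot)$ and the Hermitian connection $\ti\nabla$ from page~\pageref{connection}, as in the proof of Lemma~\ref{Ddelta}.

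\emph{Step 1 ($\eps$--regularity).} Fix $\hbar>0$ below the threshold of the mean value inequality of \cite[\S4.3]{ms:jh} (equivalently Lemma~\ref{bubb}), which needs only the standing bounds on the curvature and on the first two derivatives of the almost complex structures. If \eqref{eq:longenergy} holds then, since the local energy $\int_s^\infty$ is below $\hbar$ for all $s\ge 0$, no gradient concentration occurs on $[1,\infty)\times([0,1]\sqcup[0,\delta])$; applying the mean value inequality on balls of uniformly bounded geometry — using that every interior point of the thin $M_1$--patch lies within bounded distance of the seam $(L_{01}\times L_{12})^T$ of the folded configuration \eqref{vvvv} — gives a constant $C$ independent of $\delta\in(0,1]$ with
$$
\bigl\|\pd_s v\bigr\|_{\cC^0_{1,\delta}([S,\infty))}^2
+\bigl\|\nabla\pd_s v\bigr\|_{\cC^0_{1,\delta}([S,\infty))}^2
\le C\int_{S-1}^{\infty}\!\int_{[0,1]\sqcup[0,\delta]}
\bigl|\pd_s v(s,t)\bigr|^2\,dt\,ds ,\qquad S\ge 2 .
$$
In particular, after shrinking $\hbar$ we may assume $|\pd_s v|$ is as small as we like on $[2,\infty)$, smaller than the $\eps$ of Lemma~\ref{quadratic}.

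\emph{Steps 2 and 3 (differential inequality and bootstrap).} Put $w:=\pd_s v$ and $e(s):=\tfrac12\int_{[0,1]\sqcup[0,\delta]}|w(s,t)|^2\,dt$. Differentiating \eqref{vvvv} in $s$ gives $\nabla_s w=A(s)w+\rho$, where $A(s):=-J(v(s,\cdot))\pd_t$ acts on slices with the linearized boundary conditions of \eqref{LL} and $|\rho|\le C|w|^2$ (using $\pd_t v=J(v)w$). The chosen metrics make each $A(s)$ self–adjoint: the boundary contributions at $t=\delta$ and $t=1$ vanish since those seams are Lagrangian, and the one at $t=0$ vanishes since the diagonals are Lagrangian for the difference of symplectic forms; moreover $A(s)\to A_{x^\pm}:=-J(x^\pm)\pd_t$ in $\cC^1$ as $s\to\pm\infty$. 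Granting the uniform spectral gap $\|A_{x^\pm}\eta\|_{L^2(\mathrm{slice})}\ge\Delta\,\|\eta\|_{L^2(\mathrm{slice})}$ established below, the standard computation (e.g.\ \cite[\S4.7]{ms:jh}) — differentiate $e$ twice, substitute the equation, integrate by parts in $t$ (the $\om$--boundary terms being controlled pointwise as in the proof of Lemma~\ref{Ddelta}(a)), and absorb the contributions of $\rho$, $\nabla_s\rho$, $\pd_s A$, all of order $\|w\|_{\cC^0}=O(\eps)$ — yields $e''(s)\ge\Delta^2 e(s)$ for $s\ge 2$. Since $e\ge 0$ is convex with $e(s)\to 0$, it is nonincreasing and $e(s)\le e(2)e^{-\Delta(s-2)}$; hence the tail energy $F(S):=\int_S^\infty\int_{[0,1]\sqcup[0,\delta]}|\pd_s v|^2$ satisfies $F''\ge\Delta^2 F$ and therefore, by the usual ODE comparison, $F(S)\le C e^{-\Delta S}\int_0^2\int_{[0,1]\sqcup[0,\delta]}|\pd_s v|^2$ for $S\ge 3$. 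Combining $F(S-1)$ with Step 1 gives the asserted $\cC^0$--bound on $\pd_s v$, and integrating $|\pd_s v|$ over $[s,\infty)$ together with $v(s,\cdot)\to x^+$ gives $d_{\cC^0_{1,\delta}([S,\infty))}(v,x^+)^2\le C e^{-\Delta S}\int_0^2\int_{[0,1]\sqcup[0,\delta]}|\pd_s v|^2$ after relabelling $\Delta$. The argument on $(-\infty,0]$ is identical.

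\emph{The uniform gap — the main obstacle.} Suppose there were $\delta^\nu\in(0,1]$ and slice sections $\eta^\nu=(\eta_{02}^\nu,\hat\eta^\nu)$ in the domain of $A_{x^\pm}$ with $\|\eta^\nu\|_{L^2}=1$ and $\|A_{x^\pm}\eta^\nu\|_{L^2}\to 0$; then $\|\pd_t\eta_{02}^\nu\|_{L^2([0,1])}\to 0$ and $\|\pd_t\hat\eta^\nu\|_{L^2([0,\delta^\nu])}\to 0$, so $\eta_{02}^\nu$ is bounded in $H^1([0,1])$ and $\hat\eta^\nu$ is almost $t$--constant. If $\delta^\nu\to\delta_\infty>0$ along a subsequence, Rellich produces a nonzero element of $\ker A_{x^\pm}$ at width $\delta_\infty$, contradicting invertibility of that boundary value problem — which is exactly transversality of the generator $x^\pm$ (Proposition~\ref{pert split}). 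If $\delta^\nu\to 0$, Lemma~\ref{lltrans} applied at $t=\delta^\nu$ bounds $\bigl|\hat\eta^\nu|_{t=\delta^\nu}\bigr|$ by the $\pi_{02}$--part of its $M_0\times M_2$--component, plus the difference of its two $M_1$--components, plus $\bigl|\pi_{0211}^\perp\hat\eta^\nu|_{t=\delta^\nu}\bigr|$; the last vanishes (domain condition), the middle one is $o(1)$ by near–constancy and the diagonal at $t=0$, and the first is bounded by the $H^1$--trace of $\eta_{02}^\nu$, so $\bigl|\hat\eta^\nu|_{t=\delta^\nu}\bigr|\le C$ and $\|\hat\eta^\nu\|_{L^2([0,\delta^\nu])}^2\to 0$. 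Hence $\|\eta_{02}^\nu\|_{L^2([0,1])}\to 1$, and a weak--$H^1$, strong--$L^2$ limit $\eta_{02}^\infty\ne 0$ satisfies $-J_{02}(x^\pm)\pd_t\eta_{02}^\infty=0$ with $\eta_{02}^\infty|_{t=1}\in T_{x^\pm}(L_0\times L_2)$ and, again by Lemma~\ref{lltrans} together with near–constancy of $\hat\eta^\nu$, $\eta_{02}^\infty|_{t=0}\in T_{x^\pm}L_{02}$; since $L_{02}$ and $L_0\times L_2$ meet transversally at $x^\pm$, this constant section is zero, a contradiction. As in Lemma~\ref{D02}, it is precisely the transversality of $L_{01}\times L_{12}$ with $M_0\times\Delta_{M_1}\times M_2$ and of the intersection points that makes this step — the heart of the proof — work.
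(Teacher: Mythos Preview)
Your overall architecture is the same as the paper's --- $\eps$--regularity, a convexity inequality $e''\ge\Delta^2 e$ for the slicewise energy, and a bootstrap to $\cC^0$ --- and your ``main obstacle'' paragraph correctly isolates and proves the uniform spectral gap via Lemma~\ref{lltrans}, which is indeed the heart of the matter and matches the paper's Step~2. Two points, however, are compressed past what the cited results actually give.

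First, your Step~1 claims the \emph{quantitative} $\eps$--regularity
$\|\pd_s v\|_{\cC^0_{1,\delta}([S,\infty))}^2\le C\int_{S-1}^\infty|\pd_s v|^2$
uniformly in $\delta$, from the mean value inequality of \cite[\S4.3]{ms:jh} / Lemma~\ref{bubb}. Those results give at best qualitative smallness: the interior estimate applies on balls not touching a seam, and the boundary estimate of \cite{we:en} handles a single Lagrangian seam, but near $t=0$ a ball of any fixed radius crosses both the diagonal seam and the $(L_{01}\times L_{12})^T$ seam once $\delta$ is small. The paper accordingly separates this into two contradiction arguments: Step~1 (smallness of $|\pd_s v|$, using Lemma~\ref{bubb}) and Step~4 (the quantitative $L^\infty\!\le C\cdot L^2$ bound, using linearization at a constant and Lemma~\ref{Ddelta}(b)). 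Your final bootstrap needs the quantitative version, so you must supply the Step~4 argument.

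Second, in the differential inequality you need the gap for $A(s)=-J(v(s,\cdot))\pd_t$, not for $A_{x^\pm}$; the difference $A(s)-A_{x^\pm}$ is controlled by $d(v(s,\cdot),x^\pm)$, not by $\pd_s A$. This is harmless once Step~1 gives $|\pd_t v|=|\pd_s v|$ small, so $v(s,\cdot)$ is $\cC^0$--close to a constant which the boundary conditions force into the discrete set $\cI$; but it should be stated. Relatedly, the cubic boundary terms $\om(\pd_s v,\nabla_s\pd_s v)$ in $e''$ are bounded by $C\kappa\,\|\pd_s v(s,\cdot)\|_{\cC^0}^2$, and absorbing this into $\int|\nabla_t\pd_s v|^2$ requires the $\cC^0$--Poincar\'e inequality (the paper's Step~2), not merely the $L^2$ gap you prove --- though the same Lemma~\ref{lltrans} argument upgrades one to the other.
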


\begin{proof}   
\noindent
{\bf Step 1: }{\it For every $\kappa>0$ there is an $\eps_\kappa>0$
such that the following holds for all $\delta\in(0,1]$. 
If $v\in\widehat\M_\delta(x^-,x^+)$ satisfies
(\ref{eq:longenergy}) with $\hbar=\eps_\kappa$, then }
\begin{equation} \label{step1}
\left\|\pd_s v \right\|_{\cC^0_{1,\delta}([\frac 12,\infty))}
\le \kappa .
\end{equation}
Assume by contradiction that this is wrong. 
Then there exist $\kappa>0$ and sequences $\delta^\nu\in(0,1]$
and $v^\nu\in\widehat\M_{\delta^\nu}(x^-,x^+)$ such that
\begin{equation}\label{noenergy}
\lim_{\nu\to\infty}
\int_0^\infty\int_{[0,1]\sqcup[0,\delta^\nu]} 
\left|\pd_s v^\nu(s,t) \right|^2 dt ds  = 0 ,
\end{equation}
but the assertion fails. So after a time-shift we can assume that
$$
\left\|\pd_s v^\nu \right\|_{\cC^0_{1,\delta^\nu}([\frac 12,1])}
> \tfrac 12 \kappa .
$$
The equation  $\ol{\pd}_J v^\nu =0$ together with \eqref{noenergy} implies that 
$d v^\nu|_{s\geq 0} \to 0$ in the $L^2$-norm.
If $\delta^\nu$ is bounded away from zero, then the standard compactness\footnote{
If some of the symplectic manifolds are noncompact, see Section~\ref{sec:mon} for a variety of mild `bounded geometry' assumptions which can ensure the initial $\cC^0$-bound on the curves.
} 
for pseudoholomorphic curves with Lagrangian boundary conditions implies 
that $d v^\nu|_{s>0}\to 0$ in $\cC^\infty$ on every compact set
(for a subsequence), in contradiction to the assumption.
In the case $\delta^\nu\to 0$ the standard compactness theory still
implies $d v_{02}^\nu|_{(0,1]\times(0,\infty)}\to 0$
in $\cC^\infty$ on every compact set.
For $\hat v$ and $v_{02}$ near the boundary $t=0$
we obtain a $\cC^1$-bound from Lemma~\ref{bubb}.
So we obtain $\cC^0$-convergence of a subsequence
$v_{02}^\nu\to x_{02}$, $\hat v^\nu\to (x_{02},x_1,x_1)$ 
to constants $x_{02}\in L_0\times L_2$, $x_1\in M_1$
such that $(x_{02},x_1,x_1)\in L_{01}\times L_{12}$.
Now we can use the same compactness arguments as in the proof of 
Lemma~\ref{bubb} (step 2, using a cutoff function only in $s$) 
to deduce that $d v^\nu|_{s\in[\frac 12,1]}\to 0$ in the $\cC^0$-norm.
This again is a contradiction.

\medskip
\noindent
{\bf Step 2: }{\it 
There are constants $\eps_1>0$ and $C_1$ 
such that the following holds for all $\delta\in(0,1]$. 
If $v\in\widehat\M_\delta(x^-,x^+)$ satisfies
(\ref{eq:longenergy}) with $\hbar=\eps_1$, then}
$$
\| \pd_s v (1,\cdot) \|_{\cC^0([0,1]\sqcup[0,\delta])}^2
\leq C_1
\int_{[0,1]\sqcup[0,\delta]} \left|\nabla_t \pd_s v(1,t) \right|^2 dt .
$$
By contradiction we find sequences $\delta^\nu\in(0,1]$ and 
$v^\nu\in\widehat\M_{\delta^\nu}(x^-,x^+)$ that satisfy
(\ref{noenergy}), but there is no uniform constant $C_1$ with
which the estimate holds.
Then as in Step~1 we obtain (for a subsequence) $\cC^1$-convergence 
$v^\nu\to x=(x_{02},\hat x)$ 
on $[\frac 12,2]\times( [0,1]\sqcup[0,\delta^\nu])$
to constants $x_{02}\in L_0\times L_2$, $x_1\in M_1$
with $\hat x=(x_{02},x_1,x_1)\in L_{01}\times L_{12}$.
By assumption $L_{02}$ and $(L_0\times L_2)$ intersect 
transversely in $x_{02}$, and hence we have
for all $\xi_{02}:[0,1]\to T_{x_{02}}M_{02}$ with 
$\xi_{02}(1)\in T_{x_{02}}(L_0\times L_2)$
$$
\| \xi_{02} \|_{\cC^0([0,1])} \leq C \bigl(
\|\nabla_t\xi_{02}\|_{L^2([0,1])} 
+ \bigl|\pi_{02}^\perp\xi_{02}(0)\bigr| \bigr) .
$$
Now consider in addition 
$\hat\xi:[0,\delta]\to T_{\hat x}M_{0211}$ such that
$\hat\xi(\delta)\in T_{\hat x}(L_{01}\times L_{12})$
and $\xi|_{t=0}=(\xi_{02},\hx)|_{t=0}\in 
T_x(\Delta_{M_0\times M_2}\times\Delta_1)$.
We integrate for all $t\in[0,\delta]$
\begin{equation}\label{diff}
\bigl| \hat\xi(t) - \hat\xi(\delta) \bigr|
\leq \int_0^{\delta} |\nabla_t \hx(t) | dt 
\leq \sqrt{\delta} 
\biggl(\int_0^{\delta} |\nabla_t \hx(t) |^2 dt \biggr)^{1/2}.
\end{equation}
Combining this with Lemma~\ref{lltrans} and using the boundary conditions we obtain
$$
\bigl|\pi_{02}^\perp\xi_{02}(0)\bigr|
\leq \bigl|\pi_{0211}^\perp\hx(\delta)\bigr|
+ \bigl| \pi_{0211}^\perp \bigl( \hat\xi(0) - \hat\xi(\delta) \bigr) \bigr|
+ \bigl| \xi_1'(0)-\xi_1(0) \bigr|
\leq C \sqrt{\delta} 
\biggl(\int_0^{\delta} |\nabla_t \hx(t) |^2 dt \biggr)^{1/2} ,
$$
and thus
$$
\| \xi_{02} \|_{\cC^0([0,1])}^2
\leq C^2
\int_{[0,1]\sqcup[0,\delta]} \left|\nabla_t \xi \right|^2 dt .
$$
We moreover obtain from Lemma~\ref{lltrans}
with uniform constants $C,C',C''$
\begin{align*}
\bigl|\hx(\delta)\bigr|
& \leq C\bigl( \bigl|\pi_{02}\xi'_{02}(\delta)\bigr|
+ \bigl|( \xi_1'(\delta) - \xi_1(\delta) )\bigr| \bigr) \\
& \leq C'\bigl( \bigl|\xi_{02}(0)\bigr|
+ \bigl| \hat\xi(0) - \hat\xi(\delta) \bigr| \bigr) 
\;\leq\; C'' \biggl(\int_{[0,1]\sqcup[0,\delta]} 
\bigl|\nabla_t (\xi_{02},\hx) \bigr|^2 dt \biggr)^{1/2} .
\end{align*}
Together with (\ref{diff}) this implies
$$
\| \xi \|_{\cC^0([0,1]\sqcup [0,\delta])}^2 \leq C_1
\int_{[0,1]\sqcup[0,\delta]} \left|\nabla_t \xi \right|^2 dt 
$$
with some uniform constant $C_1$ for all $\delta\in(0,1]$
and all sections $\xi$ over $x$ satisfying the boundary conditions.
Due to the $\cC^1$-convergence $v^\nu\to x$ this estimate continues 
to hold with a uniform constant for sufficiently large $\nu$
for sections $\xi_{02}\in\cC^1([0,1],{v^\nu_{02}|_{s=1}}^*TM_{02})$,
$\hx\in\cC^1([0,\delta^\nu],{\hat v}|_{s=1}^*TM_{0211})$ 
that satisfy the analogous boundary conditions.
(We can write $v^\nu|_{s=1}=e_x(\zeta^\nu)$ with $\|\zeta^\nu\|_{\cC^1}\to 0$
and use $de_x(\zeta^\nu)^{-1}$ to map $(\xi_{02},\hx)$ to a 
section over $x$. This preserves the boundary conditions by construction of $e$.)
In particular, we can apply this new estimate to $\xi=\pd_s v^\nu|_{s=1}$,
which provides a uniform estimate and thus finishes the proof by contradiction.

\medskip
\noindent
{\bf Step 3: }{\it 
There are uniform constants $\eps_2,\Delta>0$ and $C_2$ 
such that the following holds for all for all $\delta\in(0,1]$.
If $v\in\widehat\M_\delta(x^-,x^+)$ satisfies
(\ref{eq:longenergy}) with $\hbar=\eps_2$,
then for all 
$s_0\geq 2$}
$$
\int_{[0,1]\sqcup[0,\delta]} | \pd_s v (s_0,t) |^2 dt
\leq C_2 e^{-\Delta s_0} 
\int_1^2 \int_{[0,1]\sqcup[0,\delta]} | \pd_s v (s,t) |^2 dt ds .
$$
Consider the function $f:[1,\infty)\to[0,\infty)$ defined by
$$
f(s):= \tfrac 12 \int_{[0,1]\sqcup[0,\delta]} |\pd_s v(s,t)|^2 dt .
$$
We can use the equation 
$\ol{\pd}_J v = 
\bigl(\pd_s v_{02}+ J_{02}(v_{02})\pd_t v_{02} \,,\,
\pd_s \hat v+ \hat J(\hat v)\pd_t \hat v \bigr) = 0 $
and the bound $\|\pd_s v\|_\infty\leq\kappa$ from Step~1 to calculate 
for $s\geq 1$
\begin{align*}
f''(s) 
&= \int_{[0,1]\sqcup[0,\delta]} \Bigl( 
|\nabla_s\pd_s v|^2
+ \lan \pd_s v \,,\, \nabla_s^2\pd_s v \ran \Bigr) \\
&= \int_{[0,1]\sqcup[0,\delta]} \Bigl( 
|J\nabla_t\pd_s v + (\nabla_{\pd_s v} J) \pd_t v|^2 
- \lan \pd_s v \,,\, J \nabla_t \nabla_s \pd_s v \ran \Bigr)\\
&\quad
- \int_{[0,1]\sqcup[0,\delta]} \Bigl( 
\lan \pd_s v \,,\, J R(\pd_s v,\pd_t v)\pd_s v
+ 2 (\nabla_{\pd_s v}J) \nabla_s\pd_t v 
+ \nabla_s(\nabla_{\pd_s v}J) \pd_t v \ran \Bigr) \\
&\geq 
\int_{[0,1]\sqcup[0,\delta]} \Bigl( 
2 |J\nabla_t\pd_s v|^2 + \pd_t\bigl( \om(\pd_s v,\nabla_s\pd_s v)\bigr)
- C |\pd_s v|^2 \bigl( |\pd_s v|^2 + |\nabla_t\pd_s v|\bigr)
\Bigr) \\
&\geq 
\bigl(2 - C\kappa\bigr) 
\int_{[0,1]\sqcup[0,\delta]} |J\nabla_t\pd_s v(s,t)|^2 dt
\;- C'\bigl(\kappa + \kappa^2\bigr) 
\|\pd_s v(s,\cdot)\|_{\cC^0([0,1]\cup[0,\delta])}^2 .
\end{align*}
The last step uses $2|\pd_s v|^2 |\nabla_t\pd_s v|\leq 
 \kappa |\pd_s v|^2 + \kappa |\nabla_t\pd_s v|^2$
and the claim 
$$
\biggl|\int_{[0,1]\sqcup[0,\delta]} 
\pd_t\bigl( \om(\pd_s v,\nabla_s\pd_s v)\bigr)\biggr|
\leq C\bigl(|\pd_s v_{02}(1)|^3 + |\pd_s \hat v(\delta)|^3 \bigr) .
$$
To prove the claim
we first use the diagonal boundary conditions to obtain
$$
\biggl|\int_{[0,1]\sqcup[0,\delta]} 
\pd_t\bigl( \om(\pd_s v,\nabla_s\pd_s v)\bigr)\biggr|
= \bigl| \om_{02}(\pd_s v_{02},\nabla_s\pd_s v_{02})|_{t=1}
+ \om_{02}(\pd_s \hat v,\nabla_s\pd_s \hat v)|_{t=\delta} \bigr| .$$
Then we use a smooth family of orthonormal frames
$(\gamma_i)_{i=1,\dots,k}\in\Gamma(T(L_0\times L_2))$ near
$w(s):=v_{02}(s,1)$ (and similarly for $\hat v$),
$$
\pd_s w (s) = \sum \lambda^i(s) \gamma_i(w(s))  ,\quad
\nabla_s\pd_s w (s) = \sum \Bigl( \pd_s\lambda^i(s) \gamma_i(w(s))
+ \lambda^i(s) \nabla_{\pd_s w(s)}\gamma_i \Bigr)
$$
with $\lambda:\R\to\R^k$. By the orthonormality we have
$|\lambda(s)|=|\pd_s w(s)|$, and using the identities 
$\omega(\gamma_i,\gamma_j)=0$ one obtains
$\bigl| \omega(\pd_s w ,\nabla_s\pd_s w) \bigr|\leq  C |\pd_s w|^3 $, 
where the constant $C$ only depends on $\nabla\gamma_i$.
Since $L$ is compact this holds with a uniform constant.

We can now choose $\kappa>0$ sufficiently small and then
fix $\hbar\leq\min\{\eps_1,\eps_\kappa\}$ such that 
Step~1 and Step~2 (applied to time-shifts of $v$)
together with the above calculation yield for all $s\geq 1$
$$
f''(s) 
\geq 
\int_{[0,1]\sqcup[0,\delta]} |J\nabla_t\pd_s v(s,t)|^2 dt
\geq 
((1+\delta)C_1)^{-1}
\int_{[0,1]\sqcup[0,\delta]} |\pd_s v(s,t)|^2 dt 
\geq \Delta^2 f(s) 
$$
with $\Delta>0$. Any such nonnegative convex function satisfies
for all $s\geq 2$ and $T\geq s$
$$
f(s)\leq C e^{-\Delta s} \biggl(
\int_{[1,2]} f(t) dt  + \int_{[2T,2T+1]} f(t) dt \biggr)
$$
with a constant $C$ that only depends on $\Delta$.
A detailed proof can be found in e.g.\ \cite[Lemma~3.7]{sw:floer}
(use the estimate for $\hat f(s-T-1)$, where the function 
$\hat f$ is shifted by $T+1$).
If we let $T\to\infty$ then $\int_{[2T,2T+1]} f(t) dt \to 0$
by the finite energy condition $\int_0^\infty f(s) ds < \hbar $,
and this proves the claim.

\medskip
\noindent
{\bf Step 4: }{\it 
There are constants $\eps_3>0$ and $C_3$ 
such that the following holds for all $\delta\in(0,1]$. 
If $v\in\widehat\M_\delta(x^-,x^+)$ satisfies
(\ref{eq:longenergy}) with $\hbar=\eps_3$, then}
$$
\left\|\pd_s v \right\|_{\cC^0_{1,\delta}([1,2])}
\leq C_3 \left\|\pd_s v \right\|_{L^2_{1,\delta}([\frac 12,\frac 52])} .
$$
By contradiction we find sequences $\delta^\nu\in(0,1]$ and 
$v^\nu\in\widehat\M_{\delta^\nu}(x^-,x^+)$ that satisfy
(\ref{noenergy}), but the assertion fails, i.e.\ we cannot find
a constant $C_3$ for which the estimate is satisfied.
Then as in Step~1 we obtain (for a subsequence) $\cC^1$-convergence 
$v^\nu\to x=(x_{02},\hat x)$ 
on $[\frac 12,\frac 52]\times( [0,1]\sqcup[0,\delta^\nu])$
to constants $x_{02}\in L_0\times L_2$, $x_1\in M_1$
with $\hat x=(x_{02},x_1,x_1)\in L_{01}\times L_{12}$.
So we can find sections $\xi^\nu\in \Gamma_{1,\delta^\nu}$ over $u=x$
such that $v^\nu|_{s\in[\frac 12,\frac 52]}=e_x(\xi^\nu)$.
The equation $\ol{\pd}_J v^\nu$ then becomes
$$
\nabla_s \xi^\nu + J(\xi^\nu) \nabla_t \xi^\nu = 0 
$$
and we have the boundary conditions 
$\nabla_s\xi_{02}^\nu|_{t=1}\in T_{x_{02}}(L_0\times L_2)$
and
$\nabla_s\hx^\nu|_{t=\delta^\nu}\in T_{\hat x}(L_{01}\times L_{12})$.
We fix two cutoff functions $h,\ti{h}\in\cC^\infty(\R,[0,1])$
with $h|_{[1,2]}\equiv 1$, $\ti{h}|_{\on{supp} h}\equiv 1$ and 
$\on{supp}(h),\on{supp}(\ti{h})\subset(\frac 12,\frac 52)$
and consider the sections $h\xi^\nu,\ti{h}\xi^\nu\in\Gamma_{1,\delta^\nu}$.
Note that $\pd_s v^\nu = d e_x(\xi^\nu)\nabla_s\xi^\nu$ with
$d e_x(\xi^\nu)\approx{\rm Id}$. 
So for sufficiently large $\nu$ we have
\begin{align*}
\|\pd_s v^\nu \|_{\cC^0_{1,\delta^\nu}([1,2])}
&\leq 2 \|h \nabla_s \xi^\nu  \|_{\cC^0_{1,\delta^\nu}}
\leq 2C_S \|h \nabla_s \xi^\nu  \|_{H^2_{1,\delta^\nu}} , \\
\|\nabla_s \xi^\nu  \|_{L^2_{1,\delta^\nu}([\frac 12,\frac 52])} 
& \leq 2 \|\pd_s v^\nu \|_{L^2_{1,\delta^\nu}([\frac 12,\frac 52])} ,
\end{align*}
where we used Lemma~\ref{Sobolev}.
Now we apply Lemma~\ref{Ddelta}~(b) to the sections $\xi=h\nabla_s\xi^\nu$
and $\xi=\ti{h}\nabla_s\xi^\nu$
(for which the boundary terms vanish since $\nabla_s\xi^\nu, 
\nabla_s^2\xi^\nu, \nabla_s^3\xi^\nu$ satisfy the boundary conditions)
and $\zeta=\xi^\nu$ (which satisfy $\|\xi^\nu\|_\infty\to 0$ and 
$\|\nabla\xi^\nu\|_\infty\to 0$) to obtain with uniform constants $C,C'$
\begin{align*}
\| h\nabla_s\xi^\nu \|_{H^2_{1,\delta^\nu}}
&\leq C_1 \bigl( 
\| \bigl(\nabla_s + J(\xi^\nu)\nabla_t\bigr) h \nabla_s \xi^\nu \|_{H^1_{1,\delta^\nu}}
+ \| h\nabla_s\xi^\nu \|_{H^0_{1,\delta^\nu}}  \bigr) \\
&= C_1 \bigl( 
\|  h' \nabla_s \xi^\nu \|_{H^1_{1,\delta^\nu}}
+ \| h\nabla_s\xi^\nu \|_{H^0_{1,\delta^\nu}}  \bigr) \\
&\leq C \| \nabla_s\xi^\nu \|_{H^1_{1,\delta^\nu}(\on{supp}h)}  
\;\leq\; C\|\ti{h}\nabla_s\xi^\nu \|_{H^1_{1,\delta^\nu}} \\
&\leq C C_1 \bigl( 
\| \bigl(\nabla_s + J(\xi^\nu)\nabla_t\bigr) \ti{h} \nabla_s \xi^\nu \|_{H^0_{1,\delta^\nu}}
+ \| \ti{h}\nabla_s\xi^\nu \|_{H^0_{1,\delta^\nu}}  \bigr) \\
&\leq C' \| \nabla_s\xi^\nu \|_{H^0_{1,\delta^\nu}([\frac 12,\frac 52]} .
\end{align*}
Now the contradiction follows,
\begin{align*}
\|\pd_s v^\nu \|_{\cC^0_{1,\delta^\nu}([1,2])}
\leq 2 \| h\nabla_s\xi^\nu \|_{H^2_{1,\delta^\nu}} 
\leq 2C' \| \nabla_s\xi^\nu \|_{H^0_{1,\delta^\nu}([\frac 12,\frac 52])} 
\leq 4C' \|\pd_s v^\nu \|_{L^2_{1,\delta^\nu}([\frac 12,\frac 52])} .
\end{align*}

\noindent
{\bf Step 5: }{\it 
We prove the claim, that is for every $s\geq 3$
$$
d_{\cC^0([0,1]\sqcup[0,\delta])}(v(s,\cdot) , x^+ )^2
+ \left\|\pd_s v(s,\cdot) \right\|_{\cC^0([0,1]\sqcup[0,\delta])}^2
\le C e^{-\Delta s} E'(v)
$$
with}
$$
E'(v):=\int_0^2 \int_{[0,1]\sqcup[0,\delta]} | \pd_s v (s,t) |^2 dt ds .
$$
We choose $\hbar=\min\{\eps_2,\eps_3\}$, then Step~3 and Step~4 
(applied to appropriately shifted solutions) combine as follows
for all $s\geq 3$
\begin{align*}
\left\|\pd_s v \right\|_{\cC^0_{1,\delta}([s-\frac 12,s+\frac 12])}^2
&\leq C_3^2 
\int_{s-1}^{s+1} 
\int_{[0,1]\sqcup[0,\delta]} | \pd_s v (s,t) |^2 dt \\
&\leq C_3^2 C_2 \int_{s-1}^{s+1} e^{-\Delta s} E'(v) ds  
\;\leq\; C_3^2 C_2 \Delta^{-1} e^{\Delta} e^{-\Delta s}  E'(v) .
\end{align*}
This proves the second part of the claim.
The estimate on
$d_{\cC^0([0,1]\sqcup[0,\delta])}(v(S,\cdot) , x^+ )$
now simply follows by integration: For all $S\geq 3$ and
$t\in[0,1]$
\begin{eqnarray*}
d_{M_{02}}(v_{02}(S,t) , x^+ )
&\leq&
\int_S^\infty |\pd_s v_{02}(s,t)| ds \\
&\le& C \int_S^\infty e^{-\Delta s/2} \sqrt{E'(v)} ds  \\
&=& 2C\Delta^{-1} e^{-\Delta S/2} \sqrt{E'(v)} ,
\end{eqnarray*}
and similarly for $\hat v$.
\end{proof}

\subsection{Compactness} \label{cpt}

The surjectivity of the map
$\T_\delta: \M^1_0(x^-,x^+) \to \M^1_\delta(x^-,x^+)$,
as introduced in the previous section,
will be a direct consequence of the following compactness result. 
Here we choose $\eps_0\in(0,\eps]$ with $\eps>0$ from in Theorem~\ref{solving}.
Then $v=e_u(\xi)$ with $\xi\in\Gamma_{1,\delta}(\eps_0)\cap K_0$
implies that $[v_u]=\T_\delta([u])$ by the definition of $\T_\delta$
via theorem~\ref{solving}.
We will denote the time-shift by $\tau^\sigma v (s,t):= v(\sigma + s, t)$.

\begin{theorem} \label{compact}
Given $\eps_0>0$ there exists $\delta_0>0$ such that for 
every $\delta\in(0,\delta_0]$ and $v\in\widehat\M^1_\delta(x^-,x^+)$
there exist $u\in\tM^1_0(x^-,x^+)$ and $\sigma\in\R$ such that
$\tau^\sigma v=e_u(\xi)$ with $\xi\in\Gamma_{1,\delta}\cap K_0$
and $\|\xi\|_{\Gamma_{1,\delta}}\leq \eps_0$.
Moreover, the moduli space $\widehat\M^1_\delta(x^-,x^+)$ is regular
for all $\delta\in(0,\delta_0]$ in the sense that the linearized 
operator $D_{v}$ is surjective for every 
$v\in\widehat\M^1_\delta(x^-,x^+)$.
\end{theorem}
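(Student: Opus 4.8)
The plan for the convergence statement is a contradiction argument in the adiabatic limit $\delta\to 0$, after which the regularity statement follows by perturbing the uniform linear estimates already proved for Theorem~\ref{solving}. So suppose the first assertion fails: there are $\delta^\nu\to 0$ and $v^\nu\in\widehat\M^1_{\delta^\nu}(x^-,x^+)$ such that no time-shift $\tau^\sigma v^\nu$ can be written as $e_u(\xi)$ with $u\in\tM^1_0(x^-,x^+)$, $\xi\in\Gamma_{1,\delta^\nu}\cap K_0$, and $\|\xi\|_{\Gamma_{1,\delta^\nu}}\leq\eps_0$. By Lemma~\ref{index} and the energy--index relation of Proposition~\ref{prop monotone}, all the $v^\nu$ carry the same fixed energy $\tfrac12\bigl(\tau+c(x^-,x^+)\bigr)$, so the energies are uniformly bounded.

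The first real step is a uniform $\cC^1$-bound on the $v^\nu$, i.e.\ the exclusion of all bubbling. Sphere and disk bubbles on $L_0,L_2,L_{01},L_{12}$ or $L_{02}=L_{01}\circ L_{12}$ are ruled out by the index/energy argument of Section~\ref{sec:mon}: we work with a split almost complex structure, assumptions~\eqref{b} and~\eqref{c} transfer to the composed correspondence by Lemma~\ref{neednoviterbo}, and a bubble would leave a (possibly broken) trajectory between $x^\pm$ with strictly less energy, hence --- by monotonicity with $\tau\geq 0$ and the parity identity mod $2$ --- with negative index, which is impossible once transversality in negative index has been arranged. The \emph{figure eight bubble} forming at the collapsing seam $\{t=0\}$ is excluded by exactly the same energy-loss mechanism, with no geometric model for the bubble: Lemma~\ref{bubb} supplies the mean-value inequality and the $\cC^1$-control near $t=0$, so a figure eight bubble would again produce a limiting trajectory of strictly smaller energy between $x^\pm$, hence of negative index, a contradiction. \textbf{I expect this figure eight exclusion to be the main obstacle}, since it cannot be treated by a Fredholm argument and must run entirely through energy quantization, with the nonnegativity of $\tau$ and assumption~\eqref{c} used in their full strength; everything else is standard, if lengthy, adiabatic-limit compactness.

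Granting the uniform $\cC^1$-bound, elliptic bootstrapping gives, after passing to a subsequence and translating by suitable $\sigma^\nu\in\R$, $\cC^\infty_{\loc}$-convergence of $\tau^{\sigma^\nu}v^\nu$. Since the middle strip has width $\delta^\nu\to 0$, the patches $(v_0^\nu,v_2^\nu)$ converge to a pseudoholomorphic pair $u=(u_0,u_2)$ with boundary on $(L_0\times L_2,L_{02})$, while the middle patch $v_1^\nu$ collapses onto the lift $\bar u_1=\ell_1\circ\bar u_{02}$ of~\eqref{ell1}. No energy is lost to breaking: each nonconstant broken piece has index $\geq 1$ by transversality and the indices sum to at most the limiting index $1$, so exactly one nonconstant index-$1$ piece survives and $u\in\tM^1_0(x^-,x^+)$. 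To upgrade to the global statement, observe that by Lemma~\ref{expdecay} --- whose constants $\hbar,\Delta,C$ are uniform in $\delta$ --- the energy of $v^\nu$ on $\{|s|\geq S\}$ is $\leq Ce^{-\Delta S}$ for $S$ large, uniformly in $\nu$; combined with the interior convergence this gives $\cC^0$-closeness of $\tau^{\sigma^\nu}v^\nu$ to $(u_{02},\bar u)$ on all of $\R\times([0,1]\sqcup[0,\delta^\nu])$, so we may write $\tau^{\sigma^\nu}v^\nu=e_u(\xi^\nu)$ for unique small $\xi^\nu$ satisfying the boundary conditions~\eqref{LL}. The uniform elliptic estimates of Lemma~\ref{Ddelta}, together with Lemma~\ref{Sobolev} and again Lemma~\ref{expdecay}, then force $\|\xi^\nu\|_{\Gamma_{1,\delta^\nu}}\to 0$ (interior $\cC^1_{\loc}$-control on compact sets, exponential decay on the ends). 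Finally, since $D_{u_{02}}\oplus\pi_{02}^\perp$ has one-dimensional kernel, a small additional translation --- solving a one-dimensional equation by the implicit function theorem, as in Corollary~\ref{injective} --- moves $\xi^\nu$ into the slice $K_0$; for $\nu$ large we then have $\xi^\nu\in\Gamma_{1,\delta^\nu}\cap K_0$ with $\|\xi^\nu\|_{\Gamma_{1,\delta^\nu}}\leq\eps_0$, contradicting the choice of $v^\nu$. This proves the convergence statement, with $\delta_0$ the threshold produced by the contradiction.

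For regularity, fix $\delta\leq\delta_0$ and $v\in\widehat\M^1_\delta(x^-,x^+)$; by the convergence statement $v=\tau^{-\sigma}e_u(\xi)$ for some $u\in\tM^1_0(x^-,x^+)$ and $\xi\in\Gamma_{1,\delta}\cap K_0$ with $\|\xi\|_{\Gamma_{1,\delta}}\leq\eps_0$, and time-translation does not affect surjectivity of the linearized operator. By the identity~\eqref{dFD} and the third estimate of Lemma~\ref{quadratic}, $D_v=D_{e_u(\xi)}$ agrees, up to the uniformly invertible maps $E_u(\xi)$ and $\Phi_u(\xi)^{\pm1}$ and an error of operator norm $O(\|\xi\|_{\Gamma_{1,\delta}})$, with $d\F_u(\xi)$; by the second estimate of Lemma~\ref{quadratic}, $d\F_u(\xi)=D^\delta+O(\|\xi\|_{\Gamma_{1,\delta}})$; and $D^\delta$ is surjective with right inverse bounded uniformly in $\delta\leq\delta_0$ by the estimate~\eqref{right inverse} established in the proof of Theorem~\ref{solving}. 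Hence for $\eps_0$ small enough (shrinking $\delta_0$ if necessary) $D_v$ is surjective, which is the asserted regularity. Absorbing all smallness requirements into a single final choice of $\delta_0$ completes the proof.
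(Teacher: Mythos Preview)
Your plan is correct and mirrors the paper's own proof: contradiction setup with fixed energy, bubble exclusion by energy quantization (with Lemma~\ref{bubb} supplying the $\hbar$ for the figure-eight case and the monotonicity/index argument disposing of the residual broken trajectory), local convergence to some $u\in\tM^1_0(x^-,x^+)$, upgrade to $\|\xi^\nu\|_{\Gamma_{1,\delta^\nu}}\to 0$ via the uniform nonlinear estimates (Lemmas~\ref{Ddelta},~\ref{D02}) and the uniform exponential decay of Lemma~\ref{expdecay}, a final time-shift into $K_0$, and regularity from Lemma~\ref{quadratic} together with~\eqref{right inverse}. The one place where the paper proceeds more cautiously than your sketch is the phrase ``elliptic bootstrapping gives $\cC^\infty_{\loc}$-convergence'': near the collapsing seam $t=0$ the boundary condition for $v_{02}^\nu$ is coupled to $\hat v^\nu$ on a domain of width $\delta^\nu\to 0$, so the paper extracts only $\cC^0$ and weak $W^{1,p}$ limits there and then relies on the tailored estimates of Lemmas~\ref{Ddelta} and~\ref{D02} (rather than off-the-shelf boundary regularity) to run the bootstrap.
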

\begin{proof}
We assume by contradiction that there is an $\eps_0>0$,
a sequence $\delta^\nu\to 0$, and solutions 
$v^\nu=(v^\nu_{02},\hat v^\nu)\in\widehat\M^1_{\delta^\nu}(x^-,x^+)$ 
for which the assertion of the theorem fails.
Their energy is fixed, 
$E(v^\nu) =  \tfrac 12 \tau + \tfrac 12 c(x_-,x_+)$,
by the analogue of Proposition~\ref{prop monotone} for strips of different widths: For any pair of maps $(v_{02},\hat v)$ that are not necessarily pseudoholomorphic but satisfy the limits and seam conditions of $\widehat\M^1_{\delta}(x^-,x^+)$ we have
\begin{align}
E(v_{02},\hat v) &= \int v_{02}^*\bigl((-\omega_0)\oplus\omega_2\bigr)
+\int \hat v^*\bigl(\omega_0\oplus(-\omega_2)\oplus (-\omega_1)\oplus \omega_1 \bigr)  \nonumber \\
&=  \tfrac 12 \tau {\rm Ind}(D_{(v_{02},\hat v)}) + \tfrac 12 c_{\delta}(x_-,x_+) . \label{delta mon}
\end{align}
Here $c_{\delta}(x_-,x_+)$ is independent of $\delta$ since the equations for different $\delta$ apply to the same map, rescaled to different widths, which has the same energy and index.
Next, we can exclude bubbling by the following argument
based on Lemma~\ref{bubb} below:

If $|d v_{02}^\nu|$ is unbounded near a point $z\in\R\times(0,1]$,
then the standard rescaling method gives rise to a nontrivial
pseudoholomorphic sphere or disk in $(M_0,L_0)$, or in $(M_2,L_2)$,
or in both.\footnote{
In case $M_0$ or $M_2$ are noncompact, this convergence can be ensured by `bounded geometry' assumptions, or energy concentration can be proven directly, as outlined in Section~\ref{sec:mon}.
}
Thus some fixed amount of energy $\hbar>0$ would have to concentrate
near $z$.  The same energy quantization holds for blowup of $d \hat v^\nu$
or $d v_{02}^\nu|_{t=0}$ by Lemma~\ref{bubb}.  So the energy densities $|d
v^\nu|$ can only blow up at finitely many points.  On the complement
the same compactness proof as in the next paragraph
provides a $\cC^0_{\rm loc}$
convergent subsequence $v_{02}^\nu\to u_{02}$, where the limit
corresponds to a solution $u\in\tM_0(y^-,y^+)$ with finitely
many singularities and energy $E(u)<E(v^\nu)$. 
The singularities can be removed by the standard proofs for pseudoholomorphic curves with Lagrangian boundary condition \cite[Theorem 4.1.2]{ms:jh}, so we would obtain a solution $\tilde u\in\tM_0(y^-,y^+)$ of energy $E(\tilde u)<E(v^\nu)$.
Its limits $y_\pm$ may not be the same as those of $v^\nu$, in which case we find a sequence of trajectories $\ul{\tilde u}=(\tilde u_1,\ldots,\tilde u_N)\subset \tM_0(\cdot,\cdot)$ 
connecting $x_-$ to $x_+$, with total energy $E(\ul{\tilde u})=\sum E(\tilde u_j)<E(v^\nu)$.
We claim that monotonicity forces $\ul{\tilde u}$ to have total index 
$\sum{\rm Ind}(D_{\tilde u_j})<{\rm Ind}(D_{v^\nu})=1$, and hence
by regularity of the moduli spaces $\tM_0(\cdot,\cdot)$ consists of 
a single constant trajectory. This however would mean that $v^\nu$ were self-connecting trajectories of  
$x_-= x_+$, i.e.\ we have annuli with ${\rm Ind}(D_{v^\nu})=1$ -- in contradiction to assumption \eqref{d}.
To control the index of $\ul{\tilde u}$ we glue the trajectories to a single map $\tilde w:\R\times[0,1]\to M_0^-\times M_2$ satisfying all limit and boundary conditions of $\tM_0(x^-,x^+)$ except for holomorphicity. Its index and energy coincide with the total energy and index of $\ul{\tilde u}$. With that we obtain
$$
\tau {\rm Ind}(D_{\tilde w}) + c(x_-,x_+) 
= 2 E(\tilde w) < 2 E(v^\nu) =
 \tau + c(x_-,x_+) 
$$
from the monotonicity formula \eqref{delta mon} 
and the index and energy identities in Lemma~\ref{index} applied to
$(\tilde w,\hat w)$, where $\hat w$ is the $t$-independent map given by the lift of $\tilde w|_{t=0}\subset L_{02}$ to $(L_{01}\times_{\Delta_1}L_{12})^T$.
This proves $\sum{\rm Ind}(D_{\tilde u_j})\leq 0$ as claimed and hence
excludes bubbling. 

\medskip

So from now on we assume that $|d v^\nu|\leq C_0$ is uniformly bounded. 
In addition, the boundary condition in the compact set $(L_{01}\times L_{12})^T$ implies a priori bounds on the map $\hat v^\nu|_{t=\delta_\nu}$. Together with the uniform gradient bound on $\hat v^\nu$ this implies a priori bounds on $\hat v^\nu|_{t=0}$, which transfer to $v^\nu_{02}|_{t=0}$ via the boundary condition. Finally using the uniform gradient bounds on $v^\nu_{02}$ we then obtain uniform $\cC^1$-bounds on both $v^\nu_{02}$ and $\hat v^\nu$ on every ball $B_R(0)$ of fixed radius (intersected with the domain of the respective map).
So we can fix $p>2$ and find a subsequence and map $u_{02} \in \cC^0 \cap W^{1,p}_{\loc}(\R \times [0,1],M_0 \times M_2)$ such that $v_{02}^\nu\to u_{02}$ in the $\cC^0$-topology and the weak $W^{1,p}$-topology on every compact subset of $\R\times[0,1]$.
We claim that the limit $u_{02}$ corresponds to a solution $(u_0,u_2)\in\tM_0^1(x^-,x^+)$.  Indeed, the holomorphicity follows from the weak $W^{1,p}$-convergence, and the boundary condition follows from $d_{\cC^0}\bigl(v^\nu_{02}|_{t=0}, L_{02} \bigr)\to 0$. To check the latter, recall that Lemma~\ref{lltrans}~(a) bounds this distance in terms of $d_{\cC^0}\bigl(\hat v^\nu|_{t=0}, (L_{01}\times L_{12})^T \bigr)$, which due to the boundary conditions on $\hat v^\nu|_{t=\delta^\nu}$ is bounded by
$d_{\cC^0}\bigl( \hat v^\nu|_{t=\delta^\nu},\hat v^\nu|_{t=0} \bigr)
\leq C_0\delta^\nu$. 

We also conclude that $\hat v\to\bu=(u_{02}|_{t=0},\bu_1,\bu_1)$ 
in $\cC^0([-T,T]\times[0,\delta^\nu])$ for all $T>0$, where
$\bu_1$ is determined uniquely by $(u_{02}|_{t=0},\bu_1,\bu_1)\in
L_{01}\times L_{12}$.  Indeed, $\hat
v^\nu|_{t=0}=(v^\nu_{02},v^\nu_1,v^\nu_1)|_{t=0}$ satisfies
$d_{\cC^0}( \hat v^\nu|_{t=0} , u_{02}\times\Delta_1 ) \to 0$ as well
as $d_{\cC^0}( \hat v^\nu|_{t=0} , L_{01}\times L_{12} ) \leq
d_{\cC^0}( \hat v^\nu|_{t=0} , \hat v^\nu|_{t=\delta^\nu}) \to 0 $, so
$v_1|_{t=0}$ must converge to $\bu_1$ on compact sets, and the
convergence for $t_0\in[0,\delta^\nu]$ follows from $d_{\cC^0}( \hat
v^\nu|_{t=0} , \hat v^\nu|_{t=t_0}) \leq C_0\delta^\nu\to 0 $.  

In summary we have $v^\nu\to u:=(u_{02},\bu)$ in the $\cC^0$-topology on
every set $\{|s|\leq T\}$ for fixed $T$.  In the following, we will
strengthen this convergence using uniform nonlinear estimates and
exponential decay, to find sections
$\xi^\nu\in\Gamma_{1,\delta^\nu}(\eps_0)$ such that
$v^\nu=e_u(\xi^\nu)$ and $D_{v^\nu}$ is surjective in contradiction to
the assumption.
Let us first note that, by the same monotonicity arguments as above, the limit must be a nonbroken trajectory $u\in\tM_0^1(x^-,x^+)$ of the same index and energy $E(u)=E(v^\nu)$.  
In the next step we strengthen the local convergence.

For fixed $T>0$ and sufficiently large $\nu\ge\nu_0$ we can write
$v^\nu|_{\{|s|\le T\}}=e_u(\xi^\nu)$ with a section
$\xi^\nu\in\Gamma_{1,\delta^\nu}$ (extended smoothly to $\{|s|>T\}$).
The extension of $\xi^\nu$ can be chosen such that 
$\|\xi^\nu\|_\infty \to 0$ and $\sup_\nu\|\nabla\xi^\nu\|_\infty<\infty$
follows from the $\cC^0$-convergence and $\cC^1$-boundedness of 
$v^\nu|_{\{|s|\le T\}}$.
For the latter note that
$\nabla\xi^\nu =  d e_u(\xi^\nu)^{-1} \nabla v^\nu 
- \pd_1e(u,\xi^\nu) \nabla u $, where $\nabla v^\nu$ is uniformly 
bounded, and $d e_u(\xi^\nu)\to{\rm Id}$ as $|\xi^\nu|\to 0$.
This puts us into the position where Lemma~\ref{Ddelta} 
applies with $\zeta=\xi^\nu$.
We fix a cutoff function $h\in\cC^\infty_0([-T,T],[0,1])$
with $h|_{[-T+1,T-1]}\equiv 1$, then
\begin{align*}
\| h\xi^\nu \|_{H^1_{1,\delta}} 
&\leq C_1 \Bigl( 
\| \bigl(\nabla_s + J(\xi^\nu)\nabla_t \bigr) h \xi^\nu \|_{H^0_{1,\delta^\nu}}
+ \| h \xi^\nu \|_{H^0_{1,\delta^\nu}}  \\
&\qquad\qquad
+ \| h \hx^\nu|_{t=\delta^\nu} \|_{H^0(\R)} 
+ \| h \xi_{02}^\nu|_{t=1} \|_{H^0(\R)}  \Bigr) .
\end{align*}
Now we can use (\ref{dbarxi}), $\ol{\partial}_J v^\nu = 0$, 
$\ol{\partial}_{J_{02}} u_{02} = 0$, and $\pd_t\bu=0$ to obtain
\begin{align*}
\bigl\| h \bigl( \nabla_s + \hat J(\hx^\nu) \nabla_t \bigr) \hx^\nu 
\bigr\|_{L^2(\R\times[0,\delta^\nu])}
&= \bigl\| h \cdot 
de_\bu(\hx^\nu)^{-1} \bigl(\pd_1 e(\bu,\hx^\nu) \pd_s \bu
 \bigr) \bigr\|_{L^2([-T,T]\times[0,\delta^\nu])} \\
&\leq C \|\pd_s\bu\|_{L^2([-T,T]\times[0,\delta^\nu])}
\leq C \sqrt{\delta^\nu}\|\pd_s\bu\|_{L^2([-T,T])} ,
\end{align*}
and furthermore, using the fact that $\partial_1 e(u_{02},0)={\rm Id}$ 
commutes with $J(u_{02})$,
\begin{align*}
&\bigl\| h \bigl( \nabla_s + J_{02}(\xi_{02}^\nu) \nabla_t \bigr) 
\xi_{02}^\nu \bigr\|_{L^2(\R\times[0,1])} \\
&= \bigl\| h \cdot de_{u_{02}}(\xi_{02}^\nu)^{-1} \bigl( 
\pd_1 e(u_{02},\xi_{02}^\nu) J(u_{02})\pd_t u_{02}  
- J_{02}(u_{02}) \pd_1 e(u_{02},\xi_{02}^\nu)\pd_t u_{02} \bigr) 
\bigr\|_{L^2(\R\times[0,1])}  \\
& \leq C \|\xi_{02}^\nu\|_{L^2([-T,T]\times[0,1])} .
\end{align*}
Hence we have
\begin{align*}
\| \xi^\nu \|_{H^1_{1,\delta^\nu}(\{|s|\leq T-1\})} 
&\leq C \Bigl( \sqrt{\delta^\nu}
+ \| \xi^\nu \|_{H^0_{1,\delta^\nu}(\{|s|\le T\})} 
+ \| h \hx^\nu|_{t=\delta^\nu} \|_{H^0(\R)} 
+ \| h \xi_{02}^\nu|_{t=1} \|_{H^0(\R)}  \Bigr),
\end{align*}
which converges to zero,
and thus $v_{02}^\nu\to u_{02}$ in the $H^1$-norm on every compact set.
Now we can verify the assumptions of Lemma~\ref{expdecay}
(with the constant $\hbar>0$) and achieve uniform exponential decay:
Pick $T>0$ such that 
$\int_{[-T,T]\times[0,1]} |\pd_s u_{02}|^2 \ge E(u) - \frac 12 \hbar$
and pick $\nu_0$ such that for all $\nu\geq\nu_0$ we have
$\| \pd_s u_{02} \|_{L^2([-T,T]\times[0,1])}^2 
- \| \pd_s v_{02}^\nu \|_{L^2([-T,T]\times[0,1])}^2 \leq \frac 12\hbar$
and thus
$$
\int_{\{|s|>T\}} \biggl( \int_{[0,1]} |\pd_s v^\nu_{02}|^2 
+ \int_{[0,\delta^\nu]} |\pd_s \hat v|^2  \biggr)
\;\le\; E(v^\nu) + \tfrac 12 \hbar - E(u) + \tfrac 12\hbar 
\;=\; \hbar .
$$
Now the exponential decay Lemma~\ref{expdecay} combined with the
local $\cC^0$-convergence implies that
$$
d_{\cC^0}(v_{02}^\nu,u_{02}) + d_{\cC^0}(\hat v^\nu,\bu) \to 0 
$$
uniformly for all $s,t$.
Thus for sufficiently large $\nu$ we can write $v^\nu=e_u(\xi^\nu)$
with $\xi^\nu\in H^2_{1,\delta^\nu}$ and $\|\xi^\nu\|_\infty \to 0 $.
In fact, the uniform exponential decay implies global convergence,
$$
\|\xi^\nu\|_\infty \to 0,\qquad\qquad
\|\xi^\nu\|_{L^p_{1,\delta}} \to 0 \quad \forall p\geq 1 ,
\qquad\qquad \|\nabla\xi^\nu\|_\infty \leq c_0 <\infty.
$$
This puts us into the position where Lemma~\ref{Ddelta} and
~\ref{D02} apply with $\zeta=\xi^\nu$,
\begin{align*}
& \| \xi^\nu \|_{H^2_{1,\delta^\nu}} + \| \nabla \xi^\nu \|_{L^4_{1,\delta^\nu}} \\
&\leq C_1 \Bigl( 
\| \nabla_s \xi^\nu + J(\xi^\nu)\nabla_t\xi^\nu \|_{H^1_{1,\delta^\nu}}
+ \| \nabla_s \xi^\nu + J(\xi^\nu)\nabla_t\xi^\nu \|_{L^4_{1,\delta^\nu}} \\
&\qquad\qquad\qquad
+ \| \xi^\nu \|_{H^0_{1,\delta^\nu}} 
+ \| \hx^\nu|_{t=\delta^\nu} \|_{H^1(\R)} 
+ \| \xi_{02}^\nu|_{t=1} \|_{H^1(\R)}  \Bigr) \\
&\leq C_1(1+C_2) \Bigl( 
\| \nabla_s \xi^\nu + J(\xi^\nu)\nabla_t\xi^\nu \|_{H^1_{1,\delta^\nu}}
+ \| \nabla_s \xi^\nu + J(\xi^\nu)\nabla_t\xi^\nu \|_{L^4_{1,\delta^\nu}} \\
&\qquad\qquad\qquad\qquad\qquad\qquad\qquad
+ \| \xi^\nu \|_{H^0_{1,\delta^\nu}} 
+ \sqrt{\delta^\nu} \Vert \nabla_t\hx^\nu \Vert_{H^1(\R\times[0,\delta^\nu])}  \Bigr).
\end{align*}
The terms in the last line converge to zero or can be absorbed into
the left hand side for $\delta^\nu$ sufficiently small.
We claim that the penultimate line also converges to zero and we thus
obtain the convergence $\| \xi^\nu \|_{\Gamma_{1,\delta}}\to 0$.
To check this we recall from (\ref{dbarxi})
that $\ol{\partial}_J v^\nu = 0$ implies
\begin{equation} \label{quark}
\nabla_s \xi^\nu + J(\xi^\nu) \nabla_t \xi^\nu
= - de_u(\xi^\nu)^{-1} \bigl( \pd_1 e(u,\xi^\nu)\pd_s u 
+ J(u) \pd_1 e(u,\xi^\nu)\pd_t u \bigr) .
\end{equation} 
Recall that 
\begin{equation} \label{pd2}
\pd_1 e(u,0) = {\rm Id}_{T_u M}, \qquad 
\pd_2 e(u,0)=de_u(0)={\rm Id}_{T_u M}.
\end{equation}  
So in zeroth order we have, using the equations $\pd_t\bu=0$ and
$\pd_s u_{02}=-J_{02}(u_{02})\pd_t u_{02}$,
\begin{align*}
\bigl| \nabla_s \hx^\nu + \hat J(\hx^\nu) \nabla_t \hx^\nu \bigr|
&\leq
\bigl| de_{\bu}(\hx^\nu)^{-1} \bigl( \pd_1 e(\bu,\hx^\nu) \pd_s\bu \bigr) \bigr|
\leq
C | \pd_s\bu |, \\
\bigl| \nabla_s \xi_{02}^\nu + J_{02}(\xi_{02}^\nu) \nabla_t \xi_{02}^\nu \bigr| 
&\leq
\bigl| de_{u_{02}}(\xi^\nu_{02})^{-1} \bigl( \pd_1 e(u_{02},\xi_{02}^\nu) J_{02}(u_{02}) \\
&\qquad\qquad\qquad\quad
- J_{02}(u_{02}) \pd_1 e(u_{02},\xi_{02}^\nu) \bigr)   \pd_t u_{02} \bigr| 
\;\leq\;
C | \xi^\nu_{02} | ,
\end{align*}
and thus
\begin{align*}
&\| \nabla_s \xi^\nu + J(\xi^\nu)\nabla_t\xi^\nu \|_{L^2_{1,\delta^\nu}}
+ \| \nabla_s \xi^\nu + J(\xi^\nu)\nabla_t\xi^\nu \|_{L^4_{1,\delta^\nu}} \\
&\leq
C\bigl( \|\xi_{02}^\nu\|_{L^2(\R\times[0,1])} + \|\xi_{02}^\nu\|_{L^4(\R\times[0,1])}
+ (\delta^\nu)^{1/2}\|\pd_s\bu\|_{L^2(\R)} 
+ (\delta^\nu)^{1/4} \|\pd_s\bu\|_{L^4(\R)} \bigr)
\to 0 .
\end{align*}
For the first derivative we calculate
from \eqref{quark}, 
denoting all uniform constants by $C$,
\begin{align*}
\bigl|\nabla\bigl( \nabla_s \hx^\nu + \hat J(\hx^\nu) \nabla_t \hx^\nu \bigr)\bigr|
&\leq
C (1+ |\nabla\hx^\nu|) \bigl| \pd_1 e(\bu,\hx^\nu)\pd_s\bu \bigr| 
+ C \bigl| \nabla \bigl( \pd_1 e(\bu,\hx^\nu)\pd_s\bu \bigr) \bigr|  \\
&\leq  C \bigl( 1 + |\nabla\hx^\nu| \bigr) 
       \bigl( | \pd_s\bu | + |\nabla_s\pd_s \bu| \bigr) ,
\end{align*}
and (in between dropping the subscript from $\xi_{02}^\nu$)
\begin{align*}
\bigl|\nabla\bigl( \nabla_s \xi_{02}^\nu 
+ J_{02}(\xi_{02}^\nu) \nabla_t \xi_{02}^\nu \bigr)\bigr|
&\leq
C (1+|\nabla\xi^\nu|)
 \bigl| \pd_1 e(u,\xi^\nu)J(u)\pd_t u - 
J(u) \pd_1 e(u,\xi^\nu)\pd_t u \bigr| \\
&\quad
+ C \bigl| \nabla \bigl( \pd_1 e(u,\xi^\nu)J(u) 
- J(u) \pd_1 e(u,\xi^\nu) \bigr)\bigr|
\cdot |\pd_t u| \\
&\quad
+ C \bigl| \pd_1 e(u,\xi^\nu)J(u) 
- J(u) \pd_1 e(u,\xi^\nu) \bigr| \cdot |\nabla\pd_t u| \\
&\leq
C  | \xi^\nu_{02} | \bigl( 1 + |\nabla\xi^\nu_{02}| \bigr) .
\end{align*}
Here the estimate for the second summand follows from (\ref{pd2}) and
the identity
$$
\nabla_s(\pd_1 e(u,\xi) X) 
= \pd_1 e(u,\xi)\nabla_s X + (\nabla_{(\pd_s u,\nabla_s\xi)}\pd_1 e )(u,\xi) X
$$
(and similarly for $\nabla_t(\pd_1 e(u,\xi) X)$), where we have
$
\bigl(\nabla_{(\pd_s u,\nabla_s\xi)}\pd_1 e \bigr)(u,0) =0
$
since
$$
\bigl(\nabla_{(Y,0)} \pd_1 e \bigr)(u,0)
= \nabla_Y {\rm Id}_{T_u M} = 0 
$$
and, calculating in local normal coordinates
with an extension $\tilde Y\in\Gamma(TM)$ of $Y\in T_u M$
that is covariantly constant along $\tau\mapsto \exp_u(\tau X)$, 
$$
\bigl(\nabla_{(0,Y)}\pd_1 e \bigr)(u,0) X
= \pd_\sigma|_{\sigma=0} \pd_\tau|_{\tau=0} e(\exp_u(\tau X) , \sigma Y) 
= \pd_\tau|_{\tau=0} \tilde Y(\exp_u(\tau X)) 
= 0.
$$
Now the uniform estimate $\|\nabla\xi^\nu\|_\infty\leq c_0$ 
and the exponential decay of $\bu=\bu(s)$ imply
\begin{align*}
\bigl\| \nabla\bigl( \nabla_s \xi^\nu 
+ J(\xi^\nu)\nabla_t\xi^\nu \bigr) \bigr\|_{L^2_{1,\delta^\nu}}
\leq
C(1+c_0)\bigl( \|\xi_{02}^\nu\|_{L^2(\R\times[0,1])} 
+ (\delta^\nu)^{1/2}\|\pd_s\bu\|_{H^1(\R)} \bigr) \to 0. 
\end{align*}
This proves 
$$
\| \xi^\nu \|_{\Gamma_{1,\delta^\nu}}\to 0 .
$$
It remains to find a time-shift such that
$\tau^{\sigma}v^\nu=e_u(\xi^\nu(\sigma))$
with some $\xi^\nu(\sigma)\in K_0$
but still $\| \xi^\nu(\sigma) \|_{\Gamma_{1,\delta^\nu}}\leq\eps_0$.
In order to find this shift we write 
$\tau^\sigma v^\nu=e_u(\xi^\nu(\sigma))$ with
\begin{equation} \label{xinusigma}
\xi^\nu(\sigma) := \bigl( e_u^{-1} \circ \tau^\sigma \circ e_u \bigr)(\xi^\nu) 
\in \Gamma_{1,\delta^\nu} . \end{equation} 
This will satisfy
$$
\| \xi^\nu(\sigma) \|_{\Gamma_{1,\delta^\nu}} 
\leq C \bigl( \| \xi^\nu \|_{\Gamma_{1,\delta^\nu}} 
+ |\sigma| \| d u \|_{\Gamma_{1,\delta^\nu}} \bigr) ,
$$
so it is well defined whenever $|\sigma|\leq\sigma_0$, where we fixed
$\sigma_0=\frac 12 \eps_0 C^{-1}\| d u \|_{\Gamma_{1,\delta^\nu}}^{-1}$ 
such that $\| \xi^\nu (\sigma) \|_{\Gamma_{1,\delta^\nu}}\leq\eps_0$ 
is ensured for sufficiently large $\nu\geq\nu_0$.
The $L^2$-estimate on $\xi^\nu(\sigma)$ can be seen from the pointwise estimate
\begin{align*}
\bigl| e_u^{-1}\tau^\sigma e_u (\xi) \bigr|
& \leq  \bigl| e_u^{-1}\tau^\sigma e_u (\xi) - e_u^{-1}\tau^\sigma e_u (0) \bigr|
+  \bigl| e_u^{-1}\tau^\sigma u - e_u^{-1} u \bigr| \\
& \leq  C \bigl( d\bigl(\tau^\sigma e_u (\xi) , \tau^\sigma e_u(0) \bigr)
+  d \bigl( \tau^\sigma u , u \bigr)  \bigr) \\
& \leq  C \bigl( \bigl|\tau^\sigma \xi \bigr|
+  \sigma | \partial_s u |  \bigr) .
\end{align*}
Here $C$ is a continuity constant for $e_u^{-1}$.
The higher derivatives of $\xi(\sigma)=e_u^{-1}\tau^\sigma e_u (\xi)$ 
are estimated similarly. Now consider the function
$$
\Theta^\nu(\sigma):=\lan \xi^\nu_{02}(\sigma) , \pd_s u_{02} \ran_{L^2}.
$$
It satisfies
$$
|\Theta^\nu(0)|\leq
\| \pd_s u \|_{L^2_{1,\delta^\nu}} 
\| \xi^\nu \|_{L^2_{1,\delta^\nu}} \to 0 
$$
and (dropping the $02$-subscript) we obtain from \eqref{xinusigma}
\begin{align*}
&\Bigl| \tfrac\pd{\pd\sigma}\Theta^\nu (\sigma) 
- \| \pd_s u \|_{L^2}^2 \Bigr| \\
&= \Bigl| 
\lan \bigl( d e_u(\xi^\nu(\sigma))^{-1}
\tau^\sigma \bigl( \pd_1 e(u,\xi^\nu) \pd_s u 
+ d e_u (\xi^\nu) \pd_s \xi^\nu \bigr) 
- \tau^\sigma\pd_s u \bigr), \pd_s u \ran_{L^2} \\
&\qquad\qquad\qquad\qquad\qquad\qquad\qquad\qquad\qquad\quad
+ \lan \bigl( \tau^\sigma \pd_s u - \pd_s u \bigr) , 
\pd_s u \ran_{L^2} \Bigr| \\
&\leq 
C \bigl(  \|\xi^\nu\|_{H^1} \|\pd_s u\|_{L^2}
+ \|\xi^\nu\|_\infty \|\pd_s u\|_{L^2}^2 
+ |\sigma| \|\nabla_s\pd_s u \|_{L^2} \| \pd_s u \|_{L^2} \bigr) .
\end{align*}
The latter is an arbitrarily small error for large $\nu$ and small $\sigma$.
Hence we will find solutions
$\sigma^\nu \sim - \Theta^\nu(0)/\|\pd_s u_{02}\|_{L^2}^2 
\in[-\sigma_0,\sigma_0]$ of $\Theta^\nu(\sigma^\nu)=0$.
With these we have
$\tau^{\sigma^\nu}v^\nu=e_u(\xi^\nu(\sigma))$, where
$\xi^\nu\in 
K_0 = \bigl\{ \xi\in\Gamma_{1,\delta} \big| 
\lan \xi_{02} , \pd_s u_{02} \ran_{L^2} = 0 \bigr\}$
and $\| \xi^\nu (\sigma) \|_{\Gamma_{1,\delta^\nu}}\leq\eps_0$.
So with this small time-shift on $v^\nu$ we obtain a contradiction to
the assumption that $\T_\delta^\nu$ is not surjective.  

Finally, to prove the transversality we need to check that 
$D_{v^\nu}=D_{e_u(\xi^\nu)}$ is surjective.
(The same then holds for the time shifts $\tau^{\sigma^\nu}v^\nu$.)
This follows from the quadratic estimate in Lemma~\ref{quadratic} :
Let $Q:\Om_{1,\delta^\nu}\to \Gamma_{1,\delta^\nu}$ be the right
inverse of $D^\delta=d\F_u(0)$, then 
\begin{eqnarray*}
\| \Phi_u(\xi^\nu)^{-1}D_{e_u(\xi^\nu)}E_u(\xi^\nu)Q - {\rm Id} \|
&\leq& 
\| \Phi_u(\xi^\nu)^{-1}D_{e_u(\xi^\nu)}E_u(\xi^\nu) - d\F_u(0) \| \cdot \|Q\|\\
&\leq& 2 C_2\|Q\| \|\xi^\nu\|_{\Gamma_{1,\delta^\nu}} ,
\end{eqnarray*}
where $\|Q\|<\infty$ by (\ref{right inverse}) 
and $\|\xi^\nu\|_{\Gamma_{1,\delta^\nu}}\to 0$.
This shows that 
$\Phi_u(\xi^\nu)^{-1}D_{e_u(\xi^\nu)}E_u(\xi^\nu) Q$ 
and hence also the operator
$\Phi_u(\xi^\nu)^{-1}D_{e_u(\xi^\nu)}E_u(\xi^\nu)$
has a right inverse for all sufficiently large $\nu\geq\nu_0$.
Here the parallel transport $\Phi_u(\xi^\nu)$ is an isomorphism 
on the target and $E_u(\xi^\nu)$ identifies $\Gamma_{1,\delta}$
with the domain of $D_{e_u(\xi^\nu)}$. For the latter see the discussion
before Lemma~\ref{quadratic} and recall that $E_u(0)={\rm Id}$.
So we have established that $D_{v^\nu}$ is surjective, and this
finishes the proof.
\end{proof}

\begin{lemma} \label{bubb}
There exists a universal constant $\hbar>0$ such that the following holds
for any sequence of Floer trajectories $v^\nu\in\widehat\M_{\delta^\nu}(x^+,x^-)$ 
with $\delta^\nu\to 0$.
If for some $s\in\R$
$$ 
\liminf_{\nu \to \infty} 
\bigl( \| dv_{02}^\nu \|_{L^\infty(B_\eps(s,0))} 
+\| d\hat v^\nu \|_{L^\infty(B_\eps(s,0))} \bigr)
= \infty \qquad\forall\eps>0 ,
$$
then there exists a sequence $\eps^\nu\to 0$ such that
$$ 
\liminf_{\nu \to \infty} \left(
\int_{B_{\eps^\nu}(s,0)} | dv^\nu_{02} |^2  
+ \int_{B_{\eps^\nu}(s,0)} | d\hat v^\nu |^2 \right)
\geq \hbar .
$$
Here $B_\eps(s,0)$ is the $\eps$-ball in $\R\times[0,1]$
or $\R\times[0,\delta^\nu]$ respectively.
\end{lemma}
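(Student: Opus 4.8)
The plan is to run a standard Gromov-type rescaling argument, but adapted to the degenerating quilt geometry where the middle strip has width $\delta^\nu\to 0$. Fix a point $(s,0)$ in the seam/corner region as in the statement, and for each $\nu$ choose a point $z^\nu=(s^\nu,t^\nu)$ in the appropriate half-ball where the combined gradient $R_\nu:=|dv^\nu_{02}(z^\nu)|+|d\hat v^\nu(z^\nu)|$ (interpreted for whichever map $z^\nu$ belongs to, using the a priori $\cC^1$ control near the seams from Lemma~\ref{expdecay}'s companion estimates and the boundary conditions) is within a factor $2$ of its supremum on a shrinking ball, with $R_\nu\to\infty$ and $z^\nu\to(s,0)$. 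Apply Hofer's lemma to replace $z^\nu$ by a nearby point, still converging to $(s,0)$, around which the gradient is controlled on a ball of radius $\sim 1/R_\nu$ times a slowly growing sequence. Now rescale by $R_\nu$: set $w^\nu(\zeta):= v^\nu(z^\nu + \zeta/R_\nu)$ on the rescaled domain.

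The key dichotomy is how $\delta^\nu R_\nu$ behaves. First case: $\delta^\nu R_\nu \to \infty$, meaning the middle strip, after rescaling, opens up to fill the plane. Then away from the rescaling point the rescaled $v^\nu_1$ (the $M_1$-component) dominates and one extracts in the limit a nonconstant $J_1$-holomorphic sphere or half-plane in $M_1$ (or a half-plane in $(M_0,L_{01})$ / $(M_2,L_{12})$ if $z^\nu$ stays near one of the outer seams). By the removable singularity theorem this limit is a nonconstant sphere or disk on one of the Lagrangians $L_0,L_{01},L_{12},L_2$, hence carries energy at least the minimal symplectic area $\hbar_0>0$ associated to these compact Lagrangians and to sphere bubbles in $M_0,M_1,M_2$; that energy is accounted for in balls $B_{\eps^\nu}(s,0)$ of radius $\eps^\nu\to 0$ by the usual argument (choosing $\eps^\nu\to 0$ but $\eps^\nu R_\nu\to\infty$). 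Second case: $\delta^\nu R_\nu$ stays bounded; then after rescaling the middle strip has bounded or shrinking width and the rescaled seams $\{t=0\}$ and $\{t=\delta^\nu\}$ collapse onto a single line. Here one must show that the rescaled maps still converge — using the uniform $\cC^1$-estimate on $\hat v^\nu$ near $t=0$ coming from the diagonal boundary conditions together with Lemma~\ref{lltrans}, exactly as in Step~1 of Lemma~\ref{expdecay} — to a nonconstant $J_{02}$-holomorphic sphere or half-plane in $M_0\times M_2$ with boundary on $L_{02}$ or $L_0\times L_2$ (the two seam conditions having merged, with the $L_{01}\times L_{12}$ seam becoming, via the lift $\ell_1$, a condition on $L_{02}$). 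Again removal of singularities yields a nonconstant holomorphic sphere/disk, contributing at least a uniform quantum $\hbar_0$ of energy concentrated near $(s,0)$.

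The main obstacle is the second case: one is rescaling near a degenerating corner where two seams come together, so there is no off-the-shelf Fredholm/compactness statement to quote, and one must verify by hand that the rescaled limit exists, is $J_{02}$-holomorphic with the correct (merged) boundary condition, and is \emph{nonconstant}. Nonconstancy is delivered by the choice of rescaling point (the gradient of the limit at the origin is $1$ by construction), but ruling out energy escaping to the seam requires the quantitative transversality of Lemma~\ref{lltrans} to bootstrap a $\cC^1_{\loc}$ bound up to $t=0$, which is precisely the mechanism already used in Lemma~\ref{expdecay} Step~1 and in the bubbling-exclusion paragraph of Theorem~\ref{compact}; I would reuse that argument verbatim. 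Taking $\hbar := \hbar_0$ (the minimum over the finitely many relevant minimal areas, which is positive by compactness of all the $M_i$ and $L$'s, or the bounded-geometry hypotheses) completes the proof. Note that, unlike the disk/sphere exclusion via index additivity in Section~\ref{sec:mon}, here we only need the \emph{existence} of a quantum of energy, not a precise identification of the bubble, which is why the statement is phrased purely as energy quantization.
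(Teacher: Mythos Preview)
Your argument has a genuine gap in the second case of your dichotomy. You write that when $\delta^\nu R_\nu$ stays bounded, the rescaled middle strip has ``bounded or shrinking width and the rescaled seams collapse onto a single line.'' But these two possibilities are not the same. If $\delta^\nu R_\nu\to\Delta\in(0,\infty)$ along a subsequence, the rescaled middle strip has \emph{fixed} positive width $\Delta$: the two seams do \emph{not} collapse, and the limit is precisely a figure eight bubble --- a quilted map $w_{02}:\mathbb{H}\to M_0\times M_2$, $\hat w:\R\times[0,\Delta]\to M_0\times M_2\times M_1\times M_1$ with the original seam conditions. For this object there is no removal-of-singularities theorem, no Fredholm description, and in particular no a priori lower bound on its energy that you could take as $\hbar_0$. (This is exactly the obstruction flagged in the introduction.) So your proposed $\hbar$, defined as the minimum of sphere and disk areas, does not control this case.

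The paper's proof is organized around avoiding this problem rather than solving it. It argues by contradiction: assume the energy in shrinking balls tends to $0$, and derive a contradiction to the blowup. The key point (Step~1) is that under the zero-energy assumption one can apply mean value inequalities on balls of radius $\sim\delta_k$ or $\sim t_k$ to force $\delta_k R_k\to 0$ and $t_k R_k\to 0$; this \emph{rules out} the figure eight regime analytically, without ever extracting a limit there. Then (Step~2), with $\delta_k R_k\to 0$, the rescaled maps are shown to converge to a constant in $\cC^1$ up to the boundary via the uniform $H^2$-estimates of Lemma~\ref{Ddelta}(b), contradicting $|dw^k(0,t_kR_k)|\geq 1$. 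Thus the paper never identifies a bubble and never appeals to a minimal disk or sphere area; the constant $\hbar$ is purely analytic. Your final sentence correctly describes what is needed --- an energy quantum without geometric identification --- but your actual argument does the opposite, and founders on the case it cannot identify.
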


In the usual analysis of bubbling effects, one would prove this lemma by rescaling around 
points where the differentials blow up, identifying the limits with pseudoholomorphic spheres or disks, and hence obtaining an energy quantization constant $\hbar$ that is geometrically determined by the minimal nonzero energy of spheres or disks.
In the present case however, depending on the relative speed of blow-up and strip-shrinking $\delta^\nu\to 0$, the rescaling may lead to
sphere bubbles in $M_0$, $M_1$, or $M_2$,
disk bubbles in $(M_0\times M_1,L_{01})$, $(M_1\times M_2,L_{12})$,
or $(M_0\times M_2,L_{01}\circ L_{12})$,
or the novel {\em figure eight bubble} described in the introduction.
Since we do not have a geometric bound on the minimal energy of figure eight bubbles, we use a mean value inequality to obtain $\hbar$ by purely analytic methods.

\begin{proof}[Proof of Lemma \ref{bubb}]
For notational convenience we introduce the noncontinuous function
$|dv|:\R\times[0,1]\to[0,\infty)$ given by
$|dv(s,t)|^2= |d v_{02}(s,t)|^2 + |d\hat v(s,t)|^2$ for $t\in[0,\delta]$
and $|dv(s,t)|=|d v_{02}(s,t)|$ for $t\in(\delta,1]$.

Suppose the lemma is false, that is, for every $k\in\N$
there exists a sequence $v^{k,\nu}\in\widehat\M_{\delta^{k,\nu}}(x^+,x^-)$ 
with $\delta^{k,\nu}\to 0$ such that (after time shift to $s=0$)
$R^\nu_k:=| dv^{k,\nu}(s^\nu_k,t^\nu_k)|\to\infty$
for some $(s^\nu_k,t^\nu_k)\to(0,0)$, but
$$
\liminf_{\nu \to \infty} 
\int_{B_{\eps^\nu}(0)} | dv^{\nu,k} |^2  \leq \frac 1k .
$$
for every sequence $\eps^\nu\to 0$.
In particular, this will hold for a fixed sequence $\eps^\nu_k\to 0$
that satisfies in addition $\eps^\nu_k\geq \delta^\nu_k$,
$(s^\nu_k,t^\nu_k)\in B_{\frac 14\eps^\nu_k}(0)$ 
and $\eps^\nu_k R^\nu_k\to\infty$.
We can then find diagonal sequences $v^k\in\widehat\M_{\delta_k}(x^+,x^-)$ 
with $\delta_k\to 0$, and $\eps_k\to 0$, $(s_k,t_k)\in B_{\frac 14\eps_k}(0)$
such that $\eps_k R_k:=\eps_k|dv^k(s_k,t_k)|\to\infty$ 
and 
\begin{equation}\label{no energy} 
\int_{B_{\eps_k}(0)} | dv^k |^2  \to 0 .
\end{equation}
Next, we use Lemma~\ref{hofer} to refine the choice of the blowup 
points $(s_k,t_k)$.
For that purpose we consider the spaces $X_{02}=\R \times [0,1]$, 
$\hat X = \R \times [0,\delta_k]$, and $X=\R\times [0,1]$, 
with the obvious inclusion $\pi:X_{02}\cup \hat X\to X$.
Using the function $f=|dv^k_{02}|$ on $X_{02}$ and $f=|d\hat v^k|$ on $\hat X$
one can then vary the point $\pi(x)=(s_k,t_k)\in\R\times[0,1]$ by 
$2\rho=\frac 14\eps_k$ to find $(s_k,t_k)\in B_{\frac 12\eps_k}(0)$
and $\eps'_k\leq\frac 18\eps_k$, 
such that $\eps'_k R_k:=\eps'_k|dv^k(s_k,t_k)|\to\infty$ 
and $|dv^k|\leq 4R_k$ on $B_{\eps'_k}(s_k,t_k)$.
Here (\ref{no energy}) continues to hold on 
$B_{\eps_k(0)}\supset B_{\eps'_k(s_k,t_k)}$.

Now in a first step we will prove that figure eight bubbles (arising
from rescaling in the case $\delta_k R_k\to\Delta\in(0,\infty)$) have
a minimal energy (possibly depending on $\Delta>0$.)  More precisely,
we claim that (\ref{no energy}) implies
\begin{equation}\label{tR}
t_kR_k\to 0, \qquad\text{and}\qquad \delta_k R_k\to 0 .
\end{equation}
In a second step we will then see that this gives rise to a disk
bubble in $(M_0\times M_2,L_{01}\circ L_{12})$.

\medskip
\noindent
{\bf Step 1:}{\it We prove (\ref{tR}).}

First consider the case $|dv_{02}^k(s_k,t_k)|\ge \frac 12|dv^k(s_k,t_k)|$
and $t_k\geq \frac 12\delta_k$.
Then for all sufficiently large $k$ we can apply 
the mean value inequality\footnote{
For this and the following applications of mean value inequalities note that they continue to hold with uniform constants for noncompact symplectic manifolds, if one has uniform bounds on the curvature and up to second derivatives of the almost complex structures $J_i$ w.r.t.\ $J_i$-compatible metrics.
} 
\cite[Lemma~4.3.1]{ms:jh} 
to $|dv_{02}^k|$ on the ball 
$B_{r_k}(s_k,t_k)\subset\R\times(0,1) \cap B_{\eps_k}(0)$
with $r_k:=\min\{t_k,\eps'_k\}$,
$$
\tfrac 14(r_k R_k)^2 \leq r_k^2 |dv_{02}^k(s_k,t_k)|^2 
\leq c \int_{B_{r_k}(s_k,t_k)} | dv_{02}^k |^2 \to 0 .
$$
Here we cannot have $r_k=\eps'_k$ since $\eps'_k R_k\to\infty$,
so we have $r_k=t_k$ 
and thus $\frac 12 \delta_k R_k\leq t_k R_k\to 0$ as claimed.

In the case $|d\hat v^k(s_k,t_k)|\ge \frac 12|dv^k(s_k,t_k)|$ 
and $\delta_k\geq t_k\geq\frac 12\delta_k$
we can apply the mean value inequality \cite[Theorem~1.3, Lemma~A.1]{we:en}
to $|d\hat v^k|$ with boundary condition
$\hat v^k|_{t=\delta_k}\in L_{01}\times L_{12}$ on the partial ball 
$B_{r_k(s_k,t_k)}\subset\R\times(0,\delta_k] \cap B_{\eps_k}(0)$
for $r_k:=\min\{\hh\delta_k,\eps'_k\}$,
$$
\tfrac 14 (r_k R_k)^2 \leq r_k^2 |d\hat v^k(s_k,t_k)|^2 
\leq c \int_{B_{r_k}(s_k,t_k)} | d\hat v^k |^2  \to 0 .
$$
As before we cannot have $r_k=\eps'_k$ since $\eps'_k R_k\to\infty$,
so we have $r_k=\hh\delta_k$ 
and thus $t_k R_k\leq \delta_k R_k\to 0$ as claimed.

In the remaining case $t_k\leq\frac 12\delta_k$
we consider the pseudoholomorphic map
$$
w^k:=(v_{02}^k,\hat v^k):\R\times[0,\delta_k]\to
M_0\times M_2 \times M_0\times M_2\times M_1 \times M_1 ,
$$
which satisfies the Lagrangian boundary condition 
$w^k|_{t=0}\in \Delta_{M_0\times M_2}\times \Delta_{M_1}$.
By the above we have $|dw^k(s_k,t_k)|\geq R_k\to\infty$ and
$\int_{B_{\eps_k}(0)} | dw^k |^2 \to 0$.
So for all sufficiently large $k$ we can apply 
the mean value inequality \cite[Theorem~1.3, Lemma~A.1]{we:en} 
on the partial ball 
$B_{r_k(s_k,t_k)}\subset\R\times[0,\delta_k) \cap B_{\eps_k}(0)$
for $r_k:=\min\{\hh\delta_k,\eps'_k\}$,
$$
(r_k R_k)^2 \leq
r_k^2 |dw^k(s_k,t_k)|^2 
\leq c \int_{B_{r_k}(s_k,t_k)} | dw^k |^2 \to 0 .
$$
Again we cannot have $r_k=\eps'_k$ since $\eps'_k R_k\to\infty$,
so we have $r_k=\hh\delta_k$ and thus $2t_k R_k\leq\delta_k R_k\to 0$
as claimed.

\medskip
\noindent
{\bf Step 2:} {\it We prove the lemma.}

We consider the rescaled maps
$w^k=(w^k_{02},\hat w^k)$, where
$w^k_{02}:B_{\eps_k R_k}(0)\cap{\mathbb H}^2\to M_0\times M_2$ 
is defined on half balls of radius $\eps_k R_k\to\infty$
in the half space ${\mathbb H}^2:=\R\times[0,\infty)$
by $w^k_{02}(s,t):=v^k_{02}(s_k+s/R^k,t/R^k)$, and 
$\hat w^k:B_{\eps_k R_k}(0)\cap(\R\times[0,\delta_k R_k])
\to M_0\times M_2\times M_1\times M_1$
is defined by $\hat w^k(s,t):=\hat v^k(s_k+s/R^k,t/R^k)$
on balls of radius $\eps_k R_k$ intersected with the 
strip of width $\delta_k R_k\to 0$.

This rescaling preserves the nontriviality $|d w^k(0,t_k R_k)|\geq 1$,
but on both domains $|d w^k|$ is uniformly bounded.
Hence we can find a subsequence of the $w_{02}^k$ that
converges in the $\cC^0$-topology on the unit half ball
$D_1:=B_1(0)\cap{\mathbb H}^2$.
The (scaling invariant) energy 
$\int_{B_{\eps_k R_k}(0)}|d w_{02}^k|^2$ 
converges to zero by (\ref{no energy}), so the limit 
has to be constant.
In fact, we have $w_{02}^k\to x_{02}\in L_{02}$
since the boundary values $w^k_{02}|_{t=0}$ 
converge to $L_{01} \circ L_{12}=L_{02}$ in $\cC^0([-1,1])$.
To see the latter use Lemma~\ref{lltrans}~(a) to bound the distance to $L_{02}$
by the distance $d( \hat w^k(s,0), (L_{01}\times L_{12})^T)$, which is zero for $t=0$ replaced by $t=\delta_k$. However, the bound on $|\pd_t\hat w^k|$ provides a bound
$d\bigl( \hat w^k(s,0) , \hat w^k(s,\delta_k)\bigr) \leq \delta_k 2R_k \to 0$ and thus proves $x_{02}\in L_{02}$.
This also proves that $\hat w^k\to x_1$ in 
$\cC^0([-1,1]\times[0,\delta_k R_k])$,
where $x_1\in M_1$ is uniquely determined by 
$\bar x:=(x_{02},x_1,x_1)\in L_{01}\times L_{12}$.
The maps $w_{02}^k$ are $\bar J_{02}$-holomorphic, so by elliptic
regularity the convergence $w_{02}^k\to x_{02}$ is in the
$\cC^\infty$-topology on every compact subset of ${\mathbb H}^2\setminus\pd{\mathbb H}^2$.
However, in order to obtain a contradiction to the fact that
$|d w^k(0,t_k R_k)|\geq 1$ with $t_k R_k\to 0$ we need to establish
$\cC^1$-convergence on $D_1$ up to the boundary.

We begin by noting that due to the $\cC^0$-convergence we can express
$w^k=e_x(\xi^k)$ in terms of 
sections $\xi^k=(\xi_{02}^k,\hat\xi^k)\in H^2(D_1,x_{02}^*T(M_0\times M_2))\times 
H^2([0,1]\times[0,\delta_k R_k],\bar x^*T(M_0\times M_2\times M_1\times M_1))$
using the exponential map centered at $x=(x_{02},\bar x)$.
These sections satisfy the diagonal and Lagrangian boundary conditions
$\xi^k|_{t=0}\in T_{x}(\Delta_{M_0\times M_2}\times\Delta_{M_1})$ and
$\hat\xi^k|_{t=\delta_k R_k}\in T_{\bar x}(L_{01}\times L_{12})$,
the $\cC^0$-convergence $\|\xi^k\|_\infty\to 0$, and a uniform bound
$\|\nabla\xi^k\|_\infty\leq c_0$.
Since $\ol{\partial}_J w^k=0$ and $\nabla x=0$ we obtain from (\ref{dbarxi}) 
$$
\nabla_s\xi^k + J(\xi^k)\nabla_t\xi^k = 0 .
$$
Now $d w^k=de_x(\xi^k)\nabla_s\xi^k ds + de_x(\xi^k)J(\xi^k)\nabla_s\xi^k dt$,
so it suffices to prove the $\cC^0$-convergence of $\nabla_s\xi^k$ near $0$.
For that purpose we multiply the sections by cutoff functions $h=(h_{02},\hat h)$
with $h_{02}:\R\times[0,1]\to[0,1]$ supported in $D_1$,
$\hat h:\R\to[0,1]$ supported in $[-1,1]$, and both equal to $1$ near $0$.
Then we obtain sections on the multistrip
$h\xi^k:=(h_{02}\xi_{02}^k,\hat h\hat\xi^k)\in \Gamma_{1,\delta_k R_k}$
that also satisfy the boundary condition $h_{02}\xi_{02}^k|_{t=1}=0$.
These satisfy a uniform bound
\begin{align*}
 \sup_k \Bigl(
\|\nabla_s(h\xi^k) + J(\xi^k)\nabla_t(h\xi^k)\|_{H^1_{1,\delta_k R_k}}
+ \| h\xi^k \|_{H^0_{1,\delta_k R_k}}  \Bigr)
&\leq \sup_k C \|\xi^k\|_{H^1_{1,\delta_k R_k}(\on{supp}(h))} 
 <\infty
\end{align*}
due to the bounds on $\|\xi^k\|_\infty$ and $\|\nabla\xi^k\|_\infty$ 
and the compact support of $h$.
From this Lemma~\ref{Ddelta}~(b) provides a uniform bound
$$
\sup_k  \|h\xi^k\|_{H^2_{1,\delta_k R_k}} \leq C_{\Gamma} < \infty .
$$
Indeed, the boundary terms vanish since the constant boundary conditions 
directly transfer to the derivatives,
$\nabla_s\xi_{02}^k|_{t=1},\nabla_s^2\xi_{02}^k|_{t=1}\in T_{x_{02}}(L_0\times L_2)$
and
$\nabla_s\hx^k|_{t=\delta_k R_k},\nabla_s^2\hx^k|_{t=\delta_k R_k}\in T_{\hat x}(L_{01}\times L_{12})$.

We now fix a pair of cutoff functions $h'$ with support in $h^{-1}(1)$
and still equal to $1$ near~$0$.
Then we apply Lemma~\ref{Ddelta}~(b) to $h' \nabla_s \xi^k$,
again with vanishing boundary terms, to obtain
\begin{align*}
\sup_k \| h' \nabla_s\xi^k \|_{H^2_{1,\delta_kR_k}} 
&\leq \sup_k C_1 \Bigl( 
\bigl\| \bigl( \nabla_s + J(\xi^k)\nabla_t \bigr) h'\nabla_s\xi^k
\bigr\|_{H^1_{1,\delta_k R_k}}
+ \| h' \nabla_s\xi^k \|_{H^0_{1,\delta_k R_k}} \Bigl) \\
&\leq \sup_k C (1 + c_0) \| h\xi^k\|_{H^2_{1,\delta_k R_k}}  < \infty .
\end{align*}
We can pick the cutoff functions such that $h'_{02}|_{D_{1/2}}\equiv 1$
on the half ball $D_{1/2}\subset{\mathbb H}^2$ and 
$\hat h|_{[-\frac 12, \frac 12]}\equiv 1$.
Then the compact Sobolev embedding 
$H^2(D_{1/2})\hookrightarrow\cC^0(D_{1/2})$
provides $\cC^0$-convergence of a subsequence $\nabla_s\xi^k_{02}$.
We already know that the limit is $0$, so we obtain
$\nabla_s\xi^k_{02}\to 0$ and $\pd_s w^k_{02}\to 0$ in $\cC^0(D_{1/2})$.
It remains to establish
$\|\nabla_s\hat\xi^k\|_{\cC^0([-\frac 12,\frac 12]\times[0,\delta_k R_k])}
\to 0$ and thus
$\|\pd_s \hat w^k\|_{\cC^0([-\frac 12,\frac 12]\times[0,\delta_k R_k])}\to 0$ 
in contradiction to $|d w^k(0,t_k R_k)|\geq 1$ with $t_k R_k\to 0$.
To see this we follow the argument in Lemma~\ref{Sobolev}.
Using the standard Sobolev embedding 
$H^1([-\frac 12,\frac 12])\hookrightarrow\cC^0([-\frac 12,\frac 12])$
we obtain for all $t_0\in[0,\delta_k R_k]$
\begin{align}\label{hatxik} \nonumber
\tfrac 1C \| \nabla_s\hat\xi^k|_{t=t_0} - \nabla_s\hat\xi^k|_{t=\delta_k R_k} 
\|_{\cC^0([-\frac 12,\frac 12])}^2
&\leq
\| \nabla_s\hat\xi^k|_{t=t_0} - \nabla_s\hat\xi^k|_{t=\delta_k R_k} 
\|_{H^1([-\frac 12,\frac 12])}^2 \\
&\leq \delta_k R_k \int_0^{\delta_k R_k} 
\|\nabla_t\nabla_s\hat\xi^k\|_{H^1([-\frac 12,\frac 12])}^2 \\
&\leq \delta_k R_k 
\|\nabla_s\hat\xi^k\|_{H^2([-\frac 12,\frac 12]\times[0,\delta_k R_k])}^2
\to 0 . \nonumber
\end{align}
From the above we moreover have
$ \| \nabla_s\xi'^k_{02}|_{t=0} \|_{\cC^0([-\frac 12,\frac 12])}
=\| \nabla_s\xi^k_{02}|_{t=0} \|_{\cC^0([-\frac 12,\frac 12])} \to 0$.
Now using Lemma~\ref{lltrans} and the boundary conditions, in particular
$(\xi_1^k-\xi_1'^k)|_{t=0}=0$, we obtain
\begin{align*}
&\|\nabla_s\hat\xi^k|_{t=\delta_k R_k}\|_{\cC^0([-\frac 12,\frac 12])} \\
&\leq C \bigl(\Vert \pi_{02}(\nabla_s\hx^k)|_{t=\delta_k R_k}\Vert_{\cC^0([-\frac 12,\frac 12])}
+ \Vert \nabla_s(\xi_1^k-\xi_1'^k)|_{t=\delta_k R_k} 
\Vert_{\cC^0([-\frac 12,\frac 12])} \bigr) \\
&\leq C \bigl(\Vert \nabla_s\xi'^k_{02}|_{t=0} \Vert_{\cC^0([-\frac 12,\frac 12])}
+ 3 \Vert \nabla_s\hat\xi^k|_{t=\delta_k R_k} - \nabla_s \hat\xi^k|_{t=0} 
\Vert_{\cC^0([-\frac 12,\frac 12])}
\bigr) 
\to 0 .
\end{align*}
Combining 
$\|\nabla_s\hat\xi^k|_{t=\delta_k R_k}\|_{\cC^0([-\frac 12,\frac 12])}\to 0$
with (\ref{hatxik}) then proves
$\|\nabla_s\hat\xi^k
\|_{\cC^0([-\frac 12,\frac 12]\times[0,\delta_k R_k])}\to 0$
and thus $|d w^k(0,t_k R_k)|\to 0$ in contradiction to the assumption.
\end{proof}

\begin{lemma}  \label{hofer}
Let $(X,d)$ be a metric space, $X_1,\ldots, X_n$ topological spaces,
$\pi: X_1 \cup \ldots \cup X_n \to X$ a continuous map, and $f: X_1
\cup \ldots X_n \to \R$ a non-negative continuous function.  
Fix $x\in X_i$ for some $i = 1,\ldots, n$ and $\rho > 0$.  
Suppose that $\pi^{-1}(B_{2\rho}(\pi(x))) \cap X_i$ is complete for each
$i=1,\ldots, n$.  Then there exists an $x' \in X_1 \cup \ldots X_n$
and a positive number $\rho' \leq \rho$ such that
$$ d(\pi(x'),\pi(x)) < 2 \rho, \ \ \sup_{\pi^{-1} B_{\rho'}(\pi(x'))}
f \leq 2f(x'), \ \ \ \rho' f(x') \ge \rho f(x) .$$
\end{lemma}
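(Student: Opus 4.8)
The plan is to adapt Hofer's classical ``almost a lower bound'' or rescaling lemma (see \cite[Lemma~4.6.4 and its proof]{ms:jh}) to the slightly more general setting where the function $f$ is defined not on a single metric space but on a disjoint union $X_1 \cup \ldots \cup X_n$ mapping continuously to the metric space $(X,d)$. The key observation is that the ordinary Hofer lemma only uses the metric structure on $X$ (to measure which points are ``close'') and completeness to run an iteration that must terminate; neither of these is obstructed by having $f$ live upstairs on the $X_i$'s.

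Concretely, I would argue by contradiction. Suppose no such pair $(x',\rho')$ exists. Set $x_0 := x$ and $\rho_0 := \rho$. Having chosen $x_k \in X_1 \cup \ldots \cup X_n$ and $\rho_k > 0$ with $d(\pi(x_k),\pi(x)) < 2\rho - 2\rho_k$ (this holds trivially for $k=0$) and $\rho_k f(x_k) \ge \rho f(x)$, the failure of the conclusion for the candidate pair $(x_k,\rho_k)$ means that $\sup_{\pi^{-1}B_{\rho_k}(\pi(x_k))} f > 2 f(x_k)$; so there is a point $x_{k+1}$ with $\pi(x_{k+1}) \in B_{\rho_k}(\pi(x_k))$ and $f(x_{k+1}) > 2 f(x_k)$. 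Put $\rho_{k+1} := \rho_k/2$. Then $f(x_{k+1}) > 2 f(x_k) \ge 2^{k+1} f(x)$, so $f(x_k) \to \infty$; moreover $\rho_{k+1} f(x_{k+1}) \ge 2 \rho_k f(x_k) / 2 = \rho_k f(x_k) \ge \rho f(x)$, so the inductive hypothesis on $\rho_k f(x_k)$ propagates; and $d(\pi(x_{k+1}),\pi(x_k)) < \rho_k = \rho/2^k$, so by the triangle inequality $d(\pi(x_{k+1}),\pi(x)) < \sum_{j=0}^{k}\rho/2^j < 2\rho$, and in fact $d(\pi(x_{k+1}),\pi(x)) < 2\rho - 2\rho_{k+1}$, closing the induction. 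Since $f(x) > 0$ may be assumed (if $f(x)=0$ then $x'=x$, $\rho'=\rho$ works trivially since $f$ is non-negative — wait, one needs $\sup f \le 0$, which fails in general; so instead note that if $f(x) = 0$ the inequality $\rho' f(x') \ge \rho f(x) = 0$ is automatic and one simply invokes the $f(x)>0$ case applied to any point with positive value, or observes the statement is only used when $f(x)>0$), we have a genuine iteration.

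The punchline: the points $x_k$ all lie in $\pi^{-1}(B_{2\rho}(\pi(x)))$, and since $\sum_j \rho/2^j$ converges, the sequence $\pi(x_k)$ is Cauchy in $X$; more to the point, for each fixed index $i$ the sub-collection of $x_k$ lying in $X_i$ is Cauchy in the complete space $\pi^{-1}(B_{2\rho}(\pi(x))) \cap X_i$ — here one uses that consecutive $x_k, x_{k+1}$ need not lie in the same $X_i$, so a small amount of care is needed: by the pigeonhole principle some $X_i$ contains infinitely many $x_k$, and after passing to that subsequence one still has $d(\pi(x_{k_j}),\pi(x_{k_{j+1}})) \to 0$, hence the subsequence converges to some $x_\infty \in X_i$. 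But $f$ is continuous, so $f(x_{k_j}) \to f(x_\infty) < \infty$, contradicting $f(x_k) \to \infty$. This contradiction proves the lemma.

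The main obstacle, such as it is, is purely bookkeeping: in the standard Hofer lemma the iterates live in one space and one gets a Cauchy sequence directly, whereas here the iterates hop between the $X_i$ and the completeness hypothesis is imposed on each $\pi^{-1}(B_{2\rho}(\pi(x))) \cap X_i$ separately. The fix is the pigeonhole step above. One should also double-check the precise form of the three desired inequalities against the bound $d(\pi(x_{k+1}),\pi(x)) < 2\rho$: because $\rho_{k+1} = \rho_k/2$ and $\sum_{j > k}\rho/2^j = \rho/2^k = 2\rho_{k+1}$, the refined estimate $d(\pi(x_k),\pi(x)) < 2\rho - 2\rho_k$ is exactly what makes $B_{\rho_k}(\pi(x_k)) \subset B_{2\rho}(\pi(x))$, so every new point stays inside the region where completeness was assumed. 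Everything else is the verbatim Hofer argument.
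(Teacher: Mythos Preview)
Your proposal is correct and follows essentially the same approach as the paper: argue by contradiction, iterate to produce a sequence with $f(x_{k+1}) > 2 f(x_k)$ and $d(\pi(x_k),\pi(x_{k+1})) < \rho/2^k$, then use pigeonhole to extract a subsequence in a single $X_i$ and derive a contradiction from completeness and continuity of $f$. The paper's proof is a two-line sketch citing the standard Hofer lemma; you have simply supplied the details (and the slightly awkward aside on $f(x)=0$ is harmless, since the lemma is only invoked in the paper with $f(x)>0$).
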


\begin{proof}  Otherwise, the same argument as in the proof of Hofer's
lemma \cite[p.93]{ms:jh} shows that there exists a sequence $x_\alpha
\in X_1 \cup \ldots \cup X_n$ such that
$$ x_0 = x, \ \ d(\pi(x_\alpha),\pi(x_{\alpha+1})) \leq \rho/2^\alpha, \ \
f(x_{\alpha+1}) > 2 f(x_\alpha) .$$
After passing to a subsequence, we obtain a Cauchy sequence $x_\alpha$ in
some $X_i$ with $f(x_\alpha) \to \infty$, which
contradicts completeness
and continuity of $f$.
\end{proof}  

\def\cprime{$'$} \def\cprime{$'$} \def\cprime{$'$} \def\cprime{$'$}
  \def\cprime{$'$} \def\cprime{$'$}
  \def\polhk#1{\setbox0=\hbox{#1}{\ooalign{\hidewidth
  \lower1.5ex\hbox{`}\hidewidth\crcr\unhbox0}}} \def\cprime{$'$}
  \def\cprime{$'$}

\end{document}